\title{Deviation inequalities for random walks}
\date{}
\author{P.~Mathieu \footnote{Aix Marseille Universit\'e, CNRS, Centrale Marseille, I2M, UMR 7373, 13453 Marseille France, 
pierre.mathieu@univ-amu.fr}\\
A.~Sisto \footnote{Department of Mathematics, ETH Zurich, 8092 Zurich, Switzerland,
sisto@math.ethz.ch}	}
\def\eps{\varepsilon}
\def\P{{\mathbb P}}
\def\esp{{\mathbb E}}
\def\var{{\mathbb V}} 
\def\Z{{\mathbb Z}}
\def\N{{\mathbb N}}
\def\R{{\mathbb R}}
\def\1{{\mathbf 1}}
\def\o{\omega}
\def\x{\Gamma}
\def\Nu{\bold{n}}
\def\p{{\cal P}}
\def\x1{X_1^{(1)}}
\def\tx0{\tilde X^0}
\def\tmu{\tilde{\mu}}
\def\={&=&}
\def\+{&+&}
\def\ra{\rightarrow}
\newcommand{\stack}[2]{\genfrac{}{}{0pt}{3}{#1}{#2}}
\newtheorem{theo}{Theorem}[section]
\newtheorem{prop}[theo]{Proposition}
\newtheorem{lm}[theo]{Lemma}
\newtheorem{cor}[theo]{Corollary}
\newtheorem{rmk}[theo]{Remark}
\newtheorem{df}[theo]{Definition}
\def\beq{\begin{equation}}
\def\eeq{\end{equation}}
\newcommand{\bei}{\begin{itemize}}
\newcommand{\eei}{\end{itemize}}
\newcommand{\ben}{\begin{enumerate}}
\newcommand{\een}{\end{enumerate}}
\newcommand{\beqn}{\begin{eqnarray}}
\newcommand{\beqnn}{\begin{eqnarray*}}
\newcommand{\eeqn}{\end{eqnarray}}
\newcommand{\eeqnn}{\end{eqnarray*}}
\newcommand{\brm}{\begin{rmk}}
\newcommand{\erm}{\end{rmk}}
\newcommand{\calG} {\ensuremath {\mathcal{G}}}
\newcommand{\calP} {\ensuremath {\mathcal{P}}}
\newcommand{\matP} {\ensuremath {\mathbb{P}}}
\newcommand{\matE} {\ensuremath {\mathbb{E}}}
\begin{document}\maketitle

%%%%%%%%%%%%%%%%%%%%%%%%%%%%%%%%%%%%%%%%%%%%%%%%%%%%%%%%%%%%%%%%%%%%%%%%%%%%%%%%%%
%%%%%%%%%%%%%%%%%%%%%%%%%%%%% ABS %%%%%%%%%%%%%%%%%%%%%%%%%%%%%%%%%%%%%%%%%%%%%%%%
%%%%%%%%%%%%%%%%%%%%%%%%%%%%%%%%%%%%%%%%%%%%%%%%%%%%%%%%%%%%%%%%%%%%%%%%%%%%%%%%%%

\begin{abstract}
We study random walks on groups with the feature that, roughly speaking, successive positions of the walk tend to be ``aligned''. We formalize and quantify this property by means of the notion of deviation inequalities. 
We show that deviation inequalities have several consequences including Central Limit Theorems, the local Lipschitz continuity of the rate of escape and entropy, as well as linear upper and lower bounds on the variance of the distance of the position of the walk from its initial point. 
In a second part of the paper, we show that the (exponential) deviation inequality holds for measures with exponential tail on acylindrically hyperbolic groups. These include non-elementary (relatively) hyperbolic groups, Mapping Class Groups, many groups acting on CAT(0) spaces, and small cancellation groups. 
\end{abstract}

\noindent Keywords and Phrases: random walks, rate of escape, entropy, Girsanov, hyperbolic groups. 

\smallskip

%\noindent AMS 2000 Subject Classifications:                     

%%%%%%%%%%%%%%%%%%%%%%%%%%%%%%%%%%%%%%%%%%%%%%%%%%%%%%%%%%%%%%%%%%%%%%%%%%%%%%%%%%
%%%%%%%%%%%%%%%%%%%%%%%%%%%%% INTRO %%%%%%%%%%%%%%%%%%%%%%%%%%%%%%%%%%%%%%%%%%%%%%
%%%%%%%%%%%%%%%%%%%%%%%%%%%%%%%%%%%%%%%%%%%%%%%%%%%%%%%%%%%%%%%%%%%%%%%%%%%%%%%%%%

%\include{intro}

\section{Introduction} 

In this paper, we discuss fluctuations results for random walks on ``hyperbolic-like'' groups. 
In the sequel, $G$ denotes an infinite, countable and discrete group with neutral element $id$ equipped with a left-invariant metric $d$ and we let  $\mu$ denote a probability measure on $G$. We are interested in the behaviour of the random walk $(Z_n)_{n\geq 0}$ with driving measure $\mu$ and starting at $Z_0=id$. The sequence $(Z_n)_{n\geq 0}$ is obtained as successive products of random variables $(X_j)_{j\geq 1}$ that are independent and with law $\mu$. We let $\P^\mu$ be the law of the random walk and $\esp^\mu$ be the corresponding expectation. Precise definitions and some background on random walks are given in sections \ref{sec:motivation} and \ref{sec:rws}. 

We will work in the class of \emph{acylindrically hyperbolic groups} as defined in \cite{Os-acyl}. This is a large class of groups that includes non-elementary hyperbolic as well as relatively hyperbolic groups, Mapping Class Groups, groups acting on proper CAT(0) spaces with a rank-one element, and small cancellation groups. We refer to sections \ref{sec:acylin} and \ref{prelim} for definitions and references. 

Our main result is a Central Limit Theorem for the distance of the walk to its starting point $d(id,Z_n)$. 

The classical approach for the C.L.T., used in all the references we are aware of and whose most powerful version is in \cite{kn:bq}, assumes $G$ has a nice action on some compact space $X$ and derives the C.L.T. as a special case of a Central Limit Theorem for cocycles defined on $X$. When $G$ is hyperbolic, $X$ is the horo-functions (Busemann) boundary of $(G,d)$. In the case $G$ is a Mapping Class Group, $X$ is the boundary of Teichm\"uller space. 

In this paper, we take a radically different approach. Our strategy for proving the C.L.T. directly relies on deviation inequalities and completely avoids considering any boundary or compactification. Furthermore it only uses elementary geometric and probabilistic arguments.  

Deviation inequalities measure how much successive positions of the walk fail to be ``aligned''. Thus, for a random walk satisfying a deviation inequality, given two integers $n$ and $m$, with high probability, the distance $d(id,Z_{n+m})$ is close to the sum $d(id,Z_n)+d(Z_n,Z_{n+m})$. See Definition \ref{df:dev} and (\ref{eq:devlength}), (\ref{eq:devexplength}). Since the random variables $d(id,Z_n)$ and $d(Z_n,Z_{n+m})$ are independent, deviation inequalities can be used to compare the distance $d(id,Z_{n+m})$ with a sum of independent random variables. Eventually one thus proves that, in full generality, the second moment deviation inequality is sufficient to get a Central Limit Theorem (Theorem \ref{theo:clt}).

Deviation inequalities are well-adapted to deal with random walks on groups with hyperbolic features. We prove deviation inequalities if $G$ is hyperbolic and if $\mu$ satisfies some moment condition, Theorem \ref{devhyp}. We also obtain deviation inequalities for acylindrically hyperbolic groups when $\mu$ has a finite exponential moment in Corollary \ref{cor:main}. 

Combining the results mentioned above, we deduce in particular the following: 
% \begin{theo} \label{theo:intro} 
% Let $G$ be a finitely generated group acting acylindrically on a geodesic hyperbolic space $X$. Let $\mu$ be a probability measure on $G$ with a finite exponential moment and whose support generates a non-elementary group that acts with unbounded orbits in $X$. Let $d$ be a quasi-ruled metric on $G$ that is quasi-isometric to a word metric. Then the law of 
% $\frac 1{\sqrt{n}} (d(id,Z_n)-\esp^\mu[d(id,Z_n)])$ under $\P^\mu$ weakly converges to a Gaussian non-degenerate law. 
% \end{theo} 

\begin{theo} \label{theo:intro} 
Let $G$ be an acylindrically hyperbolic group and let $\mu$ be a probability measure on $G$ with a finite exponential moment and so that the semigroup generated by the support of $\mu$ is $G$. Let $d$ be a word metric on $G$. Then:\\ 
- (Deviation inequality) there exists a constant $\tau_0>0$ such that for all integers $n$ and $m$ and for all positive $c$, we have 
$$\P^\mu[d(id,Z_{n+m})-d(id,Z_n)-d(Z_n,Z_{n+m})\geq c]\leq \tau_0^{-1}e^{-\tau_0 c}\,,$$ 
and\\
- (Central limit theorem)  the law of 
$\frac 1{\sqrt{n}} (d(id,Z_n)-\esp^\mu[d(id,Z_n)])$ under $\P^\mu$ weakly converges to a Gaussian non-degenerate law. 
\end{theo} 

Theorem \ref{theo:intro} actually applies to more general measures $\mu$, as well as more general metrics $d$ than stated in Theorem \ref{theo:intro}, see Theorem \ref{theo:CLT_conclusions}. The description of all metrics we can deal with involves the notion of acylindrically intermediate spaces, see Definition \ref{def:acylinter}. 
For instance, they include the Teichm\"uller and Weil-Petersson metrics on Mapping Class Groups, see Proposition \ref{prop:examplesacylinter}.  Moreover, we recover the Central Limit Theorem for hyperbolic groups proven in \cite{kn:bq} as a combination of the deviation inequality for hyperbolic group (Theorem \ref{devhyp}) and Theorem \ref{theo:clt}.

Many results apply to other functionals than the distance to the initial point of the walk. We introduce the definition of ``defective adapted cocycles'', see Definition \ref{df:dac} and state a C.L.T. in this context, Theorem \ref{theo:clt} . It includes the case of quasimorphisms, first obtained in \cite{CLT_quasimorphisms}. 

Our theorems extend previously known results in two directions. On the one hand we deal with driving measures $\mu$ with an infinite support where most authors assume finite support or super-exponential moments. Most results are therefore new even for hyperbolic groups. 
On the other hand, we cover the case of acylindrically hyperbolic groups, which is vastly broader than that of hyperbolic groups.  
Such extensions require a new approach and motivated us to develop the theory of deviation inequalities. 

Deviation inequalities are also instrumental for other purposes than the C.L.T. 
As a first step in our proof of the C.L.T. we establish linear bounds on the variance. We also provide estimates of higher moments. We also use deviation inequalities to study fluctuations of the rate of escape and entropy in terms of the driving measure $\mu$. We prove they are Lipschitz continuous and differentiable and we identify the derivative. All statements are given in the Conclusions at the end of the paper (Section \ref{sec:statements}).

\medskip
{\bf Context} 

The history of Central Limit Theorems on groups with hyperbolic features can be traced back to early result of Furstenberg and Kesten in the sixties. We refer to \cite{kn:bq_linear} for historical background. The most recent contributions to this subject will be found in \cite{kn:bq}, where the C.L.T. is proved when $G$ is hyperbolic and $\mu$ has a finite second moment, and in \cite{Horbez:CLT}, where a similar C.L.T. is proved for random walks on Mapping Class Groups and $Out(F_n)$. 

The question of the regularity of the rate of escape or the entropy in terms of the driving measure was first addressed in \cite{kn:Er}. Gou\"ezel recently proved in \cite{Gouezel:analyticity} that, on a hyperbolic group $G$, the entropy and rate of escape are analytic functions on the set of probability measures $\mu$ with a given finite support. Once again the approach used in \cite{Gouezel:analyticity}, as well as in previous references like \cite{kn:led1} and \cite{kn:led2}, uses some boundary theory. In particular one needs to know the Martin boundary of the walk. (We recall that, when $G$ is hyperbolic and $\mu$ is finitely supported, then \cite{kn:anc} and \cite{kn:anc2} showed that the Martin boundary coincides with the Gromov boundary.)  

The situation is substantially more involved when considering random walks with a driving measure with a finite exponential moment. We recall that on any non-elementary hyperbolic group one can construct a random walk whose driving measure has a finite exponential moment but whose Martin boundary is not the Gromov boundary of the group, see \cite{Gou-Martin}.  

The situation is even more dramatic when $G$ is only assumed to be acylindrically hyperbolic. Then we have to deal with actions of $G$ on a hyperbolic spaces $X$ that need not be co-compact. It seems the best the Benoist-Quint approach could give would be a C.L.T. in $X$. As for the C.L.T. in $G$, one needs a different approach; we use deviation inequalities. 

Note that deviation inequalities were previously proved for hyperbolic groups and driving measures with finite support in \cite{kn:bhm2} as a consequence of quasi-conformal properties of the harmonic measure. We obtain them here in a completely different way. 

The linear progress property that we use to prove deviation inequalities,  was obtained by Maher and collaborators in \cite[Theorem 5.35]{CM-scl}, \cite[Theorem 1.2]{MaherTiozzo} for measures with bounded support (in the space being acted on). For our applications we need to deal with measures with exponential tail. We do not know whether the strategy of the aforementioned papers extends to this case.

The kind of control of the geometry of random walk paths we need to establish deviation inequalities for acylindrically hyperbolic groups is reminiscent of the estimates of tracking rates in \cite{Si-tracking}. 

There is a  connection between the Central Limit Theorem and the regularity of the rate of escape. This is a rather general fact that has little to do with hyperbolicity. It appears in \cite{kn:hype} in the context of random walks on hyperbolic groups with a finitely supported driving measure and we shall exploit it here too. 

Random walks on acylindrically hyperbolic group have been recently studied in \cite{MaherTiozzo} where it is shown that their Poisson boundary coincides with the Gromov boundary of $X$. This result was already known for hyperbolic groups, see \cite{kn:kaim}.

\medskip
{\bf Deviation inequalities}

Using deviation inequalities, we are able to completely avoid considering any compactification of our group or the space it acts on. This paper is 98 per cent self-contained. The main proofs rely on a combination of quite elementary probabilistic and geometric arguments (not much more than Markov's inequality and the triangle inequality indeed).

By definition, a random walk satisfies a deviation inequality in a given metric if the Gromov product between its initial point and two successive positions remains of order one in probability. Note that such a property cannot hold for almost all paths of the walk. (Typically one gets logarithmic divergences of the Gromov products.) We define different types of deviation inequalities depending on whether we require exponential or polynomial control over the tail of the law of the Gromov products, see Subsection \ref{subsec:dev} for this definition and Section \ref{sec:rws} for the definition in the case of defective adapted cocycles. 

Deviation inequalities imply some almost additivity properties: the law of large numbers can be made quantitative (Lemma \ref{lm:lln}). More is true: since the distance (or the cocycle) is almost additive along the trajectories of the walk, we deduce that the influence of a given increment is small. It then follows from an Efron-Stein type argument that the variance is sub-linear (Theorem \ref{theo:uppervariance}) and almost additive (Theorem \ref{theo:existvariance}).  The Central Limit Theorem itself follows from some strengthening of these additive properties: let us try to approximate the distance $d(id,Z_n)$ by a sum of independent random variables. The first layer of such an approximation would be to simply write the sum of the distances between successive positions of the walk. Doing so, we made an error. The second layer is then to take into account corrections due to the Gromov products of successive triplets of successive positions, avoiding overlaps to maintain some independence. This is still not perfect but we get better approximations by considering further corrections corresponding to Gromov products involving positions that correspond to times that differ by two (still avoiding overlaps). Eventually, running this process a finite but large number of times leads to a decomposition of the distance $d(id,Z_n)$ as a sum of i.i.d. random contributions plus some error, see Subsection \ref{ssec:cltfordac} . This error is expressed in terms of Gromov products and, under the assumption of a deviation inequality, we show it has a small variance. Thus the proof of the C.L.T. boils down to (and only relies on!) the C.L.T. for i.i.d. random variables.

We now describe our strategy for the proof of deviation inequalities for random walks on acylindrically hyperbolic groups. Most arguments start with a group $G$ acting on a (hyperbolic) metric space $X$ and are in two steps:

\begin{enumerate}
 \item {[Probabilistic step]} a sample path has with high probability a certain property $P$ (this step usually does not use specific geometric properties of $X$).
 \item {[Geometric step]} due to specific geometric properties of $X$ and of the action of $G$, any path with property $P$ is ``close to being a geodesic'' in the appropriate sense.
\end{enumerate}

For example, (part of) property $P$ can be that the distance between the $i$-th and $j$-th position, measured in the metric of $X$, is linear in $|i-j|$.

As a first step towards the proof of the deviation inequalities, we establish a linear progress property that says that, with overwhelming probability, the random walk tends to move away from its initial position at linear speed (Theorem \ref{linprog}). Note that this statement is proved in $X$ and not in the group itself. (Observe that the conclusion of Theorem \ref{linprog} is obviously true for random walks on non-amenable groups for a word metric. But since we need it in $X$, we have to face the fact that the distance in $X$ may not be proper. Then hyperbolicity and acylindricity are important.) 

We next deduce bounds on the probability that the distance between the position of the walk at some intermediate time $k$ and a quasi-geodesic from the identity to the position at time $n$ is larger than a given parameter (Theorem \ref{smalldevhier}). The argument is: due to the hyperbolicity of $X$ and due to the linear progress property, this distance can only be big if the walk performs very large jumps.  

Following such a strategy we obtain the deviation inequality for random walks on acylindrically hyperbolic groups with a driving measure with exponential tail in Corollary \ref{cor:main}. In the case $G$ itself is hyperbolic, we obtain a sharper control on the deviations in terms of the tail of $\mu$ in Theorem \ref{devhyp}.

\medskip
{\bf Organization of the paper}
The first part of the paper introduces deviation inequalities and investigates their consequences. It is titled ``Using deviation inequalities''. 

In Section \ref{sec:motivation} we recall the definitions of the rate of escape and the entropy and provide some general references about random walks on groups. 
In Section \ref{sec:rws}, we fix our notation, define defective adapted cocycles and deviation inequalities and state the law of large numbers (Theorem \ref{theo:lln} and Lemma \ref{lm:lln}). 
Section \ref{sec:clts} is about the Central Limit Theorem for defective adapted cocycles satisfying deviation inequalities (Theorem \ref{theo:clt}): we first estimate the variance (Theorem \ref{theo:uppervariance}) and prove it has a limit (Theorem \ref{theo:existvariance}) using a sub-additivity argument. The C.L.T. itself follows from an approximation of the distance $d(id,Z_n)$ by a sum of independent random variables, see Subsection \ref{ssec:cltfordac}. Subsection \ref{sec:highmoments} contains estimates of higher moments (Theorem \ref{theo:othermoments}). In Subsection \ref{sec:lowerbound} we obtain lower bounds for the variance (Theorem \ref{theo:downvariance}) that ensure that our C.L.T. is non-degenerate. 

In Section \ref{sec:fluctuatrescp}, we address the question of the regularity of the rate of escape in terms of the driving measure. We define a distance between probability measures with a fixed support in Subsection \ref{ssec:distance} and recall deviation inequalities. Subsection \ref{ssec:backrescp} contains some discussion of references. The main results are Theorem \ref{theo:rescp} about the Lipschitz regularity of the rate of escape and Theorem \ref{theo:diffrate} about its differentiability. Similar issues are discussed for the entropy in Section \ref{sec:entrop}. The main results are Theorem \ref{theo:entropsym} about the Lipschitz regularity of the entropy and Theorem \ref{theo:diffentrop} about its differentiability. The connection between the entropy and a rate of escape is done using Green metrics that are recalled in Subsection \ref{par:Green}. One needs some control on how the Green metric fluctuates with respect to $\mu$ and this is done in Proposition \ref{prop:flucgreen}. 

In the second part of the paper, titled ``Getting deviation inequalities'', we establish deviation inequalities for different classes of random walks. 

Section \ref{sec:acylin} starts with some references about acylindrically hyperbolic groups. The definition is given at the beginning of Section \ref{prelim} along with a first consequence (Lemma \ref{geomsep}). 
In Section \ref{sec:linearpr}, we prove that random walks on acylindrically hyperbolic groups tend to escape from their initial position, see Theorem \ref{linprog}. In Section \ref{sec:deviationqg}, we define acylindrically intermediate spaces (Definition \ref{def:acylinter}) and provide examples (Proposition \ref{prop:examplesacylinter}). The main result is Theorem \ref{smalldevhier} that gives a control on how much the trajectories of the random walk deviate from quasi-geodesics. An immediate consequence is the deviation inequality for random walks on acylindrically hyperbolic groups in Corollary \ref{cor:main}. Section \ref{sec:hypgroups} is dedicated to deviation inequalities in hyperbolic groups, and the main result there is Theorem \ref{devhyp}. In Section \ref{sec:greendev} we go back to acylindrically hyperbolic groups and study deviation inequalities in the Green metric (Theorem \ref{smalldevgen}).

Finally, in Section \ref{sec:statements} we collect all results about acylindrically hyperbolic groups that can be proven combining results in Part \ref{I:deviationinequalities} with results in Part \ref{part:geom}.

\subsection*{Acknowledgements} The authors would like to thank R. Marchand and O. Garet for pointing out the reference \cite{Hammersley:subadditive}. 
We also express our gratefulness to P. Ha\"\i ssinsky for interesting discussions on the subject of this paper and anonymous referees for insightful comments. 

\part{Using deviation inequalities}\label{part:prob} 

\section{Motivation}\label{sec:motivation}

Let $G$ be an infinite, discrete group with neutral element $id$;  
let $\mu$ be a probability measure on $\cal G$. Let $\mu^n$ denote the $n$-th convolution power of $\mu$. 

If the support of $\mu$ generates an infinite semi-group, then the sequence $\mu^n$ converges to $0$. 
One may measure the rate of decay of $\mu^n$ through the notion of entropy. 

Let $H(\mu):=\sum_{x\in G} (-\log \mu(x))\,\mu(x)\,$ be the entropy of $\mu$ and assume it is finite. 
Then the sequence $(H(\mu^n))_{n\in\N}$ is sub-additive and the following limit exists:  
\beqn\label{eq:0} 
h(\mu):=\lim_{n\ra\infty}\frac 1 n H(\mu^n)\,.\eeqn 
The quantity $h(\mu)$ is called the {\bf asymptotic entropy} of $\mu$. 

Another quantity of interest is the rate of escape: let $d$ be a left-invariant metric on $G$. 
Assume that $\mu$ has a finite first moment in the metric $d$, namely that  
$\sum_{x\in G} d(id,x)\mu(x)<\infty$. Then the sequence $(\sum_{x\in G} d(id,x)\mu^n(x))_{n\in\N}$ is sub-additive. Therefore the limit 
\beqn\label{eq:1} \ell(\mu;d):=\lim_{n\ra\infty}\frac 1 n \sum_{x\in G} d(id,x)\mu^n(x)\eeqn exists; it is called the {\bf rate of escape} of $\mu$ in the metric $d$. 
Thus the rate of escape gives the mean distance to the identity of a random element of $G$ sampled from the distribution $\mu^n$. 

We observed in \cite{kn:bhm1} that, when $\mu$ is symmetric, then  the entropy coincides with the rate of escape for a special choice of the distance $d$ called the Green metric. Details 
on the Green metric are given in Section \ref{sec:entrop}.

\medskip

The notion of asymptotic entropy was introduced by A. Avez in \cite{kn:avez} in relation with random walk theory. In \cite{kn:avez2}, Avez proved that, whenever $h(\mu)=0$, then $\mu$ satisfies the Liouville property: bounded, $\mu$-harmonic functions are constant. The converse was proved later, see \cite{kn:derrie} and \cite{kn:kaiver}. 
The Liouville property is equivalent to the triviality of the asymptotic $\sigma$-field of the random walk with driving measure $\mu$ (its so-called Poisson boundary), see \cite{kn:derrie} and \cite{kn:kaiver} again. In more general terms, the entropy plays a central role in the identification of the Poisson boundary of random walks in many examples. We refer in particular to \cite{kn:kaim} for groups with hyperbolic features. In this latter case, the asymptotic entropy is also related to the geometry of the harmonic measure through a ``dimension-rate of escape- entropy'' formula, see \cite{kn:bhm2} and the references quoted therein. 

The notion of rate of escape is also related to the potential theory on $G$. Assume that $d$ is proper and that the support of $\mu$ generates the whole group. 
One shows, see \cite{kn:KL}, that if the probability measure $\mu$ has a finite first moment and is such that $\ell(\mu;d)>0$ then at least one of the following two properties must hold: 
i) there exists a homomorphism from $G$ to $\R$, say $H$, such that the image of $\mu$ through $H$ has non zero mean; 
ii) the Poisson boundary is non trivial, i.e. there exist non constant bounded $\mu$-harmonic functions on $G$.

In this paper, we shall be mostly concerned with non-amenable groups and assume that the support of $\mu$ generates $G$. In that case the Poisson boundary is never trivial.

\medskip 

Both the rate of escape and the asymptotic entropy have simple interpretations in terms of random walks: let $(Z_n)_{n\ge 0}$ be a random walk with driving measure $\mu$. 
This means that the increments $X_n:=Z_{n-1}^{-1}Z_n$ are independent random variables with common law $\mu$. 
Kingman's sub-additive theorem implies that 
\beqn\label{eq:king}\ell(\mu;d)=\lim_{n\ra\infty}\frac 1 n d(id,Z_n)\,\hbox{\ (respectively\ }\, h(\mu)=\lim_{n\ra\infty}-\frac 1 n \log \mu^n(Z_n) \,\hbox{\ )\ }\,,\eeqn 
where both limits hold for almost any path of the random walk.

\medskip 

In this paper, we are mostly concerned with fluctuations in the ergodic limits in (\ref{eq:king}), where the word ``fluctuations''
can be understood as ``stochastic fluctuations with respect to the trajectories of the walk'' or ``fluctuations of the rate of escape or entropy'' 
with respect to $\mu$. Both types of fluctuations are actually closely related to each other, as we shall see.  

The rate of escape and the entropy can be interpreted as ``approximate cocycles'' on a product space. In the sequel we shall develop quite general tools for handling 
functionals of independent random variables that satisfy a cocycle relation up to some error. The error terms are dealt with through what we call ``deviation inequalities''.

\section{Random walks, cocycles, and deviation inequalities} \label{sec:rws} 

\subsection{Product spaces...} 

Let $G$ be a measurable space equipped with a $\sigma$-field $\cal G$. 
Let $\mu$ be a probability measure on $G$. 

Consider the product space $\Omega:=G^{\N^*}$ where $\N^*=\{1,..\}$. 
Let $(X_n)_{n\in\N^*}$ designate the coordinate maps from $\Omega$ to $G$: thus $X_n(\o)=\o_n$ 
for any sequence $\o=(\o_1,...)\in\Omega$ and $n\in\N^*$. 

Following the usual convention in probability theory we often omit to indicate that random functions, as  $X_n$ or $Q_n$ below, depend on $\o$.

We equip $\Omega$ with the product $\sigma$-field (i.e. the smallest $\sigma$-field for which all functions $X_n$ are measurable). 
We endow $\Omega$ with the product measure $\P^\mu:=\mu^{\N^*}$. 
We use the notation $\esp^\mu$ to denote the expectation with respect to $\P^\mu$ and  $\var^\mu$ to designate the variance with respect to $\P^\mu$: 
$\var^\mu[F]=\esp^\mu[F^2]-\esp[F]^2$. 

Let ${\cal F}_n$ denote the $\sigma$-field generated by the random variables $X_1,...,X_n$ (i.e. the smallest $\sigma$-field on $\Omega$ 
for which all functions $(X_j)_{j\le n}$ are measurable). 

Let $\theta$ be the canonical shift on $\Omega$ and $\theta_n:=\theta^n$; namely, $\theta_n((\o_1,\o_2,...))=(\o_{n+1},\o_{n+2},...)$.

We will refer to this framework as the ``general case''. 

\subsection {...and random walks} 

We specialize the above framework to the case of an infinite, countable, discrete group $G$.  

We now consider the sequence of $G$-valued functions $(Z_n)_{n\in\N}$ 
recursively defined by $Z_0(\o)=id$ and $Z_n(\o)=Z_{n-1}(\o)X_n(\o)$ for $n\ge 1$. 
We think of a sequence $(Z_n(\o))_{n\in\N}$ as describing a trajectory in the group $G$.  
Thus $Z_n(\o)$ gives the position of the trajectory at time $n$, while 
$X_n(\o)=(Z_{n-1}(\o))^{-1}Z_n(\o)$ gives its increment also at time $n$. 

Observe that the law of the sequence $(Z_n)_{n\in\N}$ under $\P^\mu$ is the law of a random walk driven by $\mu$ and 
starting at $id$:  its increments are independent and identically distributed random variables of law $\mu$. In particular, for any $n\in\N$,  
the law of $Z_n$ under $\P^\mu$ is the $n$-th convolution power of $\mu$,  $\mu^n$. 

Observe that $Z_m\circ \theta_n=Z_n^{-1}Z_{n+m}$. 

We shall refer to the framework we just described as the ``random walk case'' to distinguish it from the ``general case''. 
Unless otherwise specified, our framework is the ``general case''.

\subsection{Defective adapted cocycles} 

Although we are primarily interested in studying the distance of a random walk from its initial point at large times, 
our main results hold in the ``general case'' for a certain class of ``approximate cocycles'' that we define below. 

\begin{df} \label{df:dac}
A {\bf defective adapted cocycle (D.A.C.)} is a sequence of  real-valued maps on $\Omega$, say ${\cal Q}=(Q_n)_{n\in\N^*}$, where each 
map $Q_n$ is measurable with respect to the $\sigma$-field  ${\cal F}_n$. By convention we take $Q_0$ to be identically $0$. 
The {\bf defect} of $\cal Q$ is the collection of maps 
$\Psi=(\Psi_{n,m})_{(n,m)\in\N\times\N}$ defined by 
$$\Psi_{n,m}(\omega)=Q_{n+m}(\o)-Q_n(\o)-Q_m(\theta_n\o)\,.$$
\end{df}

{\bf Examples of defective adapted cocycles:} 

Here are the most important examples of D.A.C. to be considered in the sequel. 

1. Let $f$ be a measurable function from $G$ to $\R$ and let $M_n(\omega)=\sum_{j=1}^n f(X_j(\omega))$. Then the sequence ${\cal M}:=(M_n)_{n\in\N^*}$ 
is a defective adapted cocycle and its defect vanishes. 

2. Consider the ``random walk case''. A special class of D.A.C. are those for which there exists a map $q$ from $\Omega$ to $\R$ such that $Q_n(\o)=q(Z_n(\o))$. 
We call them {\bf end-point D.A.C.}. 

Define the differential $\partial q(g,h):=q(gh)-q(g)-q(h)$ and observe that 
$$\Psi_{n,m}=\partial q(Z_n,Z_n^{-1}Z_{n+m})\,.$$ 
(This follows from the identity $Z_m\circ \theta_n=Z_n^{-1}Z_{n+m}$.) 

When the function $\partial q$ is uniformly bounded with respect to $g$ and $h$, then the function $q$ is called a {\bf quasimorphism}. 

Let $d$ be a left-invariant metric on $G$ (e.g. a word metric). 
Then the maps $Q_n(\o)=d(id,Z_n(\o))$ define an end-point D.A.C. Its defect is 
$\Psi_{n,m}(\o)=-2(id,Z_{n+m}(\o))_{Z_n(\o)}$ where 
$$(x,y)_w:=\frac 12 (d(w,x)+d(w,y)-d(x,y))\,$$ 
is the {\bf Gromov product} of points $x,y\in G$ with respect to the reference point $w\in G$ in the metric $d$. 
We call this D.A.C. the {\bf length D.A.C.} (for the metric $d$). 

\subsection{Deviation inequalities} \label{I:deviationinequalities}

By ``deviation inequality'' we mean some control on how much a D.A.C. fails to be a true cocycle with respect to $\P^\mu$. 
In the case of the length D.A.C., it will give control on how much the successive positions of the random walk fail to follow a ``straight line''. 

\begin{df}\label{df:dev} 
Let $\mu$ be a probability measure on $G$. Let ${\cal Q}=(Q_n)_{n\in\N^*}$ be a defective adapted cocycle with defect $\Psi=(\Psi_{n,m})_{(n,m)\in\N\times\N}$. 
 
Let $p>0$. 
We say that $\cal Q$ satisfies the {\bf $p$-th-moment deviation inequality} (with respect to the measure $\mu$) if there exists a constant $\tau_p(\mu;{\cal Q})$ such that 
for all 
$n$ and $m$ in $\N$  then 
\beqn \label{eq:dev} \esp^{\mu}[\vert \Psi_{n,m}\vert^p]\leq \tau_p(\mu;{\cal Q})\,.\eeqn 
We say that $\cal Q$ satisfies the {\bf exponential-tail deviation inequality} (with respect to the measure $\mu$) if there exists a constant $\tau_0(\mu;{\cal Q})$ such that 
for all 
$n$ and $m$ in $\N$  and for all $c>0$, we have 
\beqn \label{eq:devexp} \P^{\mu}[\vert\Psi_{n,m}\vert\ge c]\leq \tau_0(\mu;{\cal Q})^{-1} e^{-\tau_0(\mu;{\cal Q})c}\,.\eeqn 
\end{df} 

Clearly the $p$-th-moment deviation inequality implies the $p'$-th-moment deviation inequality whenever $p\ge p'$ and the exponential-tail deviation inequality implies 
the $p$-th-moment deviation inequality for all $p>0$.

Let $p>0$. We say that $\cal Q$ has {\bf finite $p$-th moment} with respect to $\mu$ if 
$\esp^\mu[ \vert Q_1\vert^p]<\infty$. We say that $\cal Q$ has {\bf exponential tail} if there exists $\alpha>0$ such that 
$\esp^\mu[e^{\alpha \vert Q_1\vert}]<\infty$. 
We use the notation 
$$\chi_p(\mu;{\cal Q}):=\esp^\mu[ \vert Q_1\vert^p]\,.$$ 

{\bf Example} 

In the ''random walk case'', choose for $\mathcal Q$ the length D.A.C. for a certain left-invariant metric $d$. Recall the notation $(x,y)_w$ for the Gromov product. 

Thus we say that $\mu $ satisfies the {\bf $p$-th-moment deviation inequality} (in the metric $d$)  if there exists a constant $\tau_p(\mu;d)$ such that 
for all 
$n$ and $m$ in $\N$  then 
\beqn \label{eq:devlength} \esp^{\mu}[(id,Z_{n+m})_{Z_n}^p]\leq 2^{-p} \tau_p(\mu;d)\,.\eeqn 
We say that $\mu$ satisfies the {\bf exponential-tail deviation inequality} (in the metric $d$) if there exists a constant $\tau_0(\mu;d)$ such that 
for all 
$n$ and $m$ in $\N$  and for all $c>0$, then 
\beqn \label{eq:devexplength} \P^{\mu}[(id,Z_{n+m})_{Z_n}\ge c/2]\leq \tau_0(\mu;d)^{-1} e^{-\tau_0(\mu;d)c}\,.\eeqn 

Let $p>0$. We say that $\mu$ has {\bf finite $p$-th moment} with respect to $d$ if 
$\sum_{x\in G} d(id,x)^p\mu(x)<\infty$. We say that $\mu$ has {\bf exponential tail} if there exists $\alpha>0$ such that 
$\sum_{x\in G} e^{\alpha d(id,x)}\mu(x)<\infty$.
We use the notation 
$$\chi_p(\mu;d):=\sum_{x\in G} d(id,x)^p\mu(x)\,.$$

\subsection{Laws of large numbers for defective adapted cocycles} 
\label{ssec:lln}

The following result is a consequence of a more general ergodic theorem proved by Y. Derriennic \cite{derrie:ergodic}.

\begin{theo} \label{theo:lln} 
Let $\cal Q$ be a defective adapted cocycle with a finite first moment and satisfying the first moment deviation inequality with respect 
to the probability measure $\mu$. Then there exists 
a real number $\ell(\mu;{\cal Q})$ such that the sequence 
$\frac 1 n Q_n$ converges to $\ell(\mu;{\cal Q})$ in $L_1(\P^\mu)$. 
\end{theo}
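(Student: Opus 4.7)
The plan is to first prove that $a_n := \esp^\mu[Q_n]$ satisfies $a_n/n \to \ell$ for some $\ell \in \R$, and then bootstrap this deterministic limit to $L^1$ convergence of $Q_n/n$ by exploiting the i.i.d.\ structure of block increments of $\cal Q$. Iterating the cocycle identity $Q_{n+1} = Q_n + Q_1\circ\theta_n + \Psi_{n,1}$ starting from $Q_0 = 0$ gives $|Q_n|\le \sum_{k=0}^{n-1}|Q_1\circ\theta_k| + \sum_{k=0}^{n-1}|\Psi_{k,1}|$, hence $\esp^\mu|Q_n|\le n(\chi_1({\cal Q};\mu) + \tau_1({\cal Q};\mu))$. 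The first moment deviation inequality rewrites as $|a_{n+m} - a_n - a_m|\le \tau_1$, so $n\mapsto a_n + \tau_1$ is subadditive and Fekete's lemma supplies a finite $\ell := \lim_n a_n/n \in \R$.

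Next, fix $m \in \N^*$. Since $Q_m$ is ${\cal F}_m$-measurable, $Q_m\circ\theta_{jm}$ depends only on the increments $X_{jm+1},\dots, X_{(j+1)m}$, so the family $(Q_m\circ\theta_{jm})_{j\ge 0}$ is i.i.d.\ with finite first moment and mean $a_m$ under $\P^\mu$. Iterating the cocycle identity at block scale $m$ yields
\[Q_{km} = \sum_{j=0}^{k-1} Q_m\circ\theta_{jm} + \sum_{j=0}^{k-1}\Psi_{jm,m}.\]
The classical $L^1$ law of large numbers applied to the first sum, together with the trivial bound $\esp^\mu\bigl|\sum_{j=0}^{k-1}\Psi_{jm,m}\bigr|\le k\tau_1$, gives
\[\limsup_{k\to\infty}\esp^\mu\Bigl|\tfrac{Q_{km}}{km} - \ell\Bigr| \le \Bigl|\tfrac{a_m}{m} - \ell\Bigr| + \tfrac{\tau_1}{m}.\]

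For general $n$, write $n = km + r$ with $0\le r < m$ and use the identity $Q_n = Q_{km} + Q_r\circ\theta_{km} + \Psi_{km,r}$. Since $\esp^\mu|Q_r|\le r(\chi_1+\tau_1)$ (by the first paragraph applied with $n=r$) and $\esp^\mu|\Psi_{km,r}|\le \tau_1$, these two correction terms contribute $O(1/n)$ once divided by $n$ (the constants depending only on $m$). Thus the displayed $\limsup$ is unchanged when $Q_{km}/(km)$ is replaced by $Q_n/n$. Sending $n\to\infty$ with $m$ fixed, and then $m\to\infty$ (using Step 1), we obtain $\esp^\mu|Q_n/n - \ell| \to 0$, which is the desired $L^1$ convergence.

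The main subtlety is that the defect $\Psi_{n,m}$ is controlled only in expectation, not pathwise, so $Q_n$ need not be subadditive almost surely and Kingman's subadditive ergodic theorem does not apply directly. The two-parameter limit (first $k\to\infty$ with the block length $m$ fixed, then $m\to\infty$) sidesteps this by reducing the statement to a classical i.i.d.\ LLN at each block scale, with the cumulative defect error $\tau_1/m$ becoming negligible as $m$ grows.
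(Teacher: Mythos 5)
Your proposal is correct, but it takes a genuinely different route from the paper. The paper's proof is a two-line verification of the hypotheses of Derriennic's ergodic theorem for almost-subadditive processes (Th\'eor\`eme 1 of the cited reference): integrability and a linear lower bound on $\esp^\mu[Q_n]$ follow by induction from $Q_{n+m}=Q_n+Q_m\circ\theta_n+\Psi_{n,m}$, and the almost-subadditivity condition $\esp^\mu[\Psi_{n,m}^+]\le c_m$ with $c_m/m\to 0$ holds with the constant $c_m=\tau_1({\cal Q};\mu)$; the conclusion (in fact both a.s.\ and $L^1$ convergence) is then quoted. You instead give a self-contained argument: the linear bound $\esp^\mu|Q_n|\le n(\chi_1+\tau_1)$ and Fekete applied to $a_n+\tau_1$ produce the finite limit $\ell$ (this part essentially reproves Lemma \ref{lm:lln} of the paper), and then the blocking identity $Q_{km}=\sum_j Q_m\circ\theta_{jm}+\sum_j\Psi_{jm,m}$ reduces the $L^1$ statement to the classical i.i.d.\ $L^1$ law of large numbers at each block scale, with the defect error $\tau_1/m$ killed by letting $m\to\infty$ after $k\to\infty$; the passage from $Q_{km}/(km)$ to $Q_n/n$ also uses the factor $r/n$ rescaling, but this is absorbed by the same linear bound $\esp^\mu|Q_{km}|\le km(\chi_1+\tau_1)$, so there is no gap. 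What each approach buys: the paper's citation is shorter and yields almost sure convergence in addition to $L^1$, which your blocking argument does not give; your argument is elementary, avoids any external subadditive ergodic theorem, delivers exactly the $L^1$ statement claimed in the theorem, and is in the same spirit as the coarse-graining decomposition the paper itself later uses for the CLT (Lemma \ref{lm:sumsiid}).
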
 

We call $\ell(\mu;{\cal Q})$ the {\bf rate of escape} of the D.A.C. $\cal Q$ with respect to $\mu$. 
\medskip

\begin{proof} 
We apply Derriennic's result (Th\'eor\`eme 1 in \cite{derrie:ergodic}). 

First we should check that all $Q_n$'s are integrable and that $\inf_n\frac 1 n\esp^\mu[Q_n]>-\infty$. 
  
We start from the identity 
$Q_{n+m}=Q_n+Q_m\circ\theta_n+\Psi_{n,m}$ and observe that $Q_m\circ\theta_n$ has the same law as $Q_m$. 
A simple induction argument based on the fact that $\esp^\mu[\vert\Psi_{n,m}\vert]<\infty$ for all $n$ and $m$ shows that $\esp^\mu[\vert Q_n\vert]<\infty$ for all $n$. 
It also proves that $\sup_n \frac 1 n \esp^\mu[\vert Q_n\vert]<\infty$.

The other condition to be checked is that $\esp^\mu[\vert\Psi_{n,m}\vert]\le c_m$ for a sequence $c_m$ such that $\frac 1 m c_m\ra 0$. 
With our assumptions, one may take $c_m=\tau_1(\mu;{\cal Q})$. 
\end{proof} 

\begin{lm}\label{lm:lln} 
Let $\cal Q$ be a defective adapted cocycle with a finite first moment and satisfying the first moment deviation inequality with respect 
to the probability measure $\mu$. Then, for all $n\ge 1$, 
\beqn\label{eq:lln} 
\vert\frac 1n \esp^\mu[Q_n]-\ell(\mu;{\cal Q})\vert\leq \frac 1 n \tau_1(\mu;{\cal Q})\,.
\eeqn 
\end{lm}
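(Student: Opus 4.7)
The idea is that the first moment deviation inequality makes the numerical sequence $a_n := \esp^\mu[Q_n]$ \emph{almost} additive, so that standard Fekete-type manipulations give both an upper and a lower bound on $a_n/n - \ell({\cal Q};\mu)$ of size $\tau_1({\cal Q};\mu)/n$. The main (and only) ingredient beyond Theorem \ref{theo:lln} is the quantitative control $|\esp^\mu[\Psi_{n,m}]|\le \tau_1({\cal Q};\mu)$.

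First I would take expectations in the cocycle identity
$$Q_{n+m} = Q_n + Q_m\circ\theta_n + \Psi_{n,m}.$$
Under $\P^\mu$ the increments $(X_j)_{j\ge 1}$ are i.i.d., so $Q_m\circ\theta_n$ has the same law as $Q_m$ (this is already used in the proof of Theorem \ref{theo:lln}), yielding
$$a_{n+m} = a_n + a_m + \esp^\mu[\Psi_{n,m}].$$
The first moment deviation inequality and Jensen then give $|\esp^\mu[\Psi_{n,m}]|\le \tau_1({\cal Q};\mu)$, so writing $\tau := \tau_1({\cal Q};\mu)$,
$$a_n + a_m - \tau \;\le\; a_{n+m} \;\le\; a_n + a_m + \tau.$$

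Next I would introduce $c_n := a_n + \tau$ and $d_n := a_n - \tau$. The right-hand inequality above rearranges to $c_{n+m}\le c_n+c_m$, so $(c_n)$ is sub-additive; the left-hand one rearranges to $d_{n+m}\ge d_n+d_m$, so $(d_n)$ is super-additive. By Theorem \ref{theo:lln} we know $a_n/n\to \ell({\cal Q};\mu)$, hence also $c_n/n \to \ell({\cal Q};\mu)$ and $d_n/n \to \ell({\cal Q};\mu)$. Fekete's lemma then identifies these limits as $\inf_n c_n/n$ and $\sup_n d_n/n$ respectively, so for every $n\ge 1$
$$\ell({\cal Q};\mu) \;\le\; \frac{c_n}{n} = \frac{a_n}{n} + \frac{\tau}{n}, \qquad \ell({\cal Q};\mu) \;\ge\; \frac{d_n}{n} = \frac{a_n}{n} - \frac{\tau}{n}.$$
Combining these two estimates yields exactly (\ref{eq:lln}).

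There is no genuine obstacle here, only one subtlety: Fekete's lemma on its own would merely say $c_n/n\to\inf c_n/n\in[-\infty,+\infty)$, so it is important to invoke Theorem \ref{theo:lln} to guarantee that the limit is in fact the finite number $\ell({\cal Q};\mu)$ before reading off the two one-sided bounds. Everything else is arithmetic.
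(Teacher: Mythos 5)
Your proof is correct and follows essentially the same route as the paper: taking expectations in the cocycle identity to get almost additivity of $\esp^\mu[Q_n]$, then applying Fekete's lemma to shifted sequences and using Theorem \ref{theo:lln} to identify the limit as $\ell({\cal Q};\mu)$. The only cosmetic difference is that the paper works with the two subadditive sequences $\tau_1({\cal Q};\mu)\mp\esp^\mu[Q_n]$, whereas you use one subadditive and one superadditive sequence, which is the same argument up to a sign.
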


\begin{proof} 

Taking the expectation in the identity $Q_{n+m}=Q_n+Q_m\circ\theta_n+\Psi_{n,m}$, we get that 
$\vert\esp^\mu[ Q_{n+m}]-\esp^\mu[Q_n]-\esp^\mu[Q_m]\vert\le\tau_1(\mu;{\cal Q})$. Therefore the two sequences 
$a_n:=\tau_1(\mu;{\cal Q})-\esp^\mu[Q_n]$ and $b_n:=\tau_1(\mu;{\cal Q})+\esp^\mu[Q_n]$ are both subadditive. 
%As a consequence $\frac 1 n \esp^\mu[Q_n]$ has a finite limit, that we denote with $\ell(\mu;{\cal Q})$. 

By Theorem \ref{theo:lln}, $\frac 1 n a_n$ converges to $-\ell(\mu;{\cal Q})$. The subadditivity implies that $\frac 1 n a_n\ge -\ell(\mu;{\cal Q})$ 
for all $n\ge 1$. Similarly, we get that $\frac 1 n b_n \ge \ell(\mu;{\cal Q})$. In other words, 
$\frac 1 n (\tau_1(\mu;{\cal Q})-\esp^\mu[Q_n])\ge  -\ell(\mu;{\cal Q})$ and $\frac 1 n (\tau_1(\mu;{\cal Q})+\esp^\mu[Q_n])\ge \ell(\mu;{\cal Q})$. 
These inequalities imply (\ref{eq:lln}). 
\end{proof}

%%%%%%%%%%%%%%%%%%%%%%%%%%%%%%%%%%%%%%%%%%%%%%%%%%%%%%%%%%%%%%%%%%%%%%%%%%%%%%%%%%%

\section{Central limit theorems for defective adapted cocycles} \label{sec:clts}

Recall from section \ref{ssec:lln}, that a D.A.C. with finite first-moment assumption and first-moment deviation inequality satisfies the law of large numbers with rate of escape 
$$\ell(\mu;{\cal Q})=\lim_{n\to\infty}\frac 1 n \esp^\mu[Q_n]\,.$$ 

The main results in this section are the following. In particular, the second one is a Central Limit Theorem for defective adapted cocycles.

\begin{theo}\label{theo:existvariance} 
Let ${\cal Q}$ be a defective adapted cocycle. Assume that $\cal Q$ has a finite second moment and satisfies the second-moment deviation inequality 
with respect to the probability measure $\mu$. 
Then the variance 
$\frac 1 n \var^\mu(Q_n)$ has a limit as $n$ tends to $\infty$. We denote it by 
$$\sigma^2(\mu;{\cal Q}):=\lim_{n\to\infty} \frac 1 n\var^\mu[Q_n]\,.$$
\end{theo}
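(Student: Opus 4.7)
The plan is to turn the defective cocycle identity into an approximate sub/superadditivity statement for the sequence $v_n:=\var^\mu(Q_n)$ with sublinear defect, and then invoke Hammersley's lemma on almost subadditive sequences (the reference thanked in the acknowledgements). Throughout, I shall write $\tau_2:=\tau_2(\calQ;\mu)$.

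First I would start from $Q_{n+m}=Q_n+Q_m\circ\theta_n+\Psi_{n,m}$. Under $\P^\mu$ the random variable $Q_m\circ\theta_n$ depends only on $X_{n+1},\dots,X_{n+m}$, hence is independent of $Q_n$ and has the same law as $Q_m$. Taking variances and expanding, using that $\mathrm{Cov}^\mu(Q_n,Q_m\circ\theta_n)=0$, I obtain
\begin{equation*}
v_{n+m}=v_n+v_m+\var^\mu(\Psi_{n,m})+2\,\mathrm{Cov}^\mu\bigl(Q_n+Q_m\circ\theta_n,\,\Psi_{n,m}\bigr).
\end{equation*}

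Next, the second-moment deviation inequality gives $\var^\mu(\Psi_{n,m})\le\esp^\mu[\Psi_{n,m}^2]\le\tau_2$, and Cauchy--Schwarz together with the independence of $Q_n$ and $Q_m\circ\theta_n$ bounds the covariance term by $2\sqrt{\tau_2(v_n+v_m)}$. Combining, I get the two-sided bound
\begin{equation*}
\bigl|v_{n+m}-v_n-v_m\bigr|\;\le\;\tau_2+2\sqrt{\tau_2\,(v_n+v_m)}.
\end{equation*}
At this point I would invoke the linear upper bound on the variance from Theorem \ref{theo:uppervariance}, which asserts $v_n\le K\,n$ for some constant $K=K(\calQ;\mu)$. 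Plugging this in yields the key sublinear defect estimate
\begin{equation*}
\bigl|v_{n+m}-v_n-v_m\bigr|\;\le\;\tau_2+2\sqrt{\tau_2 K}\,\sqrt{n+m}\;=:\;\phi(n+m),
\end{equation*}
with $\phi(k)=O(\sqrt{k})$.

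Finally, the existence of the limit follows by a Hammersley-type argument \cite{Hammersley:subadditive}: since $\phi$ is non-decreasing and $\sum_{k\ge1}\phi(k)/k^2<\infty$ (in fact $\phi(k)/k^2=O(k^{-3/2})$), the sequences $v_n+\Phi(n)$ and $-v_n+\Phi(n)$ with $\Phi$ a suitable antiderivative of $\phi$ are genuinely sub/superadditive, so Fekete's lemma applied to each gives $\limsup v_n/n\le\liminf v_n/n$, hence convergence of $v_n/n$ to some $\sigma^2(\calQ;\mu)\ge 0$.

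The main obstacle in this plan is arranging the error term so that the Hammersley-type almost-subadditive lemma applies cleanly in both directions; the input of Theorem \ref{theo:uppervariance} (used to convert the a priori quadratic-looking Cauchy--Schwarz bound into a genuinely $\sqrt{n+m}$ defect) is what makes the scheme work, and it is the one non-trivial ingredient borrowed from earlier in this section.
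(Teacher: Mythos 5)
Your proposal is correct and follows essentially the same route as the paper: the identity $Q_{n+m}=Q_n+Q_m\circ\theta_n+\Psi_{n,m}$ plus independence, Cauchy--Schwarz and the deviation inequality to get $\bigl|\var^\mu(Q_{n+m})-\var^\mu(Q_n)-\var^\mu(Q_m)\bigr|\le\tau_2+O(\sqrt{n+m})$ (using the linear bound of Theorem \ref{theo:uppervariance}), and then the almost-subadditivity lemma of \cite{Hammersley:subadditive}. The only cosmetic difference is that the paper cites that lemma as a black box while you sketch its proof via a correction term and Fekete; the content is the same.
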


\begin{theo} \label{theo:clt} 
Let ${\cal Q}$ be a defective adapted cocycle. Assume that $\cal Q$ has a finite second moment and satisfies the second-moment deviation inequality 
with respect to the probability measure $\mu$. 
Then the law of $\frac 1{\sqrt{n}} (Q_n-\ell(\mu;{\cal Q})n)$ under $\P^\mu$ weakly converges to the Gaussian law with zero mean and variance 
$\sigma^2(\mu;{\cal Q})$. 
\end{theo}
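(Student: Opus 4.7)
The plan is to approximate $Q_n$ in $L^2$ by a sum of i.i.d. random variables plus a small-variance remainder, then invoke the classical Lindeberg--L\'evy CLT. A naive block decomposition of $Q_n$ into i.i.d. pieces of fixed size leaves behind a sum of defects $\Psi$ whose variance one cannot easily control. The key idea, already sketched in the introduction of the paper, is a \emph{hierarchical} (dyadic) decomposition that sorts defect terms by their scale and exposes independence at each scale.

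\textbf{Step 1: reduction and decomposition.} Subtracting $n\,\ell(\mathcal{Q};\mu)$ from $Q_n$ preserves the defect $\Psi_{n,m}$ and the deviation inequality and, by Lemma~\ref{lm:lln}, keeps $|\esp^\mu[Q_n]|$ uniformly bounded, so I assume $\ell(\mathcal{Q};\mu)=0$. For $n=2^k$ and an integer parameter $0\le\ell^*\le k$, I iterate the cocycle identity $Q_{a+b}=Q_a+Q_b\circ\theta_a+\Psi_{a,b}$ in a binary-tree pattern stopped at depth $k-\ell^*$, giving
\begin{equation*}
Q_{2^k}=S_{k,\ell^*}+E_{k,\ell^*},
\end{equation*}
where
\begin{equation*}
S_{k,\ell^*}=\sum_{i=0}^{2^{k-\ell^*}-1}Q_{2^{\ell^*}}\!\circ\theta_{i\cdot 2^{\ell^*}},\qquad E_{k,\ell^*}=\sum_{\ell=\ell^*}^{k-1}\sum_{i=0}^{2^{k-1-\ell}-1}\Psi_{2^\ell,2^\ell}\!\circ\theta_{i\cdot 2^{\ell+1}}.
\end{equation*}
The summands of $S_{k,\ell^*}$ are i.i.d. copies of $Q_{2^{\ell^*}}$, which has finite second moment by induction in $k$ from the second-moment deviation inequality and the finite second moment of $Q_1$. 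The classical CLT then gives
\begin{equation*}
\frac{S_{k,\ell^*}-\esp^\mu[S_{k,\ell^*}]}{\sqrt{2^k}}\ \xrightarrow[k\to\infty]{d}\ \mathcal{N}\!\left(0,\frac{\var^\mu(Q_{2^{\ell^*}})}{2^{\ell^*}}\right),
\end{equation*}
and the limiting variance tends to $\sigma^2(\mathcal{Q};\mu)$ as $\ell^*\to\infty$ by Theorem~\ref{theo:existvariance}.

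\textbf{Step 2: variance of the error --- the main obstacle.} I expect the delicate point to be a small-variance bound on $E_{k,\ell^*}$; a single-scale block decomposition would only give a bound of order $n$ with no handle on the constant. Within each level $\ell$, however, the defects $\Psi_{2^\ell,2^\ell}\!\circ\theta_{i\cdot 2^{\ell+1}}$ for different $i$ depend on the disjoint blocks of increments $X_{i\cdot 2^{\ell+1}+1},\ldots,X_{(i+1)\cdot 2^{\ell+1}}$ and are therefore independent. Combined with the second-moment deviation inequality this yields
\begin{equation*}
\var^\mu\!\Big(\sum_{i=0}^{2^{k-1-\ell}-1}\Psi_{2^\ell,2^\ell}\!\circ\theta_{i\cdot 2^{\ell+1}}\Big)\le 2^{k-1-\ell}\,\tau_2(\mathcal{Q};\mu).
\end{equation*}
Applying Minkowski's inequality in $L^2$ to the centred decomposition $E_{k,\ell^*}-\esp^\mu[E_{k,\ell^*}]=\sum_{\ell}(A_\ell-\esp^\mu[A_\ell])$, where $A_\ell$ denotes the $\ell$-th inner sum,
\begin{equation*}
\sqrt{\var^\mu(E_{k,\ell^*})}\ \le\ \sum_{\ell=\ell^*}^{k-1}\sqrt{2^{k-1-\ell}\,\tau_2(\mathcal{Q};\mu)}\ \le\ C\sqrt{\tau_2(\mathcal{Q};\mu)}\cdot 2^{(k-\ell^*)/2},
\end{equation*}
so that $\var^\mu(E_{k,\ell^*})/2^k\le C^2\tau_2(\mathcal{Q};\mu)/2^{\ell^*}$, which is arbitrarily small for $\ell^*$ large.

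\textbf{Step 3: conclusion.} Decomposing
\begin{equation*}
\frac{Q_{2^k}-\esp^\mu[Q_{2^k}]}{\sqrt{2^k}}=\frac{S_{k,\ell^*}-\esp^\mu[S_{k,\ell^*}]}{\sqrt{2^k}}+\frac{E_{k,\ell^*}-\esp^\mu[E_{k,\ell^*}]}{\sqrt{2^k}},
\end{equation*}
the second summand is centred with variance $\le C^2\tau_2(\mathcal{Q};\mu)/2^{\ell^*}$, hence converges to $0$ in probability as $\ell^*\to\infty$, uniformly in $k$. Letting $k\to\infty$ first for $\ell^*$ fixed and then $\ell^*\to\infty$, a standard characteristic-function/Slutsky argument produces the CLT along the subsequence $n=2^k$. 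For general $n$, I run the same scheme with base size $K=2^{\ell^*}$ on the $\lfloor n/K\rfloor$ super-blocks; the boundary piece $Q_{n-K\lfloor n/K\rfloor}\!\circ\theta_{K\lfloor n/K\rfloor}$ and the finitely many extra defects contribute $o(\sqrt{n})$ in $L^2$ by the finite second-moment hypothesis and the deviation inequality, completing the proof.
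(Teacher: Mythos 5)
Your proposal is correct and follows essentially the same route as the paper's proof: the same dyadic (binary-tree) decomposition into i.i.d.\ blocks of size $2^{\ell^*}$ plus defect terms sorted by scale, the same within-level independence and Minkowski bound giving error variance $O(n\,\tau_2({\cal Q};\mu)\,2^{-\ell^*})$, and the same two-parameter approximation argument (the paper's Lemma \ref{lm:gaussianconvergence}) together with Lemma \ref{lm:lln} to recenter. The only cosmetic difference is the treatment of general $n$: the paper works with the binary expansion of $n$ directly, which produces $O(\log n)$ extra defect terms rather than \emph{finitely many}, but their total contribution is still $o(\sqrt{n})$ in $L^2$, as you claim.
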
 

\begin{rmk}
Observe that it might be the case that $\sigma^2(\mu;{\cal Q})=0$. Even in such a case, Theorems \ref{theo:existvariance} and \ref{theo:clt} remain true. The limit law in Theorem  \ref{theo:clt}  is then the Dirac mass at $0$. 

In Section \ref{sec:lowerbound} we give sufficient conditions for the asymptotic variance of a D.A.C. to be positive, see Theorem \ref{theo:downvariance}. 
\end{rmk} 

\subsection{Existence of the variance of D.A.C.: proof of Theorem \ref{theo:existvariance}} 
\label{ssec:existvariance} 

We start with an a priori estimate on the variance of $Q_n$. 

\begin{theo}\label{theo:uppervariance} 
Let ${\cal Q}$ be a defective adapted cocycle. Assume that $\cal Q$ has a finite second moment and satisfies the second-moment deviation inequality 
with respect to the probability measure $\mu$. Define $C^+(\mu;{\cal Q}):=4\chi_2(\mu;{\cal Q})+16 \tau_2(\mu;{\cal Q})$. 
Then 
\beqn\label{eq:upvar} \var^\mu(Q_n)\le C^+(\mu;{\cal Q}) n\,.\eeqn for all $n\ge 1$.
\end{theo}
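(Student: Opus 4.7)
The plan is to apply the Efron--Stein inequality to $Q_n$, viewed as a measurable function of the i.i.d.\ increments $X_1,\dots,X_n$. Concretely, if $X_1',\dots,X_n'$ are independent copies of the $X_i$'s and $Q_n^{(i)}$ denotes the value of $Q_n$ after replacing $X_i$ by $X_i'$, then
$$
\var^\mu(Q_n)\;\le\;\tfrac{1}{2}\sum_{i=1}^{n}\esp^\mu\bigl[(Q_n-Q_n^{(i)})^2\bigr].
$$
Since the target bound is linear in $n$, it suffices to control each summand by a constant depending only on $\chi_2({\cal Q};\mu)$ and $\tau_2({\cal Q};\mu)$, uniformly in $i$ and $n$. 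This is precisely the ``Efron--Stein type argument'' announced in the introduction.

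The key step is to use the defective cocycle identity twice to isolate the dependence on the single variable $X_i$. Writing first $Q_n=Q_{i-1}+Q_{n-i+1}\circ\theta_{i-1}+\Psi_{i-1,n-i+1}$, and then splitting off the very first factor of the shifted cocycle via $Q_{n-i+1}\circ\theta_{i-1}=Q_1\circ\theta_{i-1}+Q_{n-i}\circ\theta_i+\Psi_{1,n-i}\circ\theta_{i-1}$, I obtain
$$
Q_n \;=\; Q_{i-1}\;+\;Q_1\circ\theta_{i-1}\;+\;Q_{n-i}\circ\theta_i\;+\;\Psi_{1,n-i}\circ\theta_{i-1}\;+\;\Psi_{i-1,n-i+1}.
$$
The first term is ${\cal F}_{i-1}$-measurable and the third depends only on $X_{i+1},\dots,X_n$, so both are unchanged when $X_i$ is swapped. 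Hence $Q_n-Q_n^{(i)}$ reduces to a sum of three paired differences, involving respectively $Q_1\circ\theta_{i-1}$, $\Psi_{1,n-i}\circ\theta_{i-1}$ and $\Psi_{i-1,n-i+1}$.

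To finish, I apply $(a+b+c)^2\le 3(a^2+b^2+c^2)$ and bound each pair separately. The $Q_1$-pair equals $Q_1(X_i)-Q_1(X_i')$ with $X_i,X_i'$ i.i.d., contributing at most $2\var^\mu(Q_1)\le 2\chi_2({\cal Q};\mu)$; each $\Psi$-pair is bounded via $(u-v)^2\le 2u^2+2v^2$ and the second-moment deviation inequality by $4\tau_2({\cal Q};\mu)$. Substituting into Efron--Stein then yields $\var^\mu(Q_n)\le (3\chi_2+12\tau_2)n$, which is comfortably dominated by $C^+({\cal Q};\mu)n=(4\chi_2+16\tau_2)n$. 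The only real difficulty in this argument is book-keeping — correctly identifying which of the five terms in the decomposition involves $X_i$ and which does not, and verifying that no hidden cross-term leaks in through the shifted defects; the probabilistic and analytic ingredients themselves are essentially Markov's inequality and the parallelogram-type bound.
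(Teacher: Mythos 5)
Your proposal is correct and follows essentially the same route as the paper: the Efron--Stein inequality applied via the replacement trick, using exactly the same two-step cocycle decomposition $Q_n=Q_{i-1}+Q_1\circ\theta_{i-1}+Q_{n-i}\circ\theta_i+\Psi_{1,n-i}\circ\theta_{i-1}+\Psi_{i-1,n-i+1}$ to isolate the dependence on $X_i$. The only difference is cosmetic: you bound the squared difference by $3(a^2+b^2+c^2)$ instead of the paper's $(2\sqrt{\chi_2}+4\sqrt{\tau_2})^2$, giving the slightly better constant $3\chi_2+12\tau_2\le C^+({\cal Q};\mu)$, so the stated inequality follows.
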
 

Below we give a proof of Theorem \ref{theo:uppervariance} using the Efron-Stein inequality. An alternative proof is given in Remark \ref{rmk:uppervariance}. 
The Efron-Stein inequality and some of its extensions are discussed in Section \ref{sec:highmoments} and Remark \ref{rmk:efronstein}. 
 
 \medskip 
 
The proof of Theorem \ref{theo:uppervariance} is based on the following {\bf replacement trick}. 
Let $k\ge 1$ and let $X'_k$ be a random variable with law $\mu$ and independent of the sequence $(X_j)_{j\ge 1}$. Let 
$X^{(k)}_j$ be the sequence obtained when replacing $X_k$ by $X'_k$ in the sequence $(X_j)_{j\ge 1}$. In other words 
$X^{(k)}_j=X_j$ for all $j\not=k$ and $X^{(k)}_k=X'_k$. We now denote with $\theta$ the shift operator operating on the two sequences $(X_j)_{j\ge 1}$ and $(X^{(k)}_j)_{j\ge 1}$. 

Recall that, for all $n\ge 1$, then $Q_n$ is measurable with respect to ${\cal F}_n$. Therefore $Q_n$ is of the form 
$Q_n=f(X_1,...,X_n)$ for some measurable function $f$. We define $Q^{(k)}_n:=f(X^{(k)}_1,...,X^{(k)}_n)$. 
Note that $Q^{(k)}_n=Q_n$ for $n<k$. Also note that $Q^{(k)}_n\circ\theta_m=Q_n\circ\theta_m$ for all $m\ge k$. 

Then ${\cal Q}^{(k)}:=(Q^{(k)}_n)_{n\ge 1}$ is a D.A.C. associated to the sequence of random variables $(X^{(k)}_j)_{j\ge 1}$. We let 
$\Psi^{(k)}_{n,m}:=Q^{(k)}_{n+m}-Q^{(k)}_n-Q^{(k)}_m\circ\theta_n$ be its defect. 

For $n\ge k$, we have 
\beqnn 
Q^{(k)}_n&=&Q^{(k)}_{k-1}+Q^{(k)}_{n-k+1}\circ\theta_{k-1}+\Psi^{(k)}_{k-1,n-k+1}\\
&=& Q^{(k)}_{k-1}+Q^{(k)}_{1}\circ\theta_{k-1}+Q^{(k)}_{n-k}\circ\theta_{k}+\Psi^{(k)}_{1,n-k}\circ\theta_{k-1}+\Psi^{(k)}_{k-1,n-k+1}\,,\eeqnn 
and similarly 
\beqnn 
Q_n=
Q_{k-1}+Q_{1}\circ\theta_{k-1}+Q_{n-k}\circ\theta_{k}+\Psi_{1,n-k}\circ\theta_{k-1}+\Psi_{k-1,n-k+1}\,.\eeqnn 
Since $Q^{(k)}_{k-1}=Q_{k-1}$ and $Q^{(k)}_{n-k}\circ\theta_{k}=Q_{n-k}\circ\theta_{k}$, we get that  

 $$Q^{(k)}_n-Q_n=$$
 \beqn\label{eq:difference} Q^{(k)}_{1}\circ\theta_{k-1}-Q_{1}\circ\theta_{k-1}
 +\Psi^{(k)}_{1,n-k}\circ\theta_{k-1}-\Psi_{1,n-k}\circ\theta_{k-1}
 +\Psi^{(k)}_{k-1,n-k+1}-\Psi_{k-1,n-k+1}\,.
 \eeqn 
 
 As a consequence 
  $$\vert Q^{(k)}_n-Q_n\vert\le$$
\beqn\label{eq:differencee}\vert Q^{(k)}_{1}\circ\theta_{k-1}\vert +\vert Q_{1}\circ\theta_{k-1}\vert 
 +\vert \Psi^{(k)}_{1,n-k}\circ\theta_{k-1}\vert +\vert \Psi_{1,n-k}\circ\theta_{k-1}\vert 
 +\vert \Psi^{(k)}_{k-1,n-k+1}\vert +\vert \Psi_{k-1,n-k+1}\vert \,.
 \eeqn 

Note that both terms $Q^{(k)}_{1}\circ\theta_{k-1}$ and $Q_{1}\circ\theta_{k-1}$ have the same law as $Q_1$. 
Similarly, $\Psi^{(k)}_{1,n-k}\circ\theta_{k-1}$ and $\Psi_{1,n-k}\circ\theta_{k-1}$  have the same law as $\Psi_{1,n-k}$, 
and $\Psi^{(k)}_{k-1,n-k+1}$ and $\Psi_{k-1,n-k+1}$ have the same law. 

\medskip 

{\it Proof of Theorem \ref{theo:uppervariance}.}
Taking the expectation of the square in (\ref{eq:differencee}), we deduce that 
\beqnn 
\esp^\mu[(Q^{(k)}_n-Q_n)^2]
\le \big(2\sqrt{\chi_2(\mu;{\cal Q})} +4\sqrt{\tau_2(\mu;{\cal Q})}   \,\big)^2
\le 8\chi_2(\mu;{\cal Q})+32\tau_2(\mu;{\cal Q})\,.\eeqnn 

By the Efron-Stein inequality, see \cite{kn:steele}, we have: 
\beqnn 
\var^\mu[Q_n]&\leq& \frac 12 \sum_{k=1}^n \esp^\mu[(Q^{(k)}_n-Q_n)^2]\\ 
&\leq& n (4\chi_2(\mu;{\cal Q})+16\tau_2(\mu;{\cal Q}))
\,.\eeqnn 
\qed

{\it Proof of Theorem \ref{theo:existvariance}.}

We will use the following fact.

\begin{lm}\cite{Hammersley:subadditive}
\label{lm:limsubadd}
 Let $(a_n)_{n=1,\dots}$ be a sequence of real numbers so that there exists $b\geq 0$ with the property that $a_{n+m}\leq a_n+a_m +b \sqrt{n+m}$ for each $m,n\geq 1$. Then $\frac{a_n}{n}$ converges to some $L<+\infty$.
\end{lm}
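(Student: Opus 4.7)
The plan is to adapt Fekete's classical subadditive lemma, using a dyadic block decomposition to absorb the correction $b\sqrt{n+m}$. Set $L := \liminf_{n\to\infty} a_n/n$; the goal is to show $L < +\infty$ and that $\limsup_n a_n/n \leq L$, whence $a_n/n$ converges to $L$.

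\smallskip

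First I would establish an upper bound along doubling sequences. Applying the hypothesis with $n=m$ gives $a_{2n} \leq 2 a_n + b\sqrt{2n}$, so $a_{2n}/(2n) \leq a_n/n + b/\sqrt{2n}$. Iterating from a base integer $m$ and telescoping yields
\[
\frac{a_{2^k m}}{2^k m} \;\leq\; \frac{a_m}{m} + \frac{b}{\sqrt{m}}\sum_{j=1}^{k} 2^{-j/2} \;\leq\; \frac{a_m}{m} + \frac{b}{\sqrt{m}(\sqrt{2}-1)} \;=:\; C(m).
\]
Taking $m=1$ already shows that $L \leq C(1) < +\infty$.

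\smallskip

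Next I would fix $\epsilon > 0$ and choose $m$ large enough that $a_m/m \leq L + \epsilon$ and $b/(\sqrt{m}(\sqrt{2}-1)) \leq \epsilon$, so that $C(m) \leq L + 2\epsilon$. For an arbitrary large $n$, write $n = qm + r$ with $0 \leq r < m$ and decompose the quotient in binary, $q = 2^{k_1} + \cdots + 2^{k_s}$ with $k_1 > \cdots > k_s \geq 0$ and $s \leq \log_2(q) + 1$. Iterating the near-subadditive inequality along this decomposition yields
\[
a_n \;\leq\; \sum_{i=1}^{s} a_{2^{k_i} m} + a_r + s\,b\sqrt{n}.
\]
Using the doubling bound $a_{2^{k_i} m} \leq 2^{k_i} m \cdot C(m)$ and summing, one gets $\sum_i a_{2^{k_i} m} \leq (n-r)\,C(m)$, so
\[
\frac{a_n}{n} \;\leq\; C(m) \;+\; \frac{|C(m)|\,r + |a_r| + s\,b\sqrt{n}}{n}.
\]
The error term tends to $0$ as $n \to \infty$ with $m$ fixed, since $r < m$, $a_r$ ranges over a finite set, and $s = O(\log n)$. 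Hence $\limsup_n a_n/n \leq C(m) \leq L + 2\epsilon$, and letting $\epsilon \to 0$ gives $\limsup \leq L$. The case $L = -\infty$ is handled by the same scheme, choosing $a_m/m$ arbitrarily negative for $m$ large.

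\smallskip

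The crux is the choice of decomposition. A naive iteration $a_{qm} \leq q\,a_m + b\sum_{j\leq q}\sqrt{jm}$ produces an error of order $q^{3/2}\sqrt{m}$, contributing a \emph{diverging} term of order $\sqrt{q/m}$ to $a_n/n$. The dyadic scheme splits this error into two tame pieces: a block-internal error of order $1/\sqrt{m}$ coming from the convergent geometric sum $\sum 2^{-j/2}$, and a concatenation error of order $\log(n)/\sqrt{n}$ from the $s = O(\log n)$ top-level applications of the hypothesis. Both can be made arbitrarily small by first taking $m$ large and then $n$ large.
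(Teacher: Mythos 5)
Your argument is correct, but it is worth noting that the paper does not prove this lemma at all: it simply cites Hammersley's general theorem on approximate subadditivity, which asserts convergence of $a_n/n$ whenever $a_{n+m}\leq a_n+a_m+c(n+m)$ with $\sum_n c(n)/n^2<\infty$ (the error $b\sqrt{n}$ being a special case). What you supply instead is a self-contained, elementary proof in the spirit of the classical de Bruijn--Erd\H{o}s/Fekete argument, specialised to the $\sqrt{n}$ error: the doubling step shows the liminf $L$ is $<+\infty$ and produces the quantity $C(m)=a_m/m+b/(\sqrt{m}(\sqrt{2}-1))$, and the binary decomposition of the quotient $q$ correctly splits the error into a geometric block-internal part of size $O(1/\sqrt{m})$ and a concatenation part of size $O(b\sqrt{n}\log n)$, which is $o(n)$; this is exactly the point where a naive iteration would fail, as you observe. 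Your choice of $m$ (large and with $a_m/m\leq L+\epsilon$) is legitimate since the liminf is attained along infinitely many indices, and your remark disposing of the case $L=-\infty$ is adequate. The only cosmetic slip is the term $a_r$ when $r=0$, which is undefined; either adopt the convention $a_0=0$ or treat $m\mid n$ separately, since in that case one application of the inequality fewer is needed. The trade-off between the two routes: the citation is shorter and yields the sharper general statement, while your proof keeps the argument elementary and self-contained, in line with the paper's stated aim of relying on little beyond the triangle and Markov inequalities.
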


 The starting point is the identity 
 $$Q_{n+m}=Q_n+Q_m\circ\theta_n+\Psi_{n,m}\,.$$
 
The two terms $Q_n$ and $Q_m\circ\theta_n$ are independent and $Q_m\circ\theta_n$ has the same distribution as $Q_m$. 
Therefore 
$$\var^\mu[Q_n+Q_m\circ\theta_n]=\var^\mu[Q_n]+\var^\mu[Q_m]\,.$$

We can now apply the inequality
$$|\var^\mu(A+B)-\var^\mu(A)|\leq \var^\mu(B)+2\sqrt{\var^\mu(A)\var^\mu(B)}$$
with $A=Q_n+Q_m\circ\theta_n$ and $B=\Psi_{n,m}$. Theorem \ref{theo:uppervariance} yields 
\beqnn 
\vert \var^\mu[Q_{n+m}]-\var^\mu[Q_n]-\var^\mu[Q_m]\vert 
&\le& \var^\mu[\Psi_{n,m}]+2\sqrt{\var^\mu[Q_n]+\var^\mu[Q_m]}\sqrt{\var^\mu[\Psi_{n,m}]}\\
&\le& \tau_2(\mu;{\cal Q})+2\sqrt{C^+({\cal Q};\mu)(n+m)}\sqrt{\tau_2(\mu;{\cal Q})}\,,
\eeqnn 
since $\var^\mu[\Psi_{n,m}]\leq \esp^\mu[\Psi_{n,m}^2]$.

Lemma \ref{lm:limsubadd} implies that $\lim_{n\to\infty}\frac 1 n\var^\mu[Q_n]$ exists and is finite, as required.
\qed

\subsection{A C.L.T. for D.A.C.: proof of Theorem \ref{theo:clt}} \label{ssec:cltfordac}

In this section we prove our Central Limit Theorem, Theorem \ref{theo:clt}.

We split  time $n$ into successive blocks of length $2^M$ and express $Q_n$ 
as a sum of the contributions of the  different blocks, plus some cross terms. The cross terms are expressed in terms of defects and can be controlled by the deviation inequality. Thus 
we conclude that $Q_n$ can be approximated by a sum of i.i.d. random variables and therefore that its law is close to Gaussian. 

\begin{lm}\label{lm:sumsiid} 
Let ${\cal Q}$ be a defective adapted cocycle. Assume that $\cal Q$ has a finite second moment and satisfies the second-moment deviation inequality 
with respect to the probability measure $\mu$. Then 
\beqn\label{eq:bd4}
\lim_{M\ra\infty}\limsup_{n\ra\infty} \frac 1 n 
\var^\mu[Q_n-\sum_{j=0}^{\lceil n2^{-M} \rceil-1}Q_{2^M} \circ\theta_{j2^M}]=0\,.\eeqn 
\end{lm}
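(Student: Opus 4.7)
The plan is to use a dyadic decomposition of $Q_n$ into blocks of size $N:=2^M$ and to control the resulting defect terms via the second-moment deviation inequality.

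For transparency I first treat the case $n=2^{M+L}$, so that $n/N=2^L$; the general $n$ that is a multiple of $N$ can be reduced to this by binary decomposition of $n/N$, and any residual remainder of length strictly less than $N$ contributes at most $O(N)$ to the variance by Theorem \ref{theo:uppervariance}, which vanishes after division by $n$. Iterating the defect identity $Q_{a+b}=Q_a+Q_b\circ\theta_a+\Psi_{a,b}$ along a balanced binary split from scale $n$ down to scale $N$, a straightforward induction yields
\begin{equation*}
Q_n-\sum_{j=0}^{n/N-1}Q_N\circ\theta_{jN}\;=\;\sum_{\ell=M}^{M+L-1}\ \sum_{i=0}^{n/2^{\ell+1}-1}\Psi_{2^\ell,\,2^\ell}\circ\theta_{i\cdot 2^{\ell+1}}.
\end{equation*}

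The key observation is that for each fixed scale $\ell$, the summand $\Psi_{2^\ell,2^\ell}\circ\theta_{i\cdot 2^{\ell+1}}$ depends only on the block of increments $(X_{i\cdot 2^{\ell+1}+1},\ldots,X_{(i+1)\cdot 2^{\ell+1}})$, and these blocks are pairwise disjoint as $i$ varies. The defects at a single scale are therefore independent, so by the second-moment deviation inequality the scale-$\ell$ sum has variance at most $(n/2^{\ell+1})\,\tau_2({\cal Q};\mu)$. Across distinct scales the defect sums need not be independent, but Minkowski's inequality applied to their centered $L^2$ norms gives
\begin{equation*}
\sqrt{\var^\mu\Bigl[Q_n-\sum_{j=0}^{n/N-1}Q_N\circ\theta_{jN}\Bigr]}\;\leq\;\sum_{\ell=M}^{M+L-1}\sqrt{\tfrac{n}{2^{\ell+1}}\,\tau_2({\cal Q};\mu)}\;\leq\;C\sqrt{\tfrac{n}{N}\,\tau_2({\cal Q};\mu)},
\end{equation*}
where $C=(1-2^{-1/2})^{-1}$ comes from summing a geometric series. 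Squaring and dividing by $n$ gives a bound of order $C^2\tau_2({\cal Q};\mu)\cdot 2^{-M}$, uniform in $n$, which tends to $0$ as $M\to\infty$.

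The only delicate point is that the scale sums are independent within a single scale but not across scales, which forces the use of Minkowski (rather than variance additivity) when combining them; fortunately the per-scale standard deviations decay geometrically in $\ell$, so Minkowski still gives the sharp $O(\sqrt{n/N})$ total. The inductive derivation of the dyadic decomposition identity and the reduction from general $n$ to the dyadic case are routine.
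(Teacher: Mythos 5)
Your proof is correct and follows essentially the same route as the paper: a dyadic decomposition of $Q_n$ into defects $\Psi_{2^\ell,2^\ell}$ at scales $\geq 2^M$, independence of the defects within a fixed scale (disjoint blocks of increments), and Minkowski across scales with the geometric decay $2^{-\ell/2}$, plus remainder terms of variance $o(n)$ for non-dyadic $n$. The only cosmetic difference is that you stop the decomposition at scale $2^M$ and reduce general $n$ to the dyadic case, whereas the paper decomposes down to scale $1$ via the binary expansion of $n$ and then regroups into $Q_{2^M}$-blocks; both handle the boundary terms the same way in substance.
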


{\it Proof of (\ref{eq:bd4}).} 

First consider an integer $n$ of the form $2^k$. 
Iterating the identity 
$$Q_{2n}=Q_n+Q_n\circ\theta_n+\Psi_{n,n}\,,$$ we get that 
$$Q_{2^k}=\sum_{j=0}^{2^k-1} Q_1\circ\theta_j
+\sum_{i=1}^k\sum_{j=0}^{2^{k-i}-1}\Psi_{2^{i-1},2^{i-1}}\circ\theta_{j2^i}\,.$$ 

Let us now take $n$ of the form $n=2^k+2^l$, with $k< l$. 
We use the identity $Q_n=Q_{2^l}+Q_{2^k}\circ\theta_{2^l}+\Psi_{2^l,2^k}$ to get that 
$$Q_n=\Psi_{2^l,2^k}+\sum_{j=0}^{n-1} Q_1\circ\theta_j
+\sum_{i=1}^l\sum_{j=0}^{\lceil n2^{-i}\rceil-1}\Psi_{2^{i-1},2^{i-1}}\circ\theta_{j2^i}\,.$$ 

More generally, let $n\ge 1$ and write a dyadic decomposition of $n$ in the form 
$$n=\eps_0+\eps_1 2+\eps_2 2^2+...+\eps_m 2^m$$ 
where the $\eps_j$ are either $0$ or $1$ and $m$ is such that $\eps_m=1$. 
(In other words, $m$ is the integer part of $\log_2 n$.) 
Then 
\beqn\label{eq:decomposition} Q_n=\sum_{j=0}^m\gamma_j+\sum_{j=0}^{n-1} Q_1\circ\theta_j
+\sum_{i=1}^m\sum_{j=0}^{\lceil n2^{-i}\rceil-1}\Psi_{2^{i-1},2^{i-1}}\circ\theta_{j2^i}\,.\eeqn  
In this last expression, all the terms $\gamma_j$ are either $0$ or of the form $\Psi_{a,b}$ for some values of $a$ and $b$ that 
can be computed in terms of $n$ but whose value does not play any role in the sequel. 

We note that in the expression (\ref{eq:decomposition}), the terms $Q_1\circ\theta_j$ corresponding to different values of $j$ are independent and identically distributed. 
Likewise, the terms $\Psi_{2^{i-1},2^{i-1}}\circ\theta_{j2^i}$ corresponding to different values of $j$ are also independent and identically distributed. 

\medskip 

Choose $M\ge 1$ such that $M\le m$.

We group the different terms in the sum $\sum_{j=0}^{n-1} Q_1\circ\theta_j$ in packs of length $2^M$ to get that 
\beqn\label{eq:comp1}\sum_{j=0}^{n-1} Q_1\circ\theta_j=\sum_{j=0}^{\lceil n2^{-M}\rceil-1} \big(\sum_{t=0}^{2^M-1} Q_1\circ\theta_t\big)\circ\theta_{j2^M}+R^{(0)}_M\,,\eeqn 
where $R^{(0)}_M$ is a sum of at most $2^M-1$ terms of the form $Q_1\circ\theta_a$ for some index $a$. 
Likewise, for any index $i\le M$, let us write 
\beqn\label{eq:comp2}\sum_{j=0}^{n2^{-i}-1}\Psi_{2^{i-1},2^{i-1}}\circ\theta_{j2^i}
=\sum_{j=0}^{\lceil n2^{-M}\rceil-1}\big(\sum_{t=0}^{2^{M-i}-1}\Psi_{2^{i-1},2^{i-1}}\circ\theta_{t2^i}\big)\circ\theta_{j2^M}+R^{(i)}_M\,,\eeqn 
where $R^{(i)}_M$ is a sum of at most $2^M-1$ terms of the form $\Psi_{b,b}\circ\theta_a$ for some values of $a$ and $b$. 
Recall the expression 
$$Q_{2^M}=\sum_{t=0}^{2^M-1} Q_1\circ\theta_t
+\sum_{i=1}^M\sum_{t=0}^{2^{M-i}-1}\Psi_{2^{i-1},2^{i-1}}\circ\theta_{t2^i}\,.$$
Summing (\ref{eq:comp1}) and (\ref{eq:comp2}), we get  
\beqn\label{eq:decomposition2}\sum_{j=0}^{n-1} Q_1\circ\theta_j+\sum_{i=1}^M\sum_{j=0}^{\lceil n2^{-i}\rceil-1}\Psi_{2^{i-1},2^{i-1}}\circ\theta_{j2^i}
=\sum_{j=0}^{\lceil n2^{-M}\rceil-1} Q_{2^M} \circ\theta_{j2^M}+R_M\,,\eeqn 
where  $R_M$ is the sum of the $R^{(i)}_M$ for $i=0...M$. 

From (\ref{eq:decomposition}) and (\ref{eq:decomposition2}), we get the following decomposition 
\beqn\label{eq:decomposition3}Q_n=\sum_{j=0}^m\gamma_j+\sum_{j=0}^{\lceil n2^{-M}\rceil-1}Q_{2^M} \circ\theta_{j2^M}+R_M+\sum_{i=M+1}^m\sum_{j=0}^{\lceil n2^{-i}\rceil-1}\Psi_{2^{i-1},2^{i-1}}\circ\theta_{j2^i}\,.\eeqn

\medskip 

Let us now bound the variance of these terms. We use the notation 
$\tau_2=\tau_2(\mu;{\cal Q})$ and $\chi_2=\chi_2(\mu;{\cal Q})$. 

We have $m\le \log_2 n$. Since the variance of each $\gamma_j$ is bounded by $\tau_2$, we have 
\beqn\label{eq:bd1}
\var^\mu[\sum_{j=0}^m\gamma_j]\le (\log_2 n)^2\tau_2\,.\eeqn 

Since there are at most $2^M$  terms of the form $Q_1\circ\theta_a$ 
and at most $M2^M$  terms of the form $\Psi_{b,b}\circ\theta_a$  in $R_M$, we get that 
\beqn\label{eq:bd2} 
\var^\mu[R_M]\le 2^{2M+1}\big(\chi_2+M^2\tau_2\big)\,.\eeqn 

We observe that, for a fixed $i$, the random variables $\Psi_{2^{i-1},2^{i-1}}\circ\theta_{j2^i}$ are i.i.d. Therefore the variance of 
$\sum_{j=0}^{\lceil n2^{-i}\rceil-1}\Psi_{2^{i-1},2^{i-1}}\circ\theta_{j2^i}$ is bounded by $\lceil n2^{-i}\rceil\tau_2$ and 
\beqn\label{eq:bd3}
\var^\mu[\sum_{i=M+1}^m\sum_{j=0}^{\lceil n2^{-i}\rceil-1}\Psi_{2^{i-1},2^{i-1}}\circ\theta_{j2^i}]\le 
n\tau_2(\sum_{i=M+1}^m 2^{-i/2})^2\le  n\tau_2(\sum_{i=M+1}^\infty 2^{-i/2})^2   \,.\eeqn 

Using the inequality $\var[A+B+C]\le (\sqrt{\var[A]}+\sqrt{\var[B]}+\sqrt{\var[C]})^2$, 
we deduce from (\ref{eq:decomposition3}), (\ref{eq:bd1}), (\ref{eq:bd2}) and (\ref{eq:bd3}) that 
$$\frac 1 n 
\var^\mu[Q_n-\sum_{j=0}^{\lceil n2^{-M}\rceil-1}Q_{2^N} \circ\theta_{j2^M}] 
\leq \frac 1 n \big( (\log_2 n\sqrt{\tau_2}+2^{M+1}\sqrt{\chi_2+M^2\tau_2}+\sqrt{n\tau_2}(\sum_{i=M+1}^\infty 2^{-i/2})   \big)^2\,.$$ 
Therefore 
$$\limsup_n \frac 1 n 
\var^\mu[Q_n-\sum_{j=0}^{\lceil n2^{-M}\rceil-1}Q_{2^N} \circ\theta_{j2^M}] 
\leq \tau_2 (\sum_{i=M+1}^\infty 2^{-i/2})^2\,.$$

and we obtain (\ref{eq:bd4}) by letting $M$ tend to $\infty$.
\qed

We are now in a position to apply the following:

\begin{lm}\label{lm:gaussianconvergence} 
Let $(A_n)_{n\in\N}$ be a sequence of real-valued centered and square integrable random variables 
such that $\var[A_n]$ has a limit, say $\sigma^2$. 
We assume that for all $M$, there exist square integrable random variables $A^{(M)}_n$ and $B^{(M)}_n$ such that 
$A_n=A^{(M)}_n+B^{(M)}_n$ and\\ 
(i) $\lim_{M\ra\infty}\limsup_{n\ra\infty}\var[B^{(M)}_n]=0$, 
(ii) for each $M$, there exists $\sigma^2_M$ such that 
$\var[A^{(M)}_n]\ra\sigma^2_M$ and $A^{(M)}_n-\esp[A^{(M)}_n]$ converges in distribution towards the Gaussian 
law with mean $0$ and variance $\sigma^2_M$. 
Then $\lim_M\sigma^2_M=\sigma^2$ and $A_n$ converges in distribution as $n$ tends to $\infty$ towards the Gaussian law with mean $0$ and variance $\sigma^2$. 
\end{lm}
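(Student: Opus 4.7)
The plan is to prove the two conclusions in turn: first that $\sigma_M^2\to\sigma^2$ as $M\to\infty$, and then that $A_n$ converges in distribution to the $N(0,\sigma^2)$ law. Throughout, I shall write $\tilde A^{(M)}_n:=A^{(M)}_n-\esp[A^{(M)}_n]$ and $\tilde B^{(M)}_n:=B^{(M)}_n-\esp[B^{(M)}_n]$; since $A_n$ is centered and $A_n=A^{(M)}_n+B^{(M)}_n$, one has $\esp[A^{(M)}_n]+\esp[B^{(M)}_n]=0$, so $A_n=\tilde A^{(M)}_n+\tilde B^{(M)}_n$.

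For the convergence of the variances, I would use the decomposition
$$\var[A_n]=\var[A^{(M)}_n]+\var[B^{(M)}_n]+2\,\mathrm{Cov}(A^{(M)}_n,B^{(M)}_n),$$
together with the Cauchy--Schwarz bound $|\mathrm{Cov}(A^{(M)}_n,B^{(M)}_n)|\le\sqrt{\var[A^{(M)}_n]\var[B^{(M)}_n]}$. Letting $n\to\infty$ first, by (ii) and the assumption on $\var[A_n]$ this gives $|\sigma^2-\sigma_M^2|\le\limsup_n\var[B^{(M)}_n]+2\sqrt{\sigma_M^2\,\limsup_n\var[B^{(M)}_n]}$. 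Since $\sigma_M^2$ is bounded (indeed $\sigma_M^2\le 2(\sigma^2+\limsup_n\var[B^{(M)}_n])$ up to lower order terms, which is uniformly bounded for large $M$), letting $M\to\infty$ and using (i) yields $\sigma_M^2\to\sigma^2$.

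For the convergence in distribution, I would compare characteristic functions. Fix $t\in\R$. Using $|e^{iu}-1|\le|u|$ and the Cauchy--Schwarz inequality,
$$\bigl|\esp[e^{itA_n}]-\esp[e^{it\tilde A^{(M)}_n}]\bigr|
=\bigl|\esp\bigl[e^{it\tilde A^{(M)}_n}(e^{it\tilde B^{(M)}_n}-1)\bigr]\bigr|
\le|t|\,\esp[|\tilde B^{(M)}_n|]\le|t|\sqrt{\var[B^{(M)}_n]}.$$
By hypothesis (ii), $\tilde A^{(M)}_n$ converges in distribution to $N(0,\sigma_M^2)$, so $\esp[e^{it\tilde A^{(M)}_n}]\to e^{-t^2\sigma_M^2/2}$ as $n\to\infty$. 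Combined with the first step, $e^{-t^2\sigma_M^2/2}\to e^{-t^2\sigma^2/2}$ as $M\to\infty$. Given $\eps>0$, I first choose $M$ large so that $|e^{-t^2\sigma_M^2/2}-e^{-t^2\sigma^2/2}|<\eps$ and $\limsup_n|t|\sqrt{\var[B^{(M)}_n]}<\eps$ (possible by (i)), then $n$ large so that $|\esp[e^{it\tilde A^{(M)}_n}]-e^{-t^2\sigma_M^2/2}|<\eps$. Hence $\esp[e^{itA_n}]\to e^{-t^2\sigma^2/2}$, which by L\'evy's continuity theorem gives the desired weak convergence.

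The only mildly delicate point is the asymmetric role played by the centering: $B^{(M)}_n$ is not assumed to be centered, but the identity $\esp[A^{(M)}_n]=-\esp[B^{(M)}_n]$ lets us absorb the centering constants and work with $\tilde B^{(M)}_n$, whose first absolute moment is controlled by $\sqrt{\var[B^{(M)}_n]}$. Apart from this bookkeeping the argument is a routine $\eps$-$M$-$n$ diagonal chase, and no additional hypothesis beyond (i) and (ii) is needed.
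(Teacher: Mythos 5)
Your proof is correct and follows essentially the same route as the paper: the same variance comparison (your covariance expansion plus Cauchy--Schwarz is exactly the inequality $\var[A+B]\le(\sqrt{\var[A]}+\sqrt{\var[B]})^2$ used there), the same centering trick $\esp[B^{(M)}_n]=-\esp[A^{(M)}_n]$, and the same $L^1$-bound $\esp[\vert B^{(M)}_n-\esp[B^{(M)}_n]\vert]\le\sqrt{\var[B^{(M)}_n]}$ driving the $\eps$--$M$--$n$ argument. The only cosmetic difference is that you test against characteristic functions $e^{itx}$ (Lipschitz with constant $\vert t\vert$) and invoke L\'evy's continuity theorem, whereas the paper tests against smooth compactly supported Lipschitz functions.
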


{\it Proof of Lemma \ref{lm:gaussianconvergence}.} 

We denote with ${\cal N}(0,\sigma^2)$ the Gaussian law with $0$ mean and variance $\sigma^2$. 

From the inequality 
$$\var[A+B]\le (\sqrt{\var[A]}+\sqrt{\var[B]})^2$$ 
one easily deduces that $\sigma^2_M$ converges to $\sigma^2$. 

Let $g$ be a smooth, compactly supported function on $\R$.

Since $A_n$ is centered, we must have $\esp[B^{(M)}_n]=-\esp[A^{(M)}_n]$. 
The function $g$ is Lipschitz continuous; it follows that there exists a constant $C$ such that 
\beqnn 
\vert \esp[g(A_n)]-\esp[g(A^{(M)}_n-\esp[A^{(M)}_n])]\vert &\le& C \esp[\vert A_n-A^{(M)}_n+\esp[A^{(M)}_n]\vert] \\
&=&C \esp[\vert B^{(M)}_n-\esp[B^{(M)}_n]\vert]\le C \sqrt{\var[B^{(M)}_n]}\,.\eeqnn
Applying conditions (i) and (ii), we get that 
$$\lim_{M\ra\infty}\limsup_{n\ra\infty}\vert \esp[g(A_n)]-\int g\,d {\cal N}(0,\sigma^2_M)\vert=0\,.$$
Since ${\cal N}(0,\sigma_M^2)$ weakly converges to ${\cal N}(0,\sigma^2)$, we are done.
\qed 

\par\smallskip

{\it End of the proof of Theorem \ref{theo:clt}.} 

We apply Lemma \ref{lm:gaussianconvergence} to the random variables 
$$A_n=\frac 1{\sqrt{n}}(Q_n-\esp^\mu[Q_n]) \hbox{\, and\, }  
A^{(M)}_n=\frac 1{\sqrt{n}}\sum_{j=0}^{n2^{-M}-1}Q_{2^N} \circ\theta_{j2^M}-\frac 1{\sqrt{n}}\esp^\mu[Q_n]\,.$$

The claim (\ref{eq:bd4}) gives condition (i) in the Lemma. 

Fix $M$. Then the random variables $(Q_{2^N} \circ\theta_{j2^M})_{j=0...n2^{-M}-1}$ are square integrable, independent and identically distributed. 
Therefore the convergence of the variance and the Central Limit Theorem for $A^{(M)}_n$ are nothing but the C.L.T. for sums of i.i.d. variables. 

We conclude that the distribution of $\frac 1{\sqrt{n}}(Q_n-\esp^\mu[Q_n])$ converges to a Gaussian law. Lemma \ref{lm:lln} allows to replace $ \esp^\mu[Q_n]$ by $\ell(\mu;{\cal Q})n$. 
\qed 

\begin{rmk}\label{rmk:uppervariance} 
One may obtain a linear upper bound on the variance of $Q_n$ (as in the statement of Theorem \ref{theo:uppervariance}) using the decomposition (\ref{eq:decomposition}). 

We already observed that, in the expression (\ref{eq:decomposition}), the terms $Q_1\circ\theta_j$ corresponding to different values of $j$ are independent and identically distributed. 
Likewise, the terms $\Psi_{2^{i-1},2^{i-1}}\circ\theta_{j2^i}$ corresponding to different values of $j$ are also independent and identically distributed. 
Therefore, taking the variance in (\ref{eq:decomposition}), we get that 

$$\sqrt{\var^\mu[Q_n]} 
\le m\sqrt{\tau_2(\mu;{\cal Q})}+\sqrt{n\chi_2(\mu;{\cal Q})}+\sqrt{\tau_2(\mu;{\cal Q})n}\sum_{i\ge 1} 2^{-i/2}\,.$$ 
Remember that $m\le\log_2(n)$. Thus the above inequality yields a linear upper bound on the variance $\var^\mu[Q_n]$ 
and we have an alternative proof of Theorem \ref{theo:uppervariance} that avoids using the Efron-Stein inequality. 

\end{rmk} 

\subsection{Higher moments} 
\label{sec:highmoments} 
In Theorem \ref{theo:uppervariance} we showed that a finite second moment and the second moment deviation inequality imply a linear 
upper bound on the variance of $Q_n$. The next Theorem exploits Burkholder's inequality to generalize this fact to other moments. 

\begin{theo}\label{theo:othermoments} 
For all $p>1$, there exists a constant $c(p)$ such that 
for any defective adapted cocycle ${\cal Q}$ that has a finite $p$-th moment and satisfies the $p$-th moment deviation inequality 
with respect to the probability measure $\mu$ then 
$$\esp^\mu[\vert Q_n-\esp^\mu[Q_n]\vert^p]\le c(p) (\chi_p(\mu;{\cal Q})+\tau_p(\mu;{\cal Q}))\,n^{p/2}\,.$$
\end{theo}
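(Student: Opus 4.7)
The plan is to follow the dyadic decomposition argument sketched for Theorem \ref{theo:uppervariance} in Remark \ref{rmk:uppervariance}, but to replace the additivity of the variance for independent sums by Burkholder's inequality for sums of independent centered random variables, which plays the analogous role in $L^p$ when $p\geq 2$. With $m=\lfloor\log_2 n\rfloor$, the decomposition (\ref{eq:decomposition}) reads
$$Q_n \;=\; \sum_{j=0}^{m} \gamma_j \;+\; \sum_{j=0}^{n-1} Q_1\circ\theta_j \;+\; \sum_{i=1}^{m}\sum_{j=0}^{n2^{-i}-1} \Psi_{2^{i-1},2^{i-1}}\circ\theta_{j2^i},$$
where each $\gamma_j$ is either $0$ or of the form $\Psi_{a,b}$ for some $a,b$. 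As already used in Subsection \ref{ssec:cltfordac}, for each fixed $i$ the family $\{\Psi_{2^{i-1},2^{i-1}}\circ\theta_{j2^i}\}_{j}$ depends on disjoint blocks of the increments and is therefore i.i.d., and the same holds for $\{Q_1\circ\theta_j\}_{j}$. Subtracting expectations produces a decomposition of $Q_n-\esp^\mu[Q_n]$ into $m+1$ centered i.i.d. sums plus a boundary piece of at most $m+1$ centered terms.

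The main input is the following consequence of Burkholder's inequality: for $p\geq 2$ and i.i.d. centered $Y_1,\ldots,Y_N\in L^p$, there exists $\kappa_p$ such that
$$\Bigl\|\sum_{j=1}^N Y_j\Bigr\|_p \;\leq\; \kappa_p\,\sqrt{N}\,\|Y_1\|_p.$$
This follows from the martingale version $\|M_N\|_p\leq \kappa_p\|[M]_N^{1/2}\|_p$ together with the triangle inequality in $L^{p/2}$, which is valid since $p/2\geq 1$. Applied to the centered sum $\sum_{j=0}^{n-1}(Q_1\circ\theta_j-\esp^\mu[Q_1])$, it yields an $L^p$ bound of $2\kappa_p\sqrt{n}\,\chi_p({\cal Q};\mu)^{1/p}$; applied to the $i$-th centered block, it yields $2\kappa_p\sqrt{n2^{-i}}\,\tau_p({\cal Q};\mu)^{1/p}$. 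The centered boundary piece is bounded by summing $L^p$-norms term by term, giving at most $2(m+1)\,\tau_p({\cal Q};\mu)^{1/p}$.

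The $i$-blocks are not mutually independent (they may share underlying increments), so I use only the triangle inequality in $L^p$ to combine them, arriving at
$$\|Q_n-\esp^\mu[Q_n]\|_p \;\leq\; 2\kappa_p\sqrt{n}\,\chi_p({\cal Q};\mu)^{1/p} + 2\kappa_p\,\tau_p({\cal Q};\mu)^{1/p}\sqrt{n}\sum_{i\geq 1} 2^{-i/2} + 2(m+1)\,\tau_p({\cal Q};\mu)^{1/p}.$$
Since the geometric series converges and $m+1=O(\log n)=O(\sqrt{n})$ for $n\geq 2$, this gives $\|Q_n-\esp^\mu[Q_n]\|_p\leq C(p)\bigl(\chi_p({\cal Q};\mu)^{1/p}+\tau_p({\cal Q};\mu)^{1/p}\bigr)\sqrt{n}$. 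Raising to the $p$-th power and applying $(a+b)^p\leq 2^{p-1}(a^p+b^p)$ delivers the claim with $c(p)=2^{p-1}C(p)^p$. The only subtlety is the regime $1<p<2$: there Burkholder/Rosenthal for i.i.d. sums delivers only the slower rate $N^{1/p}$, so the dyadic argument above does not reach $n^{p/2}$; in that range the bound is recovered from the $p=2$ case (Theorem \ref{theo:uppervariance}) via Jensen's inequality $\esp^\mu[|X|^p]\leq\esp^\mu[|X|^2]^{p/2}$, at the cost of implicitly invoking finite second moments of $Q_1$ and of the defects.
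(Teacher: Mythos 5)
Your argument for $p\ge 2$ is correct, but it follows a genuinely different route from the paper's. You recycle the dyadic decomposition (\ref{eq:decomposition}) from Subsection \ref{ssec:cltfordac}, center each piece, control each of the $O(\log n)$ i.i.d.\ block sums by the Marcinkiewicz--Zygmund form of Burkholder ($\|\sum_{j\le N} Y_j\|_p\le \kappa_p\sqrt N\,\|Y_1\|_p$ for centered i.i.d.\ $Y_j$, valid for $p\ge2$ exactly as you say), and then add the blocks and the $O(\log n)$ boundary terms by the triangle inequality in $L^p$; the bookkeeping is done correctly and the constants close up. The paper instead applies Burkholder (Lemma \ref{lm:burkholder}) once, to the Doob martingale differences $Y_j=\esp^\mu[Q_n\vert{\cal F}_j]-\esp^\mu[Q_n\vert{\cal F}_{j-1}]$, bounds $\esp^\mu[\vert Y_j\vert^p]$ uniformly in $j$ by the replacement trick and (\ref{eq:difference}) (giving $6^{p-1}(2\chi_p+4\tau_p)$), and then uses $\esp^\mu[(\sum_j Y_j^2)^{p/2}]\le n^{p/2-1}\sum_j\esp^\mu[\vert Y_j\vert^p]$ to land on $n^{p/2}$ in one step. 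The paper's route is the natural $L^p$ upgrade of the Efron--Stein proof of Theorem \ref{theo:uppervariance}: it avoids the dyadic bookkeeping and the logarithmic boundary terms and yields an explicit constant; yours has the mild advantage of reusing the CLT decomposition verbatim and not invoking conditional expectations.

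Concerning $1<p<2$: you are right that the block argument only gives rate $N^{1/p}$ there, and your fallback via Jensen from the $p=2$ case requires finite second moments of $Q_1$ and of the defects, which are not among the hypotheses, so your proof does not cover that range as stated. You should be aware, though, that the paper's proof has the same restriction: the step $\esp^\mu[(\sum_j Y_j^2)^{p/2}]\le n^{p/2-1}\sum_j\esp^\mu[\vert Y_j\vert^p]$ is the power-mean (Jensen) inequality with exponent $p/2$ and needs $p\ge2$; for $p<2$ it reverses, and the subadditive substitute $(\sum_j Y_j^2)^{p/2}\le\sum_j\vert Y_j\vert^p$ only yields a bound linear in $n$, which is weaker than $n^{p/2}$ in that range. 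Indeed the $n^{p/2}$ rate cannot hold for $1<p<2$ under the stated hypotheses: take the zero-defect cocycle $Q_n=\sum_{j\le n}f(X_j)$ with $f(X_1)$ symmetric, of finite $p$-th moment, in the domain of attraction of a $p'$-stable law with $p<p'<2$; then $\tau_p=0$, $\chi_p<\infty$, while $\esp^\mu[\vert Q_n\vert^p]$ grows at least like a constant times $n^{p/p'}\gg n^{p/2}$. So the theorem should be read (and is proved, by either argument) for $p\ge2$, and your flag of this subtlety is well placed.
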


The proof uses Burkholder's inequality that we first recall: 
\begin{lm}\label{lm:burkholder} 
Let $p>1$. There exists a constant $c_B(p)$ such that for any $n\ge 1$ and any martingale difference sequence 
$(Y_j)_{j=1...n}$ with finite $p$-th moment then 
$$\esp[\vert \sum_{j=1}^n Y_j\vert^p]\le c_B(p) \esp[(\sum_{j=1}^n Y_j^2)^{p/2}]\,.$$ 
\end{lm}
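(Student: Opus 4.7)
The plan is to prove Burkholder's inequality by the classical martingale argument, treating $p \geq 2$ and $1 < p < 2$ separately. Throughout, set $S_k = \sum_{j=1}^{k} Y_j$, $S_n^\ast = \max_{k\leq n} |S_k|$, and $[S]_n = \sum_{j=1}^n Y_j^2$, and denote by $\F_k$ the filtration generated by $Y_1,\dots,Y_k$. A standard truncation ($Y_j \mapsto Y_j \mathbf{1}_{[S]_j \leq R}$, then $R \to \infty$ via monotone convergence) lets me assume all the quantities at hand are bounded, so that every expectation below is finite.

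For $p \geq 2$, I would start from the elementary pointwise bound
$$|a+b|^p \le |a|^p + p\,|a|^{p-1}\mathrm{sgn}(a)\,b + C_p\,(|a|\vee|b|)^{p-2}\,b^2,$$
obtained from Taylor's theorem (it is trivial for even integer $p$ and extends by a convexity argument otherwise). Applied with $a=S_{k-1}$, $b=Y_k$ and summed telescopically,
$$|S_n|^p \le \sum_{k=1}^n p\,|S_{k-1}|^{p-1}\mathrm{sgn}(S_{k-1})\,Y_k \;+\; C_p\sum_{k=1}^n Y_k^2\,(S_n^\ast)^{p-2}.$$
The first sum is a martingale (by $\esp[Y_k\mid \F_{k-1}]=0$) and disappears in expectation. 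Applying Hölder with conjugate exponents $p/2$ and $p/(p-2)$ to the second sum gives
$$\esp[|S_n|^p]\;\le\; C_p\,\esp\bigl[[S]_n^{p/2}\bigr]^{2/p}\,\esp\bigl[(S_n^\ast)^p\bigr]^{(p-2)/p}.$$
Doob's maximal inequality then controls $\esp[(S_n^\ast)^p]$ by $(p/(p-1))^p\,\esp[|S_n|^p]$, and solving the resulting inequality $x \le K\,A^{2/p}\,x^{(p-2)/p}$ for $x = \esp[|S_n|^p]$ yields the desired bound with an explicit constant $c_B(p)$.

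For $1 < p < 2$, the pointwise Taylor bound above no longer produces a dominating second-order remainder, so I would instead use Davis' decomposition: write $S_n = U_n + V_n$ where the martingale $U$ has increments $Y_k' := Y_k \mathbf{1}_{|Y_k|\leq 2 Y_{k-1}^\ast} - \esp[\,\cdot\,\mid \F_{k-1}]$, the martingale $V$ has increments $Y_k''$ equal to the balance, and $Y_k^\ast = \max_{j\leq k}|Y_j|$. The part $U$ has increments controlled by $Y^\ast$, on which a weak-type (or good-$\lambda$) estimate against $[S]_n^{1/2}$ applies; the part $V$ is dominated in $L^p$ by $Y_n^\ast \le [S]_n^{1/2}$ up to a universal constant. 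Summing the two $L^p$ contributions yields the conclusion in this range as well.

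The main obstacle is the range $1 < p < 2$: the self-improving Taylor-plus-Doob argument works only because $|x|^p$ has a quadratic second-order term for $p \geq 2$, and replacing it requires either the Davis decomposition or, equivalently, Burkholder's good-$\lambda$ inequality, both of which demand noticeably more technical care than the $p\geq 2$ case. The $p\geq 2$ part, in contrast, is an essentially mechanical combination of Taylor expansion, the martingale property, Hölder, and Doob.
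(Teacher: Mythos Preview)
The paper does not prove this lemma: it is simply stated and used as the classical Burkholder (square-function) inequality, with no argument given. So there is no ``paper's own proof'' to compare against.

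Your sketch is the standard proof and is essentially correct. Two minor remarks. First, the preliminary truncation is not really needed here: with $n$ fixed and each $Y_j\in L^p$, one has $\esp[|S_n|^p]<\infty$, hence $\esp[(S_n^\ast)^p]<\infty$ by Doob, and for $p\ge2$ also $\esp\big[[S]_n^{p/2}\big]\le n^{p/2-1}\sum_j\esp[|Y_j|^p]<\infty$, so all quantities in the self-improving inequality are finite from the start (and your proposed truncation $Y_j\mathbf 1_{[S]_j\le R}$ would in any case spoil the martingale-difference property, since $[S]_j$ depends on $Y_j$; the correct localisation is by stopping times). Second, in the $p\ge2$ step you bound $(|S_{k-1}|\vee|Y_k|)^{p-2}$ by $(S_n^\ast)^{p-2}$; this is fine up to a factor $2^{p-2}$ since $|Y_k|\le|S_k|+|S_{k-1}|\le 2S_n^\ast$. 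The $1<p<2$ part via the Davis decomposition is only outlined, but it names exactly the right ingredients; filling it in is routine from, e.g., Burkholder's or Davis' original papers.
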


\begin{rmk}\label{rmk:efronstein} 
The argument below in particular works with $p=2$. Then one may choose $c_B(2)=1$ and the inequality in Lemma \ref{lm:burkholder} becomes an equality. 
We thus recover Theorem \ref{theo:uppervariance} although with a different constant. 
\end{rmk}

{\it Proof of Theorem \ref{theo:othermoments}.} 

Recall that ${\cal F}_j$ is the $\sigma$-field generated by the variables $X_1,...,X_j$. 
Eventually, we shall apply Burkholder's inequality to the sequence of conditional expectations $Y_j=\esp^\mu[Q_n\vert{\cal F}_j]-\esp^\mu[Q_n\vert{\cal F}_{j-1}]$. 

Note that the sequence $(Y_j)$ is indeed a martingale difference sequence in $L_p$. And also note that 
$\sum_{j=1}^n Y_j=Q_n-\esp^\mu[Q_n]$. 

Let us play the same replacement trick as in the proof of Theorem \ref{theo:uppervariance}: 
we see that $Y_j=\esp^\mu[Q_n-Q^{(j)}_n\vert {\cal F}_j]$. (Keep in mind that $Q^{(j)}$ is obtained by replacing $X_j$ by an independent copy $X'_j$. 
Then $X'_j$ is in particular independent of ${\cal F}_j$; so that $\esp^\mu[Q^{(j)}_n\vert {\cal F}_j]=\esp^\mu[Q_n\vert {\cal F}_{j-1}]$.) 

It follows from H\"older's inequality that $\esp^\mu[\vert Y_j\vert^p]\le \esp^\mu[\vert Q_n-Q^{(j)}_n\vert^p]$. 
To estimate this last term, we take the $p$-th power in inequality (\ref{eq:differencee}) and conclude that 
\beqn\label{eq:burk1}\esp^\mu[\vert Y_j\vert^p]\le 6^{p-1}(2\chi_p(\mu;{\cal Q})+4\tau_p(\mu;{\cal Q}))\,.\eeqn 

By Burkholder's inequality, we have 
\beqnn 
 \esp^\mu[\vert Q_n-\esp^\mu[Q_n]\vert^p]&=&\esp^\mu[\vert \sum_{j=1}^n Y_j\vert^p]\\ 
 &\le& c_B(p) \esp^\mu[(\sum_{j=1}^n Y_j^2)^{p/2}]\\ 
 &\le& c_B(p) n^{p/2-1} \sum_{j=1}^n \esp^\mu[\vert Y_j\vert^p]\\
 &\le& c_B(p) n^{p/2} 6^{p-1}(2\chi_p(\mu;{\cal Q})+4\tau_p(\mu;{\cal Q}))\,.\eeqnn 
 \qed

\subsection{Lower bound on the variance of D.A.C.} 
\label{sec:lowerbound}

The C.L.T. in Theorem \ref{theo:clt} applies even in examples where the limiting variance $\sigma^2(\mu;{\cal Q})$ vanishes. 

We now give sufficient conditions for a linear \emph{lower} bound on the variance of $Q_n$. 

Recall that our framework is the ``general case''.  Let ${\cal Q}$ be a defective adapted cocycle. 
Choose two integers $k$ and $n$ with $1\leq k\leq n$. 
For a given sequence $(X_j)_{j\geq 1}$, let $(X^{((k))}_j)_{j\geq 1}$ be the modified sequence obtained by removing $X_k$ and translating the indices 
after $k$. More precisely $X^{((k))}_j=X_j$ for $j<k$ and $X^{((k))}_j=X_{j+1}$ for $j\geq k$. 

Recall that $Q_{n}$ is measurable with respect to ${\cal F}_{n}$. Therefore $Q_{n}$ is of the form 
$Q_{n}=g(X_1,...,X_{n-1})$ for some measurable function $g$. 

Let $A$ be a measurable set in $G$. We shall say that the D.A.C. ${\cal Q}$ is {\bf $A$-consistent} if 
\beqn\label{consistentdac} Q_n=g(X^{((k))}_1,...,X^{((k))}_{n-1})\eeqn 
on the set $\{X_k\in A\}$ and this relation holds for any choices of $k$ and $n$ 
and all sequences $(X_j)_{j\geq 1}$ in $\Omega$. 

{\bf Examples} 

1. Let $f$ be a measurable function from $G$ to $\R$ and let $M_n(\omega)=\sum_{j=1}^n f(X_j(\omega))$. Then the sequence ${\cal M}:=(M_n)_{n\in\N^*}$ 
is a defective adapted cocycle. Assume that $f$ vanishes on a measurable set $A$. Then $\cal M$ is $A$-consistent. 

2. Consider the ``random walk case''. Let $\cal Q$ be an end-point D.A.C.. Then $\cal Q$ is consistent for the set $A=\{id\}$.

\begin{theo}\label{theo:downvariance} 
Let ${\cal Q}$ be a defective adapted cocycle. Assume that $\cal Q$ has a finite second moment and satisfies the second-moment deviation inequality 
with respect to the probability measure $\mu$. Assume that $\cal Q$ is $A$-consistent for a measurable set $A$ such that $\mu(A)>0$.  
Assume that $\ell(\mu;{\cal Q})>0$. 
Then $\sigma^2(\mu; {\cal Q})>0$. 
\end{theo}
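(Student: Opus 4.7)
I argue by contradiction: assume $\sigma^2(\mathcal{Q};\mu)=0$, so that $\var^\mu[Q_n]=o(n)$ by Theorem~\ref{theo:existvariance}. My goal is to produce a linear lower bound on the variance, exploiting $\ell:=\ell(\mathcal{Q};\mu)>0$ and the end-point structure.

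My main tool is the martingale decomposition from the proof of Theorem~\ref{theo:othermoments}: write $Q_n-\esp^\mu[Q_n]=\sum_{k=1}^n Y_k$ with $Y_k:=\esp^\mu[Q_n\mid\F_k]-\esp^\mu[Q_n\mid\F_{k-1}]$. Since the $Y_k$ are orthogonal in $L^2$, $\var^\mu[Q_n]=\sum_{k=1}^n \esp^\mu[Y_k^2]$, so it suffices to show $\esp^\mu[Y_k^2]\ge c>0$ for $k$ in a positive fraction of $\{1,\dots,n\}$. Exploiting that $Q_n=q(Z_n)$ and setting $\phi_m(g):=\esp^\mu[q(gZ_m)]$, one has $\esp^\mu[Q_n\mid\F_k]=\phi_{n-k}(Z_k)$ by independence of $Z_k^{-1}Z_n$ from $\F_k$, hence
$$
\esp^\mu[Y_k^2\mid\F_{k-1}]=\var^\mu_X\bigl[\phi_{n-k}(Z_{k-1}X)\bigm|Z_{k-1}\bigr],
$$
where the inner variance is over $X\sim\mu$ with $Z_{k-1}$ frozen.

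Next, I decompose $\phi_m(g)=q(g)+\esp^\mu[Q_m]+u_m(g)$ with $u_m(g):=\esp^\mu[\partial q(g,Z_m)]$. Jensen's inequality combined with the second-moment deviation inequality yields $\esp^\mu[u_m(Z_k)^2]\le\tau_2(\mathcal{Q};\mu)$, so the correction $u_m$ is uniformly $L^2$-bounded against walk distributions regardless of $m$ and $k$. Combining with the cocycle identity $q(gX)=q(g)+q(X)+\partial q(g,X)$ (whose defect is similarly controlled), and noticing that $q(g)$ is constant in $X$, the inner conditional variance reduces to $\var^\mu_X\bigl[q(X)+\partial q(Z_{k-1},X)+u_{n-k}(Z_{k-1}X)\bigr]$.

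The main obstacle---where the hypothesis $\ell>0$ enters essentially---is establishing a uniform positive lower bound on this reduced variance for a positive-density set of indices $k$. Intuitively, the linear drift $\esp^\mu[Q_n]=n\ell+O(\tau_1)$ prevents the conditional distribution of $\phi_{n-k}(Z_{k-1}X)$ under $X\sim\mu$ from collapsing to a point mass for typical $Z_{k-1}$. Quantitatively, I would compare the values $\phi_{n-k}(Z_{k-1}X)$ and $\phi_{n-k}(Z_{k-1}X')$ for distinct $X,X'$ in the support of $\mu$ and use the deviation inequality to show their difference is close, up to an error controlled by $\sqrt{\tau_2(\mathcal{Q};\mu)}$, to the ``Busemann-type'' quantity $q(Z_{k-1}X)-q(Z_{k-1}X')$, which stays non-trivial in expectation thanks to the macroscopic drift supplied by $\ell>0$. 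Summing over $k$ then gives $\var^\mu[Q_n]\ge cn$ for some $c>0$, contradicting the initial assumption.
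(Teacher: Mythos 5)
Your reduction to the martingale increments $Y_k=\esp^\mu[Q_n\mid\F_k]-\esp^\mu[Q_n\mid\F_{k-1}]$ is sound bookkeeping, and the identities you use ($\esp^\mu[Q_n\mid\F_k]=\phi_{n-k}(Z_k)$, and $\esp^\mu[u_m(Z_k)^2]\le\tau_2({\cal Q};\mu)$ by conditional Jensen) are correct. However, the step you yourself flag as ``the main obstacle'' is the entire content of the theorem, and the strategy you sketch for it does not go through. Two concrete problems. First, the deviation inequality controls $\partial q(Z_{k-1},X_k)$ and $u_{n-k}(Z_{k-1}X_k)$ only in an averaged sense, i.e.\ in $L^2(\P^\mu)$, not conditionally on $\F_{k-1}$; so even granting your decomposition, you cannot convert these bounds into a lower bound on $\var^\mu_X\bigl[\phi_{n-k}(Z_{k-1}X)\bigm| Z_{k-1}\bigr]$ valid for typical $Z_{k-1}$ and a positive fraction of $k$ --- upper bounds on the correction terms would only help if $\var_\mu[q(X_1)]$ dominated $\tau_2({\cal Q};\mu)$, which is not a hypothesis. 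Second, and more fundamentally, $\ell({\cal Q};\mu)>0$ does not force the one-step quantity $q(Z_{k-1}X)-q(Z_{k-1}X')$ to fluctuate: drift is a global property and is compatible with the conditional law of $\phi_{n-k}(Z_{k-1}X)$ given $\F_{k-1}$ being (nearly) a point mass for all $k$ (think of all atoms of $\mu$ lying at the same distance from $id$, with no cancellation along typical trajectories, so that the length increment is essentially deterministic at each step). So a correct proof must extract variance from a more global source of randomness than single-increment fluctuations, and your heuristic cannot be repaired without adding such an ingredient.

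The paper's proof is built around exactly such a global mechanism, quite different from yours. After passing to a convolution power so that $\mu(id)\neq 0$, it splits the walk into the walk $\tilde Z$ driven by $\mu$ conditioned on $X_1\neq id$ and the count $N_n$ of lazy steps, proves these two are independent, and uses the end-point property precisely to say that $Q_n$ conditioned on $N_n=k$ has the law of ${\tilde Q}_{n-k}$. The randomness is then supplied by the binomial fluctuations of $N_n$ on scale $\sqrt n$ (each value in a window of width $\sqrt n$ has probability at least $\epsilon_0/\sqrt n$), and $\ell>0$ enters only through the elementary inequality $(A-\ell n)^2+(A-\ell n-\ell r)^2\ge \ell^2 r^2/2$ applied with $r=\lceil\sqrt n\rceil$, which gives $\esp^\mu[(Q_n-\ell n)^2]+\esp^\mu[(Q_{n+r}-\ell n-\ell r)^2]\ge \epsilon n$; combined with Lemma \ref{lm:lln} and Theorem \ref{theo:existvariance} this yields $\sigma^2({\cal Q};\mu)>0$ directly, with no argument by contradiction needed. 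If you want to salvage your martingale route, you would have to inject a comparable global non-degeneracy (for instance the lazification itself), at which point you are essentially reproducing the paper's argument.
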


\begin{rmk}
In the special case of quasimorphisms, the authors of \cite{CLT_quasimorphisms} give a necessary and sufficient condition for the non-vanishing of the variance that is more precise than the one in Theorem \ref{theo:downvariance}.\end{rmk}

\begin{proof}[Proof of Theorem \ref{theo:downvariance}]
The existence of the limit of $\frac 1 n\var^\mu[Q_n]$ is guaranteed by Theorem \ref{theo:existvariance}, hence it suffices to show $\limsup\frac 1 n\var^\mu[Q_n]>0$. 

Choose a set $A$ so that $\cal Q$ is $A$-consistent and $\mu(A)>0$. 
Observe that if we had $\mu(A)=1$, then the consistency assumption would imply that $Q_n=0$ for all $n$, a contradiction with our assumption that $\ell({\cal Q};\mu)>0$. 
Therefore $\mu(A)<1$. Let $A^c$ be the complement of $A$ in $G$. 
Let $\tilde{\mu}(.)=\mu(A^c\cap .)/\mu(A^c)$ be the measure $\mu$ conditioned on $A^c$. 

Define $N_n$ to be the random variable $\#\{j\leq n:X_j\in A\}$ that counts the number of null increments of $\cal Q$ up to time $n$. 
Let $S_n=\inf\{m:m-N_m\ge n\}$ be the first time $m-N_m$ exceeds $n$. 

The idea of the proof is to exploit the fluctuations of $N_n$. 

We define a new D.A.C. $\tilde{\cal Q}$ obtained by removing 
all the increments in $A$ in the following way. First we define ${\tilde X}_n=X_{S_n}$. Then, considering as before $Q_n$ as a function of the random variables $(X_1,...,X_n)$, 
we define ${\tilde Q}_n=Q_{n}({\tilde X}_1,...,{\tilde X}_n)$. From the consistency assumption on $\cal Q$, it follows that ${\tilde Q}_n=Q_{S_n}$. 

Note that the sequence ${\tilde{\cal Q}}=({\tilde Q}_n)_{n\geq 0}$ is a D.A.C.  in the filtration generated by the sequence 
$({\tilde X}_j)_{j\geq 1}$. 

We claim that, under $\matP^\mu$, the  two sequences $({\tilde Q}_n)$ and $(N_n)$ are independent and, furthermore, the law of the sequence $({\tilde Q}_n)$ under $\matP^\mu$ is the same as 
the law of the sequence $(Q_n)$ under $\matP^{\tilde\mu}$. 

\emph{Proof.} 
First observe that $\tilde \mu$ is the law of $X_1$ conditioned on the event $(X_1\notin A)$. 

Let $M$ be an integer and $n_1,...,n_M$ be integers. Note that if the event ${\cal A}:=(N_j=n_j\,\forall j\le M)$ is not empty, then there is a unique set ${\tt N}\subset\{1,...,M\}$ such that, on $\cal A$ and for $j\le M$, $X_j\in A$ if and only if $j\in \tt N$. Conversely, once we know for which indices $j\le M$ we have $X_j\in A$, then we know the value of $N_j;j\le M$. Therefore conditioning on $\cal A$ is equivalent to conditioning on the event $(X_j\in A \hbox{\,iff\,} j\in{\tt N})$. 
Under the conditional law given $\cal A$, the random variables $(X_j:j\notin {\tt N})$ are i.i.d. with law $\tilde \mu$. 

Therefore, for any measurable set $F$, we have 
\begin{align*}
&\matP^\mu[({\tilde X}_1,...,{\tilde X}_{M-n_M})\in F\,;\, N_1=n_1,...,N_M=n_M]\\ 
=&\matP^{\tilde \mu}[(X_1,...,X_{M-n_M})\in F]\matP^\mu[N_1=n_1,...,N_M=n_M]\,
\end{align*}  

Using the relation ${\tilde Q}_n=Q_{n}({\tilde X}_1,...,{\tilde X}_n)$, 
we conclude that 
\begin{align*}
&\matP^\mu[{\tilde Q}_1=z_1,...,{\tilde Q}_{M-n_M}=z_{M-n_M}\,;\, N_1=n_1,...,N_M=n_M]\\ 
=&\matP^{\tilde \mu}[Q_1=z_1,...,Q_{M-n_M}=z_{M-n_M}]\matP^\mu[N_1=n_1,...,N_M=n_M]\,
\end{align*}  
for all choices of $M$, $n_1,..,n_M$ and $z_1,...,z_{M-n_M}$. 
Let us now choose $k$, $z_1,...,z_k$ and $n_1,...,n_k$. Choosing any $M\ge n_k+k$ in the previous equality, we get that   
\begin{align*}
&\matP^\mu[{\tilde Q}_1=z_1,...,{\tilde Q}_k=z_k\,;\, N_1=n_1,...,N_k=n_k]\\
=&\matP^{\tilde \mu}[Q_1=z_1,...,Q_k=z_k]\matP^\mu[N_1=n_1,...,N_k=n_k]\,.
\end{align*}  
%When $M$ tends to $\infty$ then $N_M$ converges to $+\infty$ in probability. Therefore, letting $M$ tend to $\infty$ in the preceding equality, we get that 
%\begin{align*}
%&\matP^\mu[{\tilde Q}_1=z_1,...,{\tilde Q}_k=z_k\,;\, N_1=n_1,...,N_k=n_k]\\
%=&\matP^{\tilde \mu}[Q_1=z_1,...,Q_k=z_k]\matP^\mu[N_1=n_1,...,N_k=n_k]\,.
%\end{align*} 
It indeed shows that, under $\matP^\mu$, the sequence $({\tilde Q}_n)$ as the same law as  the sequence $(Q_n)$ under $\matP^{\tilde\mu}$ and that the two sequences $({\tilde Q}_n)$ and $(N_n)$ are independent. 
 \qed 

A consequence of the claim is that for all $k\le n$, the law of $Q_n$ given $N_n=k$ is the law of ${\tilde Q}_{n-k}$. 
To see this, note that, on the event $N_n=k$, we have $S_{n-k}=n$ and therefore ${\tilde Q}_{n-k}=Q_n$. 
Therefore 
$$\matP^\mu[Q_n=z;N_n=k]=\matP^\mu[{\tilde Q}_{n-k}=z;N_n=k]=\matP^\mu[{\tilde Q}_{n-k}=z]\matP^\mu[N_n=k]\,.$$ 
We used the independence of ${\tilde Q}_{n-k}$ and $N_n$. 

\medskip 

Let $\ell=\ell(\mu;{\cal Q})$. We will now show that there exists $\epsilon>0$ so that for each sufficiently large $n$ we have
$$\esp^{\mu}[(Q_n-\ell n)^2]+ \esp^{\mu}[(Q_{n+r(n)}-\ell n-\ell r(n))^2]>\epsilon n,$$
where $r(n)=\lceil \sqrt n \rceil$. 

The inequality above suffices to show that $\limsup\frac 1 n\var^\mu[Q_n]>0$ in view of Lemma \ref{lm:lln} which guarantees that $\ell n$ is a good approximation of $\esp^\mu[Q_n]$.

Observe that for each $m$ we have 
$$\esp^{\mu}[(Q_m-\ell m)^2]=$$
$$\sum_{k\in \mathbb N} \esp^\mu [(Q_m-\ell m)^2|N_m=k]\matP[N_{m}=k]=\sum_k \esp^\mu[({\tilde Q}_{m-k}-\ell m)^2]\matP[N_{m}=k].$$
We can now use the fact that $N_m$ has the same law as a sum of independent Bernoulli random variables with parameter $p=\mu(A)$. Then there exists $\epsilon_0>0$ so that for each large enough $n$ and integer $x$ satisfying $|x-pn|\leq 3\sqrt{n}$ we have $\matP^\mu[N_n=x]\geq \epsilon_0/\sqrt n$.

Hence, for each sufficiently large $n$ we have
\beqnn 
&& \esp^{\mu}[(Q_n-\ell n)^2]+ \esp^{\mu}[(Q_{n+r(n)}-\ell n-\ell r(n))^2]\\
 &\ge& \frac{\epsilon_0}{\sqrt n}\left( \sum_{|k-pn|\leq r(n)} \esp^{\mu}[({\tilde Q}_{n-k}-\ell n)^2]+\sum_{|j-pr(n)-pn|\leq 3r(n)} \esp^{\mu}[({\tilde Q}_{n+r(n)-j}-\ell n-\ell r(n))^2]\right)\\
 &\ge& \frac{\epsilon_0}{\sqrt n}\left( \sum_{|k-pn|\leq r(n)} \esp^{\mu}[({\tilde Q}_{n-k}-\ell n)^2]+\sum_{|k-pn|\leq r(n)} \esp^{\mu}[({\tilde Q}_{n-k}-\ell n-\ell r(n))^2]\right).
\eeqnn 

For any given $k$ and any $R\in\R$, we have $(R-\ell n)^2+(R-\ell n-\ell r(n))^2\geq \ell^2 r(n)^2/2$, so that we get 
$$\esp^{\mu}[(Q_n-\ell n)^2]+ \esp^{\mu}[(Q_{n+r(n)}-\ell n-\ell r(n))^2]\geq 
\frac{\epsilon_0}{\sqrt n} 2r(n)\frac{\ell^2 r(n)^2}{2}\geq \epsilon_0\ell^2 r(n)^2,$$
as required.
\end{proof}

\subsection{Applications to random walks} 
\label{sec:firstrws}

Let us now apply the results in the ``general case'' to the ``random walk case'': $G$ is an infinite, countable, discrete group, and we choose the length D.A.C. $\cal Q$ given by 
$Q_n=d(id, Z_n)$ where $d$ is a left-invariant metric on $G$ and $(Z_n)$ is the random walk with driving measure $\mu$. 
Recall the notation 
$$(x,y)_w=\frac 12 (d(w,x)+d(w,y)-d(x,y))$$ for the Gromov product in the metric $d$. 

Recall that the D.A.C. $\cal Q$ satisfies the $p$th-moment deviation inequality if there exists a constant $\tau_p(\mu;d)$ such that 
for all 
$n$ and $m$ in $\N$  then 
$$ 
 \esp^{\mu}[(id,Z_{n+m})_{Z_n}^p]\leq 2^{-p} \tau_p(\mu;d)\,.
 $$ 

And we say that $\mu$ has  finite $p$-th moment with respect to $d$ if 
$\chi_p(\mu;d):=\sum_{x\in G} d(id,x)^p\mu(x)<\infty$. 

Let $\ell(\mu;d)$ be the rate of escape in the metric $d$. 

The applications of Theorems \ref{theo:existvariance}, \ref{theo:clt} and \ref{theo:uppervariance}  are straightforward. 
As for Theorem \ref{theo:downvariance}, let us observe that the D.A.C. is $A$-consistent for the set $A=\{id\}$. Therefore the condition that $\mu(A)>0$ 
will be satisfied whenever $\mu(id)\not=0$. More generally, in the case that there is a convolution power $K$ such that $\mu^K(id)\neq 0$, we observe that $\lim\sup\left(\var^\mu[Q_n]/n\right)\geq \lim\sup\left(\var^{\mu^{\scriptscriptstyle K}}[Q_n]/(nK)\right)$. We thus conclude that: 

\begin{theo}\label{theo:cltrws}
Let $G,\mu,d,(Z_n)$ be as above.
Suppose that $\mu$ has a finite second moment with respect to $d$ and the second-moment deviation inequality holds true. 
Then the variance 
$\frac 1 n \var^\mu(d(id,Z_n))$ has a limit as $n$ tends to $\infty$. We denote it by $\sigma^2(\mu;d)$. It satisfies the bound 
$$\sigma^2(\mu;d)\leq 4\chi_2(\mu;d)+16 \tau_2(\mu;d).$$
Furthermore,
the law of $\frac 1{\sqrt{n}} (d(id,Z_n)-\ell(\mu;d)n)$ under $\P^\mu$ weakly converges to the Gaussian law with zero mean and variance 
$\sigma^2(\mu;d)$. 

Assume furthermore that $\ell(\mu;d)>0$ and that there exists an integer $K$ such that $\mu^K(id)\not=0$. Then $\sigma^2(\mu;d)>0$. 
\end{theo}

Finally observe that Theorem \ref{theo:othermoments} also applies and provides some useful bounds on other moments of the distance $d(id,Z_n)$.

%%%%%%%%%%%%%%%%%%%%%%%%%%
%%%%%%%%%%%%%%%%%%%%%%%%%%
\section{Fluctuations of the rate of escape} \label{sec:fluctuatrescp} 

Let $d$ be a left-invariant metric on the infinite, countable, discrete group $G$. 
In this section of the paper, we discuss regularity properties of the rate of escape $\ell(\mu;d)$ (as defined in (\ref{eq:1}) and (\ref{eq:king})), considered as a function of the driving measure $\mu$. 
In Theorem \ref{theo:rescp}, we give sufficient conditions that imply the Lipschitz continuity of $\ell$; 
Theorems \ref{theo:diffrate} and \ref{theo:c1rate} are about the differentiability of $\ell$. Both use deviation inequalities as assumptions. 

The question of the regularity of the rate of escape and the entropy as a function of $\mu$ was raised by A. Erschler and V. Kaimanovich in \cite{kn:EK}. We refer to \cite{kn:GL} for a review on the subject. Although the question is simple enough to state, very little is known for general (non-hyperbolic) groups. Only in a handful of examples, can we explicitly compute $\ell(\mu;d)$. In \cite{kn:EK}, it is proved that, for non-elementary hyperbolic groups and under a first moment assumption, the asymptotic entropy  is continuous for the weak topology on measures - a fact that fails to be true in all groups, see \cite{kn:Er}. If we restrict ourselves to measures $\mu$ with fixed finite support (and still assume that $G$ is non-elementary hyperbolic), F. Ledrappier proved in \cite{kn:led2} that $h$ and $\ell$ are Lipschitz continuous. This result was upgraded to analyticity by S. Gouezel in  \cite{Gouezel:analyticity}. 

%In \cite{kn:led1}, the rate of escape is proved to be analytic for $\mu$ with finite fixed support and when $G$ is a free group and $d$ the usual word metric. As argued in \cite{kn:GL}, the same holds true on any non-elementary hyperbolic group equipped with a metric $d$ satisfying a certain smoothness property at infinity called (BA) - and which first appeared in \cite{kn:bjork}. From this, using the connection between the entropy and the rate of escape through the Green metric and using a general lemma from \cite{kn:hype}, one deduces that $h$ is differentiable. 

Different techniques were used to prove these results. In \cite{kn:EK}, the authors use a version of Kaimanovich's ray criteria. 
The results of \cite{kn:led1}, \cite{kn:led2} and \cite{Gouezel:analyticity} are based on properties of the dynamics induced by a random walk on the boundary of $G$. 
\cite{kn:HMM} proved the analyticity of the rate of escape for random walks in Fuchsian groups using their automatic structure and regeneration times.  

In \cite{kn:hype}, we introduced a martingale approach to  clarify the connection between the differentiability of $\ell$ and $h$ and the Central Limit Theorem. The proofs of both Theorem \ref{theo:rescp} and \ref{theo:diffrate} follow a similar martingale approach.

As for the Central Limit Theorem, our theorems are expressed for a defective adapted cocycle on a general product space before being applied to random walks on groups. 
We use the notation and definitions from Section \ref{sec:rws}.

\subsection{Distances between measures} \label{ssec:distance} 

Let $\p({\cal G})$ be the set of probability measures on the space $(G,{\cal G})$. 

We define a distance on $\p({\cal G})$ in the following way: let $\mu$ and $\tmu$ be in $\p({\cal G})$. 
First assume that $\mu$ and $\tmu$ are absolutely continuous with respect to each other. Let 
$f=d\tmu/d\mu$ and $\tilde {f}=d\mu/d\tmu$ be the Radon-Nikodym derivatives and define 
$$\Nu(\tmu,\mu)=\max(\hbox{$\sup^\mu$}(\vert f-1\vert ) ; \hbox{$\sup ^{\tmu}$}(\vert \tilde{f}-1\vert ),$$ 
where $\sup^\mu$ denotes the essential $\sup$ with respect to $\mu$ and similarly for $\sup^{\tmu}$. 
Observe that $$\hbox{$\sup^{\tmu}$}(\vert \tilde{f}-1\vert )=\hbox{$\sup^\mu$}(\vert \frac 1 f-1\vert).$$ Therefore 
$$\Nu(\tmu,\mu)=\max(\hbox{$\sup^\mu$}(\vert f-1\vert ) ; \hbox{$\sup^\mu$}(\vert \frac 1 f-1\vert )).$$ 
If $\mu$ and $\tmu$ are not  absolutely continuous with respect to each other, we then let $\Nu(\mu,\tmu)=\infty$. 
Then $\Nu$ is a distance on $\p({\cal G})$  although possibly taking the value $+\infty$.  

In the sequel, we shall call ''neighborhood'' of a probability measure $\mu_0$ a subset of $\p({\cal G})$ of the form 
${\cal N}=\{\mu\in\p({\cal G})\,;\, \Nu(\mu,\mu_0)\leq K\}$ for some $K>0$. 

In the case where $G$ is a countable and discrete set, the expression of $\Nu(\tmu,\mu)$ becomes 
$$\Nu(\tmu,\mu)=\sup_{a\in G}\max( \frac{\tmu(a)}{\mu(a)}  ;  \frac{\mu(a)}{\tmu(a)})-1.$$ 
Observe that $\Nu(\tmu,\mu)$ cannot be finite unless $\tmu$ and $\mu$ have the same support. 
For a subset $B\subset G$, we use the notation $\p(B)$ for the set of probability measures with support equal to $B$. 

If $B$ is finite, then we may identify $\p(B)$ with a subset of $\mathbb{R}^d$ with $d=\# B$. Then $\Nu$ is locally 
equivalent to the Euclidean distance. 

%Let $\mu_0$ and $\mu_1$ belong $\p(B)$ and let 
%$\Nu(\mu_0,\mu_1):=\sup_{a\in B}\big( \max(\frac{\mu_0(a)}{\mu_1(a)};\frac{\mu_1(a)}{\mu_0(a)})-1\big).$ 
%It is not difficult to see that $\Nu$ defines a distance on $\p(B)$,
%
%Assume that $B$ is finite. We may identify $\p(B)$ as a subset of $\R^d$ with  $d=\# B$. 
%Observe that $\Nu(\mu_0,\mu_1)$ is then locally equivalent to the Euclidean distance between $\mu_0$ and $\mu_1$. 
%
%We do not assume that $B$ is finite any more. Let $\mu\in\p(B)$. By neighborhood of $\mu$, we mean a set of the form 
%${\cal N}=\{\mu_0\in\p(B)\,;\, \Nu(\mu_0,\mu)\le K\}$ for some $0<K$. Note that, for $\mu_0$ and $\mu_1$ in $\cal N$, then $\Nu(\mu_0,\mu_1)$ 
%is equivalent to the norm $\sup_{a\in B} \vert \mu_1(a)-\mu_0(a)\vert/\mu(a)$. 
%
%
%
%

\subsection{Uniform deviation inequalities}\label{subsec:dev} 

Let ${\cal Q}=(Q_n)_{n\in\N^*}$ be a defective adapted cocycle with defect $\Psi=(\Psi_{n,m})_{(n,m)\in\N\times\N}$ as in Definition \ref{df:dac}. 
Recall the notions of finite moments and deviation inequalities from Section \ref{sec:rws}. 

We shall also use uniform versions of the deviation inequalities. Namely: 
let $\cal B$ be a subset of $\p({\cal G}))$. 
In the sequel we use notation 
$\tau_p({\cal B};{\cal Q}):=\sup_{\mu\in{\cal B}} \tau_p(\mu;{\cal Q})$ and $\chi_p({\cal B};{\cal Q}):=\sup_{\mu\in{\cal B}}\chi_p(\mu;{\cal Q})$. 
If $\tau_p({\cal B};{\cal Q})<\infty$ (resp. $\chi_p({\cal B};{\cal Q})<\infty$), we say that 
the {\bf $p$-th moment deviation inequality} (resp.  the {\bf finite $p$-th moment assumption}) holds {\bf uniformly} on $\cal B$. 

Given $\mu_0$ in $\p({\cal G})$, we also say that the  $p$-th moment deviation inequality (resp.  the  finite $p$-th moment assumption) holds {\bf locally uniformly} 
around $\mu_0$ if there exists a neighborhood of $\mu_0$, say $\cal N$, such that the $p$-th moment deviation inequality (resp.  the  finite $p$-th moment assumption) holds 
uniformly on $\cal N$. 

We shall need the following simple observation: 

\begin{lm} \label{lm:simple} 
Let $\mu\in\p({\cal G})$ and $\tmu\in\p({\cal G})$  and assume $\mu$ has a finite first moment. Then 
$$\chi_1(\tmu;{\cal Q})\le (1+\Nu(\tmu,\mu))\chi_1(\mu;{\cal Q})\,.$$
\end{lm}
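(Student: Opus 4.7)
The statement is a one-line bookkeeping estimate (and the ``$\mu_1$'' on the right-hand side is surely a typo for ``$\mu$'', since no $\mu_1$ has been introduced in the hypotheses). The plan is therefore to unpack the definitions of $\chi_1$ and $\nu$ and apply them pointwise.

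First I would note that by the definition
\[
\nu(\mu_0,\mu)=\sup_{a\in B}\Bigl(\max\!\bigl(\tfrac{\mu_0(a)}{\mu(a)},\tfrac{\mu(a)}{\mu_0(a)}\bigr)-1\Bigr),
\]
we have in particular $\mu_0(a)/\mu(a)-1\le \nu(\mu_0,\mu)$ for every $a\in B$, hence
\[
\mu_0(a)\le \bigl(1+\nu(\mu_0,\mu)\bigr)\mu(a)\qquad\text{for every }a\in B.
\]
Since $\mu_0$ and $\mu$ are both supported on $B$, this inequality extends trivially to all $a\in G$ (both sides vanish outside $B$).

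Multiplying by $d(id,a)\ge 0$ and summing over $a\in G$ then yields
\[
\chi_1(\mu_0;d)=\sum_{a\in G}d(id,a)\mu_0(a)\le \bigl(1+\nu(\mu_0,\mu)\bigr)\sum_{a\in G}d(id,a)\mu(a)=\bigl(1+\nu(\mu_0,\mu)\bigr)\chi_1(\mu;d),
\]
which is the desired bound. The finite first-moment hypothesis on $\mu$ simply ensures the right-hand side is finite, so there is no genuine obstacle — the statement is really just saying that pointwise comparison of measures on a common support transfers to comparison of first moments, and it will be used later in Section \ref{sec:proofrescp} (e.g.\ in combination with the explicit constant in \eqref{eq:explicit}) to absorb factors of $\chi_1(\mu_t;d)$ into $\chi_1(\mu;d)$ uniformly over $t\in[0,1]$.
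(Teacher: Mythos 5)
Your proof is correct and is essentially the paper's own argument: the paper likewise deduces the pointwise bound $\mu_0(a)\le(1+\nu(\mu_0,\mu))\mu(a)$ from the definition of $\nu$ and sums against $d(id,a)$. You also rightly read the ``$\mu_1$'' in the statement (and the corresponding slip in the paper's one-line proof) as a typo for $\mu$.
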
 
\begin{proof}
By definition of $\Nu$, we have $f(a)\le 1+\Nu(\tmu,\mu)$ for all $a$. The inequality in the Lemma follows.
\end{proof}

\subsection{Lipschitz continuity and differentiability of the rate of escape} \label{ssec:backrescp}

Recall that we deal with the ``general case''. Let $\cal B$ be a subset of $\p({\cal G})$. In the sequel, we will say that a function $F$ is Lipschitz continuous on $\cal B$ if it satisfies 
$\vert F(\mu_1)-F(\mu_0)\vert\le C \Nu(\mu_0,\mu_1)$ for some constant $C$ and all $\mu_0$ and $\mu_1$ in $\cal B$. 

The main results in this section, stated below, are about Lipschitz continuity and differentiability of the rate of escape.

\begin{theo}
\label{theo:rescp} Let $\cal B$ be a convex subset of $\p({\cal G})$. Assume $\cal Q$ satisfies the finite first-moment assumption 
and first-moment deviation inequalities uniformly  in $\cal B$. 
Then the function $\mu\ra \ell(\mu;{\cal Q})$ is Lipschitz continuous on $\cal B$. \end{theo}

\begin{theo} \label{theo:diffrate} Let $\mu$ be a probability measure in $\p({\cal G})$ and 
let $\cal Q$ be a defective adapted cocycle with a finite second moment and satisfying the second moment deviation inequality with respect to $\mu$. 

Then the function $\tmu\ra \ell(\tmu;{\cal Q})$ is differentiable at $\tmu=\mu$ in the following sense: 
Let $(\mu_t, t\in[0,1])$ be a curve in $\p({\cal G})$ such that $\mu_0=\mu$ and such that $\mu_t$ is absolutely continuous with respect to $\mu$ for all $t\in[0,1]$. 
Further assume that \\
(i) we can choose the Radon-Nikodym derivatives $f_t=d\mu_t/d\mu$ such that, for all $a\in G$, the function 
$t\ra \log f_t(a)$ has a derivative at $t=0$, say $\nu(a)$, the function  $\nu$ is bounded on $G$ and 
$\sup_{t\in[0,1]}\sup_{a\in G} \vert \frac 1 t \log f_t(a)-\nu(a)\vert<\infty$. \\
(ii) $\cal Q$ satisfies the uniform first moment deviation inequality on the family $(\mu_t, t\in [0,1])$.\\ 
Then the limit of $\frac 1 t (\ell(\mu_t;{\cal Q})-\ell(\mu;{\cal Q}))$ as $t$ tends to $0$ exists. 
Furthermore, this limit coincides with the covariance
\beqn\label{eq:sigmanu}\sigma(\nu,\mu;{\cal Q}):=\lim_n\frac 1 n \esp^\mu[Q_n\sum_{j=1}^n \nu(X_j)]\,.\eeqn
 \end{theo}

Observe that $\sigma(\nu,\mu;{\cal Q})$  is linear w.r.t. $\nu$.

\begin{theo} \label{theo:c1rate} 
The function $\mu\ra \ell(\mu;{\cal Q})$ is $C^1$ in the following sense: 
Let  $(\mu_t, t\in[-1,1])$ be a curve in $\p({\cal G})$ such that $\mu_t$ is absolutely continuous with respect to $\mu_0$ for all $t\in[-1,1]$. 
Further assume that:\\ 
(i) we can choose the Radon-Nikodym derivatives $f_t=d\mu_t/d\mu_0$ such that, for all $a\in G$, the function 
$t\ra \log f_t(a)$ is $C^1$.  Let $\nu_t(a)$ be its derivative and assume that 
$\sup_{t\in[-1,1]}\sup_{a\in G}\vert\nu_t(a)\vert<\infty$.  \\ 
(ii) there exists $p>1$ such that $\cal Q$ satisfies the uniform $p$-th moment deviation inequality on the family $(\mu_t, t\in [-1,1])$.\\ 
(iii) $\mu_0$ has a finite first moment.\\ 
Then $\cal Q$ satisfies  the uniform first moment inequality on the family $(\mu_t, t\in [-1,1])$ 
and  the function 
$t\ra \ell(\mu_t;d)$ is continuously differentiable.
\end{theo}

The main ingredient in the proofs of these three Theorems is the following {\bf Girsanov formula}. 
Let $\mu$ and $\tmu$ be two probability measures in $\p({\cal G})$. Assume that $\tmu$ is absolutely 
continuous with respect to $\mu$ with Radon-Nikodym derivative $f$. Then 
the restriction of $\P^{\tmu}$ to the $\sigma$-field ${\cal F}_n$  
is absolutely continuous with respect to 
the restriction of $\P^\mu$  with Radon-Nikodym derivative equal to 
$\Pi_{j=1}^n f(X_j)$. In other words,  for any non-negative measurable function $F:G^n\ra\R_+$: 
\beqn\label{eq:girgir} 
\esp^{\tmu}[F(X_1,...,X_n)]=\esp^\mu[F(X_1,...,X_n) \Pi_{j=1}^n f(X_j)]\,.
\eeqn 

Formula (\ref{eq:girgir}) also holds if the random variable $F(X_1,...,X_n)$ is integrable with respect to $\P^{\tmu}$. 

Theorem \ref{theo:rescp} directly follows from the Girsanov formula, the replacement trick from Section \ref{ssec:existvariance} and the deviation inequality. 

The strategy to obtain Theorem \ref{theo:diffrate} is the same as in \cite{kn:hype}. It very much relies on the expression of the derivative of the variance at a fixed time 
as a correlation as in (\ref{eq:gir1}), and the Central Limit Theorem \ref{theo:clt}.  

The proof of Theorem \ref{theo:c1rate} exploits the Girsanov formula in a more direct way in combination with the representation formula (\ref{eq:magic}) for the rate of escape. 
It does not use the C.L.T.

%%%%%%%%%%%%%%%%%%%%%%%%%%%%%%%%%%%%%%%%%%%%%%%%%%%%%%%%%%%%%%%%%%%%%%%%%%%%%%%%%%
%%%%%%%%%%%%%%%%%%%%%%%%%%%%% PROOF THEOREM RESCP %%%%%%%%%%%%%%%%%%%%%%%%%%%%%%%%%%%%%%%%%%%%%%
%%%%%%%%%%%%%%%%%%%%%%%%%%%%%%%%%%%%%%%%%%%%%%%%%%%%%%%%%%%%%%%%%%%%%%%%%%%%%%%%%%

\subsection{Proof of Theorem \ref{theo:rescp}}
\label{sec:proofrescp}

We shall in fact obtain a stronger result than stated in Theorem \ref{theo:rescp} with an explicit control on the Lipschitz constant, see Proposition \ref{prop:lip} below.  

Let $\mu_0$ and $\mu_1$ be probability measures in $\p({\cal G})$. 

For $t\in[0,1]$, we define the measure $\mu_t=\mu_0+t(\mu_1-\mu_0)$. Then $\mu_t$ belongs to $\p({\cal G})$. 

Assume that $\mu_1$ is absolutely continuous with respect to $\mu_0$ and let $f_1=d\mu_1/d\mu_0$ be a Radon-Nikodym derivative. 
Then $\mu_t$ is also absolutely continuous with respect to $\mu_0$ and 
$$f_t:=\frac{d\mu_t}{d\mu_0}=1+t(f_1-1)$$ 
is a Radon-Nikodym derivative of $\mu_t$ with respect to $\mu_0$. 

Define $\nu_t:=(f_1-1)/f_t$. Note that for any $a\in G$, then $\sup_{t\in[0,1]}\vert \nu_t\vert=\max(\vert f_1(a)-1\vert; \vert 1/f_1(a)-1\vert)$. 
It then follows that 
$$\Nu(\mu_1,\mu_0)=\hbox{$\sup^{\mu_0}$}\sup_{t\in[0,1]}\vert \nu_t\vert.$$ 

The next Proposition is more precise than Theorem \ref{theo:rescp}. 

\begin{prop}\label{prop:lip} 
Let $\mu_0$ and $\mu_1$ be probability measures in $\p({\cal G})$ and $\cal Q$ be a D.A.C. with a finite first moment with respect to $\mu_0$ and $\mu_1$. 
Then, for all $n\geq 1$, we have  
\beqn \label{eq:lipn} \frac 1n\esp^{\mu_1}[Q_n]-\frac 1n\esp^{\mu_0}[Q_n]\leq \,  \Nu(\mu_1,\mu_0)
\big( 2\chi_1(\mu_0;{\cal Q})+2\chi_1(\mu_1;{\cal Q})+4\sup_{t\in[0,1]}\tau_1(\mu_t;{\cal Q})\big)\,.\eeqn 
\end{prop}

\begin{proof}
We now assume that $\mu_0$ and $\mu_1$ are such that $\chi_1(\mu_0;{\cal Q})$, $\chi_1(\mu_1;{\cal Q})$, $\Nu(\mu_1,\mu_0)$,  and 
$\sup_{t\in[0,1]}\tau_1(\mu_t;{\cal Q})$ are all finite. (Otherwise there is nothing to be proved.) 

We use the shorthand notation $\esp^t$ instead of $\esp^{\mu_t}$. 

Applying (\ref{eq:girgir}) to $\mu_t$, we get   that for any integrable non-negative measurable function $F:G^n\ra\R_+$: 
\beqn\label{eq:gir} 
\esp^t[F(X_1,...,X_n)]=\esp^0[F(X_1,...,X_n) \Pi_{j=1}^n f_t(X_j)]\,.
\eeqn 

Since $Q_n$ is integrable, we deduce that 
\beqn\label{eq:gir0} 
\esp^t[Q_n]=\esp^0[Q_n \Pi_{j=1}^n f_t(X_j)]\,.
\eeqn 

Let us take the derivative in $t$ in equation (\ref{eq:gir0}). This is justified since the expectation w.r.t. $\esp^0$ in (\ref{eq:gir0}) is in fact a polynomial in $t$ and also 
because $\nu_t$ is bounded. 
We get that 
\beqnn 
\frac d{dt} \esp^t[Q_n]&=&\sum_{k=1}^n \esp^0[Q_n\frac {f_1(X_k)-1}{f_t(X_k)}\Pi_{j=1}^n f_t(X_j)]\\
&=&\sum_{k=1}^n \esp^0[Q_n \nu_t(X_k)\Pi_{j=1}^n f_t(X_j)]\,.
\eeqnn
Using the Girsanov formula again (but in the other direction!), we deduce that 
\beqn\label{eq:gir1} 
\frac d{dt} \esp^t[Q_n]=\sum_{k=1}^n \esp^t[Q_n \nu_t(X_k)]\,.
\eeqn

We now use the same replacement trick as in Section \ref{ssec:existvariance} and, as in Section \ref{ssec:existvariance}, 
we let $Q^{(k)}_n$ be the D.A.C. obtained when replacing $X_k$ by an independent copy in the definition of $Q_n$. 
Then $Q^{(k)}_n$ is independent of $X_k$ and therefore 
$$\esp^t[Q^{(k)}_n \nu_t(X_k)]=\esp^t[Q^{(k)}_n]\,\esp^t[ \nu_t(X_k)]=0. $$ 
Therefore  
$$\frac d{dt} \esp^t[Q_n]=\sum_{k=1}^n \esp^t[(Q_n-Q^{(k)}_n)\nu_t(X_k)]$$ 
and 
\beqn \label{eq:ef1}  \vert \frac d{dt} \esp^t[Q_n]\vert \leq \Nu(\mu_1,\mu_0) \sum_{k=1}^n \esp^t[\vert Q_n-Q^{(k)}_n\vert].\eeqn

We next want to bound $\esp^t[\vert Q_n-Q^{(k)}_n\vert]$. Taking the expectation in Formula (\ref{eq:difference}), we deduce that  
$$\esp^t[\vert Q_n-Q^{(k)}_n\vert]
\leq 2\chi_1(\mu_t;{\cal Q})+4\tau_1(\mu_t;{\cal Q}).$$ 

Inserting this bound into (\ref{eq:ef1}) and taking the $\sup$ over $t$, we get that 

$$ \frac 1 n\vert \frac d{dt} \esp^t[Q_n]\vert \leq \Nu(\mu_1,\mu_0) \big(2\sup_{t\in[0,1]}\chi_1(\mu_t;{\cal Q})+4\sup_{t\in[0,1]}\tau_1(\mu_t;{\cal Q})\big).$$ 

Observe that $\sup_{t\in[0,1]}\chi_1(\mu_t;{\cal Q})\leq \chi_1(\mu_0;{\cal Q})+\chi_1(\mu_1;{\cal Q})$. 
The Proposition follows at once.
\end{proof}

\begin{rmk} 
We just used the obvious bound $\sup_{t\in[0,1]}\chi_1(\mu_t;{\cal Q})\leq \chi_1(\mu_0;{\cal Q})+\chi_1(\mu_1;{\cal Q})$. One might ask whether a similar bound holds 
for $\tau_1$ instead of $\chi_1$: is there a way to bound $\tau_1(\mu_t;{\cal Q})$ in terms of $\tau_1(\mu_0;{\cal Q})$ and $\tau_1(\mu_1;{\cal Q})$? We do not even know 
whether $\tau_1(\mu_t;{\cal Q})$ is finite whenever $\tau_1(\mu_0;{\cal Q})$ and $\tau_1(\mu_1;{\cal Q})$ are. 

Consequently it may be sometimes difficult to figure out which curves satisfy the assumptions of Theorems \ref{theo:diffrate} and \ref{theo:c1rate}. Fortunately, in Part II, we shall deal 
with groups (hyperbolic and acylindrically hyperbolic groups) for which rather explicit bounds on $\tau_1$ can be proved for large classes of measures. 

\end{rmk}

\subsection{Proof of Theorem \ref{theo:diffrate}.} 

Let us first observe that, since we assume $\sup_{t\in[0,1]}\sup_{a\in G} \vert \frac 1 t \log f_t(a)-\nu(a)\vert<\infty$ and $\nu$ is bounded, 
then it follows that $\sup_{t\in[0,1]}\sup_{a\in G}  f_t(a) <\infty$ and $\sup_{t\in[0,1]}\sup_{a\in G} 1/ f_t(a) <\infty$. \\ 
Therefore $\sup_{t\in [0,1]}\Nu(\mu_t,\mu_0) <\infty$. Then Lemma \ref{lm:simple} implies that $\cal Q$ satisfies the uniform first-moment inequality 
on the family $(\mu_t, t\in [0,1])$. Combined with the uniform first-moment deviation inequality, it implies the existence of the rate of escape $\ell(\mu_t;{\cal Q})$ 
for all $t\in[0,1]$.

In the first part of the proof, we argue that Theorem \ref{theo:existvariance} implies that the limit defining $\sigma(\nu,\mu;d)$ in (\ref{eq:sigmanu}) exists.  

The rest of the proof of Theorem \ref{theo:diffrate} follows the same strategy as in \cite{kn:hype}. 
As in  (\ref{eq:gir1}), one may use the Girsanov formula to write the derivative of $\esp^t[d(id,Z_n)]$ as a covariance, divide by $n$ and let $n$ tend to $\infty$. This leads to a correct guess 
for the identification of the derivative but it remains to explain how to exchange the two limits as $n$ tend to $\infty$ and $\lambda$ tends to $0$. The justification  we find in \cite{kn:hype} uses Gaussian integration by parts and the full strength of the C.L.T. (not just the existence of the variance).

For $n\ge 1$, let us define $$M_n:=\sum_{j=1}^n \nu(X_j)\,;\,M_0=0.$$ 
Recall that $\nu(a)$ is the derivative at $t=0$ of the function $f_t(a)$. Since all measures $\mu_t$ are probability measures, we must have 
$\int \nu(a)d\mu(a)=0$. Therefore the sequence of random variables $(M_n)_{n\in\N}$ is a sum of i.i.d. bounded and centered random variables under $\esp^\mu$.  In particular it satisfies the C.L.T. 

Let us now check that the limit defining $\sigma(\nu,\mu;d)$ in (\ref{eq:sigmanu}) exists. 
First observe that, since $M_n$ is centered, then $\esp^\mu[Q_nM_n]=\esp^\mu[(Q_n-\esp^\mu[Q_n])M_n]$ is the covariance of $Q_n$ and $M_n$. 

Let $a\in\R$. We define a D.A.C. ${\cal Q}^a:=(Q^a_n:=Q_n+aM_n)_{n\in\N^*}$. Note that since $\nu$ is bounded and since 
we are assuming that $\cal Q$ has a finite second-moment, then ${\cal Q}^a$ has a finite second moment. 
The defect of ${\cal Q}^a$ coincides with the defect of $\cal Q$. Since we are assuming the second-moment deviation inequality for $\cal Q$, 
then ${\cal Q}^a$ also satisfies the second-moment deviation inequality in the sense of Definition \ref{df:dac}.

A first application of Theorem \ref{theo:existvariance} to the D.A.C. ${\cal Q}$ yields the existence of the limit 
$\lim_n\frac 1 n \var^\mu[Q_n]$. It is clear that the limit $\lim_n\frac 1 n \var^\mu[M_n]$ also exists. 
Applying now  Theorem \ref{theo:existvariance} to ${\cal Q}^{1/2}$, we deduce the existence of the limit 
$ \lim_n\frac 1 n \esp^\mu[Q_nM_n] 
=\lim_n \frac 1 n\big(\var^\mu[Q^{1/2}_n]-\frac 1 4 \var^\mu[M_n]-\var^\mu[Q_n]\big)$.

\medskip

The rest of the proof of Theorem \ref{theo:diffrate} follows the same strategy as in \cite{kn:hype}. 

The Central Limit Theorem \ref{theo:clt} under $\P^\mu$ applied to the family of D.A.C. ${\cal Q}^a$ implies the joint Central Limit Theorem under $\P^\mu$ for the vector 
$\frac 1{\sqrt{n}} (Q_n-n\ell(\mu;{\cal Q}),M_n)$ as in Proposition 3.2 (i) of \cite{kn:hype}; see Lemma \ref{lm:Levy} below. 
 
Note that the variance upper bound needed to apply Theorem 2.3 (assumption (ii)) in \cite{kn:hype} follows from Theorem \ref{theo:uppervariance} here.  

Lemma \ref{lm:lln} applied to the D.A.C. ${\cal Q}$ with the assumption of uniform first-moment deviation inequality imply Lemma 3.1 in \cite{kn:hype}. 

Also note that Theorem 2.3 in \cite{kn:hype} was written for the length D.A.C. and a measure $\mu$ with finite support. The details to adapt it to our setup are straightforward. 

\begin{lm}\label{lm:Levy}
 Let $(A_n),(B_n)$ be sequences of real-valued random variables. Then the random vectors $(A_n,B_n)$ converge in distribution to the random vector $(A,B)$ if and only if $(B_n)$ converges in distribution to $B$ and for each $a\in\mathbb R$ the random variables $A_n+aB_n$ converge in distribution to $A+aB$.
\end{lm}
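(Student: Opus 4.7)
The plan is to reduce this to the Cramér–Wold device. The forward implication is immediate from the continuous mapping theorem: projecting $(x,y)\mapsto y$ gives $B_n\Rightarrow B$, and the linear map $(x,y)\mapsto x+ay$ gives $A_n+aB_n\Rightarrow A+aB$ for each $a\in\mathbb R$. I expect no difficulty with this direction.

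For the converse, I would appeal to the Cramér–Wold theorem, which says that $(A_n,B_n)\Rightarrow(A,B)$ in $\mathbb R^2$ is equivalent to $\alpha A_n+\beta B_n\Rightarrow \alpha A+\beta B$ for every pair $(\alpha,\beta)\in\mathbb R^2$. I would verify this family of one-dimensional convergences by a short case analysis starting from the two hypotheses. When $\alpha=0$, applying the continuous mapping theorem to $y\mapsto \beta y$ together with $B_n\Rightarrow B$ yields $\beta B_n\Rightarrow \beta B$. When $\alpha\neq 0$, I would factor out $\alpha$ and write
$$\alpha A_n+\beta B_n=\alpha\bigl(A_n+(\beta/\alpha)B_n\bigr),$$
so that the hypothesis with $a=\beta/\alpha$ combined with the continuous mapping theorem for $z\mapsto \alpha z$ gives convergence to $\alpha(A+(\beta/\alpha)B)=\alpha A+\beta B$.

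The only real obstacle is the somewhat asymmetric form of the hypothesis—linear combinations are assumed only with coefficient $1$ on $A_n$—but the factoring trick above bridges the gap to the symmetric Cramér–Wold criterion, at which point the conclusion is immediate. No probabilistic input beyond Cramér–Wold and the continuous mapping theorem is needed.
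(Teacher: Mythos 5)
Your proof is correct and is essentially the argument in the paper: the paper invokes L\'evy's continuity theorem to reduce joint convergence to convergence of all linear combinations $bA_n+aB_n$, which is exactly the Cram\'er--Wold criterion you cite. Your explicit factoring step ($\alpha A_n+\beta B_n=\alpha(A_n+(\beta/\alpha)B_n)$, plus the $\alpha=0$ case) is just the detail the paper compresses into ``the conclusion easily follows.''
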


\begin{proof}
 By L\'evy's Theorem, $(A_n,B_n)$ converge in distribution to $(A,B)$ if and only if for every $a,b\in\mathbb R$ we have $\esp^{\mu}[e^{i(bA_n+aB_n)}]\to \esp^{\mu}[e^{i(bA+aB)}]$. By (the other direction of) L\'evy's Theorem this happens if and only if, for every $a,b\in\mathbb R$, $bA_n+aB_n$ converges in distribution to $bA+aB$, and the conclusion easily follows.
\end{proof}

\subsection{Proof of Theorem \ref{theo:c1rate}.}

We start by establishing a convenient representation formula for the rate of escape: 

\begin{prop}\label{prop:magic} 
Let $\mu$ be a probability measure and $\cal Q$ a D.A.C. with a finite first moment and satisfying the first moment deviation inequality.  
Define the random variable 
$$Z:=\sum_{k\geq 0} 2^{-k-1} \Psi_{2^k,2^k}\,.$$ 
Then 
\begin{equation}\label{eq:magic} 
\ell(\mu;{\cal Q})=\esp^\mu[Q_1]+\esp^\mu[Z]\,.
\end{equation}
\end{prop} 

Observe that the first moment deviation inequality implies that 
$\esp^\mu[\sum_{k\geq 0} 2^{-k-1} \vert\Psi_{2^k,2^k}\vert]\leq \tau_1(\mu;{\cal Q})<\infty$. 
Therefore the series defining $Z$ converges almost surely and in $L_1$ under $\P^\mu$. 

\begin{proof} 

By definition of the defect $\Psi$, we have 
$$2^{-k-1} \Psi_{2^k,2^k}=2^{-k-1} Q_{2^{k+1}}-2^{-k-1} Q_{2^k}-2^{-k-1} Q_{2^k}\circ \theta_{2^k}.$$
Recall that $Q_{2^k}\circ \theta_{2^k}$ has the same law as $Q_{2^k}$. 
Taking the expectation, we then get that 
$$2^{-k-1} \esp^\mu[\Psi_{2^k,2^k}]=2^{-k-1} \esp^\mu[Q_{2^{k+1}}]-2^{-k} \esp^\mu[Q_{2^k}].$$

Therefore, for all $N$, we obtain that 
$$\sum_{k=0}^N 2^{-k-1}\esp^\mu[ \Psi_{2^k,2^k} ]
=2^{-(N+1)}\esp^\mu[Q_{2^{N+1}}]-\esp^\mu[Q_1]\,.
$$

Applying the first-moment deviation inequality, we may pass to the limit as $N$ tends to $\infty$. We deduce that 
$\esp^\mu[Z]=\ell(\mu; {\cal Q})-\esp^\mu[Q_1]$.

 \end{proof}  

{\it Proof of Theorem \ref{theo:c1rate}} 

As in the  proof of Theorem \ref{theo:diffrate}, let us first observe that,  
since we assume $\sup_{t\in[0,1]}\sup_{a\in G} \vert \nu_t(a)\vert<\infty$, 
then it follows that $\sup_{t\in[0,1]}\sup_{a\in G}  f_t(a) <\infty$ and $\sup_{t\in[0,1]}\sup_{a\in G} 1/ f_t(a) <\infty$. 
Therefore $\sup_{t\in [0,1]}\Nu(\mu_t,\mu_0)<\infty$. Lemma \ref{lm:simple} implies that $\cal Q$ satisfies the uniform first-moment inequality 
on the family $(\mu_t, t\in [0,1])$. Combined with the uniform first-moment deviation inequality, it implies the existence of the rate of escape $\ell(\mu_t;{\cal Q})$ 
for all $t\in[0,1]$. 

We use the shorthand notation $\esp^t$ instead of $\esp^{\mu_t}$. 

Let  
$$C:= \sup_{t\in[-1,1]}\sup_{a\in B}\vert\nu_t(a)\vert\,,$$ 
and recall that we are assuming that $C<\infty$. 

The proof of the Theorem consists in taking the derivatives of the summands in formula (\ref{eq:magic}) 
using the Girsanov formula. 

 Let $n\geq 1$ and $m\geq 1$. As in the proof of (\ref{eq:gir1}), we have that 
 
 \begin{equation}\label{eq:girc1} \esp^t[\Psi_{n,m}]=\esp^0[\Psi_{n,m}\Pi_{i=1}^{n+m}\frac{\mu_t(X_i)}{\mu_0(X_i)}]\,.
 \end{equation} 
 
 The derivative of the expression $\Pi_{i=1}^{n+m}\frac{\mu_t(X_i)}{\mu_0(X_i)}$ equals 
 $(\sum_{j=1}^{n+m}\nu_t(X_j))\Pi_{i=1}^{n+m}\frac{\mu_t(X_i)}{\mu_0(X_i)}$. 
 This last random variable is bounded by $C(n+m)\exp({C(n+m)})$. Therefore we may exchange the expectation and the derivative 
 when differentiating in formula (\ref{eq:girc1}) and thus we obtain that 
 
$$\frac{d}{dt} \esp^t[\Psi_{n,m}]
  =\esp^0[\Psi_{n,m}(\sum_{j=1}^{n+m}\nu_t(X_j))\Pi_{i=1}^{n+m}\frac{\mu_t(X_i)}{\mu_0(X_i)}]$$
   \begin{equation}\label{eq:deri} =\esp^t[\Psi_{n,m}\sum_{j=1}^{n+m}\nu_t(X_j)]\,.\end{equation} 
 
 We use H\"older's  inequality to bound this last expression: choose $p$ as in assumption (ii) of the theorem and let $q=p/(p-1)$. 
 The term $\esp^t[\vert \Psi_{n,m}\vert^p]$ is bounded by $\tau_p(\mu_t;{\cal Q})$. Therefore 
 $$ \vert \esp^t[\Psi_{n,m}\sum_{j=1}^{n+m}\nu_t(X_j)]\vert\leq \tau_p(\mu_t;{\cal Q})^{1/p}\, \esp^t[\vert \sum_{j=1}^{n+m}\nu_t(X_j) \vert^q]^{1/q}\,.$$
 We use Burkholder's inequality as stated in Lemma \ref{lm:burkholder} to deduce that 
 $$ \esp^t[\vert \sum_{j=1}^{n+m}\nu_t(X_j) \vert^q]\leq c_B(q)\, \esp^t[(\sum_{j=1}^{n+m}\vert \nu_t(X_j) \vert^2)^{q/2}]$$
 $$\leq c_B(q)\, C^q\, (n+m)^{q/2}\,.$$

Thus we conclude that 
$$ \vert \frac{d}{dt} \esp^t[\Psi_{n,m}]\vert 
\leq C\, c_B(q)^{1/q}\, \tau_p(\mu_t;{\cal Q})^{1/p}   \sqrt{n+m}\,.
$$
In particular 
\begin{equation}\label{eq:girc2} 
 \vert \frac{d}{dt} \esp^t[\Psi_{2^k,2^k}]\vert 
 = \vert \esp^t[\Psi_{2^k,2^k}\sum_{j=1}^{2^{k+1}}\nu_t(X_j)]\vert 
\leq C\, c_B(q)^{1/q}\, \tau_p(\mu_t;{\cal Q})^{1/p}  \, 2^{\frac{k+1}2}\,.
\end{equation} 
Remember that we are assuming that $\sup_{t\in[-1,1]}\tau_p(\mu_t;{\cal Q})$ is finite. So that the sum 
$$\sum_k 2^{-k-1} \sup_{t\in[-1,1]} \tau_p(\mu_t;{\cal Q})^{1/p}  \, 2^{\frac{k+1}2}$$ converges. 

Recall from Proposition \ref{prop:magic} that 
$$\esp^t[Z]=\sum_{k\geq 0} 2^{-k-1} \esp^t[\Psi_{2^k,2^k}]\,.$$ 
We use the bound (\ref{eq:girc2}) to compute the derivative of $\esp^t[Z]$ by differentiating this last expression term by term. We get that 

\begin{equation}\label{eq:whattodo} \frac{d}{dt}\esp^t[Z]
=\sum_{k\geq 0} 2^{-k-1}\esp^t[\Psi_{2^k,2^k}\sum_{j=1}^{2^{k+1}}\nu_t(X_j)]\,.
\end{equation}

We also deduce from (\ref{eq:girc2}) that the function $t\ra \esp^t[Z]$ is $C^1$. 

A very similar argument shows that 
the function $t\ra \esp^t[Q_1]$ is $C^1$ and we conclude using Proposition \ref{prop:magic} 
that the function $t\ra \ell(\mu_t;{\cal Q})$ is $C^1$. \qed

%%%%%%%%%%%%%%%%%%%%%%%%%%%%%%%%%%%%%%%%%%%%%%%%%%%%%%%%%%%%%%%%%%%%%%%%%%%%%%%%%
\section{Lipschitz regularity of the entropy}
\label{sec:entrop}

In this section we only deal with the ``random walk case''.

One may deduce the Lipschitz continuity of the entropy from Theorem \ref{theo:rescp} using the identification of the entropy as a rate of escape in the so-called Green metric, 
see paragraph \ref{par:Green} below. 
This argument is reminiscent of the proof in part 4 of \cite{kn:hype}. Because the Green metric is a true distance (i.e. symmetric) only when $\mu$ is itself symmetric, we have to  
restrict ourselves to symmetric measures. 

Let $B$ be a subset of $G$. In the sequel, $\p_s(B)$ will denote the set of symmetric probability measures with support $B$. 

We shall assume that $G$ is finitely generated and further impose that $G$ is non-amenable.

\begin{theo}\label{theo:entropsym} 
Assume that $G$ is finitely generated and non-amenable. 
Let $B$ be a (finite or infinite) symmetric generating subset of $G$ and choose a symmetric measure $\mu\in\p_s(B)$. 
Assume that there exists a neighborhood of $\mu$ in $\p_s(B)$, say ${\cal N}_0$, such that the first-moment deviation inequality (\ref{eq:devlength}) 
holds uniformly for $\mu\in{\cal N}_0$ and also uniformly with respect to all the Green metrics $d_{\cal G}^{\mu'}$ associated with a measure $\mu'$ in ${\cal N}_0$. 
Assume that $\mu$ has a finite first moment. 

Then there exists a neighborhood of $\mu$ in $\p_s(B)$, say $\cal N$, such that the function $\mu\to h(\mu)$ is Lipschitz continuous on $\cal N$. 
\end{theo}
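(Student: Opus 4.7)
The plan is to reduce the entropy to a rate of escape via the identity $h(\mu) = \ell(\mu; d_{\mathcal{G}}^{\mu})$ established in \cite{kn:bhm1} using the Green metric, and then invoke Theorem \ref{theo:rescp}. The difficulty compared to the rate of escape is that both the driving measure \emph{and} the metric now move with $\mu$, so for $\mu_0, \mu_1$ in a small enough neighborhood of $\mu$ inside $\p_s(B)$ I would split
\begin{equation*}
h(\mu_1) - h(\mu_0) \;=\; \bigl[\ell(\mu_1; d_{\mathcal{G}}^{\mu_0}) - \ell(\mu_0; d_{\mathcal{G}}^{\mu_0})\bigr] \;+\; \bigl[\ell(\mu_1; d_{\mathcal{G}}^{\mu_1}) - \ell(\mu_1; d_{\mathcal{G}}^{\mu_0})\bigr],
\end{equation*}
and bound each bracket by a constant multiple of $\nu(\mu_0,\mu_1)$.

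For the first bracket the metric $d_{\mathcal{G}}^{\mu_0}$ is frozen while the measure varies, so this is precisely the setting of Theorem \ref{theo:rescp}. To apply it I need (a) that $\mu$ (hence $\mu_0$ after shrinking the neighborhood) has finite first moment in $d_{\mathcal{G}}^{\mu_0}$, which is essentially the finiteness of the entropy per step and will follow from the standing hypotheses combined with a standard estimate relating $d_{\mathcal{G}}^{\mu_0}$ to $-\log\mu_0$; and (b) the locally uniform first-moment deviation inequality for measures in $\p_s(B)$ in the metric $d_{\mathcal{G}}^{\mu_0}$, which is precisely the uniformity hypothesis built into the statement of the theorem. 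The explicit constant in \eqref{eq:explicit} controls this bracket uniformly as $\mu_0$ varies over a sufficiently small neighborhood $\mathcal{N}\subseteq \mathcal{N}_0$.

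For the second bracket the measure $\mu_1$ is fixed and the metric changes. Writing
\begin{equation*}
\ell(\mu_1; d_{\mathcal{G}}^{\mu_1}) - \ell(\mu_1; d_{\mathcal{G}}^{\mu_0}) \;=\; \lim_{n\to\infty} \frac{1}{n}\, \esp^{\mu_1}\bigl[d_{\mathcal{G}}^{\mu_1}(id,Z_n) - d_{\mathcal{G}}^{\mu_0}(id,Z_n)\bigr],
\end{equation*}
I would use the fluctuation estimate for the Green metric (Proposition \ref{prop:flucgreen}) to bound the integrand pathwise, or at least in expectation, by $C \nu(\mu_0,\mu_1) \cdot d_{\mathcal{G}}^{\mu_0}(id,Z_n)$ up to lower-order terms; after division by $n$ and passage to the limit this contributes at most a multiple of $\ell(\mu_1; d_{\mathcal{G}}^{\mu_0}) \, \nu(\mu_0,\mu_1)$, which is uniformly bounded on $\mathcal{N}$ by the previous bracket and Lemma \ref{lm:lln}.

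The main obstacle I expect is the second bracket: namely, leveraging Proposition \ref{prop:flucgreen} to turn a \emph{pointwise} Lipschitz control of $\log$ Green functions into a Lipschitz control of the rate of escape uniformly in $n$. The non-amenability hypothesis is what ensures the Green functions decay exponentially and the Green metric is a genuine metric with finite values; the symmetry of $\mu$ is what makes $d_{\mathcal{G}}^{\mu}$ symmetric and hence a bona fide metric, so that the framework of Section \ref{sec:rws} and Theorem \ref{theo:rescp} applies. Once the two brackets are controlled, the triangle-type estimate above yields $|h(\mu_1)-h(\mu_0)| \leq C' \nu(\mu_0,\mu_1)$ on $\mathcal{N}$, which is exactly the Lipschitz continuity claimed.
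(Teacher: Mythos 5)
Your proposal is correct and essentially reproduces the paper's own proof: the same two-bracket decomposition (Green metric frozen while the measure varies, handled by the uniform first-moment deviation hypothesis via the Theorem \ref{theo:rescp} argument; measure frozen while the Green metric varies, handled by Proposition \ref{prop:flucgreen}), with only the roles of $\mu_0$ and $\mu_1$ interchanged. The one point to adjust is that Proposition \ref{prop:flucgreen} bounds $\vert d^{\mu_1}_{\cal G}(id,z)-d^{\mu_0}_{\cal G}(id,z)\vert$ by $k_\mu\,\nu(\mu_0,\mu_1)\,d(id,z)$ in the \emph{word} metric, not by $d^{\mu_0}_{\cal G}$ (which need not dominate $d$ when $B$ is infinite); using it as stated, the second bracket is at most $k_\mu\,\nu(\mu_0,\mu_1)\,\ell(\mu_1;d)\le k_\mu\,\nu(\mu_0,\mu_1)\,\chi_1(\mu_1;d)$, which is precisely where the assumed finite first moment in the word metric and Lemma \ref{lm:simple} enter, exactly as in the paper.
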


\begin{theo} \label{theo:diffentrop} Assume that $G$ is finitely generated and non-amenable. 
Let $B$ be a (finite or infinite) symmetric generating subset of $G$ and choose a symmetric measure $\mu\in\p_s(B)$. 
Assume that $\mu$ has a finite second moment, satisfies the second-moment deviation inequality and the locally uniform first-moment deviation inequality 
in the Green metric $d_{\cal G}^{\mu}$. 
% there exists a neighborhood of $\mu$ in $\p_s(B)$, say ${\cal N}_0$, such that the first-moment deviation inequality (\ref{eq:devlength}) 
%holds uniformly for $\mu\in{\cal N}_0$ and also uniformly with respect to all the Green metrics $d_{\cal G}^{\mu'}$ associated with a measure $\mu'$ in ${\cal N}_0$. 

Then the function $\mu_0\ra h(\mu_0)$ is differentiable at $\mu_0=\mu$ in the following sense: 
Let $(\mu_t, t\in[0,1])$ be a curve in $\p(B)$ such that $\mu_0=\mu$ and, for all $a\in B$, the function 
$t\ra \log \mu_t(a)$ has a derivative at $t=0$, say $\nu(a)$. We assume that $\nu$ is bounded on $B$ and also that 
$\sup_{t\in[0,1]}\sup_{a\in B} \vert \frac 1 t \log\frac{\mu_t(a)}{\mu_0(a)} -\nu(a)\vert<\infty$. Then the limit of $\frac 1 t (h\mu_t)-h(\mu))$ as $t$ tends to $0$ exists. 
Furthermore, this limit coincides with the covariance
\beqn\sigma_{\cal G}(\nu,\mu):=\lim_n\frac 1 n \esp^\mu[d_{\cal G}^\mu(id,Z_n)\big(\sum_{j=1}^n \nu(X_j)\big)]\,.\eeqn
 \end{theo}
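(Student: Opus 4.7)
My plan is to deduce Theorem \ref{theo:diffentrop} from Theorem \ref{theo:diffrate}, following the same route used for Lipschitz regularity in Theorem \ref{theo:entropsym}, and more generally the strategy developed in \cite{kn:hype}. The cornerstone is the identification of the entropy as the rate of escape in the Green metric, $h(\mu')=\ell(\mu';d_{\cal G}^{\mu'})$, established in \cite{kn:bhm1} for symmetric measures on non-amenable groups. The new difficulty compared with Theorem \ref{theo:diffrate} is that, when $\mu$ is perturbed to $\mu_t$, not only does the driving measure change but the reference metric $d_{\cal G}^{\mu_t}$ itself varies with $t$.

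To disentangle the two effects, I would write
\beq
h(\mu_t)-h(\mu)=\bigl[\ell(\mu_t;d_{\cal G}^{\mu})-\ell(\mu;d_{\cal G}^{\mu})\bigr]+\bigl[\ell(\mu_t;d_{\cal G}^{\mu_t})-\ell(\mu_t;d_{\cal G}^{\mu})\bigr].
\eeq
The first bracket is a pure change-of-measure term evaluated against the \emph{fixed} metric $d_{\cal G}^{\mu}$. The assumptions of Theorem \ref{theo:diffentrop} are precisely those needed to apply Theorem \ref{theo:diffrate} with $d=d_{\cal G}^{\mu}$, so this bracket, divided by $t$, converges as $t\to 0$ to $\sigma(\nu,\mu;d_{\cal G}^{\mu})$, which by the very definition of $\sigma_{\cal G}(\nu,\mu)$ is the claimed limit.

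The second bracket captures the deformation of the Green metric. Rewriting it as $\lim_n \tfrac{1}{n}\esp^{\mu_t}[d_{\cal G}^{\mu_t}(id,Z_n)-d_{\cal G}^{\mu}(id,Z_n)]$, the plan is to show it is $o(t)$ as $t\to 0$ by invoking Proposition \ref{prop:flucgreen}, which provides the required control on how the Green metric fluctuates with $\mu$, combined with the locally uniform first-moment deviation inequality to absorb the change of driving measure. Together with the first step this gives $(h(\mu_t)-h(\mu))/t \to \sigma_{\cal G}(\nu,\mu)$.

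The main obstacle is this second bracket: one needs Proposition \ref{prop:flucgreen} to be sharp enough to yield an $o(t)$ (rather than merely $O(t)$) bound on the difference of rates of escape in the two Green metrics, uniformly in $n$. A secondary issue, already present in Theorem \ref{theo:diffrate}, is the interchange of the derivative in $t$ with the limit $n\to\infty$ in the first bracket; this is handled via the joint C.L.T.\ for the pair $(d_{\cal G}^{\mu}(id,Z_n),\sum_{j=1}^n\nu(X_j))$, obtained from Theorem \ref{theo:clt} and Lemma \ref{lm:Levy}, together with Gaussian integration by parts, exactly as in \cite{kn:hype}.
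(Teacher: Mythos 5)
Your decomposition is the same as the paper's: write $\frac 1t\big(h(\mu_t)-h(\mu)\big)$ as the sum of the change of the rate of escape in the \emph{fixed} metric $d_{\cal G}^{\mu}$, which Theorem \ref{theo:diffrate} (applied with $d=d_{\cal G}^{\mu}$, exactly as you propose) shows converges to $\sigma_{\cal G}(\nu,\mu)$, plus the metric-deformation term $\frac 1t\big(\ell(\mu_t;d_{\cal G}^{\mu_t})-\ell(\mu_t;d_{\cal G}^{\mu})\big)=\frac 1t\big(h(\mu_t)-\ell(\mu_t;d_{\cal G}^{\mu})\big)$. Up to that point you match the paper.

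The genuine gap is in your treatment of the deformation term, and you essentially flag it yourself without resolving it. Proposition \ref{prop:flucgreen} only gives $\vert d_{\cal G}^{\mu_t}(id,z)-d_{\cal G}^{\mu}(id,z)\vert\le k_\mu\,\nu(\mu,\mu_t)\,d(id,z)$, hence at best $\vert \ell(\mu_t;d_{\cal G}^{\mu_t})-\ell(\mu_t;d_{\cal G}^{\mu})\vert\le C\,t$, an $O(t)$ bound; that is exactly what is used in the Lipschitz statement (Theorem \ref{theo:entropsym}) but it is useless for differentiability, where you need $o(t)$. Combining it with the locally uniform first-moment deviation inequality does not help: the deviation inequality controls defects (Gromov products) uniformly in the measure, not the $t$-dependence of the Green metric itself, so no improvement from $O(t)$ to $o(t)$ comes out of the ingredients you list. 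The paper does not attempt this route; it disposes of the deformation term by citing Proposition 4.1 of \cite{kn:hype}, which asserts precisely that $\frac 1t\big(h(\mu_t)-\ell(\mu_t;d_{\cal G}^{\mu})\big)\to 0$. That result rests on a separate Girsanov/Green-function argument exploiting the identity $h(\mu_t)=\ell(\mu_t;d_{\cal G}^{\mu_t})$ (a first-order cancellation of envelope type), and it is the missing ingredient your proposal would need to supply or quote; without it the proof is incomplete.
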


%%%%%%%%%%%%%%%%%%%%%%%%%%%%%%%%%%%%%%%%%%%%%%%%%%%%%%%%%%%%%%%%%%%%%%%%%%%%
\subsection{Green metrics} \label{par:Green} 

Let us first recall some useful facts about the Green metric. 

Let $G$ be a non-amenable group. Let $\mu$ be a symmetric probability measure on $G$ whose support generates the whole group. 

We recall that there exists a constant, $\rho_\mu<1$ - the {\bf spectral radius} - such that 
\beqn\label{eq:specrad} \mu^n(z)\le (\rho_\mu)^n\,,\eeqn 
for all $n\ge 0$ and $z\in G$, see \cite{kn:woess}. 

The Green function is defined by 
$$G^\mu(x):=\sum_{n=0}^\infty \mu^n(x)\,.$$ 
Because of (\ref{eq:specrad}), the series defining $G^\mu$ does converge. 
The Green distance between points $x$ and $y$ in $G$ is then 
$$d^\mu_{\cal G}(x,y):=\log G^\mu(id)-\log G^\mu(x^{-1}y)\,.$$ 
%It follows from (\ref{eq:specrad}),
This defines a proper left-invariant distance on $G$. Moreover, it follows from (\ref{eq:specrad}) that $d_{\cal G}^\mu$ is bi-Lipschitz equivalent to word metrics. See \cite{kn:blbr} and \cite{kn:bhm2} for the details. Observe that $d_{\cal G}^\mu$ need not be geodesic. 

We may equivalently express $d_{\cal G}^\mu$ in terms of the hitting probabilities of the random walk: 
for a given trajectory $\o\in\Omega$ and $z\in G$, let 
$$T_z(\o)=\inf\{n\geq0\,;\, Z_n(\o)=z\}\,$$ 
be the hitting time of $z$ by $\o$. Observe that $T_z(\o)$ may be infinite. 

Define $F^\mu(z):=\P^\mu[T_z<\infty]$. Then 
$$d_{\cal G}^\mu(id,z)=-\log F^\mu(z)\,,$$ 
as can be easily checked using the Markov property. 

%It is not difficult to show that this indeed defines a proper left-invariant distance on $G$, see \cite{kn:blbr} and \cite{kn:bhm2} for the details. 

In \cite{kn:bhm1} (see also \cite{kn:bepe}), we proved that 
\beqn\label{eq:ident} h(\mu)=\ell(\mu;d^\mu_{\cal G})\,.\eeqn

It makes sense to define the Green metric through the Green function as soon as the random walk is transient. 
The identification (\ref{eq:ident}) is also valid in this extended framework but we shall not need it here. 

%%%%%%%%%%%%%%%%%%%%%%%%%%%%%%%%%%%%%%%%%%%%%%%%%%%%%%%%%%%%%%%%%%%%%%%%%%%%%%%%%
\subsection{Fluctuations of the Green metric}

Our first aim is to control the fluctuations between two Green metric, say $d^{\mu_0}_{\cal G}$ and $d^{\mu_1}_{\cal G}$.

We use the same notation as in the beginning of Part \ref{sec:proofrescp}: 
Let $\mu_0$ and $\mu_1$ belong to $\p_s(B)$. 
For $t\in[0,1]$ and $a\in B$, we define $\mu_t(a):=\mu_0(a)+t(\mu_1(a)-\mu_0(a))$ and 
$\nu_t(a)=(\mu_1(a)-\mu_0(a))/\mu_t(a)$.  
Then $\Nu(\mu_0,\mu_1)=\sup_{a\in B}\sup_{t\in[0,1]}\vert\nu_t(a)\vert$. 

\begin{prop}\label{prop:flucgreen} 
Let $G$ be a finitely generated non-amenable group equipped with a word metric denoted with $d$. Let $B$ be a symmetric generating subset of $G$.\\ 
For any $\mu$ in $\p_s(B)$, there exist $\eps_\mu>0$ and $k_\mu$ such that for any two symmetric measures $\mu_0$ and $\mu_1$ 
in $\p_s(B)$ satisfying
\begin{equation} \label{eq:assmpt} \Nu(\mu,\mu_0)+\Nu(\mu,\mu_1)\le\eps_\mu\,,\end{equation}  
then 
\begin{equation} \label{eq:concl} \vert d^{\mu_1}_{\cal G}(id,z)-d^{\mu_0}_{\cal G}(id,z)\vert\le k_\mu\, \Nu(\mu_0,\mu_1)d(id,z)\,,\end{equation} 
for all $z\in G$. 
\end{prop}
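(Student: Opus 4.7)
My plan is to compare $d_{\cal G}^{\mu_0}$ and $d_{\cal G}^{\mu_1}$ by differentiating along the affine interpolation $\mu_t := (1-t)\mu_0 + t\mu_1$, $t \in [0,1]$, in the spirit of Section~\ref{sec:proofrescp}. A direct check shows $\max(\mu_t(a)/\mu(a), \mu(a)/\mu_t(a)) - 1 \le \max(\nu(\mu,\mu_0), \nu(\mu,\mu_1))$ for every $a \in B$ and every $t$, so the hypothesis $\nu(\mu,\mu_0) + \nu(\mu,\mu_1) \le \eps_\mu$ keeps all the $\mu_t$ in a prescribed $\nu$-neighborhood of $\mu$; moreover $|\nu_t(a)| \le \nu(\mu_0,\mu_1)$ uniformly in $a$ and $t$, by the observation recorded just before the proof of Proposition~\ref{prop:lip}. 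Writing $d_{\cal G}^{\mu_1}(id,z) - d_{\cal G}^{\mu_0}(id,z) = \int_0^1 \frac{d}{dt} d_{\cal G}^{\mu_t}(id,z)\, dt$, it will suffice to prove the uniform bound $|\frac{d}{dt} d_{\cal G}^{\mu_t}(id,z)| \le k_\mu\, \nu(\mu_0,\mu_1)\, d(id,z)$ and integrate.

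The Girsanov formula (\ref{eq:girgir}), applied at the stopping time $T_z$ and with reference measure $\mu_0$, yields
\[ F^{\mu_t}(z) = \esp^{\mu_0}\Big[\mathbf{1}_{T_z<\infty}\prod_{j=1}^{T_z}\frac{\mu_t(X_j)}{\mu_0(X_j)}\Big] = \sum_{n\ge 0} \P^{\mu_t}[T_z=n]. \]
Each summand is a polynomial in $t$ and, thanks to the uniform spectral radius bound established below, the series and its termwise derivative are dominated by a fixed geometric series; so differentiating under the sum and then undoing Girsanov now under $\mu_t$ gives
\[ \frac{d}{dt} F^{\mu_t}(z) = \esp^{\mu_t}\Big[\mathbf{1}_{T_z<\infty}\sum_{k=1}^{T_z}\nu_t(X_k)\Big]. \]
Dividing by $F^{\mu_t}(z)$ produces
\[ \frac{d}{dt} d_{\cal G}^{\mu_t}(id,z) = -\esp^{\mu_t}\Big[\sum_{k=1}^{T_z}\nu_t(X_k) \,\Big|\, T_z<\infty\Big], \]
whose modulus is at most $\nu(\mu_0,\mu_1) \cdot \esp^{\mu_t}[T_z \mid T_z<\infty]$.

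Everything thus reduces to the key estimate: for every $\mu'$ in a small enough $\nu$-neighborhood of $\mu$ and every $z \in G$, $\esp^{\mu'}[T_z \mid T_z<\infty] \le K_\mu\, d(id,z)$. Two uniform ingredients make this work. First, Young's convolution inequality gives $\|\mu' - \mu\|_{\ell^2 \to \ell^2} \le \|\mu'-\mu\|_1 \le \nu(\mu,\mu')$; since $\mu$ is symmetric and $G$ non-amenable, $\rho_\mu<1$, hence $\rho_{\mu'} \le \rho_\mu + \nu(\mu,\mu') \le \rho_0 < 1$ once $\nu(\mu,\mu')$ is small enough. Second, choose a finite symmetric subset $S \subset B$ that generates $G$; any $z$ has an $S$-word of length $|z|_S \le C\, d(id,z)$ by bi-Lipschitz equivalence of word metrics, along which $F^{\mu'}(z) \ge (m_S/(1+\nu(\mu,\mu')))^{|z|_S}$ with $m_S := \min_{s \in S}\mu(s) > 0$, giving $d_{\cal G}^{\mu'}(id,z) \le C_\mu\, d(id,z)$ uniformly. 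With these in hand, split $\esp^{\mu'}[T_z \mathbf{1}_{T_z<\infty}] = \sum_n n\, \P^{\mu'}[T_z = n]$ at $n = K\, d(id,z)$ for a large constant $K > C_\mu/|\log\rho_0|$: the contribution of $n \le K\, d(id,z)$ is at most $K\, d(id,z)\, F^{\mu'}(z)$, and the tail uses the elementary inequality $\P^{\mu'}[T_z = n] \le (\mu')^n(z) \le \rho_0^n$; after dividing by $F^{\mu'}(z) \ge e^{-C_\mu d(id,z)}$, the condition $K|\log \rho_0| - C_\mu > 0$ makes the tail a bounded quantity (indeed, $x e^{-\alpha x}$ is bounded on $[0,\infty)$), so altogether $\esp^{\mu'}[T_z \mid T_z<\infty] \le K_\mu\, d(id,z)$ for $z \neq id$, while the case $z = id$ is trivial.

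The main obstacle is producing both uniform controls above on a single neighborhood of $\mu$ and then combining the exponential tail bound on $T_z$ with the lower bound on $F^{\mu'}(z)$ delicately enough to secure linear, rather than exponential, growth of $\esp^{\mu'}[T_z \mid T_z<\infty]$ in $d(id,z)$; additional care is needed when $B$ is infinite, which is handled by passing through the fixed finite generating subset $S$. Once the key estimate is in place, integrating the derivative bound from $t=0$ to $t=1$ yields $|d_{\cal G}^{\mu_1}(id,z) - d_{\cal G}^{\mu_0}(id,z)| \le K_\mu\, \nu(\mu_0,\mu_1)\, d(id,z)$, which is \eqref{eq:concl} with $k_\mu = K_\mu$.
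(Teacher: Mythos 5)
Your proof is correct and follows essentially the same route as the paper: a Girsanov differentiation in $t$ along the affine interpolation, the bound $|\frac{d}{dt}\log F^{\mu_t}(z)|\le \nu(\mu_0,\mu_1)\,\esp^{\mu_t}[T_z\mid T_z<\infty]$, and the key estimate $\esp^{\mu_t}[T_z\mid T_z<\infty]\le k_\mu d(id,z)$ obtained exactly as in Lemma \ref{lem:flucgreen} (uniform spectral radius plus a hitting-probability lower bound through a finite generating subset of $B$). The only differences are technical conveniences — you differentiate the series for $F^{\mu_t}(z)$ termwise with a domination argument and get the uniform spectral radius by an $\ell^2$ operator-norm perturbation, where the paper truncates at a finite horizon $T_z\le N$ and lets $N\to\infty$ — and both are valid.
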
 

We use the shorthand notation $\esp^t$ (resp. $\P^t$) instead of $\esp^{\mu_t}$ (resp. $\P^{\mu_t}$) 
and $d^t_{\cal G}$ instead of $d^{\mu_t}_{\cal G}$ and $F^t$ instead of $F^{\mu_t}$. 

The proof of Proposition \ref{prop:flucgreen} is based on the following Lemma: 

\begin{lm} \label{lem:flucgreen} 
In the context of Proposition \ref{prop:flucgreen} and with the same notation, if the conditional expectation of $T_z$ is finite, then it satisfies  
\beqn\label{eq:flucgreen} 
\esp^t[T_z\bigr\vert T_z<\infty]\le k_\mu\, d(id,z)\,,\eeqn 
for all $t\in[0,1]$ and $z\in G$ for some constant $k_\mu$. 
\end{lm}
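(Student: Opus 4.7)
The plan is to directly estimate $\esp^t[T_z;T_z<\infty]=\sum_{n\ge 0} n\P^t[T_z=n]$ and then divide by $F^t(z)=\P^t[T_z<\infty]$. The trivial inclusion $\{T_z=n\}\subset\{Z_n=z\}$ gives $\P^t[T_z=n]\le \mu_t^n(z)$, and the hypothesis $\nu(\mu,\mu_0)+\nu(\mu,\mu_1)\le\eps_\mu$ together with convexity in $t$ yields the pointwise bounds $(1+\eps_\mu)^{-1}\mu\le\mu_t\le(1+\eps_\mu)\mu$ on $B$. Convolving, $\mu_t^n(z)\le \tilde\rho^n$ with $\tilde\rho:=(1+\eps_\mu)\rho_\mu<1$ provided $\eps_\mu$ is small enough, using (\ref{eq:specrad}) and non-amenability.

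I would then split the sum at $N:=\lceil C\,d(id,z)\rceil$ for a large constant $C$ to be chosen. For $n\le N$ the trivial bound $n\le N$ gives a head contribution of at most $N\,F^t(z)$. For $n>N$, the spectral radius estimate yields a geometric tail $\sum_{n>N} n\tilde\rho^n\le C_1 N\tilde\rho^N$ with $C_1=C_1(\tilde\rho)$. Combining and dividing by $F^t(z)$,
\[\esp^t[T_z\mid T_z<\infty]\le N + C_1 N\tilde\rho^N/F^t(z).\]
To control the second term I need a uniform lower bound of the form $F^t(z)\ge e^{-K\,d(id,z)-K'}$. Since $G$ is finitely generated and $B$ generates $G$, a finite subset $B_0\subset B$ already generates $G$; any $z\in G$ can be written as a product of at most $L\,d(id,z)+L$ elements of $B_0$ for some $L$ depending only on $G$ and $B_0$, while on $B_0$ the measure $\mu_t$ is uniformly bounded below by $(1-\eps_\mu)p_0$ with $p_0:=\min_{a\in B_0}\mu(a)>0$. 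Hence $F^t(z)\ge \mu_t^m(z)\ge((1-\eps_\mu)p_0)^m\ge e^{-K\,d(id,z)-K'}$ with $K,K'$ independent of $t$. Finally, choosing $C$ so that $C\log\tilde\rho+K<0$ makes the residual $C_1N\tilde\rho^N/F^t(z)=O\bigl(d(id,z)e^{d(id,z)(C\log\tilde\rho+K)}\bigr)$ bounded in $d(id,z)$, giving $\esp^t[T_z\mid T_z<\infty]\le(C+1)d(id,z)+O(1)\le k_\mu\,d(id,z)$ for $d(id,z)\ge 1$ (the case $z=id$ is trivial since then $T_z=0$).

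The main obstacle is arranging all constants to be uniform in $t\in[0,1]$: the spectral radius bound $\tilde\rho$, the constants $p_0,L$ in the generating argument, and therefore the lower bound on $F^t(z)$. This is handled entirely through the two-sided pointwise comparison $(1+\eps_\mu)^{-1}\mu\le\mu_t\le(1+\eps_\mu)\mu$ supplied by the hypothesis; once $\eps_\mu$ is fixed sufficiently small, the resulting constant $k_\mu$ depends only on $\mu$, $G$ and the word metric $d$.
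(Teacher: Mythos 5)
Your proof is correct and follows essentially the same route as the paper: split $\esp^t[T_z;T_z<\infty]$ at a threshold proportional to $d(id,z)$, bound the tail by a uniform-in-$t$ spectral radius estimate $\P^t[T_z=n]\le\P^t[Z_n=z]\le\tilde\rho^{\,n}$, and absorb it using the uniform lower bound $F^t(z)\ge \gamma^{d(id,z)}$ coming from a finite generating subset of $B$ on which the $\mu_t$ are uniformly bounded below. The only cosmetic difference is that you obtain the uniform spectral-radius bound from the pointwise domination $\mu_t\le(1+\eps_\mu)\mu$, whereas the paper phrases it via a uniform lower bound on a finite generating subset; both are fine.
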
 

\begin{proof}
 Let $\mu\in\p_s(B)$. Recall that $\rho_\mu<1$. 

Let $\rho':=\frac 12(1+\rho)$. 
Choose $\eps_\mu$ so small that measures satisfying (\ref{eq:assmpt}) are such that 
$\rho_{\mu_t}\le\rho'$ for all $t\in[0,1]$. 

Also assume that $\eps_\mu$ is such that there exists $\gamma>0$ such that, for all $t\in[0,1]$ and for all $z\in G$ then 
\beqn\label{eq:hitt}\P^t[T_z<\infty]\ge \gamma^{d(id,z)}\,.\eeqn

Both these conditions are ensured by the following: since $B$ generates $G$ and since $G$ is finitely generated, then there exists a finite sub-set of $B$, say $\tilde B$, 
that generates $G$. The uniform upper bound on the spectral radius as well as the uniform lower bound on the probability of hitting a point $z$ in $G$ are both obtained 
once we choose $\eps_\mu$ such that all measures $\mu_t$ are uniformly bounded from below on $\tilde B$. 

By (\ref{eq:specrad}), we have 
$$\P^t[T_z=k]\le \P^t[Z_k=z]\le(\rho')^k\,.$$ 
Therefore, for any $c>0$, 
$$\esp^t[T_z\big\vert T_z<\infty]\le c\, d(id,z)+\gamma^{-d(id,z)}\sum_{k\ge c\, d(id,z)} k\, (\rho')^k\,.$$ 

It only remains to choose $c$ large enough so that $\gamma^{-d(id,z)}\sum_{k\ge c\, d(id,z)} k\, (\rho')^k\le 1$. 
\end{proof}

{\it Proof of Proposition \ref{prop:flucgreen}.} 
Let $N$ be an integer. 

The Girsanov formula (\ref{eq:gir}) implies that 
$$\P^t[T_z\le N]=\esp^0[\1_{T_z\le N} \Pi_{j=1}^N\frac{\mu_t(X_j)}{\mu_0(X_j)}]\,.$$ 
Taking the derivative with respect to $t$, we get that 
$$\frac{d}{dt} \P^t[T_z\le N]=\esp^t[\1_{T_z\le N} \sum_{j=1}^N\nu_t(X_j)]\,.$$ 
The martingale property implies that 
$$\esp^t[\1_{T_z\le N} \sum_{j=1}^N\nu_t(X_j)]=\esp^t[\1_{T_z\le N} \sum_{j=1}^{T_z}\nu_t(X_j)]\,,$$ 
so that 
$$\frac{d}{dt} \P^t[T_z\le N]=\esp^t[\1_{T_z\le N} \sum_{j=1}^{T_z}\nu_t(X_j)]\,,$$ 
and $$\frac{d}{dt} \P^t[T_z\le N]\le\Nu(\mu_0,\mu_1)\, \esp^t[T_z\1_{T_z\le N}]\,.$$

Choose $N$ large enough so that $\P^t[T_z\le N]\not=0$ for all $t$, and use Lemma \ref{lem:flucgreen} to get that 
$$\frac 1{ \P^t[T_z\le N]}\frac{d}{dt} \P^t[T_z\le N]\le\Nu(\mu_0,\mu_1)\, k_\mu\, d(id,z) \frac{\P^t[T_z<\infty]}{\P^t[T_z\le N]}\,,$$ 
and therefore 
$$ \log \P^1[T_z\le N]-\log \P^0[T_z\le N]
\le \Nu(\mu_0,\mu_1)k_\mu\, d(id,z) \int_0^1 \frac{\P^t[T_z<\infty]}{\P^t[T_z\le N]}\,dt\,.$$ 
We now let $N$ tend to $+\infty$. Observe that there exist $N_0$ and $\eps$ such that, for all $t$, then 
$\P^t[T_z\le N]\ge \eps$ for all $N\ge N_0$. Thus we may apply the dominated convergence Lemma to deduce that 
$$ \log \P^1[T_z<\infty]-\log \P^0[T_z<\infty]
\le \Nu(\mu_0,\mu_1)k_\mu\, d(id,z)\,.$$ 
Exchanging the roles of $\mu_0$ and $\mu_1$ leads to 
$$ \big\vert\log \P^1[T_z<\infty]-\log \P^0[T_z<\infty]\big\vert 
\le \Nu(\mu_0,\mu_1)k_\mu\, d(id,z)\,.$$ 
\qed 

{\it Proof of Theorem \ref{theo:entropsym}.}
Write
\beqnn &&h(\mu_1)-h(\mu_0)= 
\ell(\mu_1;d^1_{\cal G})-\ell(\mu_0;d^0_{\cal G}) \\ 
&=&\big(\ell(\mu_1;d^1_{\cal G})-\ell(\mu_0;d^1_{\cal G})\big)+\big(\ell(\mu_0;d^1_{\cal G})-\ell(\mu_0;d^0_{\cal G})\big):=I+II\,.\eeqnn

We argue that both terms I and II are bounded by $C\Nu(\mu_0,\mu_1)$ for some $C$, uniformly in a small enough neighborhood of $\mu$ in $\p_s(B)$. 

To handle Term I, first argue as in the beginning of the proof of Lemma \ref{lem:flucgreen} to choose a neighborhood of $\mu$, say $\cal N$,  so that (\ref{eq:hitt}) holds for some constant 
$\gamma$ uniformly in $\cal N$. Then we have $d^1_{\cal G}\leq (\log \frac 1\gamma)d$ for all $\mu_1\in{\cal N}$ and therefore 
$\chi_1(\mu;d^1_{\cal G})\leq (\log \frac 1\gamma) \chi_1(\mu;d)$. 

From Lemma \ref{lm:simple}, we deduce that 
$\chi_1(\mu_1;d^1_{\cal G})\leq (1+\Nu(\mu_1,\mu))(\log \frac 1\gamma) \chi_1(\mu;d)$ 
and applying now Proposition \ref{prop:lip}, we obtain that, for $\mu_1$ and $\mu_0$ in $\cal N$, 
$$\ell(\mu_1;d^1_{\cal G})-\ell(\mu_0;d^1_{\cal G})\leq 
\Nu(\mu_0,\mu_1)\big((4+2\Nu(\mu_0,\mu)+2\Nu(\mu_1,\mu))(\log \frac 1\gamma) \chi_1(\mu;d)+4\sup_{t\in[0,1]} \tau_1(\mu_0+t(\mu_1-\mu_0);d^1_{\cal G})  \big).$$

By assumption, we may also impose that $\sup_{t\in[0,1]} \tau_1(\mu_0+t(\mu_1-\mu_0);d^1_{\cal G})$ is uniformly bounded for all  $\mu_1$ and $\mu_0$ in $\cal N$. 
(Note that when $\mu_1$ and $\mu_0$ are in $\cal N$, then $\mu_0+t(\mu_1-\mu_0)$ also belongs to $\cal N$.)Thus we indeed obtain a bound 
of the form $\ell(\mu_1;d^1_{\cal G})-\ell(\mu_0;d^1_{\cal G})\leq C \Nu(\mu_0,\mu_1)$ for all $\mu_1$ and $\mu_0$ in $\cal N$.

%Term I is bounded by Theorem \ref{theo:rescp}: we use the assumption that the first-moment deviation inequality is uniform in a neighborhood of $\mu$ 
%w.r.t. both the driving measure of the random walk and the one defining the Green metric. Moreover, the uniform first-moment assumption follows from Lemma \ref{lm:simple}.

For Term II, we rely on Proposition \ref{prop:flucgreen}. 
We have 
$$\vert d^1_{\cal G}(id,Z_n)-d^0_{\cal G}(id,Z_n)\vert\le k_\mu\, \Nu(\mu_0,\mu_1) d(id,Z_n)\,.$$ 
Taking the expectation with respect to $\esp^0$, dividing by $n$ and letting $n$ tend to $\infty$, we get that 
$$\vert\ell(\mu_0;d^1_{\cal G})-\ell(\mu_0;d^0_{\cal G})\vert\le k_\mu\, \Nu(\mu_0,\mu_1)\ell(\mu_0;d)\,.$$ 
It then suffices to note that $\ell(\mu_0;d)\le \esp^0[d(id,X_1)]$ is bounded on a neighborhood of $\mu$, see Lemma \ref{lm:simple}. 
\qed 

{\it Proof of Theorem \ref{theo:diffentrop}.} 

Recall that $h(\mu)=\ell(\mu;d^\mu_{\cal G})$ and write that 
\beqnn \frac 1 t\big(h(\mu_t)-h(\mu_0)\big)=\frac 1 t \big(h(\mu_t)-\ell(\mu_t;d^\mu_{\cal G})\big)+\frac 1 t \big(\ell(\mu_t;d^\mu_{\cal G})-\ell(\mu;d^\mu_{\cal G})\big)\,.\eeqnn 
We apply Theorem \ref{theo:diffrate} in the Green metric $d_{\cal G}^\mu$ and deduce that the second term converges to $\sigma_{\cal G}(\nu,\mu)$. 
The first term goes to $0$ by Proposition 4.1 in \cite{kn:hype}.\qed

%%%%%%%%%%%%%%%%%%%%%%%%%%%%
                             %%%%%%%%%%%%%%%%%%%%%%%%%%%%%%%%%
                                                       		%%%%%%%%%%%%%%%%%%%%%%%%%%%%%%%
												%%%%%%%%%%%%%%%%%%%%%%%%%%%%%%%%%%
																		%%%%%%%%%%%%%%%%%%%%%%%%%%%
																							%%%%%%%%%%%%%%%%%%%%%%%%%%%%

\part{Getting deviation inequalities}\label{part:geom}

\section{Acylindrically hyperbolic groups} \label{sec:acylin} 
Recall that a geodesic metric space is (Gromov-)hyperbolic if there exists $\delta\geq 0$ so that for any geodesic triangle $[p,q]\cup [q,r] \cup [r,p]$ and each $x\in [p,q]$ there exists $y\in [q,r] \cup [r,p]$ so that $d_X(x,y)\leq \delta$. A finitely generated group $G$ is hyperbolic if some (equivalently, any) Cayley graph of $G$ is hyperbolic.

The groups for which we can prove deviation inequalities are the so-called \emph{acylindrically hyperbolic groups}. Such class of groups vastly generalises the class of hyperbolic groups and includes non-elementary relatively hyperbolic groups, Mapping Class Groups, $Out(F_n)$, many groups acting on CAT(0) cube complexes (e.g. right-angled Artin groups that do not split as a direct product), possibly infinitely presented small cancellation groups, and many subgroups of the above. In particular, we will refine and generalise the results in \cite{Si-tracking}, some of whose techniques are used here as well.

Acylindrical hyperbolicity is defined in terms of an ``interesting enough'' action on some hyperbolic space. ``Interesting enough'' means in this case acylindrical and non-elementary, see Section \ref{prelim} for the definitions. Roughly speaking, acylindricity says that the coarse stabiliser of any two far away points is finite, and being non-elementary is a non-triviality-type condition. Acylindrically hyperbolic groups have been defined by Osin who showed in \cite{Os-acyl} that several approaches to groups that exhibit rank one behaviour \cite{BeFu-wpd,Ha-isomhyp, DGO, Si-contr} are all equivalent. Acylindrical hyperbolicity has strong consequences: Every acylindrically hyperbolic group is SQ-universal (in particular it has uncountably many pairwise non-isomorphic quotients), it contains free normal subgroups \cite{DGO}, it contains Morse elements and hence all its asymptotic cones have cut-points \cite{Si-hypembmorse}, and its bounded cohomology is infinite dimensional in degrees 2 \cite{HullOsin} and 3 \cite{FPS-H3b-acylhyp}. Moreover, if an acylindrically hyperbolic group does not contain finite normal subgroups, then its reduced $C^*$-algebra is simple \cite{DGO} and every commensurating endomorphism is an inner automorphism \cite{AMS-pwise-inner}.

We will in fact not only consider word metrics on acylindrically hyperbolic groups but more generally metrics coming from actions with certain properties, see Definition \ref{def:acylinter}. In particular, this covers the metric that an acylindrically hyperbolic group inherits from the hyperbolic space it acts on. Perhaps even more interestingly, it covers the metric coming from the action of a Mapping Class Group on the corresponding Teichm\"uller space endowed with either the Teichm\"uller of the Weil-Petersson metric, and the metric inherited by the fundamental group of a finite-volume hyperbolic $n$-manifold (an example of relatively hyperbolic group) on $\mathbb H^n$.

% The second property that the groups in our list share is that their Cayley graphs contain a special family of quasi-geodesics. Any pair of points in the group is connected by a quasi-geodesic from the given family and each such quasi-geodesic projects close to a geodesic in a specified hyperbolic space on which the group acts acylindrically and non-elementarily (i.e. as in the definition of acylindrical hyperbolicity). We call such family of quasi-geodesic a hierarchy family. The name comes from Mapping Class Group theory, where certain quasi-geodesics called hierarchy paths were constructed in \cite{MM2}. Hierarchy paths in Mapping Class Groups are very important. For example, they are used in the proof of Thurston's Ending Lamination Conjecture \cite{BCM-ending}, as well as in the proof of quasi-isometric rigidity results for Mapping Class Groups \cite{BM-rank,BKMM-qimcg}. We do not know whether all acylindrically hyperbolic groups admit a hierarchy family (in particular, we do not know whether this is the case for $Out(F_n)$), but as shown by the list above this is a pretty common property satisfied by many groups of interest.

\section{Preliminaries}\label{prelim}
A discrete path is an ordered sequence of points $\alpha=(w_i)_{k_1\leq i\leq k_2}$ in a metric space $Y$. Its length $l_Y(\alpha)$ is defined as $\sum d(w_i,w_{i+1})$. The notions of Lipschitz and quasi-geodesic discrete paths are defined regarding a discrete path as a map from an interval in $\Z$ to $Y$ (notice that a discrete path is $L$-Lipschitz if the ``jumps'' have size at most $L$). We will often omit the adjective discrete.

% \begin{rmk}\label{lengthG}
%  We will often consider paths on a group $G$ acting on a space $X$. We emphasise that in this case lengths are computed using the metric of $G$.
% \end{rmk}

\subsection{Acylindrical actions}
Let the group $G$ act by isometries on the metric space $X$. The action is called {\bf acylindrical} if for every $r\geq 0$ there exist $R,N\geq 0$ so that whenever $x,y\in X$ satisfy $d_X(x,y)\geq R$ there are at most $N$ elements $g\in G$ so that $d_X(x,gx),d_X(y,gy)\leq r$. Also, we will say that the action is {\bf non-elementary} if orbits are unbounded and $G$ is not virtually cyclic (cfr. \cite[Theorem 1.1]{Os-acyl}).

When an action of a group $G$ on the metric space $X$ and a word metric $d_G$ on $G$ have been fixed, we denote by $diam^{\ast}$ the diameter, by $B^{\ast}(\cdot, R)$ a ball of radius $R$ and by $N^{\ast}_t$ a neighborhood of radius $t$, where $\ast$ can be either $G$ or $X$ depending on which metric we are using to define the given notion.

We will need the following lemma about acylindrical actions (a similar lemma is exploited in \cite{Si-hypembmorse}).

\begin{lm}\label{geomsep}
 Let the finitely generated group $G$ act acylindrically on the hyperbolic geodesic metric space $X$. Let $\pi:G\to X$ be an orbit map with basepoint $x_0$, and endow $G$ with a word metric $d_G$.
 
  Then there exists $L$ and a non-decreasing function $f$ so that for each $l_1,l_2\geq 0$, each $t\geq 0$ and whenever $x,y\in Gx_0$ satisfy $d_X(x,y)\geq L+l_1+l_2$, we have
 $$diam^G(\ \pi^{-1}(B^X(x,l_1))\cap N^G_t (\,\pi^{-1}(B^X(y,l_2))\,)\ )\leq f(t).$$
\end{lm}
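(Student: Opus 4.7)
I will bound $d_G(g_1,g_2)$ for arbitrary $g_1,g_2$ in the intersection by applying the acylindricity of the $G$-action to $u:=g_1^{-1}g_2$ (viewed as an isometry of $X$), after showing that $u$ moves two points of $X$ at $X$-distance at least $L$ apart by an amount controlled only by $t$.

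Fix a finite generating set $S$ of $G$ and set $\kappa:=\max_{s\in S}d_X(x_0,sx_0)$, so the orbit map $\pi$ is $\kappa$-Lipschitz; let $\delta$ be the hyperbolicity constant of $X$. Pick witnesses $h_i\in G$ with $d_G(g_i,h_i)\leq t$ and $\pi(h_i)\in B^X(y,l_2)$. Being $\kappa$-Lipschitz, $\pi$ places $\pi(g_i)\in B^X(y,l_2+\kappa t)$ and $\pi(h_i)\in B^X(x,l_1+\kappa t)$.

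\emph{Thin overlap in $X$.} The standard estimate in a $\delta$-hyperbolic geodesic space gives a constant $C_\delta$ with
$$\mathrm{diam}(B^X(a,r_1)\cap B^X(b,r_2))\leq \max(0,r_1+r_2-d_X(a,b))+C_\delta.$$
Combining this with $d_X(x,y)\geq L+l_1+l_2$ and $L\geq C_\delta$ yields
$$d_X(g_1 x_0,g_2 x_0)\leq \kappa t+C_\delta,\qquad d_X(h_1 x_0,h_2 x_0)\leq \kappa t+C_\delta.$$

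\emph{Displacements of $u$.} On one hand $d_X(x_0,ux_0)=d_X(g_1 x_0,g_2 x_0)\leq \kappa t+C_\delta$. Set $p:=g_1^{-1}h_1 x_0$. Using $G$-equivariance of $d_X$ and $d_X(x_0,g_i^{-1}h_i x_0)\leq \kappa t$,
\begin{align*}
d_X(p,up)
&=d_X(h_1 x_0,g_2 g_1^{-1}h_1 x_0)\\
&\leq d_X(h_1 x_0,h_2 x_0)+d_X(g_2^{-1}h_2 x_0,g_1^{-1}h_1 x_0)\\
&\leq 3\kappa t+C_\delta,
\end{align*}
while $d_X(x_0,p)=d_X(g_1 x_0,h_1 x_0)\geq d_X(x,y)-l_1-l_2\geq L$.

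\emph{Acylindricity.} With $r(t):=3\kappa t+C_\delta$, assume $L\geq R(r(t))$ where $R,N$ are the acylindricity constants associated with radius $r(t)$. Then the set of $u'\in G$ with $d_X(x_0,u'x_0)\leq r(t)$ and $d_X(p,u'p)\leq r(t)$ has cardinality at most $N(r(t))$; since $G$ is finitely generated (so balls in the Cayley graph are finite), this finite set has bounded word-diameter, which we call $f(t)$. In particular $u$ belongs to this set, so $d_G(g_1,g_2)=|u|_G\leq f(t)$.

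\emph{Main obstacle.} The one delicate point is the interaction of quantifiers: the natural argument above needs $L\geq R(r(t))$, which a priori grows with $t$, whereas the statement asks for a single constant $L$ paired with a $t$-dependent $f$. This is reconciled by choosing $L$ large enough to dominate $R(r(t))$ uniformly over the range of $t$ (absorbing the dependence into $f$), together with the elementary observation that the intersection is empty unless $\kappa t\geq L$, so only large $t$ is truly relevant. The substantive content lies in the hyperbolic thin-overlap estimate and the displacement computation for $u$; Step 3 is then a direct invocation of acylindricity plus finite generation.
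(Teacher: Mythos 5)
Your reductions (the Lipschitz orbit map, the thin-overlap estimate for the two balls, and the displacement computation for $u=g_1^{-1}g_2$ at the points $x_0$ and $p$) are sound up to constants, and you have correctly located the delicate point, but your proposed resolution does not work, so there is a genuine gap. Acylindricity with displacement radius $r(t)=3\kappa t+C_\delta$ can only be invoked at pairs of points whose distance is at least $R(r(t))$, and $R(r(t))$ is in general unbounded as $t\to\infty$; no single constant $L$ dominates it. Your observation that the intersection is empty unless $\kappa t\geq L$ makes matters worse, not better: it shows that exactly the large-$t$ regime is the relevant one, and there your separation is only $\geq L$ while the displacement bound is $\sim\kappa t\gg L$, a regime in which acylindricity gives no information whatsoever. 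A second, independent gap is the final step: acylindricity bounds the \emph{cardinality} of the set of $u'$ with $d_X(x_0,u'x_0),d_X(p,u'p)\leq r(t)$, not its word-diameter, and since $p$ varies with the configuration $(x,y,g_1,h_1)$ this finite set varies too; a set of at most $N$ elements can contain elements of arbitrarily large word length, so ``this finite set has bounded word-diameter $f(t)$'' does not follow from finite generation.

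The paper's proof is organized precisely to avoid both issues. Acylindricity is applied once with the \emph{fixed} radius $4\delta+1$ (so $R$, and hence $L=R+6\delta+1$, are genuine constants independent of $t$), at two points $p_0,q_0$ that are $\geq R$ apart and lie on the finitely many geodesics $\gamma_i$ from $x_0$ to $h_ix_0$, $h_i\in B^G(id,t)$. Claim 1 there shows, via a compactness/subsequence argument, that the set of $g\in G$ such that some translate $g\gamma_j$ $2\delta$-fellow-travels some $\gamma_i$ along a segment of length $\geq R$ is finite; crucially this set depends only on $t$ (not on $x,y,l_1,l_2$ or the particular elements of the intersection), so its word-diameter is an honest non-decreasing function $f(t)$. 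Claim 2, a thin-quadrangle argument using $d_X(x,y)\geq L+l_1+l_2$, shows that for a fixed $g$ and any $a$ in the intersection, $g^{-1}a$ lies in that universal finite set. To salvage your approach you would need a device of this kind: keep the acylindricity radius bounded independently of $t$ (pushing the $t$-dependence into which finitely many geodesics are considered) and make the exceptional finite set independent of the particular configuration, rather than applying acylindricity directly to $u$ with a $t$-dependent radius.
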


\begin{proof}
First of all, we choose the constants. Let $\delta$ be the hyperbolicity constant of $X$. Let $R,N$ be as in the definition of acylindrical action with $r=4\delta+1$. Finally, let $L=R+6\delta+1$.
  
Up to applying an element of $G$, we can and will assume $x=x_0$ throughout the proof. Fix some $t$ from now on. Let $\{h_i\}_{i=1,\dots,k}$ be the finitely many elements of $B^G(id,t)$ and for each $i$ let $\gamma_i$ be a geodesic in $X$ from $x_0$ to $h_ix_0$.
 
 \par\medskip
 
 {\bf Claim 1:} There exist only finitely many $g\in G$ so that $diam^X(N_{2\delta}^X(\gamma_i)\cap g\gamma_j)\geq R$ for some $i,j$. In particular, the diameter of the set of such $g$'s is bounded by, say, $f(t)$.
 
  \par\medskip
 
\emph{Proof of Claim 1.} Since there are only finitely many $h_i$'s, we can fix $i,j$. Suppose by contradiction that there exist infinitely many distinct elements $g_k$, $k=0,1,\dots$, so that $diam^X(N_{2\delta}^X(\gamma_i)\cap g_k\gamma_j)\geq R$. Hence, for every $k$ there exist points $p_k,q_k\in\gamma_j$ and $p'_k,q'_k\in\gamma_i$ so that $d_X(g_kp_k,p'_k),d_X(g_kq_k,q'_k)\leq 2\delta$ and $d_X(p_k,q_k)\geq R$. By compactness of $\gamma_i$ and $\gamma_j$, we can pass to a subsequence of $\{g_k\}$ and assume that for every $k_1,k_2$ we have that each of $d_X(p_{k_1},p_{k_2}),d_X(q_{k_1},q_{k_2}),d_X(p'_{k_1},p'_{k_2}),d_X(q'_{k_1},q'_{k_2})$ is at most $1/2$. In particular, for every $k$ we have $d_X(p_0,g_0^{-1}g_kp_0)=d_X(g_0p_0,g_kp_0)\leq 4\delta+1$ and similarly $d_X(q_0,g_0^{-1}g_kq_0)\leq 4\delta+1$. But this is a contradiction because acylindricity implies that there can be only finitely many $a$ so that $d_X(p_0,ap_0),d_X(q_0,aq_0)\leq 4\delta+1$ (since $d_X(p_0,q_0)\geq R$). This completes the proof of the claim.\qed

\begin{figure}[h]
\begin{minipage}[b]{0.4\textwidth}
\centering
 \includegraphics[scale=0.75]{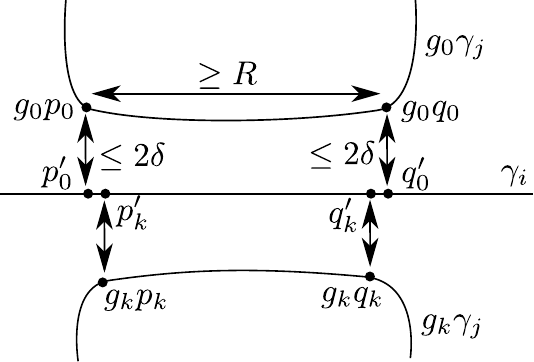}
 \caption{Claim 1. Translates of $\gamma_j$ stay close to the same subgeodesic of $\gamma_i$.}
\end{minipage}
\hspace{0.2cm}
\begin{minipage}[b]{0.55\textwidth}
\centering
 \includegraphics[scale=0.75]{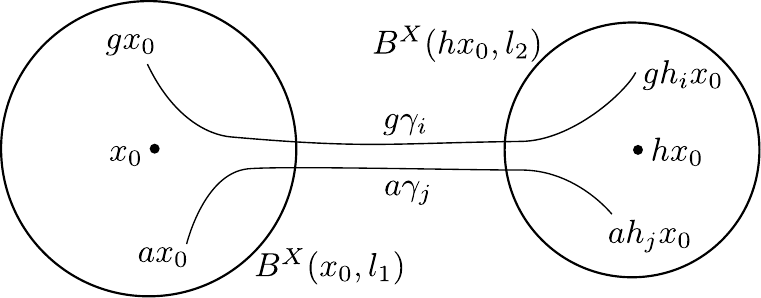}
 \caption{Claim 2. $g\gamma_i$ and $h\gamma_j$ fellow-travel between $B^X(x_0,l_1)$ and $B^X(hx_0,l_2)$.}
\end{minipage}
\end{figure}

For $h\in G$ and $l_1,l_2\geq 0$, denote $A_{l_1,l_2}(h)=\pi^{-1}(B^X(x_0,l_1))\cap N^G_t (\pi^{-1}(B^X(hx_0,l_2)))$. Let $h\in G$ be so that $d_X(x_0,hx_0)\geq L+l_1+l_2$. Assume that $A_{l_1,l_2}(h)$ is non-empty, for otherwise there is nothing to prove, and let $g\in A_{l_1,l_2}(h)$. Notice that there exists $i$ so that $gh_ix_0\in B^X(hx_0,l_2)$. Similarly, for each $a\in A_{l_1,l_2}(h)$ there exists $j=j(a)$ so that $ah_jx_0\in B^X(hx_0,l_2)$.

 \par\medskip

{\bf Claim 2:} For any $a\in A_{l_1,l_2}(h)$ we have $diam^X(N_{2\delta}^X(g\gamma_i)\cap a\gamma_j)\geq R$, where $j=j(a)$.

 \par\medskip

\emph{Proof of Claim 2.} Consider a geodesic quadrangle in $X$ containing $g\gamma_i$ and $a\gamma_j$ with vertices $gx_0, ax_0, gh_ix_0,ah_jx_0$. Notice that the first two vertices belong to $B_1=B^X(x_0,l_1)$ while the other vertices belong to $B_2=B^X(hx_0,l_2)$, and by our hypothesis on $h$ the distance between $B_1$ and $B_2$ is at least $L$. Also, considering a triangle with vertices $x_0,gx_0,ax_0$ it is readily seen that the geodesic $[gx_0, ax_0]$ is contained in the $\delta$-neighborhood of $B_1$, and that, similarly, $[gh_ix_0,ah_jx_0]$ is contained in the $\delta$-neighborhood of $B_2$. Consider now a subgeodesic $\gamma$ of $a\gamma_j$ of length at least $L-6\delta-1$ that does not intersect the $3\delta$-neighborhood of $B_1\cup B_2$. By $2\delta$-thinness of geodesic quadrangles, any point of $\gamma$ is $2\delta$ close to $g\gamma_i$, since it cannot be $2\delta$-close to either $[gx_0, ax_0]$ or $[gh_ix_0,ah_jx_0]$. Hence, $diam^X(N_{2\delta}^X(g\gamma_i)\cap a\gamma_j)\geq L-6\delta-1=R$, as required.\qed

In view of Claim 2 we get that, for each $a\in A_{l_1,l_2}(h)$, $g^{-1}a$ belongs to the finite set given by Claim 1. In particular $diam^G(A_{l_1,l_2}(h))\leq f(t)$, as required.
\end{proof}

\section{Linear progress with exponential decay}\label{sec:linearpr} 

When a group $G$ acting on a hyperbolic space $X$ is fixed, we will implicitly make a choice of basepoint $x_0\in X$ and, to simplify the notation, write $d_X(g,h)$ instead of $d_X(gx_0,hx_0)$ when $g,h\in G$.

In the above setting, we say that a semigroup is \emph{non-elementary} if it contains two loxodromic elements that freely generate a free group. When $\mu$ is a measure on the group $G$, the semigroup generated by the support of $\mu$ is the smallest sub-semigroup of $G$ containing the support of $\mu$.

Recall that we defined a distance, whence a topology, on measures on a given group in Subsection \ref{ssec:distance}.

\begin{theo}\label{linprog}
 Let the finitely generated group $G$ act acylindrically on the geodesic hyperbolic space $X$. Then any measure $\mu_0$ with exponential tail whose support generates a non-elementary semigroup has a neighborhood, say $\mathcal N$, so that there exists $C$ with  the following property. For any $\mu\in \mathcal N$, any positive integer $n$ and any $g_0\in G$ we have
$$\matP^\mu[d_X(id,g_0Z_n)-d_X(id,g_0)\leq n/C]\leq Ce^{-n/C}.$$
In particular, the rate of escape measured in the metric $d_X$ of the random walk driven by $\mu$ is strictly positive.
\end{theo}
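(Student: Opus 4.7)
The strategy is to dominate the walk from below by a Bernoulli process of ``Schottky'' block increments, and then combine a large-deviation estimate for that Bernoulli process with an exponential-tail bound on the aggregate $d_X$-displacement of the walk. Uniformity over a neighborhood of $\mu_0$ will follow from the fact that every quantity appearing below depends continuously on $\mu$ in the topology introduced in Section \ref{ssec:distance}.

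\emph{Step 1: extracting Schottky elements.} First I would use that the support of $\mu_0$ generates a subgroup acting acylindrically, non-elementarily and with unbounded orbits to produce two loxodromic elements $h_1,h_2\in G$ whose quasi-axes in $X$ are in Schottky position: any reduced word of length $m$ in $\{h_1^{\pm1},h_2^{\pm1}\}$ is sent by the orbit map to a quasi-geodesic in $X$ of length at least $c_0\, m$ for a fixed $c_0>0$. Existence of such Schottky pairs is classical for non-elementary acylindrical actions, and Lemma \ref{geomsep} is precisely the quantitative input that ensures the alignment property in the present non-locally-compact setting. Since the support of $\mu_0$ generates the relevant subgroup, there is an integer $k$ with $\mu_0^k(\{h_1,h_2\})>0$, and by continuity there exist a neighborhood $\cal N$ of $\mu_0$ and $\delta>0$ with $\mu^k(\{h_1,h_2\})\geq \delta$ uniformly for $\mu\in\cal N$.

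\emph{Step 2: Bernoulli comparison and aggregate-length bound.} Partition $\{1,\ldots,n\}$ into consecutive blocks of length $k$, and call block $j$ \emph{good} if the corresponding product of increments lies in $\{h_1,h_2\}$. Good blocks form an i.i.d. Bernoulli sequence of parameter at least $\delta$, so Hoeffding's inequality gives $\matP^\mu[\#\{\text{good blocks}\}\leq \delta n/(2k)]\leq C e^{-n/C}$. In parallel, the uniform exponential tail on $\cal N$ together with a standard Chernoff bound gives $\matP^\mu[\sum_{i=1}^n d_X(id,X_i)\geq Kn]\leq C e^{-n/C}$ for some $K=K(\cal N)$, controlling the total length contributed by bad blocks. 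Independence of increments and left-invariance of $d_X$ allow one to replace $id$ by $g_0$ without changing any of these estimates.

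\emph{Step 3: projecting to a Schottky axis.} The final and most delicate step is to convert abundance of good blocks into genuine progress of $d_X(id,g_0 Z_n)$. I would project the whole trajectory $\{g_0 Z_j\}$ onto a quasi-axis $A$ of $h_1$: by the Schottky property from Step 1, each good block pushes the projection forward by at least a fixed amount $c_0$, while acylindricity (via Lemma \ref{geomsep}) together with hyperbolicity of $X$ ensures that a bad block of $d_X$-diameter $r$ can move the projection by at most $C_1(r+1)$, with generic bad blocks contributing only $O(1)$ because their alignment with $A$ is generic. Combining with Step 2, one obtains a linear lower bound of the form $d_X(id,g_0 Z_n)-d_X(id,g_0)\geq n/C$ outside an event of probability $\leq C e^{-n/C}$, as required. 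The main obstacle I expect is precisely this projection estimate: unlike \cite{MaherTiozzo}, $X$ is not assumed locally compact and $\mu$ is only assumed to have an exponential moment, so there is no compactness shortcut, and the rare long jumps allowed by the exponential tail must be absorbed explicitly by the Chernoff bound of Step 2 rather than ruled out a priori.
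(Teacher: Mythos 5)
There is a genuine gap, and it sits exactly where you predicted: Step 3. Projecting the whole trajectory onto a single fixed quasi-axis $A$ of $h_1$ does not work. A ``good'' block occurring at time $j$ multiplies on the right at the location $g_0Z_{jk}$, which is in general nowhere near $A$, so there is no reason it advances the closest-point projection to $A$ by $c_0$ (or at all). Even if you replace the fixed axis by a more clever family of reference geodesics, your accounting does not close: the good blocks can yield at best a gain of order $c_0\delta n/(2k)$, while your only control on the bad blocks is the aggregate bound $\sum_i d_X(id,X_i)\leq Kn$, and $K$ (coming from the exponential tail) is in general much larger than $c_0\delta/(2k)$, so the bad blocks can a priori swallow all the progress. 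The assertion that ``generic bad blocks contribute only $O(1)$ because their alignment with $A$ is generic'' is precisely the missing theorem: making it rigorous requires a conditioning/pivoting or shadow-measure argument showing that later increments rarely undo earlier progress, and at the time of this paper such arguments (Maher--Tiozzo and related work) were only available for driving measures with bounded support in $X$; the authors explicitly state they do not know whether that strategy extends to exponential tails. A further, smaller issue: the uniformity in $g_0$ is not a consequence of left-invariance, since $d_X(id,g_0Z_n)-d_X(id,g_0)$ equals $d_X(g_0^{-1}x_0,Z_nx_0)-d_X(g_0^{-1}x_0,x_0)$, i.e.\ progress seen from an arbitrary basepoint (equivalently it involves the Gromov product $(id,g_0Z_n)_{g_0}$), and this uniform-in-$g_0$ form is an essential strengthening rather than a cosmetic one.

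For comparison, the paper avoids Schottky elements and any projection to an axis altogether. It first reduces to $\mu=\mu_0$ by Girsanov, then proves a uniform one-step estimate (Proposition \ref{further}): for suitable $C,k$ and \emph{every} $g,h\in G$, $\matP[(g,gZ_kh)_{id}\leq d_X(id,g)-C]\leq 1/10$, which rests on a counting lemma from acylindricity (Lemma \ref{varys}) combined with the spectral-radius bound $\matP[Z_k=a]\leq\rho^k$ from non-amenability, plus the divergence $\matE[d_X(id,Z_m)]\to\infty$. These feed into an exponential supermartingale-type bound $\matE[e^{-\lambda(d_X(id,gZ_m)-d_X(id,g))}]\leq 1-\epsilon$ uniformly in $g$, which is iterated over blocks of length $m$ and finished with Markov's inequality; the exponential tail only enters to control the leftover $r<m$ steps and the event of an atypically long path. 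If you want to salvage your route, you would need to supply the pivoting-type estimate that controls backtracking under an exponential (rather than bounded) tail, which is a substantial piece of work in its own right.
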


\begin{rmk}
 The condition on the support of $\mu_0$ will also appear in Theorems \ref{smalldevhier}, \ref{smalldevgen}. Notice that if $G$ acts non-elementarily on $X$ such condition is weaker than requiring that the semigroup generated by the support is $G$.
\end{rmk}

First of all, we remark that it is enough to show the theorem for the measure $\mu_0$.

\begin{lm}\label{lem:uniform_in_mu}
Let $G,X,x_0,\mu_0$ be as in Theorem \ref{linprog} and suppose that there exists $C$ so that for any integer $n$ we have $\matP^{\mu_0}[d_X(id,Z_n)\leq n/C]\leq Ce^{-n/C}$. Then there exists a neighborhood $\mathcal N$ of $\mu_0$ so that for any $\mu\in \mathcal N$ and any positive integer $n$, we have $\matP^\mu[d_X(id,Z_n)\leq n/C]\leq Ce^{-n/2C}.$
\end{lm}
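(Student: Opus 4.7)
The plan is to use the Girsanov-type change of measure (formula (\ref{eq:girgir})) to transfer the bound from $\mu_0$ to any nearby measure $\mu$. The key point is that if $\mu$ is close enough to $\mu_0$ in the $\nu$-metric introduced in Subsection \ref{ssec:distance}, then the Radon-Nikodym density of $\matP^\mu$ with respect to $\matP^{\mu_0}$ on $\mathcal{F}_n$, namely $\prod_{j=1}^n \mu(X_j)/\mu_0(X_j)$, grows at most exponentially in $n$ with a rate that can be made arbitrarily small by shrinking the neighborhood.

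More precisely, I would first observe that if $\delta := \nu(\mu_0, \mu)$, then by definition $\mu(a)/\mu_0(a) \leq 1+\delta$ for every $a \in B$, so the deterministic pointwise bound
\[
\prod_{j=1}^n \frac{\mu(X_j)}{\mu_0(X_j)} \leq (1+\delta)^n \leq e^{n\delta}
\]
holds. Applying the Girsanov formula (\ref{eq:girgir}) with $F = \mathbf{1}_{\{d_X(id,Z_n) \leq n/C\}}$ gives
\[
\matP^\mu[d_X(id,Z_n) \leq n/C] = \esp^{\mu_0}\!\left[\mathbf{1}_{\{d_X(id,Z_n) \leq n/C\}}\prod_{j=1}^n \frac{\mu(X_j)}{\mu_0(X_j)}\right] \leq e^{n\delta}\,\matP^{\mu_0}[d_X(id,Z_n) \leq n/C].
\]
Combining this with the assumed bound $\matP^{\mu_0}[d_X(id,Z_n) \leq n/C] \leq C e^{-n/C}$ yields
\[
\matP^\mu[d_X(id,Z_n) \leq n/C] \leq C \, e^{n(\delta - 1/C)}.
\]

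It then suffices to choose the neighborhood $\mathcal{N} = \{\mu \in \mathcal{P}(B) : \nu(\mu_0,\mu) \leq 1/(2C)\}$; for $\mu \in \mathcal{N}$ the exponent $\delta - 1/C \leq -1/(2C)$ and the desired bound $\matP^\mu[d_X(id,Z_n) \leq n/C] \leq C e^{-n/(2C)}$ follows. No serious obstacle is expected: the only thing to keep in mind is that the Girsanov formula requires $\mu$ and $\mu_0$ to be mutually absolutely continuous on every $\mathcal{F}_n$, which is guaranteed since $\mu$ and $\mu_0$ both have support $B$ (that is why we restrict to $\mathcal{P}(B)$). The argument is uniform in $n$ and in the choice of $\mu \in \mathcal{N}$, so the same constant $C$ works throughout.
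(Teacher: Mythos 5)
Your proposal is correct and follows essentially the same route as the paper: apply the Girsanov formula with the indicator of the event, bound the Radon--Nikodym density pointwise by $(1+\nu(\mu_0,\mu))^n$, and shrink the neighborhood so that this exponential factor is absorbed by the decay $e^{-n/C}$, leaving $e^{-n/2C}$. Your only difference is making the choice of the neighborhood explicit via $(1+\delta)^n\le e^{n\delta}$ and $\delta\le 1/(2C)$, which the paper leaves as ``choose $\epsilon$ small enough''.
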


\begin{proof}
 Let $\mu$ be so that $\Nu(\mu,\mu_0)\leq\epsilon$, where $\epsilon$ will be chosen later.
Recall that by the Girsanov formula for any non-negative measurable function $F:G^n\to\R_+$, we have
$$\esp^{\mu}[F(X_1,\dots,X_n)]=\esp^{\mu_0}\left[F(X_1,\dots,X_n)\Pi_{j=1}^n \frac{\mu(X_j)}{\mu_0(X_j)}\right].$$
Since $\mu(a)/\mu_0(a)\le 1+\epsilon$ for each $a\in G$, we have
$$\esp^{\mu}[F(X_1,\dots,X_n)]\le (1+\epsilon)^n \esp^{\mu_0}[F(X_1,\dots,X_n)].$$
Using this inequality with $F=\mathbbm{1}_A$ where $A$ is the event ``$d_X(id,Z_n)\le n/C$'' yields:
$$\matP^{\mu}[d_X(id,Z_n)\le n/C]\le C(1+\epsilon)^n e^{-n/C}.$$
It is then enough to choose $\epsilon$ small enough so that $(1+\epsilon)^ne^{-n/C}\le e^{-n/2C}$. 
\end{proof}

Fix the notation of the theorem from now on. In view of the lemma, we can fix $\mu=\mu_0$. We will write $\matP$ instead of $\matP^\mu$. All Gromov products are taken with respect to $d_X$, meaning that $(g,h)_k$ denotes the Gromov product $(gx_0,hx_0)_{kx_0}$ taken in $X$.

\begin{prop}\label{further}
 There exist $C,k>0$ with the following properties. For every $g\in G$ we have
\begin{enumerate}
 \item For every $h\in G$
$$\matP[(g,gZ_kh)_{id}\leq d_X(id,g)-C]\leq 1/10.$$
\item
$$\lim_{m\to\infty}\matE[d_X(id,Z_m)]=+\infty.$$
\end{enumerate}
\end{prop}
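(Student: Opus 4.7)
Both parts exploit the non-elementary acylindrical action of $H := \langle\mathrm{supp}(\mu)\rangle$ on $X$. By general theory (see \cite{Os-acyl}), $H$ contains two independent loxodromic elements for the action on $X$, and acylindricity itself will enter through Lemma \ref{geomsep}, which provides uniform finiteness of coarse stabilizers of pairs of distant points.

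For part (2), the triangle inequality in $X$ combined with the fact that $Z_n\circ\theta_m$ has the same law as $Z_n$ gives subadditivity of $m\mapsto \esp^\mu[d_X(id,Z_m)]$; by Fekete's lemma the ratio $\esp^\mu[d_X(id,Z_m)]/m$ converges to some $\ell_X \ge 0$. If $\ell_X>0$ we are done. If $\ell_X=0$, I would argue by contradiction: boundedness of $\esp^\mu[d_X(id,Z_m)]$ combined with Markov's inequality forces $Z_m x_0$ to lie in a fixed $d_X$-ball $B$ with probability bounded away from zero uniformly in $m$. Lemma \ref{geomsep} then bounds, up to coarse stabilizers, the number of $G$-orbit points the walk can concentrate on; meanwhile, the existence of a loxodromic $f\in H$ ensures that powers $f^N$ lie in $\mathrm{supp}(\mu^{n_0 N})$ for a suitable $n_0$, producing walk positions at arbitrarily large $d_X$-distance with positive probability. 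These two facts together contradict the assumed boundedness of the expectation.

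For part (1), the first step is a reformulation via the isometry action of $G$ on $X$: since $d_X(gx_0,gZ_kh x_0) = d_X(x_0, Z_kh x_0)$, the event $(g,gZ_kh)_{id}\le d_X(id,g)-C$ is equivalent to $(g^{-1}, Z_kh)_{x_0}\ge C$. The goal thus becomes to show
\[
\matP^\mu\bigl[(g^{-1}, Z_kh)_{x_0}\ge C\bigr]\le 1/10 \quad \text{uniformly in } g,h\in G.
\]
I would establish this by a ping-pong argument with two independent loxodromics $a_1,a_2\in H$ chosen so that $a_1,a_2\in\mathrm{supp}(\mu^{k_0})$ for some $k_0$. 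For any target $y=g^{-1}x_0\in X$, hyperbolicity ensures that at most one of the four ends $a_i^{\pm\infty}$ has large Gromov product with $y$ at $x_0$; consequently a single block of $k_0$ walk steps that traces out a suitable power of some $a_i$ has a definite conditional probability of failing to align with $y$. Taking $k=n k_0$ and partitioning the walk into $n$ such blocks (with the post-translation by $h$ absorbed into the analysis of the last block), iterated application of this one-block estimate yields exponential decay in $n$ of the total alignment probability, so choosing $n$ large enough gives the bound $1/10$.

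The main obstacle is the lack of local compactness of $X$ together with the infinite support of $\mu$, which rule out direct use of boundary-shadow estimates of the sort available in hyperbolic-group or finite-support settings. Non-local-compactness will be handled by substituting Lemma \ref{geomsep} for compactness; the infinite support of $\mu$ is controlled via the exponential-tail hypothesis, which renders atypical large jumps negligible. A subtler point is the uniformity in $h$: post-translation by $h$ can rotate the endpoint $Z_k h$ so that it points in any direction relative to $Z_k x_0$, so the argument must bound the alignment probability not merely around $x_0$ but simultaneously around all far-away orbit points, which is what forces the use of genuine ping-pong at the level of the $G$-action rather than a single shadow estimate at $x_0$.
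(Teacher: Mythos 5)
Your reformulation $(g,gZ_kh)_{id}\le d_X(id,g)-C\iff (g^{-1},Z_kh)_{id}\ge C$ is correct, but the core of your argument for part (1) has a genuine gap. The step ``iterated application of this one-block estimate yields exponential decay in $n$ of the total alignment probability'' is not justified: the event concerns only the terminal point $Z_kh$, it is not an intersection of per-block events, and a block that moves transversally to $y=g^{-1}x_0$ can be undone by later blocks, so per-block failure probabilities do not multiply. More decisively, for a \emph{fixed} depth $C$ the quantity $\matP[(g^{-1},Z_kh)_{id}\ge C]$ does not tend to $0$ as $k\to\infty$: already with $h=id$ and $g^{-1}$ in a typical direction, as $k$ grows the law of $Z_kx_0$ charges the shadow of depth $C$ around that direction with asymptotically positive mass (comparable to the hitting measure of a half-space), so no choice of ``$n$ large'' with $k=nk_0$ can force the bound $1/10$; the smallness must come from coupling $C$ with $k$. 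This is exactly what the paper does, and it is where acylindricity -- absent from your argument for (1) -- enters: if $(g,gsh)_{id}\le d_X(id,g)-C$ with $d_X(id,s)\le Lk$, then $[gsx_0,gshx_0]$ must $2\delta$-fellow-travel $[x_0,gx_0]$ along a segment of length $D$, and acylindricity (Lemma \ref{varys}) shows at most $C_0(Lk)^2$ elements $s$ can do this; non-amenability gives $\matP[Z_k=a]\le\rho^k$ (Lemma \ref{littleweight}) and the tail hypothesis controls $\matP[d_X(id,Z_k)>Lk]$, so one chooses $k$ with $C_0(Lk)^2\rho^k$ small and then $C\ge Lk+D+100\delta$. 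A ping-pong/shadow scheme of the kind you sketch is the classical bounded-support approach, and the introduction of the paper explicitly says it is not known how to run that strategy with exponential tails.

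Part (2) also does not close. Subadditivity and Fekete are fine but beside the point: the statement is only divergence of the expectation, and in the remaining case your contradiction is not actually derived. The loxodromic gives $f^N\in\mathrm{supp}(\mu^{n_0N})$, hence $\matP[d_X(id,Z_{n_0N})\ \mathrm{large}]>0$, but this probability may be as small as $p^{n_0N}$, which is perfectly compatible both with $\matP[Z_mx_0\in B]\ge 1/2$ along a subsequence and with a bounded expectation. Moreover Lemma \ref{geomsep} controls intersections of preimages of \emph{two far-apart} balls; it gives no bound on the set of orbit points in a single ball on which the walk could concentrate (that preimage can be infinite -- this is precisely the non-properness issue). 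The missing ingredient is a ``once far, stay far'' estimate, which is exactly what part (1) provides and how the paper argues: setting $M=\inf\{m:\ d_X(id,Z_m)\ge K\}$, one shows $M<\infty$ a.s.\ (Borel--Cantelli, using unbounded orbits of the subgroup generated by the support), and then part (1) applied at time $M$ yields $\matE[d_X(id,Z_n)]\ge\tfrac{9}{10}(K-2C)\,\matP[M\le n-k]$, so the liminf exceeds $\tfrac45(K-2C)$ for every $K$. Without such a no-backtracking estimate (or almost sure convergence to the boundary, machinery the paper deliberately avoids), your two observations do not contradict the assumed failure of divergence.
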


\begin{proof}
1) For convenience we will assume $d_X\leq d_G$, which can be arranged by rescaling the metric on $X$. The notation $[gx_0,hx_0]$ will denote any choice of a geodesic in $X$ from $gx_0$ to $hx_0$. Let $\delta$ be a hyperbolicity constant for $X$. We will use the following deterministic lemma.

\begin{lm}
\label{varys}
 There exist $C_0,D$ with the following property. For each $g,h\in G$ and integer $k\geq 1$ the set $A(g,h,k)$ of elements $s\in  B^G(id,k)$ so that $diam^X([x_0,gx_0]\cap N^X_{2\delta}([gsx_0,gshx_0]))\geq D$ has cardinality at most $C_0k^2$.
\end{lm}

\begin{proof}
By acylindricity, there exist $R,N$ so that whenever $x,y\in X$ satisfy $d_X(x,y)\geq R$, there are at most $N$ elements $s$ so that $d_X(x,sx),d_X(y,sy)\leq 100\delta+100$. We now argue that whenever $x,y,x',y'\in X$ are so that $d_X(x,y)\geq R$, there are at most $N$ elements $s$ so that $d_X(x',sx),d_X(y',sy)\leq 50\delta+50$. Fix $x,y,x',y'$ as before, and let $\mathcal S$ be the set of all $s$ so that $d_X(x',sx),d_X(y',sy)\leq 50\delta+50$. If $\mathcal S$ is empty, we are done, otherwise let $s_0\in S$. Notice that for any $s\in \mathcal S$ we have
$$d_X(x,s^{-1}s_0 x)=d_X(sx,s_0 x)\leq d_X(sx,x')+d_X(x',s_0 x)\leq 100\delta+100,$$
and similarly $d_X(y,s^{-1}s_0 y)\leq 100\delta+100$. Hence, there are at most $N$ choices for $s$ by the choice of $R,N$, as required.

We choose $D=R+100\delta+100$. Suppose $diam^X([x_0,gx_0]\cap N^X_{2\delta}([gsx_0,gshx_0]))\geq D$. This means that there exist $p'_1,p'_2\in [x_0,gx_0]$ and $q'_1,q'_2\in [x_0,hx_0]$ so that $d_X(p'_i,gsq'_i)\leq 2\delta$ and $d_X(p'_1,p'_2)\geq D$. We will choose the indexing so that $p'_1$ is closer to $gx_0$ than $p'_2$. We now want to, roughly speaking, impose more conditions on $p'_1,p'_2,q'_1,q'_2$ to ensure that there are only about $k^2$ choices for the quadruple.

\begin{figure}[h]
\centering
 \includegraphics[scale=0.9]{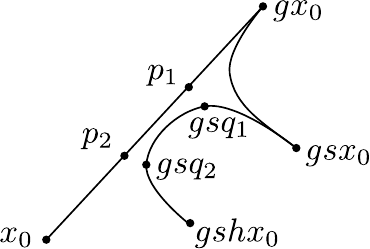}
\end{figure}

First, we replace $p'_1$ with a point at distance at most $k$ from $gx_0$, and along the way we also construct a point $q''_1\in[x_0,hx_0]$ for later purposes. If $d_X(p'_1,gx_0)\leq k$, set $p_1=p'_1$, and $q''_1=q'_1$. Otherwise, if $d_X(p'_1,gx_0)>k$,  let $p_1$ on $[x_0,gx_0]$ lie at distance $k$ from $gx_0$. Consider a quadrangle with vertices $gx_0$, $p'_1$, $gsq'_1$, $gsx_0$. We claim that $p_1$ lies within distance $4\delta$ of some points $gsq''_1$, for some $q''_1\in [x_0,hx_0]$. In fact, by $2\delta$-thinness of the quadrangle, we have $d_X(p_1,x)\leq 2\delta$ for some $x$ lying on one of the other 3 sides. If the side is $[gsx_0,gsq'_1]$, we are done. If the side is $[p'_1,gsq'_1]$, then $x$ lies within $2\delta$ of $gsq'_1$, and we can set $q''_1=q'_1$. Finally , if the side is $[gx_0,gsx_0]$, then we can set $q''_1=x_0$: we have $d_X(x,gx_0)\geq k-2\delta$ and $d_X(gx_0,gsx_0)\leq k$, implying $d_X(x,gsx_0)\leq 2\delta$. In all cases, we have $d_X(gsq''_1,p_1)\leq 4\delta$, and $d_X(p_1,p'_2)\geq D$.

Notice that $d_X(q''_1,x_0)\leq 2k+4\delta$, as $d_X(q''_1,x_0)= d_X(gsq''_1,gsx_0)\leq d_X(gsq''_1,p_1)+d_X(p_1,gx_0)+d_X(gx_0,gsx_0)$. In particular, there is $q_1\in[x_0,hx_0]$ so that $d_X(x_0,q_1)\leq 2k$ is an integer and $d_X(gsq_1,p_1)\leq 8\delta+1$.

Finally, we replace $p'_2,q'_2$ with points at a specified distance from $p_1,q_1$. Consider the quadrangle with vertices $p_1,p'_2,q'_2,gsq_1,gsq'_2$. Since $d_X(p_1,q_1),d_X(p'_2,q'_2)\leq 8\delta+1$, any point on $[p_1,p'_1]$ lies within distance $10\delta+1$ of $[gsq_1,gsq'_2]$. In particular, if we let $p_2$ be the point on $[p_1,p'_1]$ at distance $D-50\delta-50$ from $p_1$, we have that there exists $q''_2\in[x_0,hx_0]$ so that $d_X(p_2,gsq''_2)\leq 10\delta+1$. Since $|d_X(q_1,q''_2)-d_X(p_1,p_2)|\leq d_X(p_1,q_1)+d_X(q''_2,p_2)\leq 18\delta+2$, the point $q_2\in[x_0,hx_0]$ at distance $D-50\delta-50$ from $q_1$ (which exists since $d_X(q_1,q''_2)\geq d_X(p_1,p'_2)-18\delta-2$) has the property that $d_X(p_2,gsq_2)\leq 28\delta+3$.

To sum up, we found $p_1,p_2\in [x_0,gx_0]$ and $q_1,q_2\in [x_0,hx_0]$ with the following properties:

\begin{enumerate}
 \item $d_X(p_1,p_2),d_X(q_1,q_2)=D-50\delta-50$,
 \item $d_X(p_i,gsq_i)\leq 28\delta\delta+3$,
 \item $d_X(p_1,gx_0)\leq k$ and $d_X(q_1,x_0)\leq 2k$
 \item $d_X(p_1,gx_0)$ and $d_X(q_1,x_0)$ are integers.
\end{enumerate}

In particular, there are at most $(k+1)(2k+1)$ choices for the quadruple $(p_1,p_2,q_1,q_2)$, by properties 1,3, and 4. Moreover, given a quadruple $(p_1,p_2,q_1,q_2)$ satisfying property 1 (in fact, the inequality ``$\geq$'' suffices), there are at most $N$ elements $s$ satisfying 2. Hence, given $g,h$, there can be only at most $N(k+1)(2k+1)$ elements $s$ so that $diam^X([id,g]\cap N^X_{2\delta}([gs,gsh]))\geq D$, as required.
\end{proof}

The deterministic lemma will be combined with the probabilistic lemma below.

\begin{lm}
\label{littleweight}
For every $L$ there exists $k$ so that for every $A\subseteq G$ of cardinality at most $C_0(Lk)^2$ we have $\matP[Z_k\in A]\leq 1/20$.
\end{lm}

\begin{proof}
 The hypotheses on $\mu$ imply that the group generated by its support contains a non-abelian free group (see \cite[Theorem 1.1]{Os-acyl}) and is therefore non-amenable. Hence, we have, for each $g\in G$, $\matP[Z_k=g]\leq \rho^k$ for some $\rho<1$ \cite{kn:woess}, so the required result follows from summing over $A$ once we choose $k$ so that $C_0(Lk)^2\rho^k\leq 1/20$.
\end{proof}

Let us now fix some constants. Let $L$ be so that $\matP[d_X(id,Z_n)>Ln]\leq 1/20$ for every $n$, which exists because we are assuming that the measures we deal with have exponential tails. Let $k$ be as in Lemma \ref{littleweight} for the given $L$. Finally let $C$ be so that $\matP[d_X(id,Z_k)\geq C]\leq 1/4$ and $C\geq Lk+D+100\delta$, for $\delta$ a hyperbolicity constant for $X$.

We are ready to prove the required inequality, i.e. that for any $h\in G$ we have
$$\matP[(g,gZ_k h)_{id}\leq d_X(id,g)-C]\leq 1/10.\ \ \ \ (*)$$

Fix any $h\in G$. We observe that if $(g,gsh)_{id}\leq d_X(id,g)-C$, for some $s$ with $d_X(id,s)\leq Lk$, then we have $diam(N^X_{2\delta}([x_0,gx_0])\cap [gsx_0,gshx_0])\geq D$.

Hence, letting $A=A(g,h,Lk)$ be as in Lemma \ref{varys} (in particular $\# A\leq C_0(Lk)^2$) we have
$$\matP[(g,gZ_k h)_{id}\leq d_X(id,g)-C]\leq \matP[d_X(id,Z_k)> Lk]+\matP[Z_k\in A].$$
The first term is bounded by $1/20$ by the choice of $L$, while the second one is at most $1/20$ by Lemma \ref{littleweight}, so the claim is proved.

2) Fix any $K\geq 2C$, for $C$ as in the first part of the proposition. The argument below shows $\matE[d_X(id,Z_n)]\geq 4(K-2C)/5$ for each large enough $n$. Let us define $M=\inf\{m; d_X(id,Z_m)\ge K\}$.

The first claim is that $M<\infty$ almost surely.
In fact, there exists $m_0$ such that $\matP[d_X(id, Z_{m_0}) \ge 2K]=\eps>0$. Therefore, eventually one of the increments
$Z_{jm_0}^{-1}Z_{(j+1)m_0}$ will exceed $2K$. If this happens for the first time for a given $j$, then either $d_X(id,Z_{jm_0})\geq K$ or $d_X(id,Z_{(j+1)m_0})\geq K$.

Let $A_{g,m}$ be the event ``$M=m$ and $Z_m=g$''.
Then for any $n\ge m$ the events ``$A_{g,m}$ and $d_X(id,Z_n)\ge  d_X(id,g) -2C$'' and ``$A_{g,m}$ and $d_X(id,g Z_m^{-1}Z_n)\ge  d_X(id,g) -2C$'' coincide.
Notice that $A_{g,m}$ and ``$d_X(id,g Z_m^{-1}Z_n)\ge  d_X(id,g) -2C$'' are independent. Furthermore, if $n\ge m+k$, for $k$ as in the first part of the proposition, then we have
$$\matP[d_X(id,g Z_m^{-1}Z_n)\ge d_X(id,g) -2C]\ge \frac{9}{10}.$$
In fact, $(g,gZ_kh)_{id}\geq d_X(id,g)-C$ can be rewritten as $d_X(id,gZ_kh)-d_X(g,gZ_kh)\geq d_X(id,g)-2C$, so that
$$\matP[d_X(id,g Z_{n-m})\ge d_X(id,g) -2C]\geq \sum_{h\in G}\matP[(g,gZ_kh)_{id}\geq d_X(id,g)-C]\matP[Z_k^{-1}Z_{n-m}=h].$$
By independence, we have
$$\matP[A_{g,m}\textrm{\ and\ } d_X(id,Z_n)\ge d_X(id,g) -2C]\ge \frac{9}{10} \matP[A_{g,m}],$$
which implies $\matE[d_X(id, Z_n)| A_{g,m}]\geq 9(K-2C)/10$ whenever $g$ satisfies $d_X(id,g)\geq K$ (notice that $A_{g,m}=\emptyset$ if $d_X(id,g)< K$).
We can now bound
$$\matE[d_X(id,Z_n)]\ge \sum_{m\le n-k, d_X(id,g)\ge K} \matE[d_X(id, Z_n)| A_{g,m}]\matP[A_{g,m}]$$
$$\ge  \frac{9}{10} (K-2C) \sum_{m\le n-k, d_X(id,g)\ge K} \matP[A_{g,m}]$$
$$=   \frac{9}{10} (K-2C) \matP[M\le n-k].$$
But, since $M<\infty$ almost surely, we have $\matP[M\le n-k]\to 1$ as $n$ tends to $\infty$, and the proof is complete.
\end{proof}

% (g,gZ_kh)_{id}\geq d_X(id,g)-C
% 
% (g,gZ_kh)_{id}\geq d_X(id,g)-C
% 
% d_X(id,g)+d_X(id,gZ_kh)-d_X(g,gZ_kh)\geq 2 d_X(id,g)-2C

\emph{Proof of Theorem \ref{linprog} (for $\mu=\mu_0$).}
Throughout the proof we denote by $A=A(g,m)$ the event ``$d_X(id,gZ_m)-d_X(id,g)\geq d_X(id,Z_m)-2C$''. Simple algebraic manipulations show that this is the same event as ``$(g,gZ_m)_{id}\geq d_X(id,g)-C$''.

Let us start with the following claim

{\bf Claim:} There exist $\lambda,\epsilon>0$ and $m$ so that for each $g\in G$ we have
$$\matE\left[e^{-\lambda(d_X(id,gZ_m)-d_X(id,g))}\right]\leq 1-\epsilon.$$

\emph{Proof of Claim.} Fix $k\leq m$. On $A$ we have
$$d_X(id,gZ_m)-d_X(id,g)\geq d_X(id,Z_k^{-1}Z_m)-d_X(id,Z_k)-2C,$$
while on the complement $A^c$ we have
$$d_X(id,gZ_m)-d_X(id,g)\geq -d_X(id,Z_m)\geq -d_X(id,Z_k^{-1}Z_m)-d_X(id,Z_k).$$

So, for any $h\in G$, $m$ and $\lambda>0$, we have
\beqnn 
&&\matE\left[e^{-\lambda(d_X(id,gZ_m)-d_X(id,g))}|Z_k^{-1}Z_m=h\right] \\
& \leq&\matE\left[e^{\lambda2C}e^{\lambda d_X(id,Z_k)}e^{-\lambda d_X(id,h)}\mathbbm{1}_A|Z_k^{-1}Z_m=h\right]
 +\matE\left[e^{\lambda d_X(id,Z_k)}e^{\lambda d_X(id,h)}\mathbbm{1}_{A^c}|Z_k^{-1}Z_m=h\right]\\ 
 &\leq&
 e^{2C\lambda}\Big( e^{-\lambda d_X(id,h)}\matE\left[e^{\lambda d_X(id,Z_k)}|Z_k^{-1}Z_m=h\right]+e^{\lambda d_X(id,h)}\matE\left[e^{\lambda d_X(id,Z_k)}\mathbbm{1}_{A^c}|Z_k^{-1}Z_m=h\right]\\ 
 &&-\,\, e^{-\lambda d_X(id,h)}\matE\left[e^{\lambda d_X(id,Z_k)}\mathbbm{1}_{A^c}|Z_k^{-1}Z_m=h\right]\Big)\\ 
 &= &e^{2C\lambda}\Big( e^{-\lambda d_X(id,h)}\matE\left[e^{\lambda d_X(id,Z_k)}\right]+ \matE\left[e^{\lambda d_X(id,Z_k)}\mathbbm{1}_{A^c}|Z_k^{-1}Z_m=h\right] (e^{\lambda d_X(id,h)}-e^{-\lambda d_X(id,h)})\Big).
\eeqnn

Using Cauchy-Schwarz and Proposition \ref{further}-(1) we get
\beqnn
 &\matE\left[e^{\lambda d_X(id,Z_k)}\mathbbm{1}_{A^c}|Z_k^{-1}Z_m=h\right]\leq&
 \matE\left[e^{2\lambda d_X(id,Z_k)}\right]^{1/2}\ \matP\left[A^c|Z_k^{-1}Z_m=h\right]^{1/2}\\
&&\leq \sqrt{1/10}\ \matE\left[e^{2\lambda d_X(id,Z_k)}\right]^{1/2}.
\eeqnn 
Using this and integrating with respect to $h$ we get
\beqnn 
 \matE\left[e^{-\lambda(d_X(id,gZ_m)-d_X(id,g))}\right]  &\leq&
 e^{2C\lambda}\matE\left[e^{-\lambda d_X(id,Z_{m-k})}\right]\matE\left[e^{\lambda d_X(id,Z_k)}\right]\\ 
 &+ &e^{2C\lambda}\matE\left[e^{\lambda d_X(id,Z_{m-k})}-e^{-\lambda d_X(id,Z_{m-k})}\right]\sqrt{1/10}\ \matE\left[e^{2\lambda d_X(id,Z_k)}\right]^{1/2}:=\phi(\lambda).
\eeqnn
Notice that $\phi$ does not depend on $g$ and $\phi(0)=1$. Also,
$$\phi'(0)=2C-\matE[d_X(id,Z_{m-k})]+\matE[d_X(id,Z_k)]+2\sqrt{1/10}\ \matE[d_X(id,Z_{m-k})].$$
Hence, in view of Proposition \ref{further}-(2), we can choose $m$ so that $\phi'(0)<0$, and the Claim follows.\qed

Let us now fix $\lambda,\epsilon, m$ as in the Claim, and any $g_0\in G$. For a positive integer $j$, write
$$d_X(id,g_0Z_{jm+m})-d_X(id,g_0)=(d_X(id,g_0Z_{jm+m})-d_X(id,g_0Z_{jm}))+d_X(id,g_0Z_{jm})-d_X(id,g_0).$$
By the Claim we have, for each $g\in G$,
$$\matE[e^{-\lambda(d_X(id,g_0Z_{jm+m})-d_X(id,g_0Z_{jm}))}|Z_{jm}=g]\leq (1-\epsilon).$$
So, summing over all possible $g$,
$$\matE[e^{-\lambda (d_X(id,Z_{jm+m})-d_X(id,g_0))}]\leq (1-\epsilon)\matE[e^{-\lambda (d_X(id,g_0Z_{jm})-d_X(id,g_0))}],$$
and inductively we get
$$\matE[e^{-\lambda (d_X(id,g_0Z_{jm})-d_X(id,g_0))}]\leq (1-\epsilon)^j.$$
Using Markov's inequality, for any $c>0$ we can make the estimate
$$\matP[d_X(id,g_0Z_{jm})-d_X(id,g_0)<cjm]=\matP[e^{-\lambda (d_X(id,Z_{jm})-d_X(id,g_0))}>e^{-\lambda cjm}]\leq e^{\lambda cjm}(1-\epsilon)^j.$$

Choosing $c$ small enough, we see that there exists $C_0\geq 1$ so that
$$\matP[d_X(id,g_0Z_{jm})-d_X(id,g_0)<jm/C_0]\leq e^{-jm/C_0}.\ \ \ (*)$$

If $n$ is now any positive integer, we can write $n=jm+r$, with $0\leq r<m$.

Since $d_X(id,g_0Z_n)-d_X(id,g_0)\geq d_X(id,g_0Z_{jm})-d_X(g_0Z_{jm},g_0Z_n)-d_X(id,g_0)$, we can make the estimate
\begin{multline*}
 \matP[d_X(id,g_0Z_{n})-d_X(id,g_0)< n/(2C_0)]\leq\\
 \matP[d_X(id,g_0Z_{jm})-d_X(id,g_0)<jm/C_0]+ \max_{i=0,\dots,m-1}\matP[d_X(id,Z_{i})\geq (jm-i)/(2C_0)].
\end{multline*}
The first term decays exponentially in $j$, whence in $n$, because of $(*)$, while the exponential decay of the second term follows from the exponential tail of $\mu_0$.

This concludes the proof.
\qed

\section{Deviation from quasi-geodesics} \label{sec:deviationqg} 

\begin{df}
\label{def:acylinter}
 Let $G$ be a finitely generated group acting acylindrically on the geodesic hyperbolic space $X$. The geodesic metric space $Y$ endowed with an isometric group action of $G$ is \emph{acylindrically intermediate for} $(G,X)$ if there exists a coarsely Lipschitz $G$-equivariant map $\pi:Y\to X$ so that Lemma \ref{geomsep} holds for $\pi$, namely there exist $x_0\in X$, $L\geq 0$ and a non-decreasing function $f$ so that for each $l_1,l_2\geq 0$, each $t\geq 0$ and whenever $x,y\in Gx_0$ satisfy $d_X(x,y)\geq L+l_1+l_2$, we have
 $$diam^Y(\ \pi^{-1}(B^X(x,l_1))\cap N^Y_t (\,\pi^{-1}(B^X(y,l_2))\,)\ )\leq f(t),$$
where $diam^Y$ and $N^Y_t$ denote the diameter and neighborhood taken with respect to the metric of $Y$.
 \end{df}
 
 \begin{rmk}
 Since $\pi$ is coarsely Lipschitz, once we fix $x_0\in X$ and $y_0\in Y$ with $\pi(y_0)=x_0$, we can rescale the metric of $X$ to ensure that $d_X(gx_0,hx_0)\leq d_Y(gy_0,hy_0)$ for each $g,h\in G$. This will be convenient a few times below.
 \end{rmk}

\begin{prop}\label{prop:examplesacylinter}
 The following are examples of groups $G$ and metric spaces $X,Y$ so that $Y$ is acylindrically intermediate for $(G,X)$.
 \begin{enumerate}
  \item If $G$ is a finitely generated group acting acylindrically on the geodesic hyperbolic space $X$ then $Y=X$ and $Y=Cay(G,S)$, for $S$ a finite generating set of $G$, are both acylindrically intermediate for $(G,X)$.
  \item If $G$ is relatively hyperbolic, $X$ is its coned-off graph and $Y$ is its Bowditch space\footnote{By Bowditch space we mean the space obtained attaching combinatorial horoballs to parabolic subgroups as in \cite{Bow-99-rel-hyp, GrMa-perfill}}, then $Y$ is acylindrically intermediate for $(G,X)$. Similarly, if $G$ is the fundamental group of a finite-volume hyperbolic $n$-manifold, then we can take $Y=\mathbb H^n$.
  \item If $G$ is the mapping class group of a connected oriented hyperbolic surface $S$ of finite type, $X$ is the curve complex of $S$ and $Y$ is Teichm\"uller space of the surface endowed with either the Teichm\"uller or the Weil-Petersson metric, then $Y$ is acylindrically intermediate for $(G,X)$.  
 \end{enumerate}
\end{prop}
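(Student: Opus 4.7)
The plan is to handle each of the three parts separately, in each case constructing an appropriate $G$-equivariant map $\pi \colon Y \to X$ and verifying the diameter bound of Definition \ref{def:acylinter}. In every case $\pi$ will be the natural map for the geometry, and the verification will use specific structural features of the pair $(Y, X)$.

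For part (1), the case $Y = X$ with $\pi$ the identity reduces to a purely geometric fact: in a $\delta$-hyperbolic space, if $d_X(x, y) \geq L + l_1 + l_2$ for $L$ sufficiently large compared to $\delta$, then any point in $B^X(x, l_1) \cap N^X_t(B^X(y, l_2))$ lies within $O(\delta)$ of a bounded subsegment of a geodesic from $x$ to $y$ whose length depends only on $t$ and $\delta$; a short computation with thin triangles gives a diameter bound of the form $f(t) = 2t + C\delta$. For $Y = \mathrm{Cay}(G, S)$, I would take $\pi$ to be the orbit map $g \mapsto g x_0$ on the vertex set, extended affinely across edges. Then $d_Y$ agrees with the word metric $d_G$ on vertices up to an additive constant, and the conclusion follows directly from Lemma \ref{geomsep}, with $f(t)$ adjusted by a bounded amount.

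For part (2), I would define $\pi \colon Y \to X$ to be the natural projection collapsing each combinatorial horoball to its corresponding cone point and being the identity on the Cayley graph part. The argument then has two components. First, a point $p$ lying at depth greater than $t + O(1)$ in a horoball $H_v$ cannot lie in $N^Y_t$ of any set whose $\pi$-image avoids $v$, since escaping $H_v$ in the Bowditch metric costs at least the depth. Under the separation hypothesis with $L$ large, horoballs whose cone points lie in $B^X(x, l_1)$ have cone points outside $B^X(y, l_2)$, so the intersection is confined to the Cayley graph part of $Y$ near $x$ together with shallow pieces (of depth $\leq t + O(1)$) of the finitely many horoballs near $x$. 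Second, the Cayley graph component of the intersection is controlled by Lemma \ref{geomsep} applied to the acylindrical action of $G$ on $X$, after accounting for the bounded difference between $d_G$ and $d_Y$ on the Cayley graph. The finite-volume hyperbolic $n$-manifold case proceeds identically, with exponential contraction along horospheres in $\mathbb{H}^n$ replacing the linear depth of combinatorial horoballs.

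For part (3), I would take $\pi$ to be a coarsely Lipschitz systole projection from Teichm\"uller space to the curve complex (for either metric, up to bounded adjustments). The verification rests on three ingredients: the acylindricity of the mapping class group action on the curve complex, due to Bowditch; the bounded geodesic image theorem of Masur--Minsky together with Rafi's description of Teichm\"uller geodesics through the thin part; and, for the Weil--Petersson case, Brock's quasi-isometry with the pants graph combined with the hierarchical structure relating pants graph distance and curve complex projections. These inputs together imply that two points whose curve complex projections lie near each other and which are within $Y$-distance $t$ of one another must lie in a subset of Teichm\"uller space whose $Y$-diameter is bounded by a function of $t$, which is exactly the required bound. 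The main obstacle is here: unlike parts (1) and (2), which reduce essentially to Lemma \ref{geomsep} plus routine coarse-geometric bookkeeping, part (3) requires repackaging substantial Teichm\"uller-theoretic machinery into the specific form demanded by Definition \ref{def:acylinter}.
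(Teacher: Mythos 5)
Part (1) is fine and is essentially what the paper does ($Y=X$ is a thin-triangles computation, $Y=Cay(G,S)$ is exactly Lemma \ref{geomsep}). In part (2), though, the step you lean on is false as stated: $d_G$ and $d_Y$ do \emph{not} differ by a bounded amount on the Cayley graph. The Cayley graph is exponentially distorted in the Bowditch (cusped) space: two points of a peripheral coset at word distance $n$ are at $d_Y$-distance roughly $2\log_2 n$ via the horoball. Consequently a point of $\pi^{-1}(B^X(x,l_1))$ can lie in $N^Y_t(\pi^{-1}(B^X(y,l_2)))$ while being at enormous $d_G$-distance from $\pi^{-1}(B^X(y,l_2))$, so Lemma \ref{geomsep}, whose neighborhoods and diameters are taken in the word metric, does not apply the way you invoke it. The gap is repairable only because Definition \ref{def:acylinter} allows an arbitrary non-decreasing $f$: your depth argument confines the intersection to depth $O(t)$, one can push those points to depth $0$ at cost $O(t)$, and the true comparison on the Cayley graph is $d_G\le E(d_Y)$ for a fixed exponential-type function $E$, which lets you apply Lemma \ref{geomsep} with neighborhood parameter $E(O(t))$ and conclude with $f(E(O(t)))+O(t)$; but none of this is in your text, and "bounded difference" is simply wrong (also, "finitely many horoballs near $x$" is inaccurate, though harmless, since a coned-off ball can contain infinitely many cone points). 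For comparison, the paper avoids any comparison with $d_G$: it uses that the collapsing map sends $Y$-geodesics to unparametrized quasi-geodesics of $X$ (Bowditch), so the union $Q(B)$ of $Y$-geodesics between points of $\pi^{-1}(B)$ is quasiconvex and contained in $\pi^{-1}(N^X_C(B))$, and then the standard fact that for two sufficiently separated quasiconvex sets in the hyperbolic space $Y$ one has $diam^Y(Q(B_1)\cap N^Y_t(Q(B_2)))\lesssim t$.

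Part (3) is a genuine gap. The statement you say your ingredients "together imply" --- that two points whose curve complex projections are near each other and which are within $Y$-distance $t$ of one another lie in a set of $Y$-diameter bounded in $t$ --- is vacuous (any two points at distance $t$ lie in a set of diameter $t$) and is not what Definition \ref{def:acylinter} requires. What must be shown is: if $x,y\in\pi_S^{-1}(B_1)$ and $x',y'\in\pi_S^{-1}(B_2)$, with $B_1,B_2$ far apart in the curve complex and $d_Y(x,x'),d_Y(y,y')\le t$, then $d_Y(x,y)\le f(t)$. You list plausible tools but give no argument, and you explicitly defer the "repackaging", which is precisely the content of the proof. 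The paper's argument is: since geodesics of $C_S$ joining $\pi_S(x)$ to $\pi_S(y)$ stay near $B_1$ while those joining $\pi_S(x')$ to $\pi_S(y')$ stay near $B_2$, the bounded geodesic image theorem gives, for every subsurface $Z$ with $d_{C_Z}(\pi_Z(x),\pi_Z(y))$ above threshold, the bound $d_{C_Z}(\pi_Z(x'),\pi_Z(y'))\le C$, hence $d_{C_Z}(\pi_Z(x),\pi_Z(y))\le d_{C_Z}(\pi_Z(x),\pi_Z(x'))+C+d_{C_Z}(\pi_Z(y'),\pi_Z(y))$; summing over subsurfaces via the distance formula for the Teichm\"uller metric (Rafi, Durham) or the Weil--Petersson metric (Masur--Minsky, Brock) then yields $d_Y(x,y)\lesssim d_Y(x,x')+d_Y(y,y')\lesssim t$. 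This comparison of subsurface projections followed by distance-formula summation is the whole point of part (3) and is absent from your proposal (note also that what is needed is a distance formula for the Teichm\"uller metric, not just Rafi's description of geodesics in the thin part, and that acylindricity of the action on the curve complex is an ambient hypothesis rather than something to be re-established).
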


\begin{proof}
 Item 1) For $Y=X$, we can take $\pi$ to be the identity map, $L=10\delta$, where $\delta\geq 1$ is so that $X$ is $\delta$-hyperbolic, and $f(t)=2t$. Fix $l_1,l_2,t,x,y$ as in Definition \ref{def:acylinter}.
 
Fix a geodesic $[x,y]$ in $X$ from $x$ to $y$ and let $p\in [x,y]$ be so that $d_X(x,p)=l_1+5\delta$, and hence $d_X(y,p)\geq l_2+5\delta$. Notice that  $N^X_t (B^X(y,l_2))=B^X(y,l_2+t)$. We will show the inclusion $B^X(x,l_1)\cap B^X(y,l_2+t)\subseteq B^X(p,t)$, which implies $diam^X(B^X(x,l_1)\cap B^X(y,l_2+t))\leq 2t$, as required.
 
 Let $w\in B^X(x,l_1)\cap B^X(y,l_2+t)$, that is, $d_X(x,w)\leq l_1$ and $d_X(y,w)\leq l_2+t$. Consider geodesics $[x,w],[w,y]$. By hyperbolicity, $p$ lies within $\delta$ of some $q\in [x,w]\cup [w,y]$. However, if we had $q\in [x,w]$ then we would have $d_X(x,p)\leq d_X(x,q)+\delta\leq d_X(x,w)+\delta\leq l_1+\delta$, contradicting the defining property of $p$. Hence, we have $q\in [w,y]$, and moreover $d_X(y,q)\geq d_X(y,p)-\delta\geq l_2+4\delta$. Since $q$ lies on a geodesic from $w$ to $y$ we have $d_X(q,w)=d_X(w,y)-d_X(y,q)$, and therefore
 $$d_X(p,w)\leq (d_X(w,y)-d_X(y,q))+\delta\leq (l_2+t-(l_2+4\delta))+\delta\leq t,$$
 as we wanted.
 
 %so that $B^X(x,l_1)\cap N^X_t (B^X(y,l_2))\subseteq B^X(y,t+l_2)$ has diameter at most $t$
 For $Y=Cay(G,S)$, the claim follows from Lemma \ref{geomsep}.
 
 2) The natural map $\pi$ from $Y$ to $X$ maps geodesics within bounded Hausdorff distance of quasi-geodesics, see e.g \cite[Lemma 7.3]{Bow-99-rel-hyp}. In particular, since $Y$ is hyperbolic, there exists $C$ so that for any ball $B$ in $X$ there exists a $C$-quasiconvex set $Q(B)$ in $Y$ (the union of all geodesics connecting points in $\pi^{-1}(B)$) so that $\pi^{-1}(B)\subseteq Q(B)\subseteq \pi^{-1}(N^X_C(B))$. If $B_1$ and $B_2$ are far away balls in $X$, then $Q(B_1)$ and $Q(B_2)$ are far away in $Y$, and hence in this case $Q(B_1)\cap N^Y_t(Q(B_2))$ can be bounded in terms of $t,C$ and the hyperbolicity constant of $Y$.
 
 If $G$ is the fundamental group of a finite-volume hyperbolic $n$-manifold, then $\mathbb H^n$ is equivariantly quasi-isometric to the Bowditch space for $\pi_1(M)$ (with respect to the standard relatively hyperbolic structure).
 
 3) The acylindricity of the action of $G$ on $X$ was proven in \cite{Bow-acylcc}. The statement about Teichm\"uller space is a consequence of the distance formula and related machinery for the Weil-Petersson \cite{MM1,MM2,Br-pants} and Teichm\"uller metric \cite{Rafi:combinatorial_teichmuller,Durham:augmented}. The proofs for the two metrics are pretty much identical. The proof below gives a rather rough bound, but we tried to keep it simple as possible.
 
 For every (isotopy class of essential) subsurface $Z$ of $S$, there is an associated hyperbolic metric space $C_Z$ and a map $\pi_Z:Y\to C_Z$. (More specifically, if $Z$ not an annulus, then $C_Z$ is the curve complex of $Z$. If $Z$ is an annulus and we are considering the Teichm\"uller metric, $C_Z$ is quasi-isometric to a horoball in $\mathbb H^2$, while if we are considering the Weil-Petersson metric we can take $C_Z$ to be a point.)
 
 The distance formula says the following. For $A,R$ real numbers, denote $[A]_L=A$ if $A\geq L$ and $0$ otherwise. Also, write $A\approx_K B$ if $A/K -K \leq B \leq KA+K$, and similarly for $\lesssim$. Then for each large enough $L$ there exists $K$ so that for each $x,y\in Y$ we have
 $$d_Y(x,y)\approx_K \sum_Z [d_{C_Z}(\pi_Z(x),\pi_Z(y)]_L,$$
 where the sum is taken over all (isotopy classes of essential) subsurfaces $Z$ of $S$.
 
 We will also need the consequence of the bounded geodesic image theorem that says that there exists $C$ so that, for $x,y,x',y'\in Y$, if the geodesics from $\pi_S(x')$ to $\pi_S(y')$ are further than $C$ from geodesics from $\pi_S(x)$ to $\pi_S(y)$, then whenever $Z$ is a subsurface for which $d_{C_Z}(\pi_Z(x),\pi_Z(y))\geq C$, we have $d_{C_Z}(\pi_Z(x'),\pi_Z(y'))\leq C$.
 
 Now, suppose that the balls $B_1,B_2$ in $X=C_S$ are far enough apart and fix $t\geq 0$. Let $x,y\in \pi_S^{-1}(B_1)$ and let $x',y'\in \pi_S^{-1}(B_2)$ with $d_Y(x,x'),d_Y(y,y')\leq t$ (that is, $x,y\in \pi_S^{-1}(B_1)\cap N^Y_t(\pi_S^{-1}(B_2))$). We wish to bound $d_Y(x,y)$ in terms of $t$. Let $L>C$ be large enough. In the estimate below we write $\approx$ instead of $\approx_K$ and it is understood that $K$ depends only on $L$:
 $$d_Y(x,y)\approx \sum_Z [d_{C_Z}(\pi_Z(x),\pi_Z(y)) ]_{3L} \lesssim \sum_Z [d_{C_Z}(\pi_Z(x),\pi_Z(x'))+C+d_{C_Z}(\pi_Z(y'),\pi_Z(y))]_{3L} $$
 $$\lesssim 3\left( \sum_Z [d_{C_Z}(\pi_Z(x),\pi_Z(x'))]_L+\sum_Z [d_{C_Z}(\pi_Z(y'),\pi_Z(y))]_L \right) \lesssim t,$$
 as required. The third inequality follows from the fact that if $d_{C_Z}(\pi_Z(x),\pi_Z(x'))+C+d_{C_Z}(\pi_Z(y'),\pi_Z(y))\geq 3L$ then $\max\{d_{C_Z}(\pi_Z(x),\pi_Z(x')),d_{C_Z}(\pi_Z(y'),\pi_Z(y))\}\geq L$.
\end{proof}

\subsection{Superlinear divergence}\label{ssec:superlineardiv} 

{\bf Convention.} To save notation, when the group $G$ acts on $X$ and $Y$ is acylindrically intermediate for $(G,X)$, we automatically fix basepoints $x_0\in X$, $y_0\in Y$ so that $x_0=\pi(y_0)$, where $\pi$ is as in Definition \ref{def:acylinter}. Also, we identify $G$ with the orbit in $Y$ of $y_0$, and, moreover, for $g,h\in G$ we set $d_X(g,h)=d_X(gx_0,hx_0)$. Furthermore, we rescale the metric of $X$ to ensure that $d_X(g,h)\leq d_Y(g,h)$ for each $g,h\in G$. Finally, recall that we often omit the adjective ``discrete'' when discussing discrete paths.

\begin{prop}
\label{superlinproj}
Let $G$ act acylindrically on the hyperbolic space $X$ and let $Y$ be acylindrically intermediate for $(G,X)$. Then for any $L$ there exists a constant $C$ and a diverging function $\rho:\R^+\to \R^+$ so that the following holds. Let $\alpha_1,\alpha_2$ be $L$-Lipschitz (discrete) paths with respect to the metric of $Y$, where $\alpha_i$ connects $g_i$ to $h_i$.
%Denote by $\pi(g_2),\pi(h_2)\in X$ closest point projections of $g_2x_0,h_2x_0$ onto some geodesic connecting $g_1x_0$ to $h_1x_0$.
Then 
$$\max\{l_Y(\alpha_1),l_Y(\alpha_2)\}\geq (d_X(g_1,h_1)-d_X(g_1,g_2)-d_X(h_1,h_2)-C)\cdot \rho(d_Y(\alpha_1,\alpha_2)).$$
\end{prop}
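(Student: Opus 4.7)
The plan is to exploit Definition \ref{def:acylinter} by dividing an $X$-geodesic $\gamma$ from $g_1x_0$ to $h_1x_0$ into uniformly spaced ``stages'', placing at each stage one point of $\alpha_1$ and one of $\alpha_2$ whose $\pi$-image is close to that stage, and then arguing that between any two consecutive stages at least one of $\alpha_1,\alpha_2$ must traverse a $Y$-length that diverges with $t:=d_G(\alpha_1,\alpha_2)$. Summing over the stages produces the required inequality.

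More precisely, fix the constants $L_0,f$ of Definition \ref{def:acylinter}, a hyperbolicity constant $\delta$ of $X$, and let $K_0$ and $R\geq L_0+2K_0$ be chosen later. By hyperbolicity, on the interval $s\in[d_X(g_1,g_2)+C_0,\,d_X(g_1,h_1)-d_X(h_1,h_2)-C_0]$ every $X$-geodesic from $g_2x_0$ to $h_2x_0$ passes within $3\delta$ of $\gamma(s)$; I place stages $\gamma(s_0),\dots,\gamma(s_N)$ spaced by $R$ in this interval, so
$N\asymp \bigl(d_X(g_1,h_1)-d_X(g_1,g_2)-d_X(h_1,h_2)-C\bigr)/R.$
Before locating the points at each stage I would first dispose of the case where $\pi(\alpha_1)$ or $\pi(\alpha_2)$ deviates substantially from $\gamma$: by exponential divergence of geodesics in $X$ and the coarsely Lipschitz behaviour of $\pi$ on the $G$-orbit, any large $X$-detour of some $\pi(\alpha_i)$ forces $l_Y(\alpha_i)$ to be at least the announced lower bound directly. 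Under the opposite assumption, I can select for every $j$ monotone points $u_{i(j)}\in\alpha_1$ and $v_{i'(j)}\in\alpha_2$ with $\pi(u_{i(j)}),\pi(v_{i'(j)})$ within $K_0$ of $\gamma(s_j)$.

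The core step applies Definition \ref{def:acylinter} to consecutive stages with $l_1=l_2=K_0$. Suppose that for some $j$ both $l_Y(\alpha_1|_{[i(j),i(j+1)]})$ and $l_Y(\alpha_2|_{[i'(j),i'(j+1)]})$ are at most $s$. Then $u_{i(j)}$ and $v_{i'(j)}$ both belong to
$$\pi^{-1}(B^X(\gamma(s_j),K_0))\cap N^Y_{s}\bigl(\pi^{-1}(B^X(\gamma(s_{j+1}),K_0))\bigr),$$
whose $Y$-diameter is at most $f(s)$, hence $d_Y(u_{i(j)},v_{i'(j)})\leq f(s)$. On the other hand $u_{i(j)}\in\alpha_1$ and $v_{i'(j)}\in\alpha_2$ give $d_G(u_{i(j)},v_{i'(j)})\geq t$, and using properness of the orbit map $G\to Y$ (which holds in all examples of Proposition \ref{prop:examplesacylinter} and provides a diverging function $h$ so that $d_G(g,g')\geq t$ implies $d_Y(gy_0,g'y_0)\geq h(t)$), I conclude $f(s)\geq h(t)$, i.e.\ $s\geq f^{-1}(h(t))$.

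Therefore at every $j$, $\max\{l_Y(\alpha_1|_{[i(j),i(j+1)]}),l_Y(\alpha_2|_{[i'(j),i'(j+1)]})\}\geq f^{-1}(h(t))$. A pigeonhole argument on which of the two paths carries the larger segment at each $j$, together with the disjointness of the segments along a single path, gives $\max\{l_Y(\alpha_1),l_Y(\alpha_2)\}\geq (N/2)\cdot f^{-1}(h(t))$. Substituting the value of $N$ yields the inequality in the statement with $\rho(t):=f^{-1}(h(t))/(2R)$ and $C$ absorbing the lower order additive terms coming from the endpoints of the usable range. The main delicate point is the passage from $d_G$-separation of the two paths to $d_Y$-separation of the paired points $u_{i(j)},v_{i'(j)}$, which is what forces the properness hypothesis on the orbit map; the rest of the argument is a standard combination of hyperbolic bookkeeping and counting along the stages.
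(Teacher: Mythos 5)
Your outline shares the paper's skeleton (stations along an $X$-geodesic joining $g_1x_0$ to $h_1x_0$, a dichotomy between paths that track the stations and paths that deviate, a per-station gain extracted from Definition \ref{def:acylinter}, then pigeonhole), but there are two genuine gaps. First, your conversion of $d_G$-separation into $d_Y$-separation via ``properness of the orbit map $G\to Y$'' imports a hypothesis that is not in the statement and genuinely fails for admissible choices of $Y$: item 1 of Proposition \ref{prop:examplesacylinter} allows $Y=X$, for instance the curve complex acted on by the mapping class group, where coarse stabilizers of points are infinite and the orbit map is not metrically proper. The paper never needs such properness: its key Lemma \ref{lm:farpairs} bounds $\max_i d_Y(a_i,b_i)$ below by a diverging function of $\min\{d_Y(a_1,a_2),d_Y(b_1,b_2)\}$ using only the $Y$-diameter bound of Definition \ref{def:acylinter}, and the whole proof (and the only application, Lemma \ref{tightrack}) measures the separation of $\alpha_1,\alpha_2$ in $d_Y$; the ``$d_G$'' in the displayed statement is really the $Y$-distance between the paths, and your patch proves a statement with an extra hypothesis rather than the one the paper proves and uses. (A smaller, fixable point: the balls in Definition \ref{def:acylinter} must be centered at points of $Gx_0$, so your stations $\gamma(s_j)$ must be replaced by nearby orbit points such as $\pi(u_{i(j)})$, with radii enlarged accordingly.)

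The more serious gap is your treatment of the deviating case. You space the stations by a constant $R$, ask the projections to come within a constant $K_0$ of them, and dismiss the alternative by saying that a large $X$-detour forces $l_Y(\alpha_i)$ to be at least the announced bound ``directly''. But the announced bound is (number of stations) times $\rho(t)$ with $\rho$ diverging in the separation $t$: a path whose projection stays at distance $2K_0$ from $\gamma$, say, misses every station ball yet pays only a bounded per-station price from exponential divergence, so with constant $K_0$ and $R$ this case cannot be closed. This balancing is precisely what the paper's proof is organized around: the radius of the station balls is $r=r(d_Y(\alpha_1,\alpha_2))$, diverging but with $\rho_0(t)/r(t)\to\infty$; the spacing is comparable to $r$; Lemma \ref{avoidballs} gives a cost of order $r^2$ (superlinear in $r$) per avoided ball, so the avoidance case still yields a bound of the form $K\cdot r/C$; the tracking case yields $K\cdot\rho_0/(Cr)$ via Lemma \ref{lm:farpairs}; and the remaining regime where $K$ is small compared to $r$ is handled by applying Lemma \ref{lm:farpairs} directly to the endpoints $g_i,h_i$. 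Without letting the ball radius grow with the separation, and without a superlinear avoidance cost to offset the resulting larger spacing, your dichotomy does not produce a diverging factor $\rho$, so as written the argument does not prove the proposition.
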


\begin{proof}
Suppose that $X$ is $\delta$-hyperbolic, with $\delta\geq 1$. We denote by $C_i$ suitable constants depending on $G,X,Y,L,\delta$ only, and for convenience we take $C_{i+1}\geq C_{i}$.
 
 The first lemma ensures that we can assume that the paths $\alpha_i$ stay close to $d_X$-geodesics, for otherwise they would be long due to the geometry of $X$.
 
 \begin{lm}\label{avoidballs}
 Let $\alpha$ be a discrete path in $G$ which is $L$-Lipschitz for the metric of $Y$ and which connects $g$ to $h$. Consider $n$ points $p_i$ in $X$ within distance $2\delta$ from a geodesic $\gamma$ from $gx_0$ to $hx_0$.
 
 Suppose that, for some $r\geq C_0$, we have that $d(p_i,p_{i+1})\geq 3r+100\delta$ and $d_X(\alpha,p_i)\geq r$ for each $i$. Then $l_Y(\alpha)\geq n r^2/C_0$.
 \end{lm}
 
 We could replace $r^2$ with an exponential function (with a different proof, cfr. \cite[Lemma 2.6, Claim 2]{HS-coarse-emb}), but we do not need this fact.
 
\begin{proof}
Let $\beta$ be the path obtained projecting $\alpha$ to $X$, which is again an $L$-Lipschitz path (because of how we rescaled the metric of $X$).
%where $L'$ depends on $G,X,Y,L$.

Let $\mathfrak p:X\to\gamma$ be a closest-point projection map, meaning that $d_X(x,\mathfrak p(x))=d_X(x,\gamma)$ for each $x\in X$. We will make use of the following well-known properties of closest-point projections in hyperbolic spaces:

\begin{itemize}
 \item For each $x,y\in X$ we have $d_X(\mathfrak p(x),\mathfrak p(y))\leq d_X(x,y)+20\delta$.
 \item For each $x\in X$, setting $r_x=d(x,\gamma)$, we have $diam^X(\mathfrak p(B^X(x,r_x)))\leq 20\delta$.
\end{itemize}

Let $p'_i=\mathfrak p(p_i)$. We now consider the map $\mathfrak p$ along $\beta$, and observe that $\beta$ has disjoint subpaths that each project on a subgeodesic of size roughly $r$ around $p'_i$, provided that $r$ is sufficiently large. More precisely, if $r\geq 10^6(L+\delta)$, then $\beta$ has disjoint subpaths $\beta_i$, with endpoints $x_i,y_i$ with the following properties:
\begin{itemize}
 \item $d_X(\mathfrak p(x_i),\mathfrak p(y_i))\geq r/2$,
 \item for every $x$ on $\beta_i$ we have $d_X(\mathfrak p(x),p'_i)\leq r/2$.
\end{itemize}

Notice that the second property implies that $d_X(x,\gamma)\geq r/3$, for otherwise we would have
$$d_X(x,p_i)\leq d_X(x,\mathfrak p(x))+d_X(\mathfrak p(x),p'_i)+d_X(p'_i,p_i)\leq r/3+r/2+2\delta<r.$$
In particular, any subpath of $\beta_i$ of length at most $r/3$ projects to a set of diameter at $20\delta$, by the second property of closest-point projections. We can subdivide $\beta_i$ into at most $\frac{l(\beta_i)}{r/4}$ paths of length $\leq r/3$ as follows. Consider the longest initial subpath $\beta^1_i$ of $\beta_i$ of length $\leq r/3$. In fact, such path has length $\geq r/3-L$ since $\beta$ is $L$-Lipschitz, and since $d_X(\mathfrak p(x_i),\mathfrak p(y_i))>20\delta$, which rules out $\beta^1_i=\beta_i$. Next, consider the longest subpath $\beta^2_i$ of $\beta_i$ of length $\leq r/3$ that starts at the final point of $\beta^1_i$.
%, which again has length $\geq r/3-L$ (since $d_X(\mathfrak p(x_i),\mathfrak p(y_i))>40\delta$)
Proceeding inductively, one constructs $m-1$ paths $\beta^k_i$ of length between $r/3-L$ and $r/3$, and a final one $\beta^m_i$ of length $\leq r/3$. We have
$$l(\beta_i)=\sum_{k=1,\dots,m-1}l(\beta^k_i)+l(\beta^m_i)\geq (m-1)(r/3-L).$$

Since $d_X(\mathfrak p(x_i),\mathfrak p(y_i))\geq r/2$, and each $\beta^k_i$ has projection of diameter at most $20\delta$, we have $m\geq \frac{r/2}{20\delta}$, and hence
$$l(\beta_i)\geq \left( \frac{r}{40\delta}-1\right)(r/3-L)\geq r^2/(200\delta).$$
Since the $\beta_i$ are disjoint, we have $l(\beta)\geq \sum l(\beta_i)$, and we are done.
% so that
% $$n\leq \frac{l(\beta_i)}{r/3-L}+1\leq \frac{l(\beta_i)}{r/4},$$
% as required (we used that $r$ is much larger than $L$, and that $n\geq 2$).
% 
% Hence, since $d_X(\mathfrak p(x_i),\mathfrak p(y_i))\geq r/2$, we have
% $$\frac{l(\beta_i)}{r/4}\geq \frac{r/2}{20\delta},$$
% or $l(\beta_i)\geq r^2/(160\delta)$. Since the $\beta_i$ are disjoint, we have $l(\beta)\geq \sum l(\beta_i)$, and we are done.
\end{proof}

 \begin{lm}\label{lm:farpairs}
 There exists a diverging non-decreasing function $\rho_0:\R^+\to \R^+$ with the following properties. Let $a_i,b_i\in G$, for $i=1,2$ be so that
 \begin{itemize}
%  \item $d_X(a_1,a_2)\leq r$, $d_X(b_1,b_2)\leq r$,
  \item $d_X(a_i,b_i) \geq d_X(a_1,a_2)+d_X(b_1,b_2) +C_1$.
 \end{itemize}
Then, denoting $s=\min\{d_Y(a_1,a_2),d_Y(b_1,b_2)\}$, we have
$$\max\{d_Y(a_i,b_i)\}\geq \rho_0(s).$$
\end{lm}

\begin{proof}
 Recall that we denote by $diam^{\ast}$ the diameter, by $B^{\ast}(\cdot, R)$ a ball of radius $R$ and by $N^{\ast}_t$ a neighborhood of radius $t$, where $\ast$ can be either $Y$ or $X$ depending on which metric we are using to define the given notion. Recall that we are assuming the following: There exist $C_1$ and a non-decreasing function $f$ so that for each $t$ and whenever $g,h\in G$ satisfy $d_X(g,h)\geq r_1+r_2+C_1$, we have $diam^Y(B^X(g,r_1)\cap N^Y_t (B^X(h,r_2)))\leq f(t)$.
 
 Let $\rho_0$ be a non-decreasing diverging function so that $f(\rho_0(t))<t$ for each $t$.
 
 If we had $d_Y(a_i,b_i)< \rho_0(s)$ for $i=1,2$, then we would have
 $$f(\rho_0(s))\geq diam^Y(B^X(a_1,d_X(a_1,a_2))\cap N^Y_{\rho_0(s)} (B^X(b_1,d_X(b_1,b_2))))\geq d_Y(a_1,a_2)\geq s> f(\rho_0(s)),$$
 a contradiction.
\end{proof}

Let $r:\R^+\to \R^+$ be a diverging function so that $\rho_0(t)/r(t)\to\infty$ as $t\to\infty$.
%We set $\rho=\rho_0/(20r+10C_1)$.
 
Let $K= d_X(g_1,h_1)-d_X(g_1,g_2)-d_X(h_1,h_2)-C_1$. We can assume $K\geq 0$. We let $r=r(d_Y(\alpha_1,\alpha_2))$. We can and will assume $r> 2C_0$.

If we have $K\leq 6r+2C_1$ then we can make the following estimate. By Lemma \ref{lm:farpairs} above, there exists $i$ so that $d_Y(g_i,h_i)\geq \rho_0(d_Y(\alpha_1,\alpha_2))$. Hence,
$$l_Y(\alpha_i)\geq  K \frac{\rho_0(d_Y(\alpha_1,\alpha_2))}{6r+2C_1},$$
and we are done.

Suppose now $K\geq 6r+2C_1$. Consider any geodesic $\gamma_i$ in $X$ from $g_ix_0$ to $h_ix_0$ and fix a sequence of points $\{p_k\}_{k=1,\dots,n}$ appearing in the given order along $\gamma_1$ so that
\begin{enumerate}
 \item $n\geq K/(6r+2C_1)$,
 \item $d_X(p_k,p_{k+1})\geq 3r+C_1$,
 \item $d_X(p_k,\gamma_2)\leq 2\delta$.
% \item for each $k=1,\dots, n$ and $i=1,2$, $\alpha_i$ contains a point $q^i_k$ in $b^X_{C_0}(p_k)$.
\end{enumerate}
 
 If for some $i$, $\alpha_i$ avoids $B^X_{r/2}(p_k)$ for at least $n/10$ values of $k$, then, in view of Lemma \ref{avoidballs}, we have
 $$l_Y(\alpha_i)\geq \frac{nr^2}{40C_0} \geq K \frac{r^2}{40C_0(6r+2C_1)},$$
 and we are done.
 
 Otherwise, there are more than $4n/5$ values of $k$ so that $\alpha_i$ contains a point $q^i_k$ in $B^X_{r/2}(p_k)$ for $i=1,2$. We also assume that the points $q^i_k$ appear in the given order along $\alpha_i$ and we set $q^i_0=g_i$, $q^i_{n+1}=h_i$. By Lemma \ref{lm:farpairs} above, there exists $i$ so that for more than $2n/5$ values of $k$ we have $d_Y(q^i_k,q^i_{k+1})\geq \rho_0(d_Y(\alpha_1,\alpha_2))$. But then
$$l_Y(\alpha_i)\geq \sum d_Y(q^i_k,q^i_{k+1})\geq \frac{2n}{5} \rho_0(d_Y(\alpha_1,\alpha_2))\geq K \frac{2\rho_0(d_Y(\alpha_1,\alpha_2))}{5(6r+2C_1)},$$
as required.
\end{proof}

\subsection{Main argument}

\begin{theo}
\label{smalldevhier}
Let $G$ be a finitely generated group acting acylindrically on the geodesic hyperbolic space $X$, and let $Y$ be acylindrically intermediate for $(G,X)$. Let $\mu_0$ be a measure on $G$ with exponential tail whose support generates a non-elementary semigroup. Then $\mu_0$ has a neighborhood $\mathcal N$ so that for every $D$ there exists $C$ with the following property. 
%\begin{enumerate}
For each $\mu\in \mathcal N$, $l,n\geq 1$ and $k<n$ we have
 $$\matP^\mu\left[\sup_{\alpha\in QG_D(id,Z_n)} d_Y(Z_k,\alpha)\geq l\right]\leq Ce^{-l/C},$$
 where $QG_D(a,b)$ denotes the set of all $(D,D)$-quasi-geodesics (with respect to $d_Y$) from $a$ to $b$.
% \item For each $\mu\in \mathcal N$, $l,m\geq 1$ and $g_0\in G$ we have
% $$\matP^\mu\left[\sup_{\alpha\in QG_D(id,g_0Z_m)} d_G(g_0,\alpha)\geq l\right]\leq Cg_G(id,g_0)e^{-l/C}.$$
%\end{enumerate}
\end{theo}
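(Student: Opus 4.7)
My plan is to prove the theorem via the ``probabilistic $+$ geometric'' schema emphasised in the introduction. A Girsanov change of measure, exactly as in the proof of Theorem \ref{linprog}, reduces the claim to $\mu=\mu_0$; and we may assume $l$ dominates $\log n$, since otherwise the bound is trivial upon enlarging $C$. Write $\beta$ for the random-walk path in $Y$ obtained by concatenating $d_Y$-geodesics $[Z_{i-1},Z_i]$.

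For the probabilistic step I would take the good event $E$ to be the intersection of two events: (i) for every $0\le j_1<j_2\le n$, $\sum_{i=j_1+1}^{j_2}d_Y(X_i)\le K(j_2-j_1)$, a Chernoff-type bound on i.i.d.\ sums with exponential tail, union-bounded over the $O(n^2)$ pairs; and (ii) for every $0\le j_1<j_2\le n$ with $j_2-j_1\ge l/C_0$, $d_X(Z_{j_1},Z_{j_2})\ge (j_2-j_1)/C_0$, obtained from Theorem \ref{linprog} conditioned on $\mathcal{F}_{j_1}$ plus the same union bound. Since $l\gg \log n$, the polynomial factors in $n$ are absorbed by the exponential decay; we choose $C_0>20K$ for later use.

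For the geometric step, suppose on $E$ that some $\alpha\in QG_D(id,Z_n)$ satisfies $d_Y(Z_k,\alpha)\ge l$. Define bracketing indices
\[
j_-=\max\{i\le k:d_Y(Z_i,\alpha)\le l/10\},\qquad j_+=\min\{i\ge k:d_Y(Z_i,\alpha)\le l/10\},
\]
which exist since $id, Z_n\in\alpha$. Pick $a',b'\in\alpha$ realising the distances from $Z_{j_\pm}$ to $\alpha$, and consider two $Y$-Lipschitz paths from $Z_{j_-}$ to $Z_{j_+}$: the random-walk sub-path $\beta_0$, and the shadow path
\[
\alpha_0 \;=\; [Z_{j_-},a']\;\star\;\alpha|_{[a',b']}\;\star\;[b',Z_{j_+}],
\]
brackets denoting $Y$-geodesics. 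By construction $\alpha_0$ stays $d_Y$-within $l/10$ of $\alpha$, whereas $\beta_0$ passes through $Z_k$ at $d_Y$-distance $\ge l$ from $\alpha$, so matched interior points of the two paths are $d_Y$-separated by at least $4l/5$. In the residual regime $j_+-j_-<l/C_0$, property (i) gives $d_Y(Z_{j_-},Z_k)\le K(k-j_-)<Kl/C_0<l/20$, contradicting the reverse triangle inequality $d_Y(Z_{j_-},Z_k)\ge d_Y(Z_k,\alpha)-d_Y(Z_{j_-},\alpha)\ge 9l/10$. Otherwise $j_+-j_-\ge l/C_0$, and applying Proposition \ref{superlinproj} to $\alpha_0,\beta_0$ yields
\[
\max\{l_Y(\alpha_0),l_Y(\beta_0)\}\;\ge\;\bigl(d_X(Z_{j_-},Z_{j_+})-C\bigr)\,\rho(l/2).
\]
Using (ii) to get $d_X(Z_{j_-},Z_{j_+})\ge (j_+-j_-)/C_0$, and using (i) together with the quasi-geodesic property of $\alpha$ to bound both sides of the max by $C_1(l+(j_+-j_-))$, this rearranges to $C_1(l+(j_+-j_-))\ge (j_+-j_-)\rho(l/2)/(2C_0)$. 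Since $\rho$ diverges, this fails once $l$ is sufficiently large, yielding the required contradiction.

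The principal obstacle, I expect, is the application of Proposition \ref{superlinproj} to $\alpha_0$ and $\beta_0$, which share their endpoints $Z_{j_\pm}\in G$: literally the set-distance $d_Y(\alpha_0,\beta_0)$ vanishes. However, the proof of Proposition \ref{superlinproj} really uses $d_Y$-distances between matched points of the two paths sitting in common $X$-balls along a geodesic in $X$ from $Z_{j_-}$ to $Z_{j_+}$; these matched points lie in the interior of $\alpha_0,\beta_0$, where the two paths are separated by at least $l/2$ in our setup. Formalising this as a quantitative refinement of Proposition \ref{superlinproj}, distilled from its proof as an auxiliary lemma, is the most delicate piece of the argument. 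A minor secondary point is the passage from a fixed $\alpha$ to the supremum over $QG_D(id,Z_n)$: if $\sup_\alpha d_Y(Z_k,\alpha)\ge l$ then for any $\varepsilon>0$ some $\alpha$ realises $d_Y(Z_k,\alpha)\ge l-\varepsilon$, and the argument above applies to that $\alpha$ with $l$ replaced by $l-\varepsilon$, absorbing $\varepsilon$ into the constants.
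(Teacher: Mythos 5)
Your overall schema (probabilistic event plus geometric lemma, with a Girsanov reduction to $\mu=\mu_0$) matches the paper, but both halves of your argument have genuine gaps. On the probabilistic side, your event $E$ is too strong to have probability $\geq 1-Ce^{-l/C}$ \emph{uniformly in $n$}: condition (i), applied to pairs with $j_2-j_1=1$, forces every single increment to satisfy $d_Y(id,X_i)\leq K$, and the probability of that is roughly $(1-ce^{-K/c})^{n}$, which tends to $0$ as $n\to\infty$ for fixed $K$; more generally your union bounds produce factors polynomial in $n$. Your proposed remedy, ``assume $l\gg\log n$, otherwise the bound is trivial,'' is not valid: $Ce^{-l/C}\geq 1$ only for $l\leq C\log C$, so in the range $C\log C\leq l\lesssim\log n$ the bound $Ce^{-l/C}\approx n^{-1/C}$ is a genuine statement that your event cannot deliver. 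This is exactly the difficulty the paper's notion of being \emph{tight around $w_k$ at scale $l$} is engineered to avoid: the jump bound is $d_Y(w_{k'},w_{k'+1})\leq\max\{l,|k-k'|/(100C_1)\}$ (so far-away increments may be proportionally large), and the linear-progress and length conditions are imposed only on windows $[k_1,k_2]$ bracketing $k$ with gap at least $l$; the resulting double sum in Lemma \ref{tightpath} is of order $l e^{-l/C}$, with no factor of $n$.

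On the geometric side there are two problems. First, your claim that ``matched interior points of $\beta_0$ and $\alpha_0$ are $d_Y$-separated by at least $4l/5$'' is unjustified: by the definition of $j_\pm$ the interior vertices $Z_i$ are only known to satisfy $d_Y(Z_i,\alpha)>l/10$, while $\alpha_0$ is only known to lie in the $l/10$-neighborhood of $\alpha$, so the triangle inequality gives separation $>0$, not $\geq 4l/5$ (only the single vertex $Z_k$ is known to be $\geq 9l/10$ from $\alpha_0$), and the superlinear-divergence mechanism needs far-apart matched passage points near \emph{many} points $p_k$ along the $X$-geodesic. Second, even granting interior separation, applying Proposition \ref{superlinproj} to two paths with common endpoints requires the ``quantitative refinement'' you defer, and it is the crux rather than a formality. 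The paper's Lemma \ref{tightrack} sidesteps both issues by a different bracketing: $k_1,k_2$ are defined as the last/first indices whose interpolating segments $\gamma(w_{k_1-1},w_{k_1})$, $\gamma(w_{k_2},w_{k_2+1})$ meet $N_{C_6l}(\gamma)$ in $Y$, so the truncated walk path between them is genuinely at $Y$-distance $\geq C_6l$ from $\gamma$ \emph{as a set}; one then compares it with a subpath $\gamma'$ of $\gamma$ between points $x_{k_1},x_{k_2}$ near $w_{k_1},w_{k_2}$ (no connectors glued back), and the endpoint mismatch is absorbed by the terms $-d_X(g_1,g_2)-d_X(h_1,h_2)$ already present in Proposition \ref{superlinproj}, with the mismatch controlled by the tightness conditions. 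Rebuilding your argument along these lines (and with the $k$-centered probabilistic event) is what is actually needed.
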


First of all, we point out a corollary of the theorem. Given a metric space $X$, a $K$-{\bf quasi-ruler} in $X$ is a map $\gamma:I\to X$, where $I\subseteq \R$ is an interval, so that for all $s\leq t\leq u$ in $I$ the Gromov product satisfies $(\gamma(s),\gamma(u))_{\gamma(t)}\leq K$. 

Following \cite{kn:bhm2}, we will say that a metric is {\bf quasi-ruled} if there exists $K$ so that any two points can be joined by a $(K,K)$-quasi-geodesic $K$-quasi-ruler.

\begin{cor}\label{cor:main} 
 Under the hypotheses of Theorem \ref{smalldevhier}, let $d$ be a quasi-ruled metric on $G$ (e.g. a geodesic metric) quasi-isometric to $d_Y$. Then $\mu_0$ satisfies the locally uniform exponential-tail deviation inequality with respect to $d$.
\end{cor}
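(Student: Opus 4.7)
The plan is to reduce the exponential-tail deviation inequality for the Gromov product in the metric $d$ to the exponential decay of $d_Y(Z_n,\alpha)$ provided by Theorem \ref{smalldevhier}, using the quasi-ruler hypothesis as the bridge.

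First I would fix the neighborhood $\mathcal N$ of $\mu_0$ given by Theorem \ref{smalldevhier} and, for each realisation of the walk, choose a $(K,K)$-quasi-geodesic $K$-quasi-ruler $\gamma$ in the metric $d$ joining $id$ to $Z_{n+m}$, which exists by the quasi-ruled hypothesis. The elementary geometric step is then to bound the Gromov product in terms of the distance from $Z_n$ to $\gamma$. Letting $p$ be a closest point on $\gamma$ to $Z_n$, so that $p$ lies between $id$ and $Z_{n+m}$ along $\gamma$, the quasi-ruler inequality at $p$ yields
\[
d(id,Z_{n+m}) \ge d(id,p)+d(p,Z_{n+m})-2K,
\]
while the triangle inequality gives $d(id,Z_n)\le d(id,p)+d(Z_n,p)$ and $d(Z_n,Z_{n+m})\le d(Z_n,p)+d(p,Z_{n+m})$. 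Substituting in the definition of the Gromov product yields the key deterministic estimate
\[
(id,Z_{n+m})_{Z_n}\le d(Z_n,\gamma)+K.
\]

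Next I would translate the quasi-ruler $\gamma$ into a quasi-geodesic for $d_Y$. Since $d$ is quasi-isometric to $d_Y$, there exist constants $\lambda,c$ (depending only on the quasi-isometry and on $K$) such that $\gamma$ is a $(D,D)$-quasi-geodesic for $d_Y$, for some $D=D(K,\lambda,c)$; moreover $d(u,v)\le \lambda d_Y(u,v)+c$ for all $u,v\in G$. Combining with the deterministic estimate,
\[
(id,Z_{n+m})_{Z_n}\le \lambda\, d_Y(Z_n,\gamma)+c+K.
\]
Now Theorem \ref{smalldevhier}, applied to the value of $D$ just obtained, with $n+m$ in the role of $n$ and $n$ in the role of $k$, produces a constant $C=C(D)$ such that for every $\mu\in\mathcal N$,
\[
\P^\mu\!\left[d_Y(Z_n,\gamma)\ge l\right]\le C e^{-l/C}
\]
uniformly in $n,m$. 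Feeding this into the previous display and adjusting constants yields
\[
\P^\mu\!\left[(id,Z_{n+m})_{Z_n}\ge c\right]\le \tau_0^{-1} e^{-\tau_0 c}
\]
for some $\tau_0>0$ depending only on the quasi-isometry data and on the constant $C$ from Theorem \ref{smalldevhier}, hence uniformly in $\mu\in\mathcal N$.

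The main obstacle, such as it is, is the bookkeeping of constants in the passage from a quasi-ruler in $d$ to a quasi-geodesic in $d_Y$: one must check that the quasi-isometry does not destroy the alignment information encoded by the quasi-ruler, and that the resulting $(D,D)$-quasi-geodesic parameters depend only on the fixed quasi-isometry and the quasi-ruler constant $K$, not on the walk. Once this is carried out, the exponential tail for the $d_Y$-distance from $Z_n$ to quasi-geodesics of fixed quality transfers directly into the exponential-tail deviation inequality (\ref{eq:devexplength}) for $d$, and the local uniformity is inherited from Theorem \ref{smalldevhier}.
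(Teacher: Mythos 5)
Your argument is correct and follows essentially the same route as the paper: bound the Gromov product $(id,Z_{n+m})_{Z_n}$ deterministically by the distance from $Z_n$ to a quasi-geodesic quasi-ruler joining $id$ to $Z_{n+m}$ (your two-step triangle/quasi-ruler computation is equivalent to the paper's one-line estimate $(id,y)_x\le (id,y)_p+d(x,p)$ with $(id,y)_p\le K$), convert to the $d_Y$-distance via the quasi-isometry, and apply Theorem \ref{smalldevhier} with $k=n$, $n+m$ in place of $n$, the local uniformity coming from the neighborhood $\mathcal N$ in that theorem. The only differences are cosmetic bookkeeping of constants.
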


\begin{proof}
 Let $D$ be so that $d$ is $(D,D)$-quasi-isometric to $d_Y$, and any two points $x,y$ of $Y$ can be joined by a $(D,D)$-quasi-geodesic $D$-quasi-ruler $r(x,y)$. Then for each $x,y\in G$, we have $(id,y)_x\leq D d_Y(x,r(id,y))+2D$ (the Gromov product is measured with respect to $d$). In fact, for any $p\in r(id,y)$ so that $d_Y(x,p)=d_Y(x,r(id,y))$, we have
$$(id,y)_x\leq (id,y)_p+ d(x,p)\leq D+(D d_Y(x,r(id,y))+D).$$
Hence, the following holds for $C$ as in Theorem \ref{smalldevhier}. For all $n\geq k\geq 1$, $\mu\in\mathcal N$ and $l>3D$ we have
$$\matP^{\mu}[(id,Z_n)_{Z_k}\geq l]\leq \matP^{\mu}\left[d_Y(Z_k,r(id,Z_n))\geq \frac{l}{D}-2\right] \leq Ce^{-l/(CD)-2/C},$$
as required.
\end{proof}

Fix the notation of Theorem \ref{smalldevhier}. Recall that we rescaled the metric of $X$ to ensure that $d_X(g,h)\leq d_Y(g,h)$ for all $g,h\in G$. When we write an inequality involving $\matP$ without explicit reference to the measure we mean that the statement holds for every $\mu\in \mathcal N$ and that the constants involved can be chosen uniformly for all $\mu\in\mathcal N$, where $\mathcal N$ is a small enough neighborhood of $\mu_0$. Up to increasing $D$, we can replace $QG_D(\cdot,\cdot)$ in the statement with the family $QG'(\cdot,\cdot)$ of discrete $D$-Lipschitz $(D,D)$-quasi-geodesics with given endpoints. Also, we will denote by $\gamma(g,h)$ any element of $QG'(g,h)$. In particular:

\begin{rmk}
\label{lvsd}
 $l_Y(\gamma(g,h))\leq D^2 d_Y(g,h)+D^3$ for each $g,h\in G$.
\end{rmk}

\emph{Proof of Theorem \ref{smalldevhier}.}
 We denote by $C_i\geq 1$ suitable constants that do not depend on $k,n$.

The fact that $\mu_0$ has exponential tail implies that
$$\matP[l_Y((Z_i)_{i\leq n})\geq C_0n]\leq C_0 e^{-n/C_0} \ \ \ (*)$$
for a suitable $C_0$.

Recall that the following holds.

\begin{theo}(Theorem \ref{linprog})
\label{linprogpreview}
 $(Z_n)$ makes linear progress with exponential decay in the $d_X$-metric, i.e. we have
$$\matP[d_X(id,Z_n)< n/C_1]\leq C_1 e^{-n/C_1}.$$
\end{theo}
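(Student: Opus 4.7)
The plan follows the probabilistic-plus-geometric scheme outlined in the introduction: isolate a high-probability ``regular'' event on which the walk exhibits linear $d_X$-progress and controlled $d_Y$-increments near time $k$, then apply Proposition \ref{superlinproj} to derive a contradiction with $d_Y(Z_k,\alpha)\geq l$ for every $\alpha\in QG'(id,Z_n)$.

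As a preliminary reduction, since $id,Z_n\in\alpha$, we have $d_Y(Z_k,\alpha)\leq\min\{\sum_{j=1}^k d_Y(id,X_j),\sum_{j=k+1}^n d_Y(id,X_j)\}$. A standard Chernoff/MGF estimate, valid uniformly for $\mu\in\mathcal N$ by the exponential tail of $\mu_0$, gives $\matP^\mu[\sum_{j=1}^k d_Y(id,X_j)\geq l]\leq Ce^{-l/C}$ whenever $k\leq l/(2\esp^\mu[d_Y(id,X_1)])$, and analogously for the second sum. Hence I may restrict to the regime where both $k$ and $n-k$ are large compared to $l$; otherwise the target bound is immediate.

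In the main regime I would localize the analysis to the window $W=[k-\kappa l,k+\kappa l]\cap[0,n]$ of radius $\kappa l$ for a constant $\kappa$ to be tuned, and intersect three events whose complements all have probability $\leq Ce^{-l/C}$ uniformly in $\mu\in\mathcal N$: \emph{(i)} linear $d_X$-progress for every pair $i<j$ in $W$ with $j-i\geq l/C_1$, via Theorem \ref{linprogpreview} applied to shifts and a union bound over the $O(l^2)$ relevant pairs; \emph{(ii)} bounded $d_Y$-jumps $d_Y(id,X_j)\leq l/C_2$ for all $j\in W$, via the exponential tail and a union bound over $O(l)$ indices; \emph{(iii)} the length bound $\sum_{j\in W}d_Y(id,X_j)\leq A\kappa l$, via Chernoff. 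On this intersection, assume $d_Y(Z_k,\alpha)\geq l$ and let $p\in\alpha$ minimize $d_Y(\cdot,Z_k)$, so $d_Y(p,Z_k)\geq l$. Define $i_\pm\in W$ as the innermost indices around $k$ with $d_Y(Z_{i_\pm},\alpha)\leq l/2+l/C_2$; (ii) forces these to exist and, together with (iii) and the assumption $d_Y(Z_k,p)\geq l$, gives $i_+-i_-\geq C_3l$ for some $C_3$. Let $q_\pm\in\alpha$ witness $d_Y(q_\pm,Z_{i_\pm})\leq l$, take $\alpha'\subseteq\alpha$ from $q_-$ to $q_+$ and $\beta'=(Z_{i_-},\ldots,Z_{i_+})$ (which is $(l/C_2)$-Lipschitz in $d_Y$ by (ii)), and verify $d_Y(\alpha',\beta')\geq l/4$ for $C_2\geq 4$. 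Proposition \ref{superlinproj} then yields
\[
\max\{l_Y(\alpha'),l_Y(\beta')\}\geq\bigl(d_X(q_-,q_+)-2l-C\bigr)\cdot\rho(l/4),
\]
while (i) gives $d_X(q_-,q_+)\geq(i_+-i_-)/C_1-2l$ (using $d_X\leq d_Y$), (iii) together with Remark \ref{lvsd} gives $\max\{l_Y(\alpha'),l_Y(\beta')\}\leq A'(i_+-i_-)$. Since $\rho$ is diverging, for $l$ sufficiently large this inequality forces $(i_+-i_-)/C_1\leq O(l)$, and after tuning $\kappa,C_1,C_2,C_3$ one reaches the contradiction.

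The main obstacle is the dependence of the constants in Proposition \ref{superlinproj} on the Lipschitz constant of the input paths: here $\beta'$ is only $(l/C_2)$-Lipschitz, so the proposition is applied with $L\sim l$. One must either track this dependence explicitly through the proofs of Lemmas \ref{avoidballs} and \ref{lm:farpairs} (the divergence function $\rho_0$ is $L$-independent, while the additive constants grow polynomially in $L$ but are dwarfed by $\rho(l/4)$ for $l$ large), or alternatively interpolate $\beta'$ by $d_Y$-geodesic segments to reduce to the $1$-Lipschitz case, while slightly enlarging the threshold $l/2+l/C_2$ in the definition of $i_\pm$ to absorb the drift of interpolating geodesics toward $\alpha$.
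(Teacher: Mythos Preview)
You are proving the wrong statement. The theorem you were given, labeled \texttt{linprogpreview}, asserts linear progress with exponential decay in $d_X$:
\[
\matP[d_X(id,Z_n)< n/C_1]\leq C_1 e^{-n/C_1}.
\]
Your entire argument is instead aimed at bounding $\matP^\mu\bigl[\sup_{\alpha\in QG'(id,Z_n)} d_Y(Z_k,\alpha)\geq l\bigr]$, which is the content of Theorem~\ref{smalldevhier}, not of the statement in question. You even invoke Theorem~\ref{linprogpreview} itself as an input in item~(i) of your sketch, so the argument is circular with respect to the actual target.

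In the paper there is nothing to prove here: the header ``(Theorem~\ref{linprog})'' indicates that Theorem~\ref{linprogpreview} is simply a restatement, in the special case $g_0=id$, of Theorem~\ref{linprog}, which was already established in Section~\ref{sec:linearpr}. The correct ``proof'' is a one-line citation of that earlier result.

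As a side remark, since your sketch is really an attempt at Theorem~\ref{smalldevhier}: the obstacle you flag at the end is genuine and is precisely what the paper's organization avoids. Proposition~\ref{superlinproj} is stated for paths with a \emph{fixed} Lipschitz constant $L$, and its constants $C,\rho$ depend on $L$. Your path $\beta'$ is only $(l/C_2)$-Lipschitz, so applying the proposition with $L\sim l$ makes the output meaningless unless you redo its proof with explicit $L$-dependence. The paper instead packages the regularity conditions into the notion of a path ``tight around $w_k$ at scale $l$'' (Lemma~\ref{tightpath}) and then, in the purely geometric Lemma~\ref{tightrack}, applies Proposition~\ref{superlinproj} with the fixed Lipschitz constant $L=D$ coming from the quasi-geodesic family. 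Your interpolation fix is in the right spirit, but the cleaner route is to control jump sizes as in tightness condition~(3) so that the sample path is already Lipschitz with a constant independent of $l$ on the relevant scale.
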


We say that a path $(w_i)_{i\leq n}$ is \emph{tight around $w_k$ at scale $l$} if it satisfies the following conditions for any $k_1\leq k\leq k_2$ with $k_2-k\geq l$.

\begin{enumerate}
 \item $d_X(w_{k_1},w_{k_2})\geq (k_2-k_1)/C_1$,
 \item $l_Y((w_i)_{k_1\leq i\leq k_2})\leq C_0(k_2-k_1)$,
 \item $d_Y(w_{k'},w_{k'+1})\leq \max\{l,|k-k'|/(100C_1)\}$ for each $k'$.
\end{enumerate}

The third item says that the geodesic connecting the endpoints of the jump at step $k'$ has length at most $l$ if $|k'-k|$ is small and at most $|k'-k|/(100C_1)$ if $|k'-k|$ is large (recall that $k$ is fixed and $l$ is a parameter).

% \begin{lm}
% \label{tightpath}
% There exists $C_5$ so that
% \begin{enumerate}[$(a)$]
%  \item for all $k$ and all $l\geq 1$ we have that with probability $\geq 1-C_5 e^{-l/C_5}$ a sample path $(w_i)$ of length $n$ is tight around $w_k$ at scale $l$;
%  \item for all $l\geq 1$ and $g_0\in G$ we with probability $\geq 1-d_G(id,g_0)C_5 e^{-l/C_5}$ the concatenation $(w_i)$ of a geodesic in $G$ from $id$ to $w_k=g_0$ and the $g_0$-translate of a sample path is tight around $w_k$ at scale $l$.
% \end{enumerate}
% \end{lm}

\begin{lm}\label{tightpath}
 There exists $C_5$ so that for all $n$, all $k\leq n$, and all $l\geq 1$ we have 
 $$\P[(Z_i)_{i\leq n} \hbox{\, is tight around $Z_k$ at scale $l$}]\geq 1-C_5 e^{-l/C_5}\,.$$ 
\end{lm}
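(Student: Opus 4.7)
The plan is to verify each of the three tightness conditions separately with exponentially small failure probability, then conclude by a union bound. The key inputs are already available: Theorem \ref{linprogpreview} (linear progress with exponential decay in $d_X$) for condition (1), the a priori estimate $(*)$ for condition (2), and the exponential tail of $\mu$ in the $d_Y$-metric (which follows from the exponential tail hypothesis because the orbit map $G\to Y$ is Lipschitz when $G$ carries a word metric) for condition (3).

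For condition (1), I would fix a pair $(k_1,k_2)$ with $k_1\leq k\leq k_2$ satisfying the stated length condition. By the Markov property, $Z_{k_1}^{-1}Z_{k_2}$ has the same law as $Z_{k_2-k_1}$, and $G$-invariance of $d_X$ gives $d_X(Z_{k_1},Z_{k_2})\stackrel{d}{=}d_X(id,Z_{k_2-k_1})$. Hence Theorem \ref{linprogpreview} yields
$$\matP^\mu\bigl[d_X(Z_{k_1},Z_{k_2})<(k_2-k_1)/C_1\bigr]\leq C_1\,e^{-(k_2-k_1)/C_1}.$$
Setting $m=k_2-k_1$, there are at most $m+1$ admissible pairs with $k_2-k_1=m$, so summing over $m\geq l$ gives a total bound of order $\sum_{m\geq l}(m+1)C_1 e^{-m/C_1}$, which is exponentially small in $l$.

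For condition (2) the same strategy applies: by the Markov property, $(Z_{k_1+i}Z_{k_1}^{-1})_{0\le i\le k_2-k_1}$ has the law of $(Z_i)_{0\le i\le k_2-k_1}$, hence $(*)$ gives
$$\matP^\mu\bigl[l_Y((Z_i)_{k_1\le i\le k_2})\geq C_0(k_2-k_1)\bigr]\leq C_0\,e^{-(k_2-k_1)/C_0},$$
and summing over admissible pairs is handled exactly as in condition (1). For condition (3), the exponential tail gives $\matP^\mu[d_Y(Z_{k'},Z_{k'+1})>t]\leq C_3 e^{-t/C_3}$ for every $k'$. Choosing $t=\max\{l,|k-k'|/(100C_1)\}$ and summing over $k'$ splits into two regimes: in the band $|k-k'|\leq 100C_1l$ of width $O(l)$ each term contributes at most $C_3 e^{-l/C_3}$, for a total of order $l\,e^{-l/C_3}$; outside the band, the bounds $C_3 e^{-|k-k'|/(100C_1 C_3)}$ form a convergent geometric series dominated by $C' e^{-l/C'}$.

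Combining the three failure estimates by a union bound and adjusting constants to absorb the polynomial factors gives the stated inequality. The argument is a standard large-deviation/union-bound exercise; the only mild subtlety is checking that the exponential decay from Theorem \ref{linprogpreview} and $(*)$ is strong enough to dominate the polynomial counts of admissible pairs in each sum, which it is.
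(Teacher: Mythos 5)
Your proof is correct and follows essentially the same route as the paper: per-pair exponential bounds from Theorem \ref{linprogpreview}, the estimate $(*)$ and the exponential tail, followed by a union bound in which the polynomial counting factors are absorbed into the exponential constant --- exactly the summing procedure the paper spells out for item (1) and invokes for items (2) and (3). One small slip: the correctly distributed increment process is $(Z_{k_1}^{-1}Z_{k_1+i})_i$, not $(Z_{k_1+i}Z_{k_1}^{-1})_i$; since $d_Y$ is invariant under left translation this does not affect your conclusion that $l_Y((Z_i)_{k_1\leq i\leq k_2})$ has the same law as $l_Y((Z_i)_{i\leq k_2-k_1})$.
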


\begin{proof}
The probability that $1)$ does not hold for given $k_1,k_2$ can be estimated using Theorem \ref{linprogpreview}. In fact, we have 
$$\matP[d_X(Z_{k_1},Z_{k_2})< (k_2-k_1)/C_1]=$$
$$\matP[d_X(id,Z_{k_2-k_1})< (k_2-k_1)/C_1]\leq C_1 e^{-(k_2-k_1)/C_1}$$
since the law of $Z_{k_1}^{-1}Z_{k_2}$ is the same as the law of $Z_{k_2-k_1}$.
% , while under conditions $(b)$ we have
% $$\matP[d_X(w_{k_1},Z_{k_2})< (k_2-k_1)/C_1]=$$
% $$\matP[d_X(id,w_{k_1}^{-1}Z_{k_2})< (k_2-k)/C_1 ]$$

So, for a given $k_1$ we get
$$\matP[\exists k_2\geq k: k_2-k_1\geq l, d_X(Z_{k_1},Z_{k_2})< (k_2-k_1)/C_1]\leq$$
$$\sum_{k_2-k_1=j\geq \max\{l,k-k_1\}} C_1e^{-j/C_1}\leq C_3 e^{-\max\{l,k-k_1\}/C_1}.$$

Summing again over all possible $k_1$ we get:
$$\matP[\exists k_1\leq k\leq k_2: k_2-k_1\geq l, d_X(Z_{k_1},Z_{k_2})< (k_2-k_1)/C_1]\leq$$
$$\sum_{\substack{k_1\leq k\\ k-k_1\leq l}} C_3 e^{-l/C_1}+\sum_{k-k_1=j> l}C_3e^{-j/C_1}\leq$$
$$C_3l e^{-l/C_1}+C_4e^{-l/C_1}\leq C_5 e^{-l/C_5},$$
what we wanted.

Items $2)$ and $3)$ can be obtained using the same summing procedure as item $1)$, we will not spell out the details. In the case of item $2)$ one uses $(*)$, while in the case of item $3)$ one uses that $\matP[d_Y(id,Z_1)\geq l]$ decays exponentially in $l$.
\end{proof}

We now reduced the proof of Theorem \ref{smalldevhier} to the following entirely geometric lemma.

\begin{lm}
\label{tightrack}
 Let $(w_i)_{0\leq i\leq n}$ be tight around $w_k$ at scale $l$. Then $d_Y(w_k,\gamma(w_0,w_n))\leq C_7 l$.
\end{lm}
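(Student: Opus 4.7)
The plan is to argue by contradiction using Proposition \ref{superlinproj}. Suppose $\Delta := d_Y(w_k, \gamma) > C_7 l$ for $\gamma = \gamma(w_0, w_n)$ and a large constant $C_7$ to be determined. Set $m = \lfloor \Delta/(4 C_0)\rfloor$; for $C_7 \geq 4 C_0$ we have $m \geq l$, so the tightness properties apply to the subpath $\alpha := (w_i)_{k - m \leq i \leq k + m}$. (In the boundary case $k \pm m \notin [0, n]$ one truncates to $[0,n]$ and exploits that $w_0, w_n \in \gamma$, which only simplifies the argument.) Property 2 gives $l_Y(\alpha) \leq 2 C_0 m \leq \Delta/2$, so $\alpha \subseteq B^Y(w_k, \Delta/2)$, and hence $d_Y(\alpha, \gamma) \geq \Delta/2$. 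Property 1 gives $d_X(w_{k-m}, w_{k+m}) \geq 2m/C_1$.

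The core step is to produce a sub-arc $\gamma'$ of $\gamma$ with endpoints $p_\pm \in \gamma$ such that $d_X(w_{k \pm m}, p_\pm) \leq C''$ for a constant $C''$ independent of $n$, and such that $l_Y(\gamma') \leq C''' m$. The $d_X$-proximity uses that $(w_i)$ is a $d_X$-quasi-geodesic (by tightness properties 1, 2 together with $d_X \leq d_Y$) while $\gamma$ is a $D$-Lipschitz path in $X$ of $X$-length at most $D^2 C_0 n + D^3$; hyperbolicity of $X$ combined with a Morse-type argument in the spirit of Lemma \ref{avoidballs}, applied to penalize sub-arcs of $\gamma$ that avoid $d_X$-neighborhoods of $w_{k \pm m}$, yields the required $p_\pm$. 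The bound on $l_Y(\gamma')$ follows from $\gamma$ being a $(D, D)$-quasi-geodesic, via $l_Y(\gamma') \leq D\, d_Y(p_-, p_+) + D$, once $d_Y(p_-, p_+)$ is controlled by $d_Y(w_{k-m}, w_{k+m}) + O(1) \leq 2 C_0 m + O(1)$; this last control is obtained from Lemma \ref{lm:farpairs} applied to the pairs $(w_{k \pm m}, p_\pm)$, which prevents $d_X$-closeness of endpoints from being incompatible with $d_Y$-remoteness.

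Applying Proposition \ref{superlinproj} to $(\alpha, \gamma')$ then yields
\[
\max\{l_Y(\alpha), l_Y(\gamma')\} \geq \bigl(d_X(w_{k-m}, w_{k+m}) - d_X(w_{k-m}, p_-) - d_X(w_{k+m}, p_+) - C\bigr) \, \rho\bigl(d_Y(\alpha, \gamma')\bigr) \geq \frac{m}{C_1}\, \rho\bigl(\tfrac{\Delta}{2}\bigr),
\]
for $m$ sufficiently large (which holds for $C_7$ large, so that $2m/C_1 - 2C'' - C \geq m/C_1$). The left-hand side is at most $\max\{2 C_0, C'''\} \cdot m$, so dividing through by $m$ bounds $\rho(\Delta/2)$ by a constant depending only on the structural constants of the setup. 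Since $\rho$ diverges, $\Delta$ is bounded by some constant $C_*$, and taking $C_7 \geq \max\{C_*, 4 C_0\}$ contradicts the assumption $\Delta > C_7 l$.

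The main obstacle is the middle step, namely obtaining the constants $C''$ and $C'''$ uniformly in $n$. A naive Lipschitz-path-in-hyperbolic-space bound for $d_X(w_{k \pm m}, \gamma)$ would only give a logarithmic-in-$n$ estimate; avoiding this requires exploiting the full $(D, D)$-quasi-geodesic property of $\gamma$ in $d_Y$ together with the acylindrically intermediate structure of $Y$ over $X$ (via Lemma \ref{lm:farpairs}) to promote $d_X$-closeness of endpoints into genuine $d_Y$-closeness, which in turn controls the length of the relevant sub-arc of $\gamma$.
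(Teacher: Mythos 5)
Your overall skeleton (a contradiction via Proposition \ref{superlinproj}, comparing a subpath $\alpha$ of the walk around index $k$ that is $d_Y$-far from $\gamma$ with a subsegment $\gamma'$ of $\gamma$ of controlled $Y$-length whose endpoints are $X$-close to the endpoints of $\alpha$) is the same as the paper's, but your ``core step'' is a genuine gap, and the tools you invoke cannot close it. First, there is no reason why $w_{k\pm m}$ should be within bounded $d_X$-distance of $\gamma$: the tightness conditions control the walk only across the index $k$ (property 1 applies to pairs $k_1\leq k\leq k_2$), the jump at index $k\pm m$ may itself have $d_Y$-size up to $\max\{l,m/(100C_1)\}$, and the Morse-type argument in the spirit of Lemma \ref{avoidballs} would require $w_{k\pm m}$ to lie near an $X$-geodesic between the endpoints of $\pi(\gamma)$ --- which is not available (the Gromov product estimate you can extract from tightness at $w_{k\pm m}$ is linear in $n$, not bounded) --- and even granting it, it only yields the $\log n$ bound you yourself acknowledge is insufficient. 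Second, and more fundamentally, Lemma \ref{lm:farpairs} cannot ``promote $d_X$-closeness to $d_Y$-closeness'': applied with $a_1=w_{k-m}$, $b_1=w_{k+m}$, $a_2=p_-$, $b_2=p_+$ it gives $\max\{d_Y(w_{k-m},w_{k+m}),d_Y(p_-,p_+)\}\geq\rho_0\bigl(\min\{d_Y(w_{k-m},p_-),d_Y(w_{k+m},p_+)\}\bigr)$, i.e.\ a \emph{lower} bound on $d_Y(p_-,p_+)$ when the endpoint pairs are $Y$-far; it provides no upper bound on $d_Y(p_-,p_+)$, and indeed bounded $d_X$-distance does not imply bounded $d_Y$-distance in an acylindrically intermediate space (compare the curve complex with Teichm\"uller space). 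Without an upper bound $d_Y(w_{k\pm m},p_\pm)$ linear in $m$ you cannot bound $l_Y(\gamma')$ linearly in $m$, and then the superlinear-divergence comparison no longer produces a contradiction, since Proposition \ref{superlinproj} only beats competitors of linear length.

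The paper avoids this by choosing the window adaptively rather than a priori: $k_1<k\leq k_2$ are the last index before $k$ and the first index after $k$ at which the interpolating quasi-geodesics $\gamma(w_{k_1-1},w_{k_1})$, $\gamma(w_{k_2},w_{k_2+1})$ meet the $C_6l$-neighborhood of $\gamma$ in $d_Y$. This makes $d_Y(w_{k_i},\gamma)\leq C_6l+\max\{l,(k_2-k_1)/(100C_1)\}$ automatic (tightness property 3 controls the jump), which simultaneously yields the $X$-closeness needed for the quantity $K$ in Proposition \ref{superlinproj} (since $d_X\leq d_Y$) and the linear bound on $l_Y(\gamma')$ via the triangle inequality and the quasi-geodesity of $\gamma$; the case $k_2-k_1\leq 100C_6C_1l$ is then a direct estimate, and the case $k_2-k_1\geq 100C_6C_1l$ gives the contradiction. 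To salvage your fixed-window version you would need to first establish $Y$-closeness (not merely $X$-closeness) of $w_{k\pm m}$ to $\gamma$ up to an error linear in $m$, which is essentially the adaptive-window step in disguise.
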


\begin{proof}
For convenience, set $\gamma=\gamma(w_0,w_n)$. Recall that we have $d_X(g,h)\leq d_Y(g,h)$ for all $g,h\in G$.

We now choose some constants. Let $\rho$ be as in Proposition \ref{superlinproj}, with $L=D$, and fix $C_6$ so that $\rho(t)>2C_0C_1$ for each $t\geq C_6$. Up to increasing $C_6$, we can also require $C_6\geq C$, where $C$ is as in Proposition \ref{superlinproj}.

Suppose $d_Y(w_k,\gamma)\geq C_6l$, for otherwise we are done. Let $k_1< k$ be maximal (resp. $k_2\geq k$ be minimal) so that a geodesic $[w_{k_1-1},w_{k_1}]$ (resp. $[w_{k_2},w_{k_2+1}]$) intersects the neighborhood $N^Y_{C_6l}(\gamma)$ in $Y$.

Let $\alpha$ be the concatenation of geodesics $[w_i,w_{i+1}]$ for $k_1\leq i\leq k_2-1$. In particular, $d_Y(\alpha,\gamma)\geq C_6l$, and, denoting $[w_{k_i},w_{k_i\pm1}]$ any geodesic in $Y$ from $w_{k_i}$ to $w_{k_i\pm1}$, we have 
\begin{align*}
 d_Y(w_{k_i},\gamma) & \leq   d_Y(w_{k_i},w_{k_i\pm1})+d_Y([w_{k_i},w_{k_i\pm1}],\gamma) \\
 & \leq\max\{l, (k_2-k_1)/(100C_1)\}+C_6l.
\end{align*}
Also, by property $2)$ from the definition of tightness, we have $l_Y(\alpha)\leq C_0(k_2-k_1)$. 

We analyse 2 cases, with the aim of showing that only the first one can hold.

The first case is if $k_2-k_1\leq 100C_6C_1l$. Then
$$d_Y(w_{k},\gamma)\leq d_Y(w_k,w_{k_1})+d_Y(w_{k_1},\gamma)\leq C_0(k_2-k_1)+C_6l+\max\{l, (k_2-k_1)/(100C_1)\},$$
which is bounded linearly in $l$.

The second case is if $k_2-k_1\geq 100 C_6C_1 l$. (Recall that we have to show that this does not happen.) In this case $\max\{l, (k_2-k_1)/(100C_1)\}=(k_2-k_1)/(100C_1)$. Let $x_{k_i}\in \gamma$ be so that $d_Y(x_{k_i},w_{k_i})\leq C_6l+ (k_2-k_1)/(100C_1)$, and let $\gamma'$ be the subpath of $\gamma$ connecting $x_{k_1}$ to $x_{k_2}$. We remark that
$$d_X(x_{k_i},w_{k_i})\leq d_Y(x_{k_i},w_{k_i})\leq C_6l+ \frac{k_2-k_1}{100C_1}\leq \frac{k_2-k_1}{4C_1}-C.$$
Hence, we have
\begin{align*}
 K= d_X(w_{k_1},w_{k_2})-d_X(x_{k_1},w_{k_1})-d_X(x_{k_2},w_{k_2})  -C\geq (k_2-k_1)/(2C_1).
\end{align*}
We also have
$$d_Y(x_{k_1},x_{k_2})\leq d_Y(w_{k_1},w_{k_2})+d_Y(x_{k_1},w_{k_1})+d_Y(x_{k_2},w_{k_2})\leq C_0(k_2-k_1) + \frac{k_2-k_1}{2C_1}-2C \leq 2C_0(k_2-k_1),$$
so that
$$l_Y(\gamma')\leq 2D^2 C_0(k_2-k_1)+D^3 < \frac{k_2-k_1}{2C_1} \rho(C_6l)\leq K \rho(C_6l).$$ 
%$d\leq d_X(w_{k_1},w_{k_2})\leq C_0(k_2-k_1)$ and
Hence, by Proposition \ref{superlinproj} we have
$$k_2-k_1\geq \frac{l_Y(\alpha)}{C_0}\geq \frac{K\rho(C_6l)}{C_0}\geq \frac{k_2-k_1}{2C_0C_1} \rho(C_6l) >k_2-k_1,$$
a contradiction. So the second case cannot hold and the proof is complete.
\end{proof}

\section{Deviation inequalities in hyperbolic groups} \label{sec:hypgroups} 

In this section we study deviation inequalities in hyperbolic groups under very general conditions on driving measures. Perhaps surprisingly at first, we will see that driving measures $\mu$  with finite second moment satisfy the $p$-th moment deviation inequality for each $p<4$. As we will see in the proofs, this follows from the fact that, roughly speaking, the way that a sample path can deviate from a geodesic is by doing two large (almost) consecutive steps. Hence, once again roughly speaking, the probability that a point on a sample path is far from a geodesic connecting the endpoints of the path is comparable to that of having two large consecutive jumps.

Lemma \ref{bowtie} below is where we exploit the geometry of hyperbolic groups. In the given form, the lemma does not hold for acylindrically hyperbolic groups, and it is unclear whether a useful version of the lemma exists in that context. The proof is not the most efficient one for the case of measures with exponential tails, meaning that it does not give the exponential deviation inequality as in Theorem \ref{smalldevhier}.
%For the interested reader, in Remark \ref{rmk:hyp_case} we explain how to simplify the proof of Theorem \ref{smalldevhier} to obtain the special case of the theorem for hyperbolic groups.

\begin{theo}\label{devhyp}
 Let $G$ be a hyperbolic group endowed with the word metric $d_G$ and let $\mu_0$ be a measure with finite first moment on $G$ whose support generates a non-elementary semigroup of $G$. Then there exists a neighborhood $\mathcal N$ of $\mu_0$ and a constant $C\geq 1$ so that for every $\mu\in\mathcal N$ the random walk $(Z_n)$ with driving measure $\mu$ satisfies the following. For each $t\geq 0$, each $M\geq 1$ and each positive integers $k\leq n$ we have
 $$\matP^\mu\left[\sup_{[id,Z_n]} d_G(Z_k,[id,Z_n])\geq t\right]\leq Ce^{-M/C}+ M^2(\matP^\mu[d(id,X_1)\geq (t- C)/M])^2.$$
 Moreover, if $\mu$ has finite second moment then $\mu$ satisfies the $p$-th moment deviation inequality for each $p<4$.
\end{theo}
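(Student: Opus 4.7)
The theorem splits into a pointwise tail bound in terms of the auxiliary parameter $M$ and a deduction of the $p$-th moment deviation inequality for $p<4$. The strategy mirrors the probabilistic/geometric decomposition of Section~\ref{sec:deviationqg} but exploits the stronger geometry of hyperbolic groups to sharpen the bound. As the first (probabilistic) step I would apply Theorem~\ref{linprog} to $G$ acting on its own Cayley graph, which is admissible because a non-elementary hyperbolic group acts acylindrically and non-elementarily on itself. Summing over sub-intervals exactly as in Lemma~\ref{tightpath}, one obtains that the event $E_M$ of ``linear progress at scale $M$'' --- namely $d_G(Z_i, Z_j)\geq |i-j|/C$ whenever $|i-j|\geq M$ --- has complement of probability at most $Ce^{-M/C}$. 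This supplies the first term of the bound.

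The new geometric ingredient is a bowtie-type lemma: on the event $E_M$, if $d_G(Z_k, [id, Z_n])\geq t$, then there must exist two distinct indices $i,j$ with $|i-k|, |j-k|\leq M$ such that both $d_G(id, X_i)$ and $d_G(id, X_j)$ are at least $(t-C)/M$. Heuristically, let $p$ be the nearest-point projection of $Z_k$ onto $[id, Z_n]$. Then $\delta$-hyperbolicity forces both $[id, Z_k]$ and $[Z_k, Z_n]$ to fellow-travel $[p, Z_k]$ for length $\sim t-O(\delta)$. Linear progress confines the walk to a ball of radius $CM$ about $Z_k$ within $M$ time-steps of $k$; so traversing these two $t$-long excursions on either side of $k$ in fewer than $M$ steps would demand a single jump of size at least $(t-CM-O(\delta))/M$ on \emph{each} side, because reaching a point $t$-far from the geodesic and then coming back demands both an ``outgoing'' and a matching ``incoming'' large jump. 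A union bound over the at most $M^2$ pairs of admissible indices $(i,j)$ and independence of $X_i, X_j$ (they are coordinates of the i.i.d.\ sequence) then produces the second term $M^2(\mathbb{P}^\mu[d(id,X_1)\geq (t-C)/M])^2$. Converting the geodesic-distance bound into the Gromov-product bound used in Definition~\ref{df:dev} is standard hyperbolicity up to an additive $\delta$.

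For the $p$-th moment deviation inequality when $p<4$, I would apply the pointwise tail together with Markov's inequality $\mathbb{P}^\mu[d(id, X_1)\geq s]\leq \chi_2(\mu;d_G)/s^2$ and optimize over $M$. Setting $M=t^\alpha$ with $\alpha\in (0,(4-p)/6)$, the exponential term $Ce^{-t^\alpha/C}$ is negligible and the polynomial term is of order $M^6/t^4 = t^{6\alpha-4} = o(t^{-p})$, so $\int_0^\infty p t^{p-1}\,\mathbb{P}^\mu[(id,Z_n)_{Z_k}\geq t]\,dt$ converges uniformly in $n,k$ and in $\mu$ throughout a small enough neighbourhood of $\mu_0$, producing the required constant $\tau_p(\mu)$.

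The main obstacle will be the bowtie lemma itself. Generic hyperbolicity-plus-linear-progress arguments (along the lines of Section~\ref{sec:deviationqg}) only extract \emph{one} large jump, which would give a single factor of $\mathbb{P}^\mu[d(id,X_1)\geq \cdot]$ instead of its square and would not yield $p$-th moment bounds beyond $p<2$. Producing \emph{two} distinct large jumps requires using both sides of the bowtie configuration independently, which leverages the fact that in a genuine hyperbolic group the ambient metric on which hyperbolicity is measured coincides with the metric in which the walk progresses linearly --- exactly the feature that is absent for a general acylindrically hyperbolic group, where the hyperbolic space $X$ differs from the group itself.
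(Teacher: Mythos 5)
Your overall architecture (probabilistic event with failure probability $Ce^{-M/C}$, plus a deterministic ``two big jumps'' dichotomy, plus optimization over $M$ for the moment bound) matches the paper, and your treatment of the \emph{moreover} part is fine: choosing $M=t^{\alpha}$ works just as well as the paper's $M\sim\log t$, and the passage from $d_G(Z_k,[id,Z_n])$ to the Gromov product is indeed routine. The gap is in the key geometric step, which you yourself identify as the crux. First, a sourcing problem: Theorem \ref{linprog} assumes an exponential tail, which is not available here (only finite first moment); linear progress in the word metric should instead come from non-amenability and the spectral radius bound, as in Lemma \ref{doublecoset}. More seriously, your event $E_M$ (pairwise linear progress of the \emph{positions}) does not imply your bowtie-type conclusion. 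Linear progress is a lower bound on $d_G(Z_i,Z_j)$; your claim that it ``confines the walk to a ball of radius $CM$ about $Z_k$ within $M$ time-steps'' is an upper bound on displacement, which under a finite first moment cannot be had with exponentially small error --- indeed bounding the probability of large jumps near $k$ is exactly what the theorem is about, so it cannot be absorbed into the $Ce^{-M/C}$ term.

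Concretely, $E_M$ is compatible with the following scenario: a single huge jump at some time $i+1$ with $k-i\gg M$ carries the walk far from $[id,Z_n]$, the walk then wanders (its positions still separating linearly among themselves, so $E_M$ holds), passes within distance $t$ of the geodesic at time $k$, and a second huge jump at time $j+1$ with $j-k\gg M$ brings it back. Then there are no two jumps of size $(t-C)/M$ within the window of width $M$ around $k$, yet $d_G(Z_k,[id,Z_n])\geq t$; and a union bound over such distant pairs $(i,j)$ would cost $n^2$, not $M^2$, factors. Ruling this out is precisely what the paper's Lemma \ref{bowtie} does: if the excursion through the ``bowtie'' region $N^{\bowtie}(\gamma)$ lasts more than $M$ steps, then either the two jump-geodesics $[Z_{k_1},Z_{k_1+1}]$ and $[Z_{k_2},Z_{k_2+1}]$ come within distance $(k_2-k_1)/C_2$ of each other, or the intermediate path has length exponential in $k_2-k_1$. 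The first alternative is improbable by Lemma \ref{fargeodesics} (which is about the jump \emph{geodesics}, not the positions, and uses the finite first moment to sum over points along those geodesics), and the second is improbable by Markov's inequality; both are exponentially small in $M$ after summing over $k_1,k_2$, and only then does the remaining alternative --- two large jumps in a window of width $M$ containing $k$ --- yield the $M^2(\matP^\mu[d(id,X_1)\geq (t-C)/M])^2$ term. Your proposal is missing this trichotomy and the far-geodesics estimate that supports it, so as written the geometric step fails.
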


Fix the notation of the theorem from now on. The constants $C_i$ appearing below depend on the data of the theorem and are all uniform in a sufficiently small neighborhood of $\mu_0$.

\begin{lm}\label{doublecoset}
 There exists $C_0$ with the following property. For each $n\geq 1$ and each $g,h\in G$ we have
 $$\matP^\mu[d_G(id, g Z_{n}h)\leq n/C_0]\leq C_0 e^{-n/C_0}.$$
\end{lm}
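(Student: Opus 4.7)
The plan is to reformulate the event as ``$Z_n$ falls into a translate of a ball of radius $n/C_0$'', and then combine two elementary facts---exponential volume growth of the hyperbolic group $G$ and non-amenability of the subgroup generated by $\operatorname{supp}(\mu_0)$---into a clean counting argument. Very little of the earlier structural machinery of the paper is actually needed for this step, aside from the Girsanov reduction that transports a bound for $\mu_0$ to a neighborhood of it.

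More precisely, by left-invariance of $d_G$ the event $\{d_G(id,gZ_nh)\le n/C_0\}$ coincides with $\{Z_n\in g^{-1}B_G(id,n/C_0)h^{-1}\}$, a set of cardinality at most $|B_G(id,n/C_0)|\le K_1 e^{\lambda n/C_0}$, where $\lambda$ is the exponential growth rate of $G$. (Note that $G$ itself must be non-elementary, since it contains the non-elementary subgroup generated by $\operatorname{supp}(\mu_0)$, and therefore has exponential growth.) Since the support of $\mu_0$ generates a non-elementary, hence non-amenable, subgroup of $G$, applying Kesten's theorem to the symmetrisation $\mu_0*\check{\mu_0}$ gives a uniform bound $\sup_x \mu_0^n(x)\le \rho^n$ for some $\rho<1$. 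Summing over the set produces
\[
\matP^{\mu_0}\bigl[d_G(id,gZ_nh)\le n/C_0\bigr]\le K_1\,\rho^n\, e^{\lambda n/C_0},
\]
which is bounded by $C_0 e^{-n/C_0}$ as soon as $C_0\ge (\lambda+1)/|\log\rho|$.

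To upgrade this bound to a small neighborhood $\mathcal N$ of $\mu_0$, I would reuse the Girsanov trick from the beginning of the proof of Theorem \ref{linprog}: if $\nu(\mu,\mu_0)\le \eps$, then $\matP^\mu[A]\le (1+\eps)^n \matP^{\mu_0}[A]$ for any event $A\in\mathcal F_n$, and choosing $\eps$ small enough that $(1+\eps)\rho\, e^{\lambda/C_0}<e^{-1/C_0}$ absorbs the extra $(1+\eps)^n$ factor into a slightly larger $C_0$. The one ingredient to keep track of is the non-amenability estimate $\sup_x\mu_0^n(x)\le \rho^n$; everything else is bookkeeping. In particular, Girsanov frees us from having to check directly that the spectral radius stays uniformly bounded away from $1$ on $\mathcal N$---we only need such a bound for the single reference measure $\mu_0$, which is where the hypothesis on $\operatorname{supp}(\mu_0)$ enters.
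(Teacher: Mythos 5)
Your overall strategy is exactly the paper's: rewrite the event as $\{Z_n\in g^{-1}B^G(id,n/C_0)h^{-1}\}$, bound the point probabilities $\sup_x\matP^\mu[Z_n=x]$ by an exponentially decaying quantity using non-amenability of the subgroup generated by the support, multiply by the (at most exponential) cardinality of the ball, and take $C_0$ large. Your Girsanov remark for uniformity over a neighborhood of $\mu_0$ is also in line with how the paper handles neighborhoods (cf.\ the lemma preceding Theorem \ref{linprog}), and your arithmetic with $C_0\ge(\lambda+1)/|\log\rho|$ is correct.

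The gap is in your justification of the key probabilistic input. You derive $\sup_x\mu_0^n(x)\le\rho^n$ by ``applying Kesten's theorem to the symmetrisation $\mu_0\ast\check\mu_0$'', but non-amenability of $\langle\operatorname{supp}\mu_0\rangle$ does \emph{not} imply non-amenability of the group generated by $\operatorname{supp}(\mu_0\ast\check\mu_0)=\operatorname{supp}\mu_0\cdot(\operatorname{supp}\mu_0)^{-1}$. Concretely, take $G=F_2=\langle a,b\rangle$ and $\mu_0$ uniform on $\{a,b\}$, a measure satisfying all the hypotheses of Theorem \ref{devhyp}. Then $\operatorname{supp}(\mu_0\ast\check\mu_0)=\{id,ab^{-1},ba^{-1}\}$ generates an infinite cyclic, hence amenable, subgroup; the symmetrised convolution operator restricts on each coset of $\langle ab^{-1}\rangle$ to the lazy simple random walk on $\Z$ and has $\ell^2$-norm $1$, so Kesten's theorem yields no decay (indeed $\Vert P_{\mu_0}\Vert_{\ell^2}=1$ here, even though $\sup_x\mu_0^n(x)=2^{-n}$ does decay). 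So the exponential decay of $\sup_x\mu_0^n(x)$ is not a formal consequence of Kesten applied to the symmetrisation. The paper does not attempt such a derivation: it invokes the decay $\matP^\mu[Z_n=a]\leq Ke^{-n/K}$ for measures whose support generates a non-amenable subgroup directly as a known fact with a citation to \cite{kn:woess} (the same fact is used in Lemma \ref{littleweight}). To repair your write-up, replace the symmetrisation step by that citation, or by an argument valid for non-symmetric measures; the rest of your proof --- the ball-counting, the choice of $C_0$, and the Girsanov transfer to a neighborhood --- is fine and coincides with the paper's proof.
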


\begin{proof}
First of all, we claim that there exists $K$ so that for each $a\in G$ we have $\matP^\mu[Z_n=a]\leq Ke^{-n/K}$.

Since non-elementary subgroups of $G$ are non-amenable, there exists $K'$ so that for each $a\in G$ we have $\matP^{\mu_0}[Z_n=a]\leq K'e^{-n/K'}$ \cite{kn:woess}. In order to pass from $\mu_0$ to $\mu$, we use an argument very similar to the proof of Lemma \ref{lem:uniform_in_mu}.

 Let $\mu$ be so that $\Nu(\mu,\mu_0)\leq\epsilon$, where $\epsilon$ will be chosen later.

Since $\mu(x)/\mu_0(x)\le 1+\epsilon$ for each $x\in G$, using the Girsanov formula we have
$$\esp^{\mu}[F(X_1,\dots,X_n)]\le (1+\epsilon)^n \esp^{\mu_0}[F(X_1,\dots,X_n)]$$
for any non-negative measurable function $F:G^n\to\mathbb R_+$.
Using this inequality with $F=\mathbbm{1}_A$ where $A$ is the event ``$Z_n=a$'' yields:
$$\matP^{\mu}[Z_n=a]\le K'(1+\epsilon)^n e^{-n/K'}.$$
It is then enough to choose $\epsilon$ small enough so that $K'(1+\epsilon)^n e^{-n/K'}$ decays exponentially, and the proof of the claim is complete.

 To conclude the proof of the lemma, consider the elements $\{a_i\}$ in the ball of radius $n/C_0$ in $G$. Then
 $$\matP^\mu[d_G(id, g Z_{n}h)\leq n/C_0]=\sum_i \matP^\mu[Z_n=g^{-1}a_ih^{-1}]\leq |B^G(id,n/C_0)|Ke^{-n/K},$$
 and the conclusion easily follows for $C_0$ large enough.
\end{proof}

We fix, for each $x,y\in G$, a geodesic $[x,y]$ connecting them.

\begin{lm}\label{fargeodesics}
 There exists $C_1$ with the following property. For each $n\geq 1$ we have
 $$\matP^\mu[\ d_G([id,Z_1],[Z_n,Z_{n+1}])\leq n/C_1\ ]\leq C_1 e^{-n/C_1}.$$
\end{lm}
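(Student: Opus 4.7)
The plan is to reduce matters to Lemma \ref{doublecoset} via a simple triangle inequality, and then to control the remaining tail events coming from the single steps $X_1$ and $X_{n+1}$.

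First, I would establish a deterministic lower bound: for any $p \in [id, Z_1]$ and $q \in [Z_n, Z_{n+1}]$, since $d_G(id, p) \leq d_G(id, Z_1) = d_G(id, X_1)$ and $d_G(q, Z_n) \leq d_G(Z_n, Z_{n+1}) = d_G(id, X_{n+1})$, the triangle inequality applied to $id, p, q, Z_n$ gives
$$d_G([id, Z_1],[Z_n, Z_{n+1}]) \geq d_G(id, Z_n) - d_G(id, X_1) - d_G(id, X_{n+1}).$$
Next, fixing $\alpha < 1/(3C_0)$ and $C_1 \geq 3C_0$ (with $C_0$ as in Lemma \ref{doublecoset}), the event $\{d_G([id, Z_1],[Z_n, Z_{n+1}]) \leq n/C_1\}$ is contained in
$$\{d_G(id, Z_n) \leq n/C_0\} \cup \{d_G(id, X_1) \geq \alpha n\} \cup \{d_G(id, X_{n+1}) \geq \alpha n\}.$$
Lemma \ref{doublecoset} directly bounds the probability of the first event by $C_0 e^{-n/C_0}$, and since $X_1$ and $X_{n+1}$ both have law $\mu$, the other two contribute the same probability $\matP^\mu[d_G(id, X_1) \geq \alpha n]$.

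The main obstacle is controlling this single-step tail probability. For the purely exponential form of the conclusion, one needs a uniform exponential tail on $\mu$ over the neighborhood $\mathcal N$, which gives $\matP^\mu[d_G(id, X_1) \geq \alpha n] \leq C' e^{-\alpha n/C'}$; a union bound then assembles the three pieces and, after readjusting $C_1$, delivers the bound $C_1 e^{-n/C_1}$. Under only the finite first moment hypothesis, Markov's inequality would yield merely an $O(1/n)$ bound on the step-tail terms, so the pure exponential form of the lemma must be understood as valid in the regime where the step distribution already has the same exponential tail control that underpins the non-amenability estimate in Lemma \ref{doublecoset}. A potential refinement, should one want to avoid the exponential tail hypothesis, would be to exploit $\delta$-hyperbolicity more carefully (the four-point configuration $id, Z_1, Z_n, Z_{n+1}$ is typically "thin", and the Gromov products that make $[id, Z_1]$ close to $[Z_n, Z_{n+1}]$ can be analysed via a conditioning argument on $X_1, X_{n+1}$ together with a spectral-radius bound on $Z_{n-1}$), but the direct argument above is the most transparent.
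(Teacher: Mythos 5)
There is a genuine gap: your argument proves the lemma only under an exponential-tail assumption on $\mu$, but Theorem \ref{devhyp} (and hence this lemma) is stated for measures with merely a finite first moment, and the whole point of this section is to get deviation inequalities under such weak moment conditions. Your reduction via $d_G([id,Z_1],[Z_n,Z_{n+1}]) \geq d_G(id,Z_n) - d_G(id,X_1) - d_G(id,X_{n+1})$ forces you to control $\matP^\mu[d_G(id,X_1)\geq \alpha n]$, and, as you yourself note, with a finite first moment this is only $O(1/n)$, which destroys the exponential bound. Declaring that the lemma "must be understood" under an extra exponential-tail hypothesis is not a repair; it changes the statement into a strictly weaker one that would not support the "moreover" part of Theorem \ref{devhyp} (finite second moment implies the $p$-th moment deviation inequality for $p<4$).

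The paper's proof avoids the single-step tail entirely by conditioning on $X_1=h_1$ and $X_{n+1}=h_2$ and taking a union bound over the pairs of points $x_1\in[id,h_1]$, $x_2\in[id,h_2]$: given $h_1,h_2$, a point of $[Z_n,Z_{n+1}]$ has the form $h_1(X_2\cdots X_n)x_2$ with $X_2\cdots X_n$ independent of $(X_1,X_{n+1})$ and distributed as $Z_{n-1}$, so Lemma \ref{doublecoset} \emph{in its full form}, with the uniform bound over the fixed translating elements $g=x_1^{-1}h_1$ and $h=x_2$, gives $\matP^\mu[d_G(x_1,h_1 Z_{n-1}x_2)\leq n/C_0]\leq C_0e^{-n/C_0}$ for every such pair (you only invoked the special case $g=h=id$, which is why you were left with the step tails). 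The union bound then costs a factor of order $d_G(id,h_1)\,d_G(id,h_2)$, and averaging over $h_1,h_2$ yields the prefactor $\matE^\mu[d_G(id,X_1)]^2$, finite by the first moment assumption alone. Your closing remark about conditioning on $X_1,X_{n+1}$ gestures toward this, but the argument is not carried out, and the proof as written does not establish the lemma under the stated hypotheses.
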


\begin{proof}
For $C_0$ as in Lemma \ref{doublecoset} we have
\begin{align*}
 \matP^\mu[d_G([1,Z_1],&[Z_n,Z_{n+1}])\leq (n-1)/C_0]\\
  & \leq \sum_{h_1,h_2\in G} \sum_{x_i\in [id,h_i]} \matP^\mu[d_G(x_1, h_1 X_1^{-1}Z_{n}x_2)\leq n/C_0]\matP^\mu[X_1=h_1,X_{n+1}=h_2]\\
 & = \sum_{h_1,h_2\in G} \sum_{x_i\in [id,h_i]} \matP^\mu[d_G(x_1, h_1 Z_{n-1}x_2)\leq n/C_0]\matP^\mu[X_1=h_1,X_{n+1}=h_2]\\
 & \leq C_0 e^{-(n-1)/C_0} \sum_{h_1,h_2\in G} (d_G(id,h_1)+1)(d_G(id,h_2)+1)\matP^\mu[X_1=h_1,X_{n+1}=h_2]\\
 &= C_0 e^{-(n-1)/C_0} \matE^\mu[d_G(id,X_1)+1]^2,\\
\end{align*}
 as required, where to pass from the second to the third line notice that all terms in the sum $\sum_{x_i\in [id,h_i]}$ are equal, and $(d_G(id,h_1)+1)(d_G(id,h_2)+1)$ is just the number of terms.
\end{proof}

\begin{lm}\label{bowtie}
There exists $C_2$ with the following property. Let $\gamma$ be a geodesic in $G$ and let $(w_i)_{i=0,\dots, n}$ be a discrete path with endpoints on $\gamma$. Let $M\geq C_2$ be a positive integer. Then for each $k\in\{0,\dots, n\}$ one of the following holds.
\begin{enumerate}
 \item There exist $k_1< k \leq k_2$ with $|k_2-k_1|\leq M$ so that $d_G(w_{k_1},w_{k_1+1}), d_G(w_{k_2},w_{k_2+1})\geq (d_G(w_k,\gamma)- C_2)/M$.
%\item There exists $k'$ with $|k'-k|\leq M$ so that $d_G(w_{k'},w_{k'+1})\geq (d_G(w_k,\gamma)- C_2)/M$.
 \item There exist $k_1< k \leq k_2$ with $|k_2-k_1|\geq M$ so that $d_G([w_{k_1},w_{k_1+1}], [w_{k_2},w_{k_2+1}])\leq (k_2-k_1)/C_1$.
 \item There exist $k_1<k \leq k_2$ with $|k_2-k_1|\geq M$ so that $\sum_{k_1\leq i< k_2} d_G(w_{i},w_{i+1})\geq e^{(k_2-k_1)/C_2}/C_2$.
\end{enumerate}
\end{lm}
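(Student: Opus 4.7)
I fix $k$, set $t := d_G(w_k, \gamma)$, and let $\delta$ denote the hyperbolicity constant of $G$. I will take $C_2 \geq 10\delta$ and may assume $t > C_2$: otherwise the right-hand side $(t - C_2)/M$ of Case~1 is non-positive, so Case~1 holds trivially (e.g.\ with $k_1 = k-1$, $k_2 = k$, after noting $k \geq 1$ follows from $t>0$). Since $w_0, w_n \in \gamma$ and $d_G(w_k, \gamma) = t > 10\delta$, I let $k_1^\ast$ be the largest index $i < k$ with $d_G(w_i, \gamma) \leq 10\delta$ and $k_2^\ast$ the smallest index $i \geq k$ with the same property; both exist. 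By construction $d_G(w_i, \gamma) > 10\delta$ for every $i \in (k_1^\ast, k_2^\ast)$, and in particular $k_2^\ast > k$.

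\textbf{Short excursion.} If $k_2^\ast - k_1^\ast \leq M$, I establish Case~1 by pigeonhole. The sub-path $w_{k_1^\ast}, \dots, w_k$ uses at most $M$ jumps and joins a vertex within $10\delta$ of $\gamma$ to $w_k$ at distance $t$ from $\gamma$; hence $\sum_{i=k_1^\ast}^{k-1} d_G(w_i, w_{i+1}) \geq t - 10\delta$, so some jump at an index $k_1 \in [k_1^\ast, k-1]$ has length $\geq (t - 10\delta)/M$. The symmetric argument applied to $w_k, \dots, w_{k_2^\ast}$ produces a jump at an index $k_2 \in [k, k_2^\ast - 1]$ of the same lower bound. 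Since $k_2 - k_1 \leq k_2^\ast - 1 - k_1^\ast \leq M - 1$, Case~1 holds as soon as $C_2 \geq 10\delta$.

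\textbf{Long excursion.} Suppose $k_2^\ast - k_1^\ast > M$. I argue by contradiction: assume Cases~2 and~3 both fail for every admissible pair $(k_1, k_2)$ with $k_1^\ast \leq k_1 < k \leq k_2 \leq k_2^\ast$ and $k_2 - k_1 \geq M$. The failure of Case~3, applied with $(k_1, k_2) = (k_1^\ast, k_2^\ast)$, confines the sub-path $w_{k_1^\ast}, \dots, w_{k_2^\ast}$ to a bounded region, while the failure of Case~2 forces any two edges at index gap $\geq M$ to be metrically well-separated, with separation growing linearly in the index gap. Using $\delta$-hyperbolicity---specifically, that a sub-path whose endpoints lie within $10\delta$ of $\gamma$ and whose interior vertices avoid the $10\delta$-neighbourhood of $\gamma$ must either self-approach (violating Case~2) or have length exponential in its index span (violating Case~3)---one derives a contradiction, provided $C_2$ is large enough in terms of $\delta$ and $M$. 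This gives Case~2 or Case~3, completing the trichotomy.

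\textbf{Main obstacle.} The delicate step is the long-excursion case: producing a single $C_2$ that witnesses the dichotomy ``self-approach or exponentially long'' uniformly for all index gaps $m \geq M$. The natural route combines the exponential divergence of geodesics in a $\delta$-hyperbolic space (to recover the $e^{(k_2-k_1)/C_2}$ factor appearing in Case~3) with a pigeonhole on the edges of the sub-path, using that when Case~2 fails the edges form a well-separated family inside a ball whose radius is controlled by the total length of those edges. Balancing the exponential divergence against the counting estimate so that a uniform $C_2$ handles every $m \geq M$ is the key quantitative point.
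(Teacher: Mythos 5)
Your short-excursion case is fine and matches the paper's pigeonhole argument, but the long-excursion case — which is the entire content of the lemma — is not proved. Two things go wrong. First, the quantifiers: you conclude "provided $C_2$ is large enough in terms of $\delta$ and $M$", and then admit in your final paragraph that making $C_2$ uniform in $M$ is an unresolved "key quantitative point". But the statement requires a single $C_2$ chosen before $M$ (one takes any integer $M\geq C_2$), so a $C_2$ depending on $M$ does not prove the lemma; the gap you flag is exactly the gap. Second, the geometric dichotomy you invoke is false as stated: a discrete path whose interior vertices merely avoid the $10\delta$-neighbourhood of $\gamma$ can run parallel to $\gamma$ at distance $11\delta$ with unit jumps; over a large index span $m$ its length is linear in $m$ (so Case 3 fails once $m\gg C_2\log C_2$) and any two of its edges at index gap $m'\geq M$ are at distance about $m'$, not $\leq m'/C_2$ (so Case 2 fails). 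Such an excursion neither "self-approaches" nor is exponentially long. The reason this does not contradict the lemma is that in this picture $w_k$ is close to $\gamma$; but your dichotomy never uses the distance $t=d_G(w_k,\gamma)$, so it cannot carry the argument. A fixed-width tube around $\gamma$ is simply the wrong region to excise: exponential length in $k_2-k_1$ can only come from forcing the excursion to avoid a ball whose radius grows linearly in $k_2-k_1$, and a $10\delta$-threshold produces no such ball.

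The missing idea, which is how the paper proceeds, is to replace the fixed tube by the "bowtie" region $N^{\bowtie}(\gamma)$ of all $x$ with $d_G(x,\pi_\gamma(x))\leq d_G(\pi_\gamma(w_k),\pi_\gamma(x))$, i.e. a region whose width grows linearly along $\gamma$ as one moves away from $\pi_\gamma(w_k)$, and to define $k_1,k_2$ via the interpolating geodesic segments $[w_i,w_{i+1}]$ meeting this region (working with segments rather than vertices also matters, since a single huge jump may skip any fixed neighbourhood of $\gamma$). If both $k_1,k_2$ are within $M$ of $k$ one gets Case 1 as in your short case; otherwise $k_2-k_1\geq M$, and if Case 2 fails then the two endpoints $x,y$ of the excursion $\beta$ outside the bowtie satisfy $d_G(x,y)\gtrsim k_2-k_1$, which by the definition of the bowtie forces, say, $\pi_\gamma(x)$ to be at distance $\gtrsim k_2-k_1$ from $\pi_\gamma(w_k)$. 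Then the subpath of $\beta$ from $x$ to $w_k$ avoids a ball of radius comparable to $k_2-k_1$ centred at $\pi_\gamma(x)$, a point lying $10\delta$-close to a geodesic from $x$ to $w_k$, so exponential divergence in the hyperbolic group gives length at least $e^{(k_2-k_1)/C_2}/C_2$, i.e. Case 3, with all constants depending only on $\delta$ and not on $M$. Without some mechanism of this kind (a region whose width scales with the index gap, fed by the failure of Case 2), your contradiction scheme cannot close, so as it stands the proposal does not establish the lemma.
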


\begin{proof}
%We can and will assume $d_G(w_k,\gamma)\geq 100 \delta$.
%For $\gamma$ a geodesic in $G$, w
We fix a choice $\pi_{\gamma}:G\to\gamma$ of closest point projection onto $\gamma$.

%The \emph{bow-tie neighborhood} of $\gamma$ centered at $p$, denoted
Denote $N^{\bowtie}(\gamma)$ be the closure of the set of all $x\in G$ so that $d_G(x,\pi_{\gamma}(x))\leq d_G(\pi_\gamma(w_k),\pi_{\gamma}(x))$.

% Let $k_1<k$ be maximal (resp. $k_2\geq k$ be minimal) so that $d_G(w_{k_i},\pi_{\gamma}(w_{k_i}))\leq \epsilon d_G(p,\pi_{\gamma}(w_{k_i}))$.
Let $k_1<k$ be maximal (resp. $k_2\geq k$ be minimal) so that $[w_{k_1},w_{k_1+1}]\cap N^{\bowtie}(\gamma)\neq \emptyset$ (resp. $[w_{k_2},w_{k_2+1}]\cap N^{\bowtie}(\gamma)\neq \emptyset$).
%$d_G(w_{k_i},\pi_{\gamma}(w_{k_i}))\leq \epsilon d_G(p,\pi_{\gamma}(w_{k_i}))$.
 \par\medskip
{\bf Claim 1.} $d_G(w_{k_i},w_k)\geq d_G(w_k,\gamma)-100\delta$.
 \par\medskip
\emph{Proof of Claim 1.}
If $d_G(\pi_\gamma(w_k),\pi_\gamma(w_{k_i}))\leq 10\delta$ then 
$$d_G(w_k,\gamma)=d_G(w_k,\pi_\gamma(w_k))\leq d_G(w_k,w_{k_i})+d_G(w_{k_i},\pi_\gamma(w_{k_i}))+d_G(\pi_\gamma(w_{k_i}),\pi_\gamma(w_{k}))\leq d_G(w_k,w_{k_i})+20\delta.$$
If $d_G(\pi_\gamma(w_k),\pi_\gamma(w_{k_i}))\geq 10\delta$ then any geodesic from $w_{k}$ to $w_{k_i}$ passes $10\delta$-close to $\pi_\gamma(w_{k})$ and $\pi_\gamma(w_{k_i})$. Hence
$$d_G(w_{k},w_{k_i})\geq d_G(w_k,\pi_\gamma(w_k))+d_G(\pi_\gamma(w_k),\pi_\gamma(w_{k_i}))+d_G(\pi_\gamma(w_{k_i}),w_{k_i})-100\delta\geq d_G(w_k,\gamma)-100\delta.$$
\qed
 
In view of Claim 1, if we have $|k_1-k|\leq M$ and $|k_2-k|\leq M$ then 1) holds, possibly for different $k_1,k_2$. In fact, $\sum_{k'=k_1,\dots,k-1} d_G(w_{k'},w_{k'+1})\geq d_G(w_{k_1},w_k)\geq d_G(w_k,\gamma)-100\delta$, so that one of the terms of the sum is large, and similarly on the ``other side'' of $k$.

Hence, suppose that either $|k_1-k|> M$ or $|k_2-k|> M$, so that in particular $|k_2-k_1|\geq M$. Also, suppose that 2) does not hold, for the given $k_1,k_2$.

Let $\beta$ be the concatenation of a subpath of $[w_{k_1},w_{k_1+1}]$, $[w_i,w_{i+1}]$ for $i=k_1+1,\dots,k_2-1$ and a subpath of $[w_{k_2},w_{k_2+1}]$ with the property that $\beta$ intersects $N^{\bowtie}(\gamma)$ only at its endpoints $x,y$.

\par\medskip

{\bf Claim 2.} $l_G(\beta)\geq e^{d(x,y)/C_2}/C_2$.

\par\medskip

\emph{Proof of Claim 2.}
If we had $d_G(\pi_\gamma(x), \pi_\gamma(w_k)),d_G(\pi_\gamma(y), \pi_\gamma(w_k))\leq (k_2-k_1)/(5C_0)$ then we would have
$$d_G(x,y)\leq  d_G(x,\pi_\gamma(x))+d_G(\pi_\gamma(x),\pi_\gamma(y))+d_G(\pi_\gamma(y),y)\leq \frac{4}{5C_0} (k_2-k_1),$$
where we used the definition of $N^{\bowtie}(\gamma)$.  This contradicts the assumption that 2) does not hold.

Hence, let us say $d_G(\pi_\gamma(x), \pi_\gamma(w_k))\geq (k_2-k_1)/(5C_0)$, the other case being symmetric. The subpath $\beta'$ of $\beta$ from $x$ to $w_k$ avoids $B(\pi_\gamma(x), (k_2-k_1)/(10C_0))$, and $\pi_\gamma(x)$ lies $10\delta$-close to a geodesic from $x$ to $w_k$. Hence, the length of $\beta'$ is exponential in $k_2-k_1$, as required.
% Hence, $\beta$ contains a subpath $\beta'$ connecting $x'$ to $x$ so that $d_G(\pi_\gamma(x),\pi_\gamma(x'))\geq (k_2-k_1)/(20C_0)$. Hence $\beta'$ avoids a ball of radius linear in $k_2-k_1$ centered at a point on $[x,x']$, hence its length is exponential in $k_2-k_1$.
\qed

In view of Claim 2, condition 3) holds, and the proof of the lemma is complete.
\end{proof}

\begin{proof}[Proof of Theorem \ref{devhyp}]
Let $M\geq C_2$ be a positive integer (for $M\leq C_2$ the theorem trivially holds setting $C=C_2$). We claim that there exists $C_3$ so that with probability at least $1-C_3e^{-M/C_3}$ a sample path $(w_i)$ of $(Z_n)$ does not satisfy item 2) or 3) in Lemma \ref{bowtie}.
% satisfies 
% \begin{enumerate}
%  \item For each $k_1\leq k \leq k_2$ with $|k_2-k_1|\geq M$ we have $d_G([w_{k_1},w_{k_1+1}], [w_{k_2},w_{k_2+1}])\geq (k_2-k_1)/C_0$.
%  \item For each $k_1\leq k \leq k_2$ with $|k_2-k_1|\geq M$ we have $\sum_{k_1\leq i\leq k_2} d_G(w_{i},w_{i+1})< e^{(k_2-k_1)/C_2}/C_2$.
% \end{enumerate}

For given $k_1,k_2$, the probability that 2) holds is exponentially small in $k_2-k_1$ by Lemma \ref{fargeodesics}, and it is easily seen that the probability that 3) holds is also exponentially small in $k_2-k_1$. In fact, the inequality $\sum_{k_1\leq i< k_2} d_G(w_{i},w_{i+1})\geq e^{(k_2-k_1)/C_2}/C_2$ forces one of the summands to be exponentially large in $k_2-k_1$, but the probability of the existence of such a jump is exponentially small by Markov's inequality.

The claim now follows summing over all possible $k_1,k_2$, similarly to Lemma \ref{tightpath}.

In view of Lemma \ref{bowtie}, we then have
\begin{align*}
 \matP^\mu[d_G&(Z_k,[id,Z_n])\geq t]\\
&\leq C_3e^{-M/C_3}+ \sum_{\stack{k_1<k\leq k_2,}{|k_2-k_1|\leq M}} \matP^\mu[d_G(w_{k_1},w_{k_1+1})\geq (t- C_2)/M]\matP^\mu[d_G(w_{k_2},w_{k_2+1})\geq (t- C_2)/M] \\
& \leq C_3e^{-M/C_3}+ M^2(\matP^\mu[d(id,X_1)\geq (t- C_2)/M])^2,
\end{align*}
as required.

For the ``moreover'' part, we take $M$ of order $\log(t)$ in the expression above. The first term can then be made of order $t^{-4}$, while the second term, using Chebyshev, is of order $\log(t)^6/t^4$. Hence,
$$\matP^\mu[d_G(Z_k,[id,Z_n])\geq t]\leq C_4 \frac{\log(t)^6}{t^{4}}$$
for each large enough $t$ and a suitable constant $C_4$.
We can replace ``$d_G(Z_k,[id,Z_n])$'' in the expression above by ``$(id,Z_n)_{Z_k}$'' (up to modifying the constant), hence for each $0<\epsilon<1$, we have
$$\esp^\mu[((id,Z_n)_{Z_k})^{4-\epsilon}]=(4-\epsilon)\int_0^{\infty} t^{3-\epsilon}\matP^\mu[(id,Z_n)_{Z_k}\geq t],$$
which is finite in view of the estimate above.
\end{proof}

\section{Deviation for Green metrics}\label{sec:greendev} 

In this section we prove deviation inequalities for Green metrics.
%We say that the measure $\mu$ on the finitely generated group $G$ has {\bf superexponential tail} if for every $\alpha>0$ we have 
%$\sum_{x\in G} e^{\alpha d_G(id,x)}\mu(x)<\infty$, where $d_G$ is any word metric on $G$.

\begin{theo}\label{smalldevgen}
Let $G$ be a finitely generated group acting acylindrically and non-elementarily on the geodesic hyperbolic space $X$.
Let $\mu_0$ be a symmetric measure on $G$ whose finite support generates $G$. Then $\mu_0$ has a neighborhood $\mathcal N$ with the following properties. There exists $C$ so that for each symmetric $\mu\in \mathcal N$, each $\mu'$ in $\mathcal N$, $0\leq k\leq n$ and $l\geq 1$ we have 
 $$\matP^{\mu'}\left[(id,Z_n)^{\calG_\mu}_{Z_k}\geq l\right]\leq Ce^{-l/C},$$
 where $(x,y)_w^{\calG_\mu}$ denotes the Gromov product in the Green metric $d^\mu_{\calG}$ with respect to $\mu$.
\end{theo}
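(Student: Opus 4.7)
The plan is to reduce the Green-metric deviation inequality to the word-metric deviation inequality (Corollary \ref{cor:main}) by combining a strong-Markov decomposition with a bypass-probability estimate. Using the symmetry of $\mu$ we have $F^\mu(g) = F^\mu(g^{-1})$, and together with the identity $d^\mu_{\calG}(x,y) = -\log F^\mu(x^{-1}y) + \log G^\mu(id)$ this yields
\begin{equation*}
2(id, Z_n)^{\calG_\mu}_{Z_k} \;=\; \log \frac{F^\mu(Z_n)}{F^\mu(Z_k)\, F^\mu(Z_k^{-1} Z_n)}.
\end{equation*}
Applying the strong Markov property at the first visit $T_{Z_k}$ of the $\mu$-walk to $Z_k$, I would then obtain the decomposition $F^\mu(Z_n) \leq F^\mu(Z_k)\, F^\mu(Z_k^{-1} Z_n) + q(Z_k, Z_n)$, where $q(a,b) := \matP^\mu[T_b < T_a]$ is the bypass probability (the probability that the $\mu$-walk from $id$ hits $b$ before $a$). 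The theorem is thus reduced to showing that, for $l \geq 1$, with $\matP^{\mu'}$-probability at least $1 - Ce^{-l/C}$ one has $q(Z_k, Z_n) \leq (e^{2l}-1)\, F^\mu(Z_k)\, F^\mu(Z_k^{-1} Z_n)$.

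To construct the favorable event, I would let $\mathcal E_l$ denote the set of pairs $(a,b) \in G \times G$ such that $a$ lies within word-distance $l$ of every $D$-Lipschitz $(D,D)$-quasi-geodesic in the Cayley graph joining $id$ to $b$. Since the Cayley graph is acylindrically intermediate for $(G,X)$ by Proposition \ref{prop:examplesacylinter}, Theorem \ref{smalldevhier} applied to the $\mu'$-walk yields $\matP^{\mu'}[(Z_k, Z_n) \notin \mathcal E_l] \leq Ce^{-l/C}$ uniformly over $\mu' \in \mathcal N$. What remains is a deterministic weak Ancona-type inequality: for every $(a,b) \in \mathcal E_l$,
\begin{equation*}
q(a, b) \;\leq\; C e^{2l}\, F^\mu(a)\, F^\mu(a^{-1}b).
\end{equation*}

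To prove this bypass inequality, I would analyse the $\mu$-sample paths realising $\{T_b < T_a\}$: they start at $id$, reach $b$, and avoid $a$. On $\mathcal E_l$ every word-metric quasi-geodesic from $id$ to $b$ passes within word-distance $l$ of $a$, so such a path must execute a word-metric deviation of size exceeding $l$ from every such quasi-geodesic near $a$. Theorem \ref{smalldevhier}, this time applied to the $\mu$-walk, controls the probability of such deviations by $Ce^{-l/C}$. Converting this estimate into the desired comparison with $F^\mu(a) F^\mu(a^{-1}b)$ would proceed by summing over the intermediate ``near-bypass'' positions where the walk comes closest to $\pi(a)$ in $X$: Lemma \ref{geomsep} bounds the word-diameter of the candidate near-bypass sets, and the super-exponential tail of $\mu_0$ (inherited by $\mu \in \mathcal N$) makes the summation converge. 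The main obstacle will be achieving the sharp exponential dependence on $l$: we must carefully handle the constraint that the sample path ends at $b$ and combine it with Theorem \ref{smalldevhier} in a way that respects this endpoint restriction, effectively refining the geometric-probabilistic analysis of Section \ref{sec:deviationqg} to paths with pinned endpoints.
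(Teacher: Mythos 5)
Your reduction of the Green Gromov product to a hitting-probability comparison (via symmetry of $\mu$, the identity $d^\mu_\calG(id,z)=-\log F^\mu(z)$, and the strong Markov decomposition $F^\mu(b)\le F^\mu(a)F^\mu(a^{-1}b)+q(a,b)$) is fine, and your probabilistic input ($\matP^{\mu'}[(Z_k,Z_n)\notin\mathcal E_l]\le Ce^{-l/C}$ from Theorem \ref{smalldevhier}) is also correct. But the entire difficulty of the theorem lies in the deterministic ``weak Ancona'' step $q(a,b)\le Ce^{2l}F^\mu(a)F^\mu(a^{-1}b)$, and your justification of it has two genuine gaps. First, the forced-deviation claim is false: a $\mu$-path realising $\{T_b<T_a\}$ only has to avoid the single point $a$, so it can pass within word-distance $1$ of $a$ and hence within distance $l+1$ of every quasi-geodesic; membership of $(a,b)$ in $\mathcal E_l$ forces no deviation of size $l$ at all. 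Second, even a correct bound of the form $Ce^{-l/C}$ on the probability of some deviating behaviour is of the wrong nature: the quantity you must beat, $F^\mu(a)F^\mu(a^{-1}b)$, is itself exponentially small in $d_G(id,a)+d_G(a,b)$ (spectral radius bound), a scale completely unrelated to $l$, so an absolute probability estimate in $l$ cannot yield the multiplicative comparison. What is needed is a \emph{relative} weight comparison: the total weight of $\mu$-paths from $id$ to $b$ avoiding a ball of radius $\sim l$ around $a$ must be bounded by a constant times the weight of paths entering that ball, and entering the ball converts into passing through $a$ at multiplicative cost only $\epsilon^{O(l)}$.

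This is exactly what the paper's proof supplies and your sketch does not. The paper first upgrades the conclusion of Theorem \ref{smalldevhier}: combining it with the linear progress Theorem \ref{linprog} and the exponential tail, it shows (Lemma \ref{progresspoints}) that with probability $\ge 1-Ce^{-l/C}$ the point $Z_k$ is $l$-close to a $(T,C_5l)$-\emph{linear progress point} of $\gamma(id,Z_n)$, i.e.\ a point near which the geodesic makes definite progress in the $d_X$-metric. This $d_X$-progress is indispensable: it is what feeds the superlinear divergence estimate (Proposition \ref{superlinproj}) in the deterministic Lemma \ref{closetoprogpoints}, forcing any path that detours around a ball about $p$ to be long, and the weight comparison is then achieved by an explicit surgery on $\mu$-paths (interpolating large jumps by $(K_0,\mu)$-paths, where the superexponential tail is used so that interpolation does not decrease weights, and rerouting ``fly-by'' and detour subpaths). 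Your event $\mathcal E_l$, defined only by closeness to word-metric quasi-geodesics, does not carry the $d_X$-linear-progress information, and your appeal to Lemma \ref{geomsep} plus summation over near-bypass positions does not produce the required comparison against $F^\mu(a)F^\mu(a^{-1}b)$. So as written the proposal reduces the theorem to an unproved (and, under your hypotheses on $\mathcal E_l$ alone, unjustified) Ancona-type inequality; to close the gap you would essentially have to import the linear-progress-point formulation and the path-surgery argument of Lemmas \ref{progresspoints} and \ref{closetoprogpoints}.
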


The rest of this section is devoted to the proof of the theorem.
 
Fix the notation of the theorem from now on. When we write an inequality involving $\matP$, $d_\calG$ without explicit reference to the measure we mean that the statement holds for every $\mu\in \mathcal N$ and that the constants involved can be chosen uniformly for all $\mu\in\mathcal N$, where $\mathcal N$ is a small enough neighborhood of $\mu_0$. We fix the word metric $d_G$ on $G$ corresponding to the generating set $supp(\mu)$. We denote by $\gamma(g,h)$ any geodesic in $G$ from $g$ to $h$. Up to rescaling the metric of $X$, we can and will assume $d_X(g,h)\leq d_G(g,h)$ for all $g,h\in G$.

We denote by $C_i$ suitable constants depending on the data of the theorem, and for convenience we take $C_{i+1}\geq C_{i}$.

A $(T,S)$\emph{-linear progress point} $p\in \gamma(g,h)$ is a point that satisfies the following property. For each $q_1,q_2\in \gamma(g,h)$ with $d_G(p,q_i)\geq S$ and so that $q_1,p,q_2$ appear in this order along $\gamma(g,h)$, we have $d_G(q_1,q_2)\leq T d_X(q_1,q_2)$.

Denote by $\gamma(g,h)_{T,S}$ the collection of all $(T,S)$-linear progress points $p\in \gamma(g,h)$.

The theorem follows combining the two lemmas below. We remark that the measure $\mu'$ only plays a role in Lemma \ref{progresspoints}, while the measure $\mu$ only plays a role in Lemma \ref{closetoprogpoints}.

\begin{lm}\label{progresspoints}
There exists $T$ and $C_5$ so that for each $k$, $n\geq k$ and $l\geq 1$, 
$$\matP[d_G(Z_k,\gamma(id,Z_n)_{T,C_5l})\geq l]\leq C_5e^{-l/C_5}$$
\end{lm}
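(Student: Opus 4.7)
The plan is to combine the quasi-geodesic tracking of Theorem \ref{smalldevhier}, applied with $Y=\mathrm{Cay}(G,S)$ endowed with the word metric $d_G$ (which is acylindrically intermediate for $(G,X)$ by Proposition \ref{prop:examplesacylinter}(1)), with the linear $d_X$-progress of Theorem \ref{linprog} and the superexponential tail of $\mu_0$. The first gives the $d_G$-closeness of $Z_k$ to $\gamma(id,Z_n)$; the other two are then used to force $\gamma(id,Z_n)$, near the resulting projection $p$, to make uniform $d_X$-progress at the scale $C_5l$.

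First I would apply Theorem \ref{smalldevhier} to $(G,d_G)$ to get, with probability at least $1-Ce^{-l/C}$, $d_G(Z_k,\gamma(id,Z_n))\le l/100$, and let $p$ denote the $d_G$-closest point to $Z_k$. Next I would set up three auxiliary good events, each of probability at least $1-Ce^{-l/C}$: (a) by a union bound of Theorem \ref{smalldevhier} applied to $Z_{k'}$ for $k'$ in a window $I$ around $k$ of size $A_1 l$, that $d_G(Z_{k'},\gamma(id,Z_n))\le l/100$ for every $k'\in I$; (b) by Chernoff exploiting the superexponential tail of $\mu_0$, that $d_G(Z_{k'},Z_{k'+1})\le l/100$ for all $k'\in I$; (c) by a union bound over pairs $(k_1,k_2)\in I^2$ with $k_2-k_1\ge A_2l$, both $d_X(Z_{k_1},Z_{k_2})\ge (k_2-k_1)/C_0$ (from Theorem \ref{linprog} applied to the sub-walk $(Z_{k_1}^{-1}Z_{k_1+i})_{i\ge0}$, whose law is that of the original walk) and $d_G(Z_{k_1},Z_{k_2})\le C_0'(k_2-k_1)$ (from Chernoff). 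On the intersection of these events, the closest-point projections $p_{k'}:=\pi_{\gamma(id,Z_n)}(Z_{k'})$ for $k'\in I$ form a sequence along $\gamma(id,Z_n)$ with consecutive gaps at most $3l/100$, covering a segment of $\gamma(id,Z_n)$ of $d_G$-length at least $A_1l/C_0'$ on each side of $p$.

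Given $q_1,p,q_2$ on $\gamma(id,Z_n)$ in this order with $d_G(p,q_i)\ge C_5l$, if both $q_i$ lie in the $\gamma$-segment covered by the $\{p_{k'}\}$, I would pick $k_1<k<k_2$ in $I$ with $d_G(p_{k_i},q_i)\le 3l/100$ and $k_2-k_1\ge A_2l$ (the latter by choosing $C_5$ large with respect to $C_0'A_2$). The bounds
\begin{align*}
d_G(q_1,q_2)&\le d_G(Z_{k_1},Z_{k_2})+l/4\le C_0'(k_2-k_1)+l/4,\\
d_X(q_1,q_2)&\ge d_X(Z_{k_1},Z_{k_2})-l/4\ge(k_2-k_1)/C_0-l/4
\end{align*}
then give $d_G(q_1,q_2)\le T\,d_X(q_1,q_2)$ for a suitable $T=T(C_0,C_0')$, verifying that $p$ is a $(T,C_5l)$-linear progress point. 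The residual case, where some $q_i$ lies outside the segment covered by $\{p_{k'}:k'\in I\}$ and therefore close to an endpoint of $\gamma(id,Z_n)$, is reduced to this one by additionally invoking the global ratio bound $d_G(id,Z_n)\le C_0'C_0\,d_X(id,Z_n)$, itself coming from Theorem \ref{linprog} and Chernoff applied to the entire walk.

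The main obstacle will be coordinating the union bounds in (a)--(c) so that the total failure probability remains of order $e^{-l/C}$ \emph{uniformly in $n$}. In particular, the decay rate in Theorem \ref{linprog} has to be strong enough to absorb the quadratic-in-$l$ cost of the pair union bound in (c), and the superexponential tail of $\mu_0$ is essential in (b) to tune the Chernoff rate to whatever exponential decay is required. The correct choice of the window size $A_1l$, the pair-separation threshold $A_2l$, and the LPP scale $C_5l$ has to be carried out consistently.
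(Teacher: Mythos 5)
Your ingredients are the right ones (Theorem \ref{smalldevhier} applied with $Y=\mathrm{Cay}(G,S)$, Theorem \ref{linprog}, exponential-tail bounds on increments, and union bounds), but the way you localize them creates a genuine gap. A $(T,C_5l)$-linear progress point $p$ must satisfy $d_G(q_1,q_2)\leq T\,d_X(q_1,q_2)$ for \emph{every} pair $q_1,p,q_2$ in order on $\gamma(id,Z_n)$ with $d_G(p,q_i)\geq C_5l$, so in particular for $q_i$ at distance anywhere between $C_5l$ and the full length of $\gamma$, which is of order $n\gg l$. Your events (a)--(c) only control a window $I$ of size $A_1l$ around $k$, hence a segment of $\gamma$ of $d_G$-length $O(l)$ around $p$. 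The claim in your residual case -- that a $q_i$ lying outside this covered segment is ``therefore close to an endpoint of $\gamma(id,Z_n)$'' -- is false when $n$ is large: such a $q_i$ can sit anywhere in the huge middle region between the window and the endpoints. Moreover the global ratio bound $d_G(id,Z_n)\leq C_0'C_0\,d_X(id,Z_n)$ does not pass to subsegments: from $d_X(q_1,q_2)\geq d_X(id,Z_n)-d_G(id,q_1)-d_G(q_2,Z_n)$ one subtracts $d_G(id,q_1)$ and $d_G(q_2,Z_n)$ with coefficient $1$ while they are only recovered with coefficient $1/(C_0C_0')$ in the lower bound, so the estimate is vacuous unless both $q_i$ are within a definite fraction of the endpoints. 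Thus intermediate scales $A_1l\ll d_G(p,q_i)\ll n$ are not controlled by anything in your event list.

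The paper's proof fixes exactly this point, and this is the one idea your proposal is missing: instead of a fixed window with fixed tolerance, it samples times $k+i\cdot 10C_2l$ over \emph{all} of $\{0,\dots,n\}$ and applies Theorem \ref{smalldevhier} at the $i$-th sample with a deviation tolerance growing linearly in $|i|$, namely $l'=l+|i|l$ (together with Theorem \ref{linprog} for all straddling pairs of samples and the exponential tail for the $d_G$-upper bounds). The linear growth of the tolerance keeps the union bound summable, so the total failure probability is still $Ce^{-l/C}$ uniformly in $n$, while the $d_X$-progress between samples, of order $10|i_2-i_1|l$, dominates the accumulated deviations of order $|i_2-i_1|l+2l$. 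This produces marker points along the whole geodesic, at every scale away from $p$, which is what verifying the linear-progress property actually requires. If you replace your fixed-window events by this all-scales scheme, the rest of your computation (transferring $d_X$-lower and $d_G$-upper bounds from the walk points to nearby points of $\gamma$) goes through essentially as you wrote it.
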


The idea is that points along a random path make linear progress in $d_X$ and stay $d_G$-close to $\gamma(id,Z_n)$, hence random points along $\gamma(id,Z_n)$ are of linear progress.

\begin{proof}
 From Theorem \ref{smalldevhier} and Theorem \ref{linprog} we know for each $k',k_1,k_2\leq n$, $l'\geq 1$:
 $$\matP[d_G(Z_{k'},\gamma(id,Z_n))\geq l']\leq C_1e^{-l'/C_1}$$
 and
$$\matP[d_X(Z_{k_1},Z_{k_2})\leq |k_1-k_2|/C_2]\leq C_2e^{-|k_1-k_2|/C_2}.$$

Also, as $\mu$ has finite support we have $\matP[d_G(Z_{k},Z_{k'})\leq C_3|k'-k|]=1$.

Let $\mathcal I$ be the set of all integers $i$ so that $k+i10C_2l\in\{0,\dots,n\}$. Summing over all $k',k_1,k_2\leq n$ of the form $k'=k+i10C_2l$ and with $l'=l+il$, we get that the probability that $ (a),(b),(c)$ hold for each $i,i_1,i_2\in\mathcal I$ with $i_1\leq 0\leq i_2$ is at least $1-C_4e^{-l/C_4}$, where

\begin{enumerate}[(a)]
 \item $d_X(Z_{k+i_1C_2l},Z_{k+ i_2C_2l})\geq 10(i_2-i_1)l$,
 \item $d_G(Z_{k+ iC_2l},\gamma(id,Z_n))\leq l+|i|l$,
 \item $d_G(Z_{k},Z_{k+ iC_2l})\leq |i|C_4l$.
\end{enumerate}

Hence, with probability at least $1-C_4e^{-l/C_4}$, along the geodesic from $id$ to the endpoint of a random walk $Z_n$, we have points $\{p_i\}$ so that
\begin{enumerate}
 \item $p_0$ is $l$-close to $Z_k$,
 \item $d_X(p_{i_1},p_{i_2})\geq 10(i_2-i_1)l-2l-(i_2-i_1)l\geq 7(i_2-i_1)l$ for $i_1\leq 0\leq i_2$ and $i_1\neq i_2$,
 \item $d_G(p_i,p_0)\leq |i|C_4l$.
\end{enumerate}

Such properties easily imply that $p_0$ is of $(T,C_5l)$-linear progress, as required.
\end{proof}

\begin{lm}\label{closetoprogpoints}
 Let $T\geq 1$. If $\mu\in\mathcal N$ is symmetric, then there exists $C_9$ so that if $p\in \gamma(g,h)$ is a $(T,S)$-linear progress point for some $S\geq 1$ then $d^\mu_\calG(g,p)+d^\mu_\calG(p,h)\leq d^\mu_\calG(g,h)+C_9S$.
\end{lm}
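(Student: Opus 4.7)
Plan: By left-translation we may assume $g=id$, so the conclusion to be proved becomes
\[
F^\mu(h)\;\le\;e^{C_{14}S}\,F^\mu(p)\,F^\mu(p^{-1}h),
\]
where $F^\mu(x)=\matP^\mu[T_x<\infty]$, using $d^\mu_{\calG}(x,y)=-\log F^\mu(x^{-1}y)$ and the identity $F^\mu(x)=F^\mu(x^{-1})$ that holds because $\mu$ is symmetric. I would fix a radius $R=KS$ with $K$ to be chosen, set $B:=N^G_R(p)$, and decompose by whether the walk enters $B$ before hitting $h$:
\[
F^\mu(h)\;=\;\matP^\mu[T_h<\infty,\,T_B\le T_h]\;+\;\matP^\mu[T_h<T_B].
\]
The first summand admits a direct bound: the strong Markov property at $T_B$, combined with the exponential volume estimate $|B|\le e^{c_0R}$ and the bi-Lipschitz comparison $d^\mu_{\calG}\le c_1\,d_G$ valid on a non-amenable group (see Subsection \ref{par:Green}), yields $\matP^\mu[T_h<\infty,\,T_B\le T_h]\le e^{c_2 R}\,F^\mu(p)\,F^\mu(p^{-1}h)$. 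It therefore suffices to show $\matP^\mu[T_h<T_B]\le \tfrac12\,F^\mu(h)$, for then this term is absorbed to the left.

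Bounding $\matP^\mu[T_h<T_B]$ is the heart of the proof. I would establish the stronger bridge-type estimate $\matP^\mu[Z_n=h,\ (Z_i)_{i\le n}\text{ avoids }B]\le e^{-cS}\mu^n(h)$ for every $n$, from which summation in $n$ yields $\matP^\mu[T_h<T_B]\le e^{-cS}F^\mu(h)\le\tfrac12 F^\mu(h)$ once $S$ exceeds an absolute constant. The deterministic input is a tracking lemma: if a discrete path $(w_i)_{i\le n}$ from $id$ to $h$ has uniformly bounded $d_X$-jumps and remains close, in the metric of an acylindrically intermediate space $Y$, to some quasi-geodesic in $QG_D(id,h)$, then some $w_{k^\ast}$ lies in $B=N^G_{KS}(p)$ provided $K$ is chosen large in terms of $T$. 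Indeed, the intermediate value theorem applied to $d_X(id,\cdot)$, together with the hyperbolicity of $X$, produces $k^\ast$ with $d_X(w_{k^\ast},p)$ small; the tracking hypothesis then places $w_{k^\ast}$ within bounded $d_G$-distance of some $q\in\gamma(id,h)$; and the $(T,S)$-linear progress condition at $p$ forces any $q\in\gamma(id,h)$ with $d_X(q,p)$ small to satisfy $d_G(q,p)=O(S)$.

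The principal obstacle is to verify that the bridge measure $\matP^\mu[\,\cdot\,|\,Z_n=h]$ satisfies these bounded-jump and tracking hypotheses with probability at least $1-e^{-cS}$. The deviation results of Theorem \ref{smalldevhier} and Lemma \ref{progresspoints} are stated for unconditioned walks, so I would exploit the symmetry of $\mu$: split the bridge at the midpoint $\lfloor n/2\rfloor$ and invoke time-reversibility, turning each half into an unconditioned $\mu$-walk to which the previously established deviation estimates and the exponential-tail control on individual jumps apply directly. This reversibility argument is where the symmetry hypothesis on $\mu$ plays its essential role, and it is the only delicate step in the plan.
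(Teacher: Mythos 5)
Your overall architecture (reduce to $F^\mu(h)\le e^{C_{14}S}F^\mu(p)F^\mu(p^{-1}h)$, then split according to whether the trajectory enters a ball $B=N^G_{KS}(p)$) parallels the dichotomy in the paper's proof, and your treatment of the ``through $B$'' term via the strong Markov property and the comparison $d^\mu_\calG\le c_1 d_G$ is fine. The gap is in the heart of the argument, the bound $\matP^\mu[Z_n=h,\ (Z_i)_{i\le n}\text{ avoids }B]\le e^{-cS}\mu^n(h)$. You propose to verify that the \emph{bridge} measure $\matP^\mu[\,\cdot\mid Z_n=h]$ inherits the bounded-jump and quasi-geodesic tracking properties from Theorem \ref{smalldevhier} and the exponential tail, invoking symmetry and time reversal to ``turn each half into an unconditioned $\mu$-walk''. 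This does not work: reversing a bridge of a symmetric walk produces another bridge, not a free walk, and the law of a bridge half has Radon--Nikodym density $\mu^{n-k}(Z_k^{-1}h)/\mu^n(h)$ with respect to the free walk, which is not bounded (indeed $\mu^n(h)\le\rho_\mu^n$ by the spectral radius bound, so $\matP^\mu[A\mid Z_n=h]\le \matP^\mu[A]/\mu^n(h)$ is useless). None of the deviation estimates proved in the paper are conditional on the endpoint, and the claim that a bridge stays $O(S)$-close to a quasi-geodesic with probability $1-e^{-cS}$ \emph{uniformly in $n$ and $h$} is both unproven and doubtful when $n$ is much larger than $d_G(id,h)$; establishing such bridge estimates would require local-limit or ratio-type theorems that are not available in this generality.

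The paper avoids conditioning altogether. It writes $G^\mu(x^{-1}y)=\sum_{\alpha\in\calP(x,y)}W(\alpha)$ and proves the purely combinatorial claim $W(P_a)\le C\,W(P_t)$, where $P_a$ (resp.\ $P_t$) is the family of $\mu$-paths from $g$ to $h$ avoiding (resp.\ meeting) a ball of radius comparable to $S$ around $p$: it interpolates large jumps by $(K_0,\mu)$-paths (here the superexponential tail is used so that interpolation does not decrease weights), uses the superlinear divergence of Proposition \ref{superlinproj} together with the $(T,S)$-linear-progress property of $p$ to extract from any avoiding path a long detour, and then performs a surgery replacing the detour by a cheap shortcut near $p$, with a controlled-multiplicity weight comparison for the surgery map. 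If you want to salvage your plan, you would need to replace the conditional/bridge step by some argument of this weight-comparison type (or else prove genuinely new deviation inequalities for conditioned walks), since the geometric tracking input you cite only controls the unconditioned walk.
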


\begin{proof}
In this proof, by ``path'' we mean discrete path in the Cayley graph of $G$ with respect to the generating set $supp(\mu)$. The weight $W(\alpha)$ of a path $\alpha$ of length $n$ is the probability that a random walk of length $n$ driven by $\mu$ follows the path $\alpha$. Similarly, the weight $W(\calP)$ of a set of paths $\calP$ is the sum of the weights of the paths in the set.
Notice that the Green function between two points $g,h$ is $G^\mu(g^{-1}h)=W(\calP)=\sum_{\alpha\in \calP(g,h)}W(\alpha)$, where $\calP(g,h)$ is the set of all paths connecting $g,h$. 
Recall from the definitions in paragraph \ref{par:Green}  that $d^\mu_\calG(g,h)=-\log G^\mu(g^{-1}h)+\log G^\mu(id)$. Thus it will be sufficient to prove that, for points $g$, $h$ and $p$ as in the Lemma, we have 
$$G^\mu(g^{-1}p)G^\mu(p^{-1}h)\geq e^{-C_9S}G^\mu(id)G^\mu(g^{-1}h)\,.$$ 
Adjusting the value of $C_9$, we see that it suffices to prove that 
$$G^\mu(g^{-1}p)G^\mu(p^{-1}h)\geq e^{-C_9S}G^\mu(g^{-1}h)\,.$$ 

We denote $\gamma=\gamma(g,h)$ for convenience.

In the first part of the proof we show that a path avoiding a $d_G$-ball around $p\in\gamma$ has a long subpath with certain properties.

\emph{Choice of constants.} We first choose constants $\epsilon, \theta, N$ depending on the measure $\mu$ only.

There exists $\epsilon=\epsilon(\mu)>0$ so that the weight of any path of length $n$ is at least $\epsilon^n$. Moreover, we claim that there is a positive $\theta<1$ so that for each $n\geq 1/(1-\theta)$, the set of all paths of length at least $n$ between any two given points has weight at most $\theta^n$. In fact, there exists $\hat\theta=\rho_\mu<1$ so that the probability $\P^\mu[Z_m=a^{-1}b]$  is at most $\hat\theta^{m}$, see (\ref{eq:specrad}). This means that the set of all paths of length exactly $m$ between two given points has weight at most $\hat\theta^m$. Hence, the set of all paths of length at least $n$ between any two given points is at most $\sum_{m\geq n}\hat\theta^m=\hat\theta^n/(1-\hat\theta)\leq \theta^n$, for $\theta$ sufficiently close to $1$ and $n$ sufficiently large. Finally, let $N> \max\{2,1/(1-\theta)\}$ be so that $\theta^{N}/\epsilon\leq 1/2$.
 
Fix now $C_6$ so that $\rho(t)\geq 2NT$ for all $t\geq C_6$, where $\rho$ is as in Proposition \ref{superlinproj} with $Y=G$ (later we will also use the constant $C$ from that proposition). For convenience, we require that $C_6S$ is an integer. We then choose a  constant $C_7$ much larger than $C_6$; more specifically here is the list of all the inequalities we will need, in the form in which they will be used:

\begin{itemize}
\item $C_7>3 C_6$, and

for each $A\geq 2C_7S-4C_6S$ we have:
%\item $2C_7S-4C_6S\geq 4C_6ST+2CT+2C_6S$,
\item  $A-2C_6S\geq \frac{A}{2} + 2C_6S+ C$,
\item $A+2C_6S\leq 2 A$, and

 \item $\epsilon^{4C_6S}2^{-A}\leq 1$.
\end{itemize}

 \emph{Paths avoiding balls.} Let $\alpha$ be a path from $g$ to $h$ that avoids $B^G(p,C_7S)$. Then we claim that we can find a subpath $\beta$ of $\alpha$, pictured in Figure \ref{green_detour}, with the following properties. ``To the left'' and ``to the right'' refer to the natural order along $\gamma$. 
 
 \begin{enumerate}
 \item $\beta$ does not intersect the open $C_6S$-neighborhood of $\gamma$ in $G$.
 \item The endpoints $g',h'$ of $\beta$ are at $d_G$-distance $C_6S$ from $\gamma$.
 \item For some $g'',h''\in \gamma$ closest to $g',h'$ (in particular, with $d_G(g',g'')=C_6S$ and $d_G(h',h'')=C_6S$) we have that $g''$ is to the left of $p$ and $h''$ is to the right of $p$.
\end{enumerate}

In fact, we can take as $\beta$ the subpath of $\alpha$ joining
\begin{itemize}
 \item the last point $g'$ along $\alpha$ that is $C_6S$-close to a point in $\gamma$ to the left of $p$, and
 \item the first point $h'$ after $g'$ along $\alpha$ that is $C_6S$-close to a point to the right of $p$.
\end{itemize}

Let us check the required properties of $\beta$. Notice that since $C_7$ is large compared to $C_6$, no point $x$ along $\alpha$ can be simultaneously $C_6S$-close to points $x',x''$ on opposite sides of $p$ (otherwise we would have $d_G(p,x')\leq d_G(x',x'')\leq 2C_6S$ and $d_G(p,x)\leq 3C_6S<C_7S$). In particular, the endpoints of the path $\beta$ defined above do not lie in the open $C_6S$-neighborhood of $\gamma$ in $G$, and neither do other points along $\beta$, by definition. Also, $g'$ cannot be at distance less than $C_6S$ from $\gamma$, because otherwise the next point along $\alpha$ would still be within $C_6S$ of some point on $\gamma$ to the left of $p$. Similarly, $h'$ also lies at distance $C_6S$ of $\gamma$. Finally, the last property of $\beta$ holds by definition.

We now show that $\beta$ needs to be long compared to $d_G(g',h')$. Notice that we have $d_G(g'',h'')\geq d_G(g',h')-2C_6S$ since $d_G(g',g'')\leq C_6S$ and $d_G(h',h'')\leq C_6S$. Also:
\begin{align*}
d_G(g',h') & \geq d_G(g'',h'')-2C_6S=d_G(g'',p)+d_G(p,h'')-2C_6S\\
& \geq d_G(g',p)+d_G(p,h')-4C_6S \\
&  \geq 2C_7S-4C_6S.
\end{align*}
%$$\max\{l_Y(\alpha_1),l_Y(\alpha_2)\}\geq (d_X(g_1,h_1)-d_X(g_1,g_2)-d_X(h_1,h_2)-C)\cdot \rho(d_G(\alpha_1,\alpha_2)).$$
Hence, since $C_7$ is large enough compared to $C_6$, we get
$$d_X(g'',h'')\geq \frac{d_G(g'',h'')}{T}\geq \frac{d_G(g',h')-2C_6S}{T}\geq \frac{d_G(g',h')}{2T} + 2C_6S+ C.$$
Hence, using Proposition \ref{superlinproj} we get
%$d_X(g',h')\geq d_X(g'',h'')-d_X(g',g'')-d_X(h',h'')\geq d_X(g'',h'') - 2C_6S-2K$ and
\begin{align*}
 \max\{l_G(\beta), d_G(g'',h'')\} & \geq (d_X(g'',h'')- 2C_6S-C) \rho(C_6S) \\
 & \geq \frac{d_G(g',h')}{2T} \rho(C_6S) \\
 & \geq N d_G(g',h').\\
\end{align*}

But since $C_7$ is large enough compared to $C_6$, we have $d_G(g'',h'')\leq d_G(g',h')+2C_6S \leq 2d_G(g',h')< N d_G(g',h')$, so we get
$$l_G(\beta)\geq N d_G(g',h').\ \ \ (*)$$

\par\medskip

{\bf Claim.} Let $P_a$ be the collections of all paths from $g$ to $h$ that avoid the ball of radius $C_7S$ around $p$, and let $P_t$ be the collection of those that intersect it. Then $W(P_a)\leq W(P_t)$.

\par\medskip

\emph{Proof of Claim.} We will construct a map $\psi:P_a\to P_t$ with the property that the preimage of any $\alpha'\in P_t$ has weight at most $W(\alpha')$. This suffices to establish the claim.

%Let $\alpha\in P_a$. We will construct a certain $\alpha'\in P_t$ starting from $\alpha$, and then we will check that the map $\alpha\mapsto \alpha'$ has the property that the weight of the preimage of any given $\alpha'$ is at most $W(\alpha')$. 

Let $\alpha\in P_a$, and let $\beta$ be a subpath as described above, which has $d_G$-length at least $N d_G(g',h')$ by $(*)$. 

Let $\hat{\beta}$ be the concatenation of a geodesic from $g'$ to $g''$, the subgeodesic of $\gamma$ from $g''$ to $h''$, and a geodesic from $h''$ to $h'$. Also, let $\psi(\alpha)$ be the concatenation of the initial subpath $\alpha_1$ of $\alpha$ with final point $g'$, $\hat{\beta}$ and the final subpath $\alpha_2$ of $\alpha$ starting at $h'$. It is clear that $\psi(\alpha)\in P_t$, and that in fact $\psi(\alpha)$ contains a subgeodesic of $\gamma$ that contains $p$.

\begin{figure}[h]
\centering
 \includegraphics[scale=0.7]{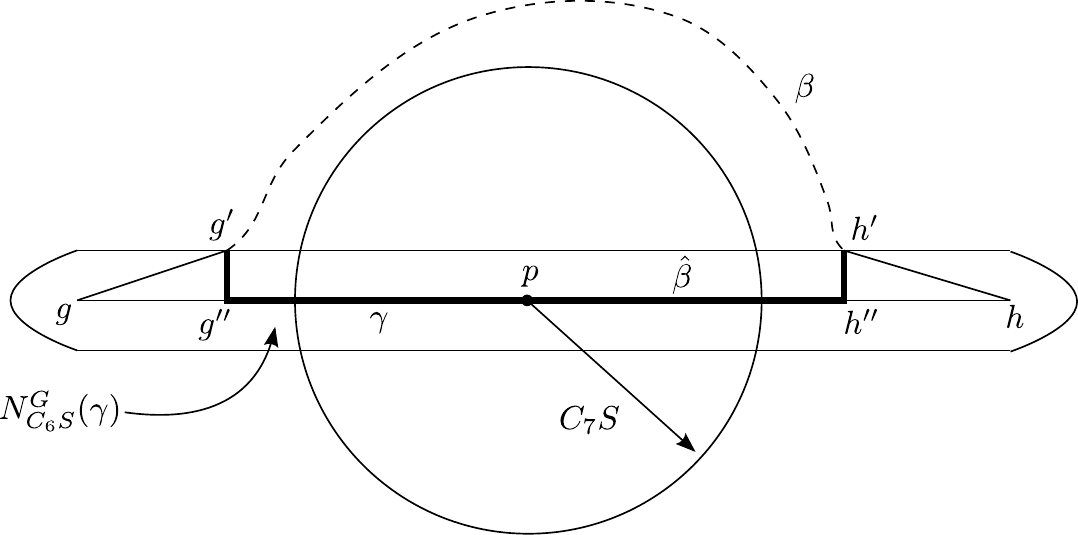}
 \caption{We can replace the subpath $\beta$ of $\alpha$ with the much shorter path $\hat\beta$. This operation increases the weight.}\label{green_detour}
\end{figure}

The map $\psi$ is not 1-1. However, given any $\alpha'\in P_t$ and $\alpha\in\psi^{-1}(\alpha')$, we have that $\alpha$ is obtained from $\alpha'$ by replacing a subpath $\hat{\beta}$ by another path, say with endpoints $g',h'$. The points $g',h'$ can be identified in the following way. Consider the maximal common subgeodesic of $\alpha'$ and $\gamma$ containing $p$. The endpoints need to be the points $g'',h''$ described above, so that $g'$ and $h'$ can be found by moving along $\alpha'$ an extra $C_6S$ away from $p$.

Notice that $W(\alpha)= W(\alpha_1)W(\alpha_2)W(\beta),$ and similarly for $\alpha'$ with $\hat{\beta}$ replacing $\beta$. Also, $W(\hat\beta)\geq \epsilon^{d_G(g',h')+4C_6S}$, and the weight of all paths of length at least $N d_G(g',h')$ from $g'$ to $h'$ is at most $\theta^{N d_G(g',h')}$. Hence, the weight of the preimage of $\alpha'$ is at most
$$W(\alpha_1)W(\alpha_2)\theta^{N d_G(g',h')}\leq W(\alpha')\theta^{N d_G(g',h')}/\epsilon^{d_G(g',h')+4C_6S}\leq W(\alpha')\epsilon^{4C_6S}2^{-d_G(g',h')}.$$
Recalling that $d_G(g',h')\geq  2C_7S-4C_6S$, and since $C_7$ is large enough compared to $C_6$ we have $\epsilon^{4C_6S}2^{-d_G(g',h')}\leq 1$, as required.\qed

\par\smallskip

Expanding the definition of the Green metric, one sees that it suffices to show the following. Let $P(a,b)$ be the collection of all paths from $a$ to $b$. Then
$$W(P(g,h))\leq C_8 W(P(g,p)) W(P(p,h)).$$

We now claim that $ W(P(g,p))W(P(p,h))\geq \epsilon^{2C_7S}W(P_t)$. In order to prove the claim, it suffices to describe an injective map $\psi_g\times\psi_h:P_t\to P(g,p)\times P(p,h)$ so that for each $\alpha\in P_t$ we have $W(\psi_g(\alpha))W(\psi_h(\alpha))\geq \epsilon^{2C_7S} W(\alpha)$. Once this is done, we get the desired conclusion from the inequality $$W(P(g,p))W(P(p,h))=\sum_{\alpha_1\in P(g,p),\alpha_2\in P(p,h)} W(\alpha_1)W(\alpha_2) \geq \sum_{\alpha\in P_t}W(\psi_g(\alpha))W(\psi_h(\alpha)).$$

Starting from a path $\alpha\in P_t$, we can form a path $\psi_g(\alpha)$ by concatenating the initial subpath of $\alpha$ to the first point in $\alpha\cap B^G(p,C_7S)$ and a path $\beta_\alpha$ of length $\leq C_7S$ to $p$. The path $\psi_h(\alpha)$ is the concatenation of the inverse of $\beta_\alpha$ and the suitable final subpath of $\alpha$. The map $\psi_g\times\psi_h$ is easily seen to be injective, and the required weight condition follows from the fact that the weight of $\beta_\alpha$ is at least $\epsilon^{C_7S}$.

We are now ready to conclude the proof of the lemma: 
\begin{align*}
 W(P(g,h)) & = W(P_a\cup P_t)\leq 2W(P_t) \\
 & \leq 2\epsilon^{-2C_7S}W(P(g,p))W(P(p,h)),\\
\end{align*}
as required.
\end{proof}

%%%%%%%%%%%%%%%%%%%%%%%%%%%%%%%%%%%%%%%%%%%%%%%%%%%%%%%%%%%%%%%%%%%%%%%%%%%%%%%%%%%%%%%%%%%%%%%%%%%%

\part{Conclusions}

\section{Statements of the main results}\label{sec:statements}
For the convenience of the reader and for future reference, in this section we state the theorems that can be obtained combining results that rely on deviation inequalities, proven in the first half of the paper, with results about getting deviation inequalities from the second part of the paper.

First of all, we collect the results about the regularity of the rate of escape. Acylindrical and non-elementary actions are defined in Section \ref{prelim}, while the notion of being acylindrically intermediate is given in Definition \ref{def:acylinter}. (Recall that if the finitely generated group $G$ acts acylindrically on the geodesic hyperbolic space $X$, then any Cayley graph of $G$ and $X$ itself are acylindrically intermediate for $(G,X)$, see Proposition \ref{prop:examplesacylinter} for more examples). Recall that we fixed the convention that, when we have a fixed action of a group $G$ on some metric space $Y$, we automatically fix a basepoint $y\in Y$ and denote $d_Y$ the metric on $G$ defined by $d_Y(g,h)=d_Y(gy,hy)$ for each $g,h\in G$. Finally, recall that, for $G$ a group and $B\subseteq G$, we defined $\p(B)$ to be the set of all measures on $G$ supported on $B$ and that, for $d$ a metric on $G$ and $\mu$ a measure on $G$ with finite first moment, we defined the rate of escape as $\ell(\mu;d):=\lim_{n\ra\infty}\frac 1 n \sum_{x\in G} d(id,x)\mu^n(x)$. We defined a distance on $\p(B)$ in paragraph \ref{ssec:distance}. 

The following theorem is obtained combining Theorem \ref{smalldevhier} with Theorems \ref{theo:rescp} and \ref{theo:diffrate}.

\begin{theo}
 Let $G$ be a finitely generated group acting acylindrically on the geodesic hyperbolic space $X$, and let $Y$ be acylindrically intermediate for $(G,X)$. Let $\mu$ be a measure on $G$ with exponential tail whose support $B$ generates a non-elementary semigroup of $G$. Then
 \begin{enumerate}
  \item there exists a neighborhood of $\mu$ in $\p(B)$, say $\cal N$, such that  
the function $\mu_0\ra \ell(\mu_0;d_Y)$ is Lipschitz continuous on $\cal N$.
\item the function $\mu_0\ra \ell(\mu_0;d_Y)$ is differentiable at $\mu_0=\mu$ in the following sense: 
Let $(\mu_t, t\in[0,1])$ be a curve in $\p(B)$ such that $\mu_0=\mu$ and, for all $a\in B$, the function 
$t\ra \log \mu_t(a)$ has a derivative at $t=0$, say $\nu(a)$. We assume that $\nu$ is bounded on $B$ and also that 
$\sup_{t\in[0,1]}\sup_{a\in B} \vert \frac 1 t \log\frac{\mu_t(a)}{\mu_0(a)} -\nu(a)\vert<\infty$. Then the limit of $\frac 1 t (\ell(\mu_t;d_Y)-\ell(\mu;d_Y))$ as $t$ tends to $0$ exists. 
Furthermore, this limit coincides with the covariance
\beqn\sigma(\nu,\mu;d_Y):=\lim_n\frac 1 n \esp^\mu[d_Y(id,Z_n)\big(\sum_{j=1}^n \nu(X_j)\big)]\,.\eeqn
 \end{enumerate}
\end{theo}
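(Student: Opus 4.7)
My plan is to deduce both parts of the theorem by combining Corollary \ref{cor:main} (the exponential-tail deviation inequality for acylindrically hyperbolic groups proved in Part \ref{part:geom}) with the general regularity results Theorems \ref{theo:rescp} and \ref{theo:diffrate}, applied to the metric $d_Y$. In both cases the work consists essentially of verifying that the hypotheses of the corresponding abstract result hold for $(\mu,d_Y)$.

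First I would verify the moment hypotheses. Since $\mu$ has exponential tail in some word metric on $G$ and any orbit map $G\to Y$ is Lipschitz with respect to a word metric, $\mu$ also has exponential tail with respect to $d_Y$; in particular it has finite $p$-th moment in $d_Y$ for every $p>0$. Next, I would invoke Corollary \ref{cor:main} to obtain the locally uniform exponential-tail deviation inequality for $\mu$ with respect to a quasi-ruled metric on $G$ quasi-isometric to $d_Y$. Since the underlying Gromov-product bound of Theorem \ref{smalldevhier} is formulated intrinsically in terms of $d_Y$, this deviation inequality transfers to $d_Y$ itself up to multiplicative constants. Together, this provides both the locally uniform first-moment deviation inequality and the second-moment deviation inequality required below.

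Part (1) then follows directly from Theorem \ref{theo:rescp}: the latter produces a neighborhood ${\cal N}$ of $\mu$ in $\p(B)$ on which $\mu_0\mapsto\ell(\mu_0;d_Y)$ is Lipschitz continuous. Part (2) follows from Theorem \ref{theo:diffrate} applied to $(\mu,d_Y)$: the verified second-moment deviation inequality and locally uniform first-moment deviation inequality, together with the finite second moment, are exactly the hypotheses needed, and the conclusion simultaneously gives the existence of $\lim_{t\to 0}\frac 1 t(\ell(\mu_t;d_Y)-\ell(\mu;d_Y))$ and its identification with the covariance $\sigma(\nu,\mu;d_Y)$.

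The main subtlety in the plan is ensuring that the deviation inequality of Corollary \ref{cor:main}, which is naturally stated for a quasi-ruled metric, transfers cleanly to the metric $d_Y$ on $G$. I would handle this by observing that the estimate in Corollary \ref{cor:main} ultimately rests on the distance-from-quasi-geodesics bound of Theorem \ref{smalldevhier}, and that this bound is insensitive to replacing $d_Y$ by a quasi-isometric quasi-ruled metric (such as a word metric on the $G$-orbit); since all quantities appearing in the statements of Theorems \ref{theo:rescp} and \ref{theo:diffrate} are defined in terms of $d_Y$, the locally uniform deviation inequalities needed in the hypotheses can be read off directly from the conclusion of Theorem \ref{smalldevhier}.
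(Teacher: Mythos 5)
Your overall route is the same as the paper's: the statement is obtained by combining the deviation-inequality results of Part \ref{part:geom} with Theorems \ref{theo:rescp} and \ref{theo:diffrate}, and your verification of the moment hypotheses (exponential tail of $\mu$ with respect to $d_Y$ via the Lipschitz orbit map) is fine. The one step whose justification does not hold as stated is the passage to a deviation inequality for the metric $d_Y$ itself. A locally uniform deviation inequality for an auxiliary quasi-ruled metric $d$ quasi-isometric to $d_Y$ does \emph{not} ``transfer up to multiplicative constants'': Gromov products are not quasi-isometry invariants when the space is not hyperbolic (already for bi-Lipschitz equivalent norms on $\R^2$ the Gromov product of suitable triples is $0$ in one metric and linear in the other), and $Y$ is not assumed hyperbolic here (e.g.\ Teichm\"uller space). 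Also, the parenthetical example of ``a word metric on the $G$-orbit'' is not in general quasi-isometric to $d_Y$ (think of the mapping class group acting on the curve complex or on Teichm\"uller space). The correct and more direct argument, which is what the paper's reference to Theorem \ref{smalldevhier} (rather than Corollary \ref{cor:main}) implicitly uses and which your last sentence gestures at, is the following: since $Y$ is geodesic, for any geodesic $\gamma$ of $Y$ from $y_0$ to $Z_ny_0$ one has $(id,Z_n)_{Z_k}\leq d_Y(Z_k,\gamma)$ (Gromov product taken in $d_Y$; this is the elementary inequality between a Gromov product and the distance to a geodesic joining the two points), and $\gamma$ is a $(D,D)$-quasi-geodesic with respect to $d_Y$, so Theorem \ref{smalldevhier} directly yields the locally uniform exponential-tail deviation inequality in $d_Y$, with no auxiliary quasi-ruled metric needed. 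With that substitution, your application of Theorems \ref{theo:rescp} and \ref{theo:diffrate} (whose hypotheses are then all verified, since the exponential-tail deviation inequality implies the first- and second-moment ones) goes through exactly as you describe.
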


Here is instead the statement about the asymptotic entropy, obtained combining Theorem \ref{smalldevgen} with Theorems \ref{theo:entropsym} and \ref{theo:diffentrop}. Recall that, whenever $G$ is a group and $\mu$ is a measure on $G$, we define the entropy $H(\mu):=\sum_{x\in G} (-\log \mu(x))\,\mu(x)\,$ and the asymptotic entropy $h(\mu):=\lim_{n\ra\infty}\frac 1 n H(\mu^n)$ (when $H(\mu)$ is finite). We denote $\p_s(B)$ the set of all symmetric measures supported on $B$.

\begin{theo}
 Let $G$ be a finitely generated group acting acylindrically and non-elementarily on the geodesic hyperbolic space $X$.
Let $B$ be a finite symmetric set that generates $G$, and let $\mu$ be a symmetric probability measure with support $B$. Then
\begin{enumerate}
 \item there exists a neighborhood of $\mu$ in $\p_s(B)$, say $\cal N$, such that the function $\mu_0\to h(\mu_0)$ is Lipschitz continuous on $\cal N$,
 \item the function $\mu_0\ra h(\mu_0)$ is differentiable at $\mu_0$ in the following sense: 
Let $(\mu_t, t\in[0,1])$ be a curve in $\p(B)$ such that $\mu_0=\mu$ and, for all $a\in B$, the function 
$t\ra \log \mu_t(a)$ has a derivative at $t=0$, say $\nu(a)$. We assume that $\nu$ is bounded on $B$ and also that 
$\sup_{t\in[0,1]}\sup_{a\in B} \vert \frac 1 t \log\frac{\mu_t(a)}{\mu_0(a)} -\nu(a)\vert<\infty$. Then the limit of $\frac 1 t (h\mu_t)-h(\mu))$ as $t$ tends to $0$ exists. 
Furthermore, this limit coincides with the covariance
\beqn\sigma_{\cal G}(\nu,\mu):=\lim_n\frac 1 n \esp^\mu[d_{\cal G}^\mu(id,Z_n)\big(\sum_{j=1}^n \nu(X_j)\big)]\,.\eeqn
\end{enumerate}
\end{theo}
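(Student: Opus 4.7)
The plan is to deduce both statements from Theorems \ref{theo:entropsym} and \ref{theo:diffentrop} respectively, using Theorem \ref{smalldevgen} to supply the required deviation inequalities in the Green metric. The strategy mirrors the one already used for the rate of escape, with the added ingredient being the identification $h(\mu)=\ell(\mu;d_{\cal G}^\mu)$ from paragraph \ref{par:Green}.

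First I would verify the integrability hypotheses. Since the support of $\mu$ generates a non-elementary subgroup acting with unbounded orbits on $X$, this subgroup (and hence the group generated by any nearby measure's support, after possibly shrinking the neighborhood) is non-amenable by \cite[Theorem 1.1]{Os-acyl}. Consequently the spectral radius is strictly less than $1$ for every measure in a small enough neighborhood $\mathcal N_0$ of $\mu$ in $\p_s(B)$, which implies that each Green metric $d_{\cal G}^{\mu'}$ ($\mu'\in \mathcal N_0$) is bi-Lipschitz equivalent to any fixed word metric $d_G$ on $G$. The superexponential tail assumption on $\mu$ yields finite $p$-th moments in $d_G$ for every $p$, and hence finite first and second moments in every $d_{\cal G}^{\mu'}$ with $\mu'\in\mathcal N_0$, uniformly on a possibly smaller neighborhood.

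Next I would apply Theorem \ref{smalldevgen} to $\mu$, obtaining a neighborhood $\mathcal N$ of $\mu$ in $\p_s(B)$ and a constant $C$ so that for every symmetric $\mu'\in\mathcal N$ and every $\mu''\in \mathcal N$, the Gromov products $(id,Z_n)^{\cal G_{\mu'}}_{Z_k}$ under $\P^{\mu''}$ have exponential tail with the \emph{same} constant. In particular, for all $\mu'\in \mathcal N\cap\p_s(B)$ the locally uniform first-moment deviation inequality \eqref{eq:devlength} holds in $d_{\cal G}^{\mu'}$, uniformly in the driving measure as well as in the measure defining the Green metric. This is precisely the hypothesis ``${\cal N}_0$'' required by Theorem \ref{theo:entropsym}. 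Combining with the integrability verified in the previous step, Theorem \ref{theo:entropsym} gives the Lipschitz continuity of $\mu_0\mapsto h(\mu_0)$ on a neighborhood of $\mu$, which is statement (1).

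For statement (2), the same application of Theorem \ref{smalldevgen} furnishes the exponential-tail (hence $p$-th moment for every $p$) deviation inequality in $d_{\cal G}^\mu$ under $\P^\mu$, as well as its locally uniform first-moment version. Together with the finite second moment of $\mu$ in $d_{\cal G}^\mu$, these are exactly the hypotheses of Theorem \ref{theo:diffentrop}, which then delivers the differentiability claim together with the identification of the derivative as the covariance $\sigma_{\cal G}(\nu,\mu)$. The main technical point, and really the only nontrivial step, is making sure that the uniformity in Theorem \ref{smalldevgen} is uniform simultaneously in the driving measure $\mu''$ and in the measure $\mu'$ defining the Green metric; this is delicate because small perturbations of $\mu$ can a priori distort the Green function substantially, but that is precisely what Theorem \ref{smalldevgen} and Proposition \ref{prop:flucgreen} were designed to control, so the proof requires no further work beyond reading off their conclusions.
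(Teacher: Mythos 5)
Your proposal is correct and follows exactly the route the paper takes: the theorem is obtained by combining Theorem \ref{smalldevgen} (which supplies the deviation inequality in the Green metrics, uniformly in both the driving measure and the measure defining the metric) with Theorems \ref{theo:entropsym} and \ref{theo:diffentrop}, together with the routine moment checks via the comparability of $d_{\cal G}^{\mu'}$ with a word metric. Your added verifications of non-amenability and of the integrability hypotheses are exactly the implicit details the paper leaves to the reader.
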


We now proceed with the bound on (higher) moments of measures, obtained combining Theorem \ref{smalldevhier} with Theorem \ref{theo:othermoments}.
%Recall that we denote by $\tau_p$ the constant appearing in definition of $p$-th-moment deviation inequality, see Section \ref{I:deviationinequalities}.

\begin{theo}
 Let $G$ be a finitely generated group acting acylindrically on the geodesic hyperbolic space $X$, and let $Y$ be acylindrically intermediate for $(G,X)$. Let $\mu$ be a measure on $G$ with exponential tail whose support generates a non-elementary semigroup. Then for all $p>1$ there exists a constant $c=c(\mu,p)$ such that for all $n\geq 0$ we have
$$\esp^\mu[\vert d_Y(id,Z_n)-\esp^\mu[d_Y(id,Z_n)]\vert^p]\le c n^{p/2}\,.$$
\end{theo}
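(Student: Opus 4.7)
The plan is a direct application of Theorem \ref{theo:othermoments} to the length defective adapted cocycle $\mathcal{Q} = (Q_n)_{n \geq 1}$ defined by $Q_n = d_Y(id, Z_n)$, whose defect is $\Psi_{n,m} = -2\,(id, Z_{n+m})_{Z_n}$ with Gromov product taken in $d_Y$. To invoke that theorem I need to check two hypotheses on $\mathcal{Q}$: that it has finite $p$-th moment and that it satisfies the $p$-th moment deviation inequality with respect to $\mu$.

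First I would verify the finite $p$-th moment. Since $G$ is finitely generated and acts on $Y$ by isometries, the orbit map $g \mapsto g y_0$ is Lipschitz with respect to any word metric on $G$; hence the exponential tail assumption on $\mu$ transfers to $d_Y$ and yields $\chi_p(\mathcal{Q}; \mu) = \esp^\mu[d_Y(id, Z_1)^p] < \infty$ for every $p > 0$. This is precisely the first summand in the target bound.

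Next I would obtain the $p$-th moment deviation inequality from Theorem \ref{smalldevhier}. Because $Y$ is geodesic, any two orbit points are joined by a $Y$-geodesic, which is a $(1,0)$-quasi-geodesic and therefore (after a trivial discretisation) belongs to $QG_D(\cdot,\cdot)$ for any $D \geq 1$. For such a geodesic $\alpha$ from $id$ to $Z_{n+m}$ and any nearest point $p \in \alpha$ to $Z_n$, the triangle inequality applied to $d_Y(id,Z_n)$ and $d_Y(Z_n,Z_{n+m})$ together with $d_Y(id,p)+d_Y(p,Z_{n+m}) = d_Y(id,Z_{n+m})$ gives
$$d_Y(id, Z_n) + d_Y(Z_n, Z_{n+m}) - d_Y(id, Z_{n+m}) \;\leq\; 2\, d_Y(Z_n, p) \;=\; 2\, d_Y(Z_n, \alpha),$$
that is, $(id, Z_{n+m})_{Z_n} \leq d_Y(Z_n, \alpha)$. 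Consequently, Theorem \ref{smalldevhier} yields a uniform exponential-tail bound $\P^\mu[\,|\Psi_{n,m}| \geq 2l\,] \leq C e^{-l/C}$ for all $n,m,l$, which a fortiori implies $\esp^\mu[|\Psi_{n,m}|^p] \leq \tau_p(\mu) < \infty$ for every $p > 0$.

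With both hypotheses verified, Theorem \ref{theo:othermoments} applied to $\mathcal{Q}$ delivers the stated inequality, with $\chi_p(\mathcal{Q}; \mu) = \esp^\mu[d_Y(id, Z_1)^p]$ and $\tau_p(\mathcal{Q}; \mu) = \tau_p(\mu)$. The argument is essentially bookkeeping: the geometric input of Theorem \ref{smalldevhier} feeds into the probabilistic moment estimate of Theorem \ref{theo:othermoments}. The only point that requires any attention is the comparison between Gromov products and tracking distances to quasi-geodesics, and this simplifies here because one is free to choose $\alpha$ to be an actual $Y$-geodesic, thereby avoiding the additive error that would accompany a generic $(D,D)$-quasi-geodesic.
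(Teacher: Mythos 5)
Your proposal is correct and follows essentially the same route as the paper: the theorem is stated there as the combination of Theorem \ref{smalldevhier} with Theorem \ref{theo:othermoments}, the Gromov product being dominated by the distance to a ($Y$-geodesic, hence $(D,D)$-quasi-geodesic) path exactly as in Corollary \ref{cor:main}, which yields the exponential-tail and hence $p$-th moment deviation inequality for the length D.A.C.\ in $d_Y$. Your use of genuine $Y$-geodesics in place of the quasi-ruler formalism is only a cosmetic simplification of that same comparison.
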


Finally, we conclude with the Central Limit Theorem. Items (1) and (3) are obtained combining Theorem \ref{smalldevhier} with Theorems \ref{theo:existvariance} and \ref{theo:clt}. Item (2) follows from Theorem \ref{theo:downvariance} in view of the fact that, in the notation set below, we have $\ell(\mu;d_X)>0$ by Theorem \ref{linprog}, and whence a fortiori we also have $\ell(\mu;d_Y)>0$.

\begin{theo}\label{theo:CLT_conclusions}[Central Limit Theorem]
 Let $G$ be a finitely generated group acting acylindrically on the geodesic hyperbolic space $X$, and let $Y$ be acylindrically intermediate for $(G,X)$. Let $\mu$ be a measure on $G$ with exponential tail whose support generates a non-elementary semigroup of $G$. Then the following hold.
 \begin{enumerate}
  \item $\frac 1 n \var^\mu(d_Y(id,Z_n))$ has a limit as $n$ tends to $\infty$, which we denote by $\sigma^2$.
  \item $\sigma^2>0$.
 \item The law of $\frac 1{\sqrt{n}} (d_Y(id,Z_n)-\ell(\mu;d_Y)n)$ under $\P^\mu$ weakly converges to the Gaussian law with zero mean and variance $\sigma^2$.
 \end{enumerate}
\end{theo}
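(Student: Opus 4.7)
The plan is to apply the abstract CLT machinery from Part~\ref{part:prob} to the end-point defective adapted cocycle $\mathcal{Q} = (Q_n)_{n \geq 1}$ defined by $Q_n := d_Y(id, Z_n)$, once the second-moment deviation inequality for $\mathcal{Q}$ is extracted from Part~\ref{part:geom}. First, I would verify that the hypotheses are in force. Since $G$ is finitely generated and acts by isometries on $Y$, the orbit map is Lipschitz with respect to a word metric, so the exponential tail of $\mu$ transfers to $d_Y$ and $\mathcal{Q}$ has finite moments of every order. Moreover, $Y$ being geodesic makes $d_Y$ itself a quasi-ruled metric tautologically quasi-isometric to $d_Y$, so Corollary~\ref{cor:main} applies and yields the exponential-tail deviation inequality for $\mathcal{Q}$ in $d_Y$; in particular, $\mathcal{Q}$ satisfies the second-moment deviation inequality in the sense of Definition~\ref{df:dev}.

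With these hypotheses in hand, items (1) and (3) follow directly: item (1) is Theorem~\ref{theo:existvariance} applied to $\mathcal{Q}$, giving the existence of the limit $\sigma^2 = \lim_n \tfrac{1}{n}\var^\mu[Q_n]$, and item (3) is Theorem~\ref{theo:clt} for the same $\mathcal{Q}$, giving the weak convergence of $\tfrac{1}{\sqrt{n}}(Q_n - \ell(\mu; d_Y) n)$ to the Gaussian law of variance $\sigma^2$; here one identifies $\ell(\mathcal{Q}; \mu)$ with $\ell(\mu; d_Y)$ via the law of large numbers of Theorem~\ref{theo:lln} and Lemma~\ref{lm:lln}.

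The only substantive step is item (2). Since $\mathcal{Q}$ is an end-point D.A.C. (with $q(g) = d_Y(id, g)$), Theorem~\ref{theo:downvariance} reduces the nondegeneracy of $\sigma^2$ to the strict positivity of $\ell(\mu; d_Y)$. To obtain this, I will invoke Theorem~\ref{linprog} for the acylindrical action of $G$ on $X$: the hypotheses (acylindricity, non-elementarity, unbounded orbits, exponential tail) are exactly those of the present theorem, so $\matP^\mu[d_X(id, Z_n) \leq n/C] \leq C e^{-n/C}$, and hence $\ell(\mu; d_X) > 0$. Transferring this to $d_Y$ uses the fact that the $G$-equivariant map $\pi \colon Y \to X$ coming with the acylindrically intermediate structure is Lipschitz on $G$-orbits (this is the tacit rescaling $d_X \leq d_Y$ made at the start of Section~\ref{ssec:superlineardiv}), so that $\ell(\mu; d_Y) \geq \ell(\mu; d_X)/C' > 0$. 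Theorem~\ref{theo:downvariance} then yields $\sigma^2 > 0$.

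The main obstacle, if any, is precisely this last comparison: while the abstract definition of acylindrically intermediate does not explicitly require $\pi$ to be Lipschitz, the orbit-level Lipschitz property is used implicitly throughout Section~\ref{ssec:superlineardiv} and is what allows linear progress in $d_X$ (from Theorem~\ref{linprog}) to upgrade to strictly positive drift in $d_Y$. The remainder of the argument is an essentially mechanical assembly of results already established, with the deviation inequality of Part~\ref{part:geom} feeding the three abstract theorems \ref{theo:existvariance}, \ref{theo:clt}, and \ref{theo:downvariance} of Part~\ref{part:prob} via the single cocycle $\mathcal{Q}$.
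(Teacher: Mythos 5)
Your proposal is correct and follows essentially the same route as the paper: the deviation inequality from Theorem \ref{smalldevhier} (via Corollary \ref{cor:main}) feeds the length D.A.C. $Q_n=d_Y(id,Z_n)$ into Theorems \ref{theo:existvariance} and \ref{theo:clt} for items (1) and (3), and item (2) comes from Theorem \ref{theo:downvariance} together with $\ell(\mu;d_X)>0$ from Theorem \ref{linprog}, upgraded to $\ell(\mu;d_Y)>0$. The comparison you single out is exactly the paper's ``a fortiori'' step, resting on the same normalization $d_X\leq d_Y$ on orbits used in Section \ref{ssec:superlineardiv}, so your treatment matches the paper's.
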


%\medskip
\bibliographystyle{alpha}
\bibliography{biblio}

\end{document}